\DeclareTextSymbolDefault{\cyrsdsc}{X2}
\theoremstyle{definition}
\newtheorem{thm}{Theorem}[section]
\newtheorem{lem}[thm]{Lemma}
\newtheorem{cor}[thm]{Corollary}
\newtheorem{prop}[thm]{Proposition}
\newtheorem{defn-prop}[thm]{Definition-Proposition}
\newtheorem{defn-thm}[thm]{Definition-Theorem}
\newtheorem{rem}[thm]{Remark}
\newtheorem{defn}[thm]{Definition}
\newcommand{\rank}{\mathop{\mathrm{rank}}}
\def\p{{\mathfrak p}}
\def\F{{\mathbb F}}
\def\G{{\mathbb G}}
\def\P{{\mathbb P}}
\def\Q{{\mathbb Q}}
\def\C{{\mathbb C}}
\def\Z{{\mathbb Z}}
\def\cO{\mathcal{O}}
\def\R{\mathbb R}
\DeclareMathOperator\CH{\mathrm{CH}}
\DeclareMathOperator\A{\mathrm{A}}
\DeclareMathOperator\Ab{\mathrm{Ab}}
\DeclareMathOperator\GL{\mathrm{GL}}
\DeclareMathOperator\SO{\mathrm{SO}}
\DeclareMathOperator{\Spin}{\mathrm{Spin}}
\DeclareMathOperator{\coker}{Coker}
\DeclareMathOperator\Aut{\mathrm{Aut}}
\DeclareMathOperator\End{\mathrm{End}}
\DeclareMathOperator\Gal{\mathrm{Gal}}
\DeclareMathOperator\Pic{\mathrm{Pic}}
\DeclareMathOperator\Sym{\mathrm{Sym}}
\DeclareMathOperator\Frac{\mathrm{Frac}}
\DeclareMathOperator\Spec{\mathrm{Spec}}
\DeclareMathOperator\Proj{\mathrm{Proj}}
\DeclareMathOperator\Shaf{\mathrm{Shaf}}
\DeclareMathOperator\sgn{\mathrm{sgn}}
\DeclareMathOperator\Gr{\mathrm{Gr}}
\DeclareMathOperator\OGr{\mathrm{OGr}}
\DeclareMathOperator\OFl{\mathrm{OFl}}
\DeclareMathOperator\Br{\mathrm{Br}}
\DeclareMathOperator\disc{\mathrm{disc}}
\DeclareMathOperator{\image}{Im}
\DeclareMathOperator{\PGL}{PGL}
\def\et{\mathop{\text{\rm \'et}}}
\newcommand{\cred}{\color{black}}
\newcommand{\cora}{\color{black}}
\title[Arithmetic finiteness of Mukai varieties of genus 7]{Arithmetic finiteness of Mukai varieties of genus 7}
\author{Tetsushi Ito}
\address{Department of Mathematics, Graduate School of Science, Kyoto University, Kyoto 606-8502, Japan}
\email{tetsushi@math.kyoto-u.ac.jp}
\author{Akihiro Kanemitsu}
\address{Department of Mathematical Sciences, Graduate School of Science, Tokyo Metropolitan University, 1-1 Minami-Osawa, Hachioji-shi, Tokyo 192-0397, Japan}
\email{kanemitsu@tmu.ac.jp}
\author{Teppei Takamatsu}
\address{Department of Mathematics (Hakubi center), Graduate School of Science, Kyoto University, Kyoto 606-8502, Japan}
\email{teppeitakamatsu.math@gmail.com}
\author{Yuuji Tanaka}
\address{Beijing Institute of Mathematical Sciences and Applications, No. 544, Hefangkou Village, Huaibei Town, Huairou District, Beijing 101408, China}
\email{ytanaka@bimsa.cn}
\begin{document}

\numberwithin{equation}{subsection}

\begin{abstract}
We study arithmetic finiteness of prime Fano threefolds of genus 7 and their higher dimensional generalization, called Mukai varieties of genus 7.
For prime Fano threefolds of genus 7,  we provide an arithmetic refinement of the Torelli theorem,  
obtain Shafarevich-type finiteness results, and show the failure of the N\'eron--Ogg--Shafarevich criterion of good reduction.
For Mukai varieties of genus 7, we prove that Shafarevich-type finiteness results hold in dimensions 9 and 10, but fail in dimension 6.
In addition, we show that Mukai $n$-folds of genus 7 over $\mathbb{Z}$ do not exist for $n \leq 4$,
whereas they exist for $5 \leq n \leq 10$.
\end{abstract}

\subjclass[2020]{11G35, 14J45, 14C34}  
\keywords{Shafarevich conjecture, Fano varieties, Mukai varieties}

\maketitle


\section{Introduction}

Finiteness of algebraic varieties over number fields and rings of integers is one of the important subjects in arithmetic geometry.
Faltings (\cite{FaltingsShaf}) and Zarhin (\cite{Zarhin})
proved the Shafarevich conjecture for curves and abelian varieties,
which states the finiteness of isomorphism classes of
curves of genus $g \geq 2$ or abelian varieties of dimension $d$
over a number field with good reduction outside
a fixed finite set of places.
Fontaine proved that there is no abelian variety over $\Z$ (\cite{Fontaineabelian}).
Since the works of Faltings, Zarhin, and Fontaine,
analogous results have been studied
for other classes of varieties by several authors;
see, for example,
\cite{Scholl}, \cite{Andre}, \cite{She}, \cite{Takamatsu}, \cite{Fu-Li-Takamatsu-Zou}, \cite{Javanpeykar-Loughran:flag},
\cite{Nagamachi-Takamatsu},
\cite{Javanpeykarcanonicallypolarized},
\cite{TakamatsuEnriques},
\cite{Takamatsubielliptic},
\cite{Lawrence-Sawin},
\cite{Kramer-Maculan}
for Shafarevich-type results,
and see
\cite{Abrashkin}, \cite{Fontaine}, \cite{Schroer}
for Fontaine-type results.

In this paper, we address these problems for prime Fano threefolds of genus 7, and their higher dimensional generalizations \emph{Mukai varieties of genus 7}.
These varieties are Fano varieties obtained as linear sections of the spinor tenfold, and, because of 
the specific geometric properties of the spinor tenfold (e.g., self-duality), these varieties have been studied intensively in several works.
For example, in his seminal works, Mukai proved several important results for them including global Torelli theorem for prime Fano threefolds of genus 7 over $\C$ (see \cite{Mukaimukain-fold}, 
\cite{Mukaicurve},
\cite{MukaiBrill},
\cite{MukaiSugakuShintenkaiEnglish},
\cite{Mukaicurve3}).
Building on Mukai's work, we study arithmetic of
Mukai varieties of genus $7$ over non-closed fields
and in mixed characteristic.
The results are summarized as follows.
In the first part,
we concentrate on prime Fano threefolds of genus $7$, and prove the following results:
\begin{itemize}
    \item
(Arithmetic Torelli theorem) \ The arithmetic period map defined by the intermediate Jacobians is injective
(see Theorem \ref{thm:introarithmeticTorelli}).
\item
(Shafarevich conjecture) \ For a number field $K$ and a finite set of places $S$,
there exist only finitely many $K$-isomorphism classes of prime Fano threefolds of genus $7$ over $K$
which have good reduction outside $S$ (see Theorem \ref{introMainTheoremshaf}).
\item
(Counter-examples to the N\'eron--Ogg--Shafarevich criterion) \ There exist prime Fano threefolds of genus $7$ over
a $p$-adic field that do not have good reduction,
while their $\ell$-adic cohomology is unramified for any $\ell \neq p$
 (see Theorem \ref{thm:introgrcounterexample}).
\end{itemize}
In the second part,
we study higher dimensional Mukai varieties of genus $7$, and prove the following results:
\begin{itemize}
\item
(Shafarevich conjecture for Mukai $n$-folds of genus $7$) \ Shafarevich-type finiteness results for Mukai $n$-folds of genus $7$ hold
if $n = 9, 10$, but fail if $n = 6$
(see Theorem \ref{thm:introMukaifinite}).
\item 
(Fontaine-type results for Mukai $n$-folds of genus $7$) \ Mukai $n$-folds of genus 7 over $\mathbb{Z}$ do not exist for $n \leq 4$,
whereas they do exist for $5 \leq n \leq 10$
(see Theorem \ref{thm:intromukainoverZ}).
\end{itemize}
To the best of our knowledge,
our results give (1) the first example of Fano varieties of Picard rank $1$ for which the Shafarevich conjecture fail, and (2) the first non-trivial examples of smooth projective $n$-folds over $\mathbb{Z}$
that are not defined combinatorially. See Remark~\ref{rem:smproj_over_z}.
See also Remark~\ref{rem:intrommp} and
Remark~\ref{rem:nonexistence}
below for the various different techniques that are used to prove the above results.
In the following, we explain our results in more detail.

\subsection{Arithmetic Torelli theorem for prime Fano threefolds of genus $7$}

First, we study the structure of
the moduli stack of prime Fano threefolds of genus 7
in characteristic $0$,
following Mukai's global Torelli theorem of prime Fano threefolds of genus $7$ over $\C$ in \cite{MukaiBrill}, and Kuznetsov's study on families of Fano varieties \cite{Kuznetsovfamily}.
We briefly recall the definition of Prime Fano threefolds of genus 7.
Let $K$ be a field, and $\overline{K}$ an algebraic closure of $K$.
A smooth projective variety $X$ over $K$ is called a \emph{Fano variety}
if the anti-canonical divisor $-K_{X}$ is ample.
The rank of the Picard group of $X_{\overline{K}} \coloneqq X \otimes_K \overline{K}$
is called the \emph{(geometric) Picard rank} of $X$.
The \emph{index} of a Fano variety $X$ over $K$ is the 
largest positive integer $r$ such that $- K_{X_{\overline{K}}} \sim r L$ for some divisor $L$ on $X_{\overline{K}}$.
In this paper, we only consider the case where $r = \dim X -2$ (the case of \emph{coindex $3$}).
In this case, the \emph{genus} $g$ of $X$ is defined by 
$( -K_{X}/r)^{\dim X} = 2g-2$.
A \emph{prime Fano threefold of genus $7$} is a Fano threefold of Picard rank one, index one and genus $7$.
We prove an arithmetic analogue of the global Torelli theorem as follows:

\begin{thm}[Arithmetic Torelli theorem; see Theorem \ref{thm:moduliarithmetictorelli}]
\label{thm:introarithmeticTorelli}
\begin{enumerate}
\item
Let $\mathcal{F}_{\Q}$ (resp. $\mathcal{M}_{\mathrm{sm}, \Q}$) be the moduli stack of prime Fano threefolds of genus 7 (resp.\ smooth projective curves of genus 7) over $\Q$. 
Then we have an open embedding
\[
\Gamma
\colon
\mathcal{F}_{\Q} \hookrightarrow \mathcal{M}_{\mathrm{sm}, \Q},
\]
whose image is the moduli stack of smooth projective curves of genus 7 without $g_4^1$.
\item
Let $\mathcal{A}_{\Q}$ be the moduli stack of principally polarized abelian varieties of dimension $7$ over $\Q$,
and $j \colon \mathcal{M}_{\mathrm{sm}, \Q} \hookrightarrow \mathcal{A}_{\Q}$
the period map defined by the (polarized) Jacobian of curves.
For any field $k$ of characteristic $0$,
$j \circ \Gamma (k)$ is given by 
\[
X \mapsto (\Ab^2_{X/k}, \Theta_{X}),
\]
where $(\Ab^2_{X/k}, \Theta_{X})$ is the (polarized) algebraic representative of $\A^2$, which is a $k$-form of the intermediate Jacobian of $X$.
\end{enumerate}
\end{thm}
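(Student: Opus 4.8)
The plan is to realize Mukai's classical genus-$7$ correspondence over an arbitrary field of characteristic $0$ and in families, using the linear-algebraic model of the spinor tenfold together with Kuznetsov's family construction. Throughout, a prime Fano threefold $X$ of genus $7$ over a field $k$ is presented as a transverse linear section $X = \mathbb{S} \cap \mathbb{P}(W)$, where $\mathbb{S} = \OGr(5,10) \subset \mathbb{P}(S)$ is the spinor tenfold in its half-spin embedding and $W \subset S$ is a $9$-dimensional subspace; since $\mathbb{S}$ and its defining equations live over $\Z$, the construction is insensitive to the base field. To build $\iota$ in part (1), I would attach to $X$ the orthogonal linear section $C_X := \mathbb{S}^{\vee} \cap \mathbb{P}(W^{\perp})$ of the dual spinor tenfold $\mathbb{S}^{\vee} \subset \mathbb{P}(S^{\vee})$ (isomorphic to $\mathbb{S}$ by self-duality), where $W^{\perp} \subset S^{\vee}$ is the $7$-dimensional annihilator. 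A dimension count gives $\dim C_X = 10 - 9 = 1$, and one checks that $C_X$ is a smooth connected curve of genus $7$ whenever $X$ is smooth; because $W \mapsto W^{\perp}$ is canonical and compatible with base change, this is functorial in families and descends to a morphism of stacks $\iota \colon \mathcal{F}_{\Q} \to \mathcal{M}_{\mathrm{sm}, \Q}$.

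I would then show $\iota$ is an open embedding with the asserted image in three steps. \textbf{Image.} The locus of genus-$7$ curves carrying a $g_4^1$ is the Brill--Noether degeneracy locus, hence closed; its expected codimension is $-\rho(7,1,4) = 1$, so its complement is a dense open substack. I would prove that a curve lies in the image of $\iota$ exactly when it has no $g_4^1$, by running Mukai's reconstruction of $W$ (hence of $X$) from the orthogonal geometry of the canonically embedded curve $C_X$, and checking that this reconstruction is obstructed precisely by the presence of a $g_4^1$. \textbf{Monomorphism.} The reconstruction furnishes a two-sided inverse on the open image and matches automorphism groups, so $\iota$ is fully faithful onto the substack of curves without $g_4^1$. \textbf{\'Etale.} Both stacks are smooth over $\Q$ of dimension $3\cdot 7 - 3 = 18$ (for $\mathcal{F}_{\Q}$ this is $\dim \Gr(9,16) - \dim \mathrm{Aut}(\mathbb{S}) = 63 - 45$); on suitable smooth atlases, a monomorphism between smooth stacks of the same dimension is flat by the fibral criterion for flatness, and a flat monomorphism locally of finite presentation is an open immersion.

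For part (2), the Torelli map $j$ sends $C$ to $(\Jac(C), \Theta_C)$, so the task is to produce a $k$-rational isomorphism of principally polarized abelian varieties $\Jac(C_X) \cong (\Ab^2_{X/k}, \Theta_X)$. Over $\overline{k}$, via base change from $\C$, this is Mukai's isomorphism between the Jacobian $\mathrm{J}(C)$ and the intermediate Jacobian $\mathrm{IJ}(X)$, which coincides with $\Ab^2_{X/k}$ by Murre's theory of algebraic representatives. To descend it, I would exhibit an algebraic correspondence $\Gamma \in \CH(C_X \times X)$ defined over $k$ --- the arithmetic incarnation of Mukai's Abel--Jacobi map, coming from the $k$-rational universal family of objects on $X$ parametrized by $C_X$ --- and use the universal property of $\Ab^2_{X/k}$ to produce a regular homomorphism $\Jac(C_X) \to \Ab^2_{X/k}$. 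Since $\Gamma$ is Galois-equivariant and induces an isomorphism after base change to $\overline{k}$, faithfully flat descent yields an isomorphism over $k$, and the same correspondence carries $\Theta_{C_X}$ to $\Theta_X$.

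The main obstacle is precisely this final descent: matching the transcendentally defined intermediate Jacobian with the purely algebraic representative $\Ab^2_{X/k}$ through a correspondence that is simultaneously defined over $k$, an honest isomorphism of abelian varieties, and compatible with the principal polarizations. Producing this $k$-rational correspondence (equivalently, a $k$-rational universal family realizing Mukai's Abel--Jacobi map) and verifying the polarization comparison --- which over $\C$ rests on Hodge-theoretic input --- is the technical heart of the argument; once it is available functorially, the family and stacky assertions of part (1) follow largely formally.
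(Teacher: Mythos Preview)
Your overall strategy --- dual linear sections of the spinor tenfold to go between $X$ and $C_X$, and a $k$-rational correspondence to identify $(\Ab^2_{X/k},\Theta_X)$ with $(\Jac(C_X),\Theta_{C_X})$ --- is exactly the paper's. The paper also constructs the inverse explicitly (its map $\beta$ is your ``Mukai's reconstruction''), so rather than arguing monomorphism $+$ equidimensional $\Rightarrow$ open immersion, it simply checks $\beta\circ\Gamma \simeq \mathrm{id}$ and $\Gamma\circ\beta \simeq \mathrm{id}$ and concludes. For part (2) the paper names the correspondence concretely: it is $c_2(\mathcal{E})$, where $\mathcal{E}$ is the restriction to $X\times C$ of Kuznetsov's Fourier--Mukai kernel on $\Sigma_+\times\Sigma_-$, which is visibly defined over $k$ and whose polarization compatibility is then reduced to $\C$.

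There is, however, a real gap in your setup. You begin by asserting that $X$ ``is presented as'' a linear section of \emph{the} spinor tenfold $\mathbb{S}$ over $\Z$, justified by ``$\mathbb{S}$ and its defining equations live over $\Z$.'' Over a non-closed field $k$ this is not free: a priori one only knows $X_{\overline{k}}$ sits in the split spinor tenfold, and the ambient tenfold over $k$ may be a nontrivial twisted form. The paper does not assume a spinor embedding; it \emph{builds} one intrinsically from $X$ (and from $C$), in families. Concretely, from the anticanonical embedding $X\hookrightarrow\P(f_*\cO_X(-K_{X/B}))$ it forms $W_X = p_*(\mathcal{I}_X(2))$ and shows the kernel $F_X$ of $S^2W_X\to p_*(\mathcal{I}_X^2(4))$ is a line bundle defining a nondegenerate quadratic form $\varphi_X$; then it proves that the orthogonal Grassmannian $\OGr_B(W_X,\varphi_X)$ satisfies the two needed conditions --- the double cover $B'\to B$ splits (because $X$ maps to one component), and the half-Pl\"ucker class $H$ descends to an honest line bundle. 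The second point is the delicate one: the paper kills the Brauer obstruction $\beta\in H^2_{\et}(B,\G_m)$ by combining $\beta^8=1$ (from the index of $\Sigma_+$) with $\beta^9=1$ (from the rank-$9$ twisted sheaf $p^+_*(\mathcal{I}^+\otimes H)$), forcing $\beta=1$. Without this step you have neither a $k$-rational $\P(S_{X,+})$ nor a well-defined $W^\perp$, so your $C_X$ is not yet constructed over $k$, let alone functorially over a base. Your sentence ``insensitive to the base field'' is hiding precisely this argument.
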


The arithmetic period map $\Gamma$ is
defined in Definition-Proposition \ref{defn-prop:dualvariety}.
For necessary backgrounds on the intermediate Jacobian of Fano threefolds over non-closed fields,
see Section \ref{section:intermediate}.

\begin{rem}
For cubic threefolds and odd-dimensional complete intersections of three quadrics,
analogous results have been obtained by
Achter, Ciurca, Javanpeykar--Loughran
(see \cite{Achter}, \cite{ciurca2024prymvarietiescubicthreefolds}, \cite{Javanpeykar-Loughran:completeintersection}).
\end{rem}

\subsection{Shafarevich conjecture for prime Fano threefolds of genus $7$}

As an application of the arithmetic Torelli theorem, we obtain a cohomological generalization of the Shafarevich conjecture (cf.\ \cite{Takamatsu}) for prime Fano threefolds of genus $7$ over finitely generated fields of characteristic $0$.

\begin{thm}[Cohomological Shafarevich conjecture; see Theorem \ref{thm:cohomshaf}]
\label{introMainTheoremshaf}
Let $K$ be a finitely generated field of characteristic $0$,
and $R$ a finitely generated normal $\Z$-algebra
with $K = \Frac R$.
Let $\Shaf(K,R)$ be the set of $K$-isomorphism classes of prime Fano threefolds $X$ of genus $7$ over $K$
satisfying the following condition:
\begin{quote}
For every prime ideal $\p \in \Spec R$ of height $1$,
the action of the absolute Galois group
$\Gal(\overline{K}/K)$ on the $\ell$-adic cohomology $H^3_{\et}(X_{\overline{K}}, \Q_{\ell})$
is unramified at $\p$ for some prime $\ell \notin \p$.
\end{quote}
Then the set $\Shaf(K,R)$ is finite.
\end{thm}

We recall the notion of good reduction of Fano varieties:
we say $X$ has \emph{good reduction at $\p$} (see \cite[Definition 4.3]{Javanpeykar-Loughran:GoodReductionFano} and Definition \ref{defn:goodreduction})
if there exists a Fano scheme (see \cite[Definition 2.1]{Javanpeykar-Loughran:GoodReductionFano})
$\mathfrak{X}$ over $\Spec R_{\p}$
such that $\mathfrak{X} \otimes_{R_{\p}} K \cong X$,
where $R_{\p}$ is the localization of $R$ at $\p$.
If a prime Fano threefold $X$ of genus $7$ has good reduction at $\p$,
the action of $\Gal(\overline{K}/K)$ on
$H^3_{\et}(X_{\overline{K}}, \Q_{\ell})$
is unramified at $\p$
for every prime $\ell \notin \p$.
Hence we have the following corollary,
which might be called the \emph{ordinary Shafarevich conjecture}.

\begin{cor}[Ordinary Shafarevich conjecture]
\label{introodrinaryshaf}
Let $K, R$ be as in Theorem \ref{introMainTheoremshaf}.
Let $\Shaf'(K,R)$ be the set of $K$-isomorphism classes of prime Fano threefolds $X$ of genus $7$ over $K$
such that $X$ has good reduction at all prime ideals $\p \in \Spec R$ of height $1$.
Then the set $\Shaf'(K,R)$ is finite.
\end{cor}

In fact, we have $\Shaf'(K,R) \subset \Shaf(K,R)$, and hence the finiteness of $\Shaf(K,R)$ implies that of $\Shaf'(K,R)$.

It is natural to ask whether
the inclusion $\Shaf'(K,R) \subset \Shaf(K,R)$ is the equality.
In other words, does the unramifiedness of the $\ell$-adic cohomology $H^3_{\et}(X_{\overline{K}}, \Q_{\ell})$
imply $X$ has good reduction at $\p$?
We shall show that the answer is negative in general.
Indeed, we construct a counter-example in mixed characteristic $(0,p)$ and equal-characteristic $0$ cases (see Theorem \ref{thm:introgrcounterexample}).
Therefore, our \emph{cohomological generalization} (Theorem \ref{introMainTheoremshaf})
is more general than the ordinary Shafarevich conjecture (Corollary \ref{introodrinaryshaf}).
See \cite{Takamatsu}, where similar results were obtained for K3 surfaces.

The Shafarevich conjecture for
some classes of Fano threefolds of Picard rank $1$
have been proved by Javanpeykar--Loughran (\cite{Javanpeykar-Loughran:GoodReductionFano}, \cite{Javanpeykar-Loughran:completeintersection})
and Licht \cite{Licht}.
More precisely, they proved the Shafarevich conjecture for Fano threefolds of Picard rank 1 and index greater than or equal to $2$, and prime Fano threefolds of genus $2,3,4,5$,
whereas the case of prime Fano threefolds of genus $7$ remained open.
See Remark~\ref{rem:separatedness}.

\begin{rem}
In the course of the proof of Theorem \ref{thm:introarithmeticTorelli} and Theorem \ref{introMainTheoremshaf},
we prove that for a smooth Fano threefold,
the action of $\Gal(\overline{K}/K)$ on
$H^3_{\et}(X_{\overline{K}}, \Q_{\ell})$
is unramified at $\p$ for some prime $\ell \notin \p$
if and only if it is unramified for every prime $\ell \notin \p$.
Moreover, if $\ell \in \p$ and $K$ is a number field,
we may use the condition on $p$-adic Hodge theory;
see Corollary \ref{cor:unramlindep}.
\end{rem}

\subsection{Counter-examples to the N\'eron--Ogg--Shafarevich criterion for prime Fano threefolds of genus $7$}

As stated in the previous subsection,
we also prove the following result.

\begin{thm}[Counter-examples to the N\'eron--Ogg--Shafarevich criterion; see Theorem \ref{thm:grcounterexample}]
\label{thm:introgrcounterexample}
Let $p$ be a prime number.
Then there exist a number field $K$, a finite place $\p$ lying above $p$, and a prime Fano threefold $X$ of genus $7$ over $K$
such that $X$ does not have potential good reduction at $\p$, while $H^3_{\et}(X_{\overline{K}}, \Q_{\ell})$ is unramified at $\p$ for any prime $\ell \neq p$.
\end{thm}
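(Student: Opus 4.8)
The plan is to transport the statement, via the arithmetic Torelli theorem (Theorem~\ref{thm:introarithmeticTorelli}), into the classical theory of stable reduction of curves of genus $7$, where the gap between good reduction of a Jacobian and good reduction of a curve is well understood. For a prime Fano threefold $X$ of genus $7$ over a local or number field, write $C=\iota(X)$ for the associated genus $7$ curve without $g^1_4$, so that $\Ab^2_{X}\cong \Jac(C)$ and, as $\Gal(\overline{K}/K)$-representations, $H^3_{\et}(X_{\overline{K}},\Q_\ell)$ is isomorphic, up to a Tate twist, to $H^1_{\et}(C_{\overline{K}},\Q_\ell)$. Since a Tate twist by an integer preserves unramifiedness at $\p$ for $\ell\ne p$, the N\'eron--Ogg--Shafarevich criterion shows that the cohomological hypothesis is equivalent to good reduction of the abelian variety $\Jac(C)$: the representation $H^3_{\et}(X_{\overline{K}},\Q_\ell)$ is unramified at $\p$ for every $\ell\ne p$ exactly when $\Jac(C)$ has good reduction at $\p$. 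This is the easy half and uses only the identification of Galois representations supplied by Theorem~\ref{thm:introarithmeticTorelli}.

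The geometric half is the implication that good reduction of the Fano threefold forces good reduction of the curve. I would prove this by making the correspondence $X\mapsto C$ relative: if $X$ acquires good reduction over the ring of integers $R_{\p}$ of a finite extension, i.e.\ extends to a smooth Fano scheme $\mathfrak X/R_\p$, then the relative form of Mukai's realization of $C$ (as a Brill--Noether moduli of sheaves on $X$) produces a smooth proper relative curve $\mathcal C/R_\p$ of genus $7$ with generic fibre $C$. By uniqueness of the stable model, the special fibre of $\mathcal C$ is then the stable reduction of $C$, so good reduction of $X$ forces good (hence potential good) reduction of $C$. Equivalently, one extends the open immersion $\iota$ to integral models over $R_\p$ and uses that an $R_\p$-point of the moduli of Fano threefolds maps to an $R_\p$-point of $\mathcal M_{\mathrm{sm}}$, i.e.\ to a smooth curve.

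It then suffices to produce, for each prime $p$, a number field $K$, a place $\p\mid p$, and a smooth genus $7$ curve $C/K$ without $g^1_4$ whose stable reduction at $\p$ is of compact type but reducible. Concretely, if the stable reduction is a smooth genus $3$ curve meeting a smooth genus $4$ curve transversally in one node, then its dual graph is a tree, so $\Pic^0$ of the special fibre is an abelian variety and $\Jac(C)$ has good reduction, whereas the special fibre is reducible, so by uniqueness of stable reduction under base change $C$ has no potential good reduction. To exhibit such a curve with no $g^1_4$ I would work in the Deligne--Mumford compactification $\overline{\mathcal M}_7$ over $\Z$: the boundary divisor $\Delta_3$ parametrizing genus $3\cup$ genus $4$ curves, and the closure $\overline{\mathcal G}$ of the codimension-one locus of $4$-gonal curves, are distinct prime divisors, and $\overline{\mathcal G}$ has no boundary component, whence $\Delta_3\not\subseteq\overline{\mathcal G}$. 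A general trait $\Spec R\to\overline{\mathcal M}_7$ of residue characteristic $p$ meeting $\Delta_3$ transversally at a point off $\overline{\mathcal G}$ has smooth generic fibre avoiding the $g^1_4$ locus and special fibre in $\Delta_3$, giving the required $C$; spreading out produces a model over a number field realizing the prescribed place $\p\mid p$. Feeding this $C$ through $\iota^{-1}$ yields the desired Fano threefold $X$.

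The main obstacle I anticipate is achieving all constraints simultaneously and uniformly in $p$: one must check that the transversal trait can be taken with residue characteristic $p$ for \emph{every} prime, so that the divisor geometry of $\overline{\mathcal M}_7$ (in particular $\Delta_3\not\subseteq\overline{\mathcal G}$) remains valid in characteristic $p$ including small primes, and then descend from the local to a global situation over a number field while preserving both the compact-type bad reduction at $\p$ and the absence of a $g^1_4$ on the generic fibre. A second, genuinely essential, technical point is the relative Mukai/integral-moduli step: one must verify that a smooth Fano model really does yield a smooth relative curve (equivalently, that $\iota$ extends to an open immersion of integral models at $\p$), since this is exactly what rigorously forbids $X$ from having potential good reduction once $C$ does not.
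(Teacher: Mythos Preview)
Your overall architecture is sound, and you correctly isolate the crux: one must show that good reduction of the Fano threefold $X$ forces good reduction of the curve $C$. However, the methods you propose for this step---``the relative form of Mukai's realization of $C$ as a Brill--Noether moduli'' or ``extending the open immersion $\iota$ to integral models''---are precisely what is \emph{not} known in mixed characteristic. The paper's construction of $\Gamma\colon\mathcal F_\Q\to\mathcal M_\Q$ via the spinor embedding (Proposition~\ref{prop:Xsigmafamily}) relies on reduction to algebraically closed fields of characteristic~$0$, and the remark following Theorem~\ref{thm:moduliarithmetictorelli} states explicitly that extending $\Gamma$ to mixed characteristic would require the classification of prime Fano threefolds of genus~$7$ in positive characteristic, which is open. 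So your proposal has a genuine gap at the exact point you flag as ``genuinely essential''.

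The paper fills this gap not with moduli theory but with a birational construction: the two-ray game (Proposition~\ref{prop:mixedtwo-ray}). One blows up a relative conic on the integral Fano model $\mathcal X/R$, runs a flopping contraction and its flop \emph{over the DVR}, and contracts to a smooth relative quadric; the exceptional locus is a smooth relative curve $\mathcal C'/R$ of genus~$7$ with $\mathcal C'_{\overline K}\simeq\Gamma(X_{\overline K})$. The hard work is establishing existence and smoothness of the flop in mixed characteristic, for which the paper leans on Tanaka's recent results on flops and Fano threefolds in positive characteristic.

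There is also a difference in the degeneration mechanism. You propose a compact-type reducible stable reduction (genus~$3$ glued to genus~$4$) so that $\Jac(C)$ has good reduction while $C$ does not. The paper instead takes $C$ with \emph{smooth} reduction whose special fibre acquires a $g_4^1$ (Lemma~\ref{lem:reductioncurves}); the contradiction then comes from a further output of the two-ray game, namely that the special fibre of $\mathcal C'$ cannot be a generic curve with $g_4^1$ (Proposition~\ref{prop:mixedtwo-ray}(6)). Granting Proposition~\ref{prop:mixedtwo-ray}(4), your compact-type degeneration would also yield a valid counterexample, and arguably more cheaply since it bypasses the analysis in~(6); but you would still need the two-ray game to reach~(4), and that is the substance you are missing.
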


We say $X$ has \emph{potential good reduction at $\p$}
if it has good reduction at a prime ideal of height $1$ lying
above $\p$ after replacing $K$ by its finite extension.

The idea of the proof of Theorem \ref{thm:introgrcounterexample}
can already be found in
Mukai's work over $\C$ \cite[Section 8, (b)]{MukaiBrill}, where he claimed that some degeneration of smooth curves (such that the special fiber is also smooth) correspond to the degeneration of prime Fano threefolds of genus 7 to the Gorenstein (non-smooth) Fano threefolds via the {\it non-commutative Brill--Noether theory}.
We verify that the generic fiber of such a family satisfies the condition in Theorem \ref{thm:introgrcounterexample}.
It is known that a curve of genus 7 can be constructed
from a given prime Fano threefold of genus $7$
via certain birational transformations called \emph{two-ray games} (see \cite[Section 8]{MukaiBrill}).

\begin{rem}
\label{rem:intrommp}
In our proof of Theorem \ref{thm:introgrcounterexample},
it is essential to perform two-ray games for families in
mixed characteristic by using relative minimal model program (see Proposition \ref{prop:mixedtwo-ray}).
For this purpose, we heavily use Tanaka's recent results on Fano threefolds and the existence of flops in characteristic $p>0$ (\cite{Tanaka1}, \cite{Tanaka2}, \cite{Tanakaflop}).
\end{rem}

\subsection{Shafarevich conjecture for Mukai $n$-folds of genus $7$}

In the second part of this paper,
we study Mukai $n$-folds of genus $7$, which are defined as (relative) linear sections of the spinor tenfold. (See Definition \ref{defn:Mukaivariety}. See also Proposition \ref{prop:MukaivssplitMukai} for a comparison with the numerical definition of \emph{Fano manifolds of coindex three and genus 7}.)
Since the Hodge diamonds of Mukai $n$-folds of genus $7$ are diagonal when $n \geq 4$,
we consider ordinary, i.e., non-cohomological, Shafarevich conjecture
in dimension $\geq 4$.

\begin{thm}[Ordinary Shafarevich conjecture for Mukai $n$-folds of genus $7$]
\label{thm:introMukaifinite}
\hfill
\begin{enumerate}
\item ($n=10$) \ The (ordinary) Shafarevich conjecture holds true for Mukai tenfolds of genus $7$.

\item ($n=9$) \ 
For a number field $K$, the number of $K$-isomorphism classes of Mukai ninefolds of genus $7$ over $K$ is equal to $2^r$,
where $r$ denotes the number of real places of $K$.
In particular, the Shafarevich conjecture holds true for Mukai ninefolds of genus $7$.

\item ($n=6$) \ The (ordinary) Shafarevich conjecture fails for Mukai sixfolds of genus $7$.
\end{enumerate}
\end{thm}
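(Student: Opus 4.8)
The plan is to handle the three cases by a single mechanism. Over $\overline{K}$ a Mukai $n$-fold of genus $7$ is a transverse linear section $\mathbb{S}\cap\Lambda$ of the spinor tenfold $\mathbb{S}=\OGr_{+}(5,10)\subset\P^{15}=\P(\Delta^{+})$ by a subspace $\Lambda$ of codimension $c=10-n$, where $\Delta^{+}$ is a half-spin representation of $\Spin_{10}$; moreover two such sections are abstractly isomorphic if and only if they are conjugate under $G=\Aut(\mathbb{S}_{\overline{K}})$, the image of $\Spin_{10}$ acting through the half-spin representation. Fixing a base section $X_{0}$ defined over $\Q$, Galois descent identifies the set of $K$-isomorphism classes of Mukai $n$-folds with the pointed cohomology set $H^1(K,\Aut(X_{0,\overline{K}}))$, and good reduction becomes an unramifiedness condition. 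The representation-theoretic input is the orbit structure of $\Spin_{10}$ on $\P(\Delta^{-})$: the two half-spin representations are dual, and the projective dual of $\mathbb{S}$ is again a spinor tenfold $\mathbb{S}^{-}$, the unique closed orbit, so $\mathbb{S}\cap\Lambda$ is transverse exactly when $\Lambda$, viewed through $\P(\Delta^{-})$, avoids $\mathbb{S}^{-}$. For $n=10$ there is no section and a Mukai tenfold is just a $K$-form of the flag variety $\mathbb{S}=\Spin_{10}/P$; its forms are spinor tenfolds attached to rank-$10$ quadratic spaces over $K$, so I would deduce finiteness directly from the Shafarevich theorem for flag varieties of Javanpeykar--Loughran \cite{Javanpeykar-Loughran:flag} (equivalently, forms with good reduction outside $S$ correspond to rank-$10$ quadratic forms unramified outside $S$, of which there are finitely many).

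For $n=9$ a smooth Mukai ninefold is a smooth hyperplane section, i.e.\ corresponds to a point of $\P(\Delta^{-})\setminus\mathbb{S}^{-}$. Since the $16$-dimensional half-spin representation is prehomogeneous with only two orbits in $\P(\Delta^{-})$, all smooth hyperplane sections are $G$-conjugate and there is a unique geometric class $X_{0}$. I would then identify $\Aut(X_{0,\overline{K}})$ with the $G$-stabilizer $G_{\phi}$ of the hyperplane; using the inclusions $\Spin_{7}\subset\Spin_{9}\subset\Spin_{10}$ and the transitivity of $\Spin_{9}$ on the $15$-sphere, this stabilizer has reductive part $\G_m\times\Spin_{7}$ and unipotent radical $\G_a^{8}$. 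As unipotent groups and split tori have trivial Galois cohomology in characteristic $0$, one obtains $H^1(K,\Aut(X_{0}))\cong H^1(K,\Spin_{7})$. Because $\Spin_{7}$ is simply connected, Kneser's theorem kills $H^1$ at all finite places, the Hasse principle reduces the count to the real places, and a Hasse--Witt (quadratic-form) computation shows that exactly two of the rank-$7$ real forms of the split form lift to $\Spin$, whence $|H^1(\R,\Spin_{7})|=2$. Hence the number of classes is $2^{r}$, and finiteness holds a fortiori.

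For $n=6$ a Mukai sixfold is a codimension-$4$ section, parametrized by $\Gr(4,\Delta^{-})$, and here
\[ \dim\Gr(4,\Delta^{-}) - \dim G = 4\cdot 12 - 45 = 3 > 0, \]
so the coarse moduli space is a positive-dimensional unirational variety. To disprove Shafarevich I would pick a non-constant $G$-invariant rational function $f$ on $\Gr(4,\Delta^{-})$ and a one-parameter family $\{L_{t}\}$ of codimension-$4$ subspaces defined over $\mathcal{O}_{K,S}$ (after enlarging $S$) along which $f$ is non-constant and $X_{t}=\mathbb{S}\cap L_{t}$ stays smooth with good reduction for every integral $t$. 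A non-constant rational function takes infinitely many values on the infinite set $\mathcal{O}_{K,S}$, and since abstract isomorphism coincides with $G$-conjugacy, infinitely many $X_{t}$ are pairwise non-isomorphic Mukai sixfolds with good reduction outside $S$, contradicting finiteness.

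The main obstacle in all three parts is the structural dictionary ``abstract isomorphism $=$ $G$-conjugacy'' together with the precise determination of the relevant automorphism and stabilizer group schemes: for $n=9$ one must show that $\Aut(X_{0})$ has no automorphisms beyond $G_{\phi}$ and pin down its Levi as $\Spin_{7}$, while for $n=6$ one must produce the family so that smoothness (hence good reduction) persists modulo every prime outside $S$ and the invariant $f$ genuinely varies. Once these geometric inputs are in place, the cohomological computations (Kneser's theorem, the Hasse principle for simply connected groups, and the real Hasse--Witt count) are routine.
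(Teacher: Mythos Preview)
For (1) your approach coincides with the paper's: both invoke Javanpeykar--Loughran's Shafarevich theorem for flag varieties directly.

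For (2) you take a genuinely different route. The paper never computes $H^1(K,\Aut(X_0))$; instead it establishes a geometric bijection (Proposition~\ref{prop:Mukaininequadic}) between Mukai ninefolds over $K$ and $K$-similarity classes of rank-$7$ quadratic forms $q$ with $C_0(q)\simeq M_8(K)$, obtained from the unique $\Q^6\subset X$ together with the explicit blow-up description $\widetilde X=\P_{\Q^5}(\cO(1)\oplus E)$. It then counts such forms by Hasse--Minkowski and finds exactly two admissible signatures at each real place. Your cohomological approach is in principle sound, but two details need care: by Proposition~\ref{prop:autmukai9} the Levi of $\Aut(X_0)$ is $(\Spin_7\times\GL_1)/\{\pm1\}$, not $\G_m\times\Spin_7$, so you must check this $\mu_2$-quotient does not enlarge $H^1$ (it does not, because the spinor norm on the split $\SO_7$ is surjective, so $H^1(K,\mu_2)\to H^1(K,\Spin_7)$ is trivial); and you must still compute $|H^1(\R,\Spin(4,3))|=2$ independently. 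The paper's quadratic-form route avoids both issues and makes the number $2^r$ drop out of an explicit list of admissible local signatures.

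For (3) your dimension count $48-45=3>0$ is exactly the paper's, but the gap you yourself flag is genuine and is the whole content of the argument: producing a supply of $\cO_{K,S}$-points of the parameter space with good reduction at \emph{every} place outside $S$ is not achieved by ``enlarging $S$'' or by choosing a single one-parameter family, since the bad locus could a priori meet every integral fibre. The paper resolves this by invoking potential density of integral points on an open torus in $\Gr(S_-,4)$ (Hassett--Tschinkel \cite[Proposition~5.1]{Hassett-Tschinkel}), obtaining a Zariski-dense set of $\cO_K$-points in the open locus $U$ where $\P(L)\cap\Sigma_-=\emptyset$; Zariski density then forces infinitely many $\Spin_{10}(\overline K)$-orbits by the dimension inequality. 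Your invariant-function idea would follow once such density is in hand, but does not supply it.
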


See Theorem \ref{thm:counterexampleshaf}, Theorem \ref{thm:Mukai10shaf}, and Theorem \ref{thm:numberofmukai9} 
for precise statements.

Assertion (1) is a direct corollary of Javanpeykar--Loughran's result (\cite{Javanpeykar-Loughran:flag}).
Assertion (2) is stronger than the usual Shafarevich conjecture, and can be seen as an analogue of the classification of forms of del Pezzo quintics (\cite[Theorem 1]{DKM}) in our setting.
To the best of our knowledge, assertion (3) gives
the first example of Fano varieties of Picard rank 1
for which the (ordinary) Shafarevich conjecture fails.
This counter-example is motivated by the existence of a non-isotrivial family of Mukai sixfolds of genus 7
constructed by Kuznetsov \cite[Subsection 1.2]{Kuznetsovspinor} (see also \cite{mathoverflow}).
To obtain infinitely many integral models, we use Hassett--Tschinkel's results on the potential density of integral points \cite{Hassett-Tschinkel}.

\begin{rem}
For the remaining values of $n$ (namely, $n \in \{ 4,5,7,8 \}$),
it remains open whether the ordinary Shafarevich conjecture for Mukai $n$-folds of genus $7$ holds in these cases.
See Section \ref{section:Remarks} for further discussion.
\end{rem}

\subsection{Fontaine-type results for Mukai $n$-folds of genus $7$}

Another interesting and important problem is determining the existence or non-existence of smooth projective varieties over $\Z$.

This problem was originally raised by Grothendieck (\cite[page 242]{Mazur}).
The non-existence of elliptic curves over $\Z$ was proved by Ogg,
who attributed it to Tate (\cite{Ogg}),
and later Abrashkin and Fontaine generalized this to abelian varieties (\cite{Abrashkinabelian} and \cite{Fontaineabelian}).
More generally, Abrashkin and Fontaine proved that if a smooth projective variety over $\Z$ exists,
then the Hodge numbers of the generic fiber satisfy $h^{i,j} =0$ for any $i\neq j$ with $i+j \leq 3$ (\cite{Abrashkin},  \cite{Fontaine}). 
Recently, Schr\"{o}er (\cite{Schroer}) proved that there is no Enriques surface over $\Z$,
where the results of Abrashkin and Fontaine do not apply.

We completely determine the integers $n$
such that there exist Mukai $n$-folds of genus $7$ over $\Z$.

\begin{thm}[Fontaine-type results for Mukai $n$-folds of genus $7$; see Theorem \ref{thm:MukainoverZ}]
\label{thm:intromukainoverZ}
Mukai $n$-folds of genus 7 over $\mathbb{Z}$ do not exist for $n \leq 4$,
whereas they do exist for $5 \leq n \leq 10$.
\end{thm}

The results of Abrashkin and Fontaine imply the non-existence of
Mukai $n$-fold of genus $7$ over $\Z$ when $n \leq 3$
(for the cases of curves and surfaces, see Remark \ref{rem:Mukai1folds,2-folds}).
But their results do not apply to the case $n\geq 4$,
where Hodge diamonds are diagonal.
On the other hand, there is a Mukai $10$-fold over $\Z$,
which is the homogeneous space of the spinor group over $\Z$.
Therefore, it remains to determine for which integers $4 \leq n \leq 9$ Mukai $n$-folds of genus $7$ over $\Z$ exist. 

\begin{rem}
\label{rem:nonexistence}
The proof of the existence for $5 \leq n \leq 9$ is done by constructing (integral) linear sections of spinor tenfolds directly.
For this we use the self-duality of the spinor tenfolds.
On the other hand, the non-existence part requires a study of the geometry of the spinor tenfold over $\F_{2}$,
which is similar to that of Schr\"{o}er's proof of the non-existence of Enriques surfaces over $\Z$ \cite{Schroer};
he reduced the non-existence of Enriques surfaces over $\Z$
to the non-existence of certain Enriques surfaces over $\F_2$.
\end{rem}

\begin{rem}
\label{rem:smproj_over_z}
Only a few examples of smooth projective varieties over $\Z$ are known.
It seems that the only known examples of smooth projective varieties over $\Z$ are combinatorially defined,
such as homogeneous spaces, toric varieties, and varieties obtained from them by applying operations like products, blow-ups along linear centers, and taking the Hilbert schemes, etc.
See \cite[Introduction]{Schroer}.
To the best of our knowledge,
our results (Theorem \ref{thm:intromukainoverZ} for $5 \leq n \leq 9$) give
the first non-trivial examples of
smooth projective varieties over $\mathbb{Z}$ that are not defined combinatorially.
\end{rem}

\subsection*{Organization of this paper}

The organization of this paper is as follows.
In Section \ref{section:intermediate}, we recall the definitions of Fano threefolds and intermediate Jacobians. We also review recent works on intermediate Jacobians over non-closed fields, and we apply them to prove the $\ell$-independence results for unramifiedness. 
In Section \ref{section:fanovscurve}, we review the basic properties of prime Fano threefolds of genus $7$ and the correspondence with curves of genus $7$ without $g_4^1$.
By using Kuznetsov's method (\cite{Kuznetsovfamily}), we construct spinor embeddings as families and apply them to prove Theorem \ref{thm:introarithmeticTorelli}.
Moreover, as an application, we prove Theorem \ref{introMainTheoremshaf}.
In Section \ref{section:goodredcriteria}, we recall the two-ray game construction and, by generalizing that to a mixed characteristic setting, we prove Theorem \ref{thm:introgrcounterexample}.
In Section \ref{Section:Mukain}, we recall the basic properties of Mukai $n$-folds of genus $7$, and prove  Theorem \ref{thm:introMukaifinite} and Theorem \ref{thm:intromukainoverZ}.
Finally, in Section \ref{section:Remarks}, we give further remarks on the possible future research prospects.

\subsection*{Notations and terminology}

In this paper, a \emph{variety} over a field is a geometrically integral scheme separated and of finite type over the field.
A \emph{curve} means a variety of dimension $1$.
For a locally free sheaf $E$ on a scheme $B$,
we denote the projective bundle 
$
\Proj_{B} (\Sym (E))
$
by $\P_{B} (E)$, which parametrizes rank one locally free quotients of $E$.
Similarly $\Gr (E,r)$ denotes the (relative) Grassmann variety parametrizing rank $r$ locally free quotients of $E$. 
For an $\cO_{X}$-module $A$, we denote the degree 2 part of the symmetric algebra $\Sym (A)$ by $S^2 A$.
Also, we denote the dual $\cO_{X}$-module of $A$ by $A^{\vee}$.

For schemes $X$ over $S$ and $T$ over $S$, we denote the base change $X \times_{S} T$ by $X_{T}$.

For a scheme $X$, we denote the derived category of bounded complexes of coherent sheaves by $D^{\mathrm{b}} (X)$.

For any closed immersion $X \hookrightarrow Y$ of schemes, we denote the conormal sheaf of $X$ in $Y$ by $N^{\ast}_{X/Y}$.
We denote the normal sheaf $(N^{\ast}_{X/Y})^{\vee}$ by $N_{X/Y}$.

For any quadratic form $q$ of rank $n$ over a field $k$ of characteristic different from $2$, the determinant $\det q \in k^{\times}/ k^{\times 2}$ is the determinant of any matrix representation of $q$, 
and the discriminant is defined by
$\disc q \coloneqq (-1)^{\frac{n(n-1)}{2}} \det q \in k^{\times }/ k^{\times 2}$.
We denote the Clifford algebra (resp.\ even Clifford algebra) by $C(q)$ (resp.\ $C_{0} (q)$).
We denote the degree $n$-part of the Clifford algebra by $\mathrm{Cl}_{n} (q)$.
We say quadratic forms $q_{1}$ and $q_{2}$ over a field $k$
are \emph{$k$-similar} if there exists $c\in k^{\times}$ such that $q_{1} = cq_{2}$.

\section{Fano varieties and intermediate Jacobians}
\label{section:intermediate}

In this section, we review the generalities of Fano threefolds and intermediate Jacobians over non-closed fields.

\subsection{Definitions of Fano varieties and Fano threefolds}

A standard reference for complex Fano threefolds is \cite{Iskovskikh-Prokhorov}.

\begin{defn}
Let $k$ be a field, and $X$ a smooth projective variety over $k$.
\begin{enumerate}
\item
We say $X$ is a \emph{Fano variety} when the anti-canonical divisor $-K_{X}$ is ample.
We call a Fano variety of dimension $3$ a \emph{Fano threefold}.
\item
The \emph{index} of a Fano variety $X$ is the largest positive integer $r$ such that $-K_{X_{\overline{K}}} \sim r L$ for some divisor $L$ on $X_{\overline{K}}$.
\item
A Fano threefold $X$ is called a \emph{prime Fano threefold} when the (geometric) Picard rank and the index of $X$ are both $1$.
\item 
The \emph{genus} of a prime Fano threefold is the integer $\frac{(-K_{X})^3}{2} +1$.
\end{enumerate}
\end{defn}

\subsection{Intermediate Jacobians over non-closed fields}

In this subsection, we recall the results by Benoist--Wittenberg and Achter--Casalaina-Martin--Vial on intermediate Jacobian varieties over non-closed fields.
First, we recall the classical intermediate Jacobians of threefolds.

\begin{defn-prop}
Let $X$ be a smooth projective variety over $\C$ of dimension $3$.
We define the \emph{intermediate Jacobian} $J(X)$ by
\[
J(X) \coloneqq H^3(X,\C)/(F^2 H^3(X,\C) + H^3(X,\Z)),
\]
where $F^i$ is the Hodge filtration on $H^3(X,\C)$.
Then we can equip $J(X)$ with the structure of principally polarized complex torus of dimension $g = \frac{b_{3}}{2}$ (see \cite[Definition 3.5]{Clemens-Griffiths}). 
Moreover, if $h^{1,0} (X) = h^{3,0} (X)=0$ (complex Fano threefolds satisfy this condition), then the structure of polarized complex torus gives the structure of principally polarized abelian variety by \cite[Lemma 3.4]{Clemens-Griffiths}. 
 In this case, we denote its principal polarization by $\Theta (X)$. We call $\Theta (X)$ the \emph{theta polarization} on $J(X)$.
\end{defn-prop}

In order to formulate a model over a non-closed field, we begin by recalling the definition of Chow groups.

\begin{defn}
\label{defn:ChowandA}
Let $k$ be a perfect field, and $X$ a smooth projective variety over $k$.
For a non-negative integer $i \geq 0$,
the \emph{Chow group} $\CH^i(X)$ \emph{of degree} $i$ is a group consisting of codimension $i$-cycles modulo rational equivalence.
We denote the subgroup of $\CH^i(X)$ consisting of algebraically trivial cycles by $\A^i(X).$
\end{defn}

\begin{defn-thm}
\label{defn-thm:representative}
Let $k$ be a perfect field, and $X$ a smooth projective variety over $k$. 
Then $\A^2(X_{\overline{k}}) \subset \CH^2(X_{\overline{k}})$ admits the algebraic representative $\Ab^2_{X_{\overline{k}}/\overline{k}}$ in the sense of Murre \cite[Definition 1.6.1]{Murre}.
Moreover, $\Ab^2_{X_{\overline{k}}}$ uniquely descends to an abelian variety $\Ab^2_{X/k}$ over $k$ such that the regular morphism (see \cite[Definition 1.6.1]{Murre}) $\A^2(X_{\overline{k}}/\overline{k}) \rightarrow \Ab^2_{X/k} (\overline{k})$ is $\Gal (\overline{k}/k)$-equivariant. We call an abelian variety $\Ab^2_{X/k}$ the \emph{algebraic representative of $\A^2$ over $k$}.
The algebraic representatives satisfy the following.
\begin{enumerate}
\item 
 Let $L$ over $k$ be a separable field extension.
 Then we have a canonical isomorphism
 \[
 (\Ab^2_{X/k})_L \simeq \Ab^2_{X_{L}/L}.
 \]

\item
 If $k$ is a subfield of $\C$ and $X_{\C}$ is a uniruled threefold, then $(\Ab^2_{X/k})_{\C}$ is isomorphic to the intermediate Jacobian $J(X_{\C})$.
 \end{enumerate}
\end{defn-thm}

\begin{proof}
The first statement follows from \cite[Theorem A]{Murre}.
The second statement follows from \cite[Theorem 4.4]{ACMV1}.
The statement (1) follows from \cite[Theorem 1]{ACMVfunctorial} (see also \cite[Remark 1.1 and Remark 1.2]{ACMVdecomposition}).
The statement (2) follows from the proof of \cite[Theorem 5.1 and Corollary 5.2]{ACMV1}.
\end{proof}

\begin{prop}
\label{prop:representativeetale}
Let $k$ be a perfect field, $X$ a smooth projective variety of dimension $d$ over $k$, and $\Ab^2_{X/k}$ be the algebraic representative over $k$. Let $\ell$ be a prime number with $\ell \in k^{\times}$.
Then the following hold. 
\begin{enumerate}
\item 
Assume that there exists a positive integer $N$ such that 
\[
N \Delta_{X_{\overline{k}}} \in \CH^d((X \times X)_{\overline{k}})
\]
admits a cohomological decomposition of type $(d-1,1)$ with respect to $H^{\ast}_{\et} (\Z_{\ell})$ in the sense of \cite[Definition 6.1]{ACMVdecomposition}.
The regular morphism 
\[
\Phi^2 \colon
\A^2 (X_{\overline{k}}) \rightarrow \Ab^2_{X/k} (\overline{k})
\]
is a $\Gal(\overline{k}/k)$-equivariant isomorphism.
\item
Assume that there exists a positive integer $N$ such that 
\[
N \Delta_{X_{\overline{k}}} \in \CH^d((X \times X)_{\overline{k}})
\]
admits a decomposition of type $(d-1,1)$ in the sense of \cite[Subsection 3.1]{ACMVbloch}.
The second $\ell$-adic map 
\[
T_{\ell} \lambda^2 \colon T_{\ell} \A^2(X_{\overline{k}}) \rightarrow H^3_{\et} (X_{\overline{k}}, \Z_{\ell} (2))_{\tau}
\]
is a $\Gal(\overline{k}/k)$-equivariant isomorphism, where 
$T_{\ell} \A^2(X_{\overline{k}})$ is the $\ell$-adic Tate module and
$H^3_{\et} (X_{\overline{k}}, \Z_{\ell} (2))_{\tau}$ is the torsion-free part of $H^3_{\et} (X_{\overline{k}},\Z_{\ell} (2))$.
\end{enumerate}
\end{prop}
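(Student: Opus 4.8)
The plan is to prove both bijectivity statements over the algebraic closure $\overline{k}$, where the decomposition of the diagonal does the real work, and then to obtain $\Gal(\overline{k}/k)$-equivariance separately. The unifying device is that any correspondence $\Gamma \in \CH^d((X \times X)_{\overline{k}})$ acts functorially on $\A^2(X_{\overline{k}})$, on its Tate module $T_\ell \A^2(X_{\overline{k}})$, and on $H^3_{\et}(X_{\overline{k}}, \Z_\ell(2))$, compatibly with $\Phi^2$ and with $T_\ell \lambda^2$; hence an identity $N[\Delta_{X_{\overline{k}}}] = [Z_1] + [Z_2]$ of the prescribed type translates into a factorization of multiplication by $N$ on each of these groups through pieces controlled by divisors and curves.

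For (1), the map $\Phi^2$ is surjective by the construction of the algebraic representative over the algebraically closed field $\overline{k}$, and it is $\Gal(\overline{k}/k)$-equivariant by the descent recorded in Definition-Theorem \ref{defn-thm:representative} (using $\Ab^2_{X_{\overline{k}}} \simeq (\Ab^2_{X/k})_{\overline{k}}$). The remaining content is injectivity. Here I would invoke the cohomological decomposition of type $(d-1,1)$: the component $Z_2$ acts on $\A^2(X_{\overline{k}})$ through the Abel--Jacobi image of the Jacobian of a curve while the divisor component $Z_1$ acts trivially, so that $N \cdot \mathrm{id}$ on $\A^2(X_{\overline{k}})$ factors through $\Phi^2$; this forces $\ker \Phi^2$ into the $N$-torsion and, by the argument of \cite{ACMVdecomposition}, shows it vanishes.

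For (2), the second $\ell$-adic map $T_\ell \lambda^2$ is known to be injective in general. The decomposition of type $(d-1,1)$ in the sense of \cite[Subsection 3.1]{ACMVbloch} then supplies a correspondence whose induced action exhibits $H^3_{\et}(X_{\overline{k}}, \Z_\ell(2))_\tau$, up to the multiplier $N$, inside the image of $T_\ell \lambda^2$, which gives surjectivity and hence the isomorphism over $\overline{k}$. As in (1), $\Gal(\overline{k}/k)$-equivariance is not produced by the decomposition but follows from naturality: the Galois actions on $\A^2(X_{\overline{k}})$ and on $H^3_{\et}$, the cycle class map $\lambda^2$, and the formation of Tate modules are all functorial in $\overline{k}/k$, so $T_\ell \lambda^2$ intertwines them.

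The step I expect to be the main obstacle is the \emph{integral} surjectivity in (2). The factorization through $Z_2$ gives surjectivity at once after inverting $\ell$, but when $\ell \mid N$ it yields only a finite-index image of $T_\ell \lambda^2$ in $H^3_{\et}(X_{\overline{k}}, \Z_\ell(2))_\tau$; promoting this to surjectivity onto the full torsion-free part is exactly what the sharp form of the hypotheses in \cite{ACMVdecomposition} and \cite{ACMVbloch} is designed to guarantee, and it requires controlling the cokernel of the correspondence integrally. By contrast, equivariance is cheap: once each map is shown to be a bijective homomorphism and to commute with the Galois action, it is automatically an isomorphism of $\Gal(\overline{k}/k)$-modules, with no need for the decomposition itself to be defined over $k$.
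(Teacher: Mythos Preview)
Your proposal is correct and compatible with the paper's approach, though you do considerably more than the paper itself: the paper's proof consists entirely of two citations---part (1) is \cite[Proposition 10.1]{ACMVdecomposition} and part (2) is \cite[Proposition 5.1]{ACMVbloch}---whereas you sketch the mechanisms behind those results (surjectivity of $\Phi^2$ from the universal property, injectivity from the diagonal decomposition, general injectivity of $T_\ell\lambda^2$, and integral surjectivity from the correspondence argument). Your identification of integral surjectivity in (2) as the delicate point is accurate, and your separation of the bijectivity argument (which uses the decomposition over $\overline{k}$) from the equivariance argument (which is pure naturality and does not require the decomposition to descend) is exactly right; since you ultimately defer to the same references at those steps, your write-up is an expanded version of the paper's citation-only proof rather than a genuinely different route.
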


\begin{proof}
(1) follows from \cite[Proposition 10.1]{ACMVdecomposition}.

(2) follows from \cite[Proposition 5.1]{ACMVbloch}.
\end{proof}

\begin{rem}
\label{rem:representative}
\begin{enumerate}
\item 
If $X$ is geometrically rationally chain connected, then the assumptions of the decomposition of the diagonal in (1) and (2) are satisfied by \cite[Remark 2.8]{ACMVdecomposition}. Note that, by \cite[Ch.V, 2.14]{Kollarrationalcurves},
smooth Fano varieties over $k$ are geometrically rationally chain connected.
\item
If $k$ is a subfield of $\C$, then without the assumption of the decomposition of the diagonal, $\Phi^2$ induces an isomorphism on torsion by \cite[Proposition 1.5]{ACMVdecomposition}.
\end{enumerate}
\end{rem}

As an application, we have the following corollary.

\begin{cor}
\label{cor:unramlindep}
Let $R$ be a Henselian discrete valuation ring with the fraction field $K$ and the residue field $k$. 
Let $X$ be a (smooth) Fano variety over $K$.
Then the following are equivalent:
\begin{itemize}
\item The action of $\Gal(\overline{K}/K)$ on $H^3_{\et}(X_{\overline{K}}, \Q_{\ell})$
is unramified for a prime $\ell$ which is invertible in $k$.

\item The action of $\Gal(\overline{K}/K)$ on $H^3_{\et}(X_{\overline{K}}, \Q_{\ell})$
is unramified for any prime $\ell$ which is invertible in $k$.
\end{itemize}
Moreover, if $K$ is of characteristic $0$ and $k$ is a perfect field of characteristic $p >0$, then the above conditions are equivalent to the following:
\begin{itemize}
\item The action of $\Gal(\overline{K}/K)$ on $H^3_{\et}(X_{\overline{K}}, \Q_p)$
is crystalline.
\end{itemize}
\end{cor}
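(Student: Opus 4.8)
The plan is to reduce both equivalences to the N\'eron--Ogg--Shafarevich criterion, together with its $p$-adic analogue, for the algebraic representative $\Ab^2_{X/K}$, which is an abelian variety over $K$ by Definition-Theorem \ref{defn-thm:representative}. The starting point is an identification of $H^3$ with a Tate module. Since a smooth Fano variety over $K$ is geometrically rationally chain connected, the decomposition-of-the-diagonal hypotheses of Proposition \ref{prop:representativeetale} hold by Remark \ref{rem:representative}(1). Hence, for every prime $\ell$ invertible in $K$, Proposition \ref{prop:representativeetale}(2), combined with the isomorphism $\A^2(X_{\overline{K}}) \cong \Ab^2_{X/K}(\overline{K})$ of Proposition \ref{prop:representativeetale}(1), yields a $\Gal(\overline{K}/K)$-equivariant isomorphism
\[
T_{\ell}\Ab^2_{X/K} \xrightarrow{\ \sim\ } H^3_{\et}(X_{\overline{K}}, \Z_{\ell}(2))_{\tau},
\]
and, after inverting $\ell$, an isomorphism $V_{\ell}\Ab^2_{X/K} \cong H^3_{\et}(X_{\overline{K}}, \Q_{\ell}(2))$ of Galois representations, where $V_{\ell}(-) = T_{\ell}(-)\otimes_{\Z_{\ell}}\Q_{\ell}$.

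Next I would dispose of the Tate twist. For $\ell$ invertible in $k$ the cyclotomic character is unramified, so $\Q_{\ell}(2)$ is unramified; therefore the $\Gal(\overline{K}/K)$-action on $H^3_{\et}(X_{\overline{K}}, \Q_{\ell})$ is unramified if and only if it is unramified on $H^3_{\et}(X_{\overline{K}}, \Q_{\ell}(2))$, equivalently on $V_{\ell}\Ab^2_{X/K}$. The first equivalence then follows from the classical N\'eron--Ogg--Shafarevich criterion applied to $\Ab^2_{X/K}$ over the fraction field of the Henselian discrete valuation ring $R$: inertia acts trivially on $V_{\ell}\Ab^2_{X/K}$ for one prime $\ell$ invertible in $k$ if and only if $\Ab^2_{X/K}$ has good reduction, if and only if inertia acts trivially for every such $\ell$.

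For the crystalline statement, assume $K$ has characteristic $0$ and $k$ is perfect of characteristic $p>0$. Since $K$ has characteristic $0$, every prime, including $p$, is invertible in $K$, so the isomorphism $V_{p}\Ab^2_{X/K} \cong H^3_{\et}(X_{\overline{K}}, \Q_{p}(2))$ above is available with $\ell = p$. As $\Q_p(2)$ is crystalline, $H^3_{\et}(X_{\overline{K}}, \Q_p)$ is crystalline if and only if $V_{p}\Ab^2_{X/K}$ is crystalline. I would then invoke the $p$-adic N\'eron--Ogg--Shafarevich criterion for abelian varieties---Fontaine's theorem for the implication that good reduction yields a crystalline representation, and the theorem of Coleman--Iovita and Breuil for the converse---to conclude that $V_{p}\Ab^2_{X/K}$ is crystalline if and only if $\Ab^2_{X/K}$ has good reduction. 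Together with the previous paragraph, this shows every listed condition is equivalent to good reduction of $\Ab^2_{X/K}$, hence to the others.

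The main care point is that the algebraic-representative results (Definition-Theorem \ref{defn-thm:representative}, Proposition \ref{prop:representativeetale}) are stated over perfect base fields. In the mixed-characteristic case this is automatic, since $K$ has characteristic $0$; but in the purely $\ell$-adic statement $K$ may be imperfect when $\chara K = p>0$. The delicate step will therefore be to secure the descent of $\Ab^2$ to $K$ and the identification with $V_{\ell}\Ab^2_{X/K}$ after passing to the perfection: the extension $K \hookrightarrow K^{\mathrm{perf}}$ is radicial, so it induces an isomorphism on absolute Galois groups and leaves $H^3_{\et}(-,\Q_{\ell})$ and the inertia action unchanged for $\ell \neq p$, which should let one reduce to the perfect case. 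A secondary point is to ensure the $p$-adic criterion is available for a general Henselian (not necessarily complete) discrete valuation ring with perfect residue field, rather than only for local fields; this I expect to handle by passing to the completion $\widehat{R}$, which affects neither the Galois representation nor the good-reduction property.
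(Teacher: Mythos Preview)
Your proposal is correct and follows essentially the same route as the paper: pass to the perfection of $K$, identify $H^3_{\et}(X_{\overline K},\Q_\ell(2))$ with $V_\ell\Ab^2_{X/K}$ via Proposition~\ref{prop:representativeetale} and Remark~\ref{rem:representative}, and then invoke Serre--Tate and Coleman--Iovita for abelian varieties. The paper's proof is terser (it simply writes ``by replacing $K$ with its perfection'' and cites \cite{Serre-Tate} and \cite{Coleman-Iovita}), whereas you spell out the Tate-twist and Henselian-versus-complete reductions explicitly; but there is no substantive difference in strategy.
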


\begin{proof}
By replacing $K$ with its perfection, we may assume that $K$ is a perfect field.
Let $\Ab^2_{X/K}$ be the algebraic representative of $X$ over $K$, which exists by Theorem \ref{defn-thm:representative}.
By Proposition \ref{prop:representativeetale} and Remark \ref{rem:representative}, for any prime number $\ell \in K^{\times}$, we have a $\Gal(\overline{K}/K)$-equivariant isomorphism 
\[
H^1_{\et}(\Ab^2_{X/K, \overline{K}}, \Q_{\ell})^{\vee} \simeq T_{\ell} (\Ab^2_{X/K}) \otimes \Q_{\ell}
\simeq T_{\ell} (\A^2(X_{\overline{K}})) \otimes \Q_{\ell} 
\simeq H^3_{\et} (X_{\overline{K}}, \Q_{\ell} (2)).
\]
Therefore, we can reduce it to the same problem for the first \'{e}tale cohomology of abelian varieties over $K$, which follows from \cite[Theorem 1]{Serre-Tate} and \cite[Theorem 1]{Coleman-Iovita}.
\end{proof}

Next, we recall the existence of the polarization on the algebraic representative (\cite{Benoist-Wittenberg1}, \cite{ACMVdecomposition}).

\begin{defn-thm}
\label{defn-thm:representativepolarization}
Let $k$ be a perfect field, and $X$ a smooth Fano threefold over $k$. Let $\Ab^2_{X/k}$ be the algebraic representative of $X$ over $k$.
\begin{enumerate}
\item 
Suppose that $k$ is a subfield of $\C$.
Then $\Ab^2_{X/k}$ admits a polarization $\Theta_{X}$ such that $(\Theta_{X})_{\C}$ on $(\Ab^2_{X/k})_{\C} \simeq J(X_{\C})$ equal to the theta polarization $\Theta(X_{\C})$.
\item 
Suppose that $X$ is geometrically stably rational, then there exists a symmetric $k$-isomorphism
\[
\Lambda_{X} \colon \Ab^2_{X/k} \rightarrow \Ab^{2 \vee}_{X/k}.
\]
When $k \subset \C$, this comes from the principal polarization $\Theta_{X}$.
\item 
When $k$ is of characteristic $0$, then $\Lambda_{X}$ comes from a principal polarization. 
We denote this polarization also by $\Theta_{X}$.
\item 
Assume $k$ is of characteristic $0$.
Then the isomorphism 
\[
T_{\ell} (\Ab^2_{X/k}) \simeq H^3_{\et} (X_{\overline{k}}, \Z_{\ell}(2))
\]
given by Proposition \ref{prop:representativeetale} and Remark \ref{rem:representative} is compatible with respect to a skew-symmetric form
\[
T_{\ell} \Ab^2_{X/K}  \times T_{\ell} \Ab^2_{X/K} \rightarrow \Z_{\ell} (1)
\]
associated with $\Theta_{X}$ and a skew-symmetric form
\[
H^3_{\et} (X_{\overline{K}}, \Z_{\ell}(2)) \times H^3_{\et} (X_{\overline{K}}, \Z_{\ell} (2)) \rightarrow
\Z_{\ell} (1)
\]
given by the cup product.
\end{enumerate}
\end{defn-thm}
\begin{proof}
(1) The assertion follows from \cite[Example 12.5]{ACMVdecomposition}.

(2) The assertion follows from \cite[Theorem 12.12]{ACMVdecomposition}.

(3) The assertion is essentially proved in \cite{ACMVdecomposition}.
Indeed, by taking a finitely generated subfield $K' \subset K$ where $X$ is defined and choosing an embedding $K' \hookrightarrow \C$, we can reduce it to (1) and (2).
Note that, $\Lambda_{X}$ is compatible with the base change since
it is \emph{distinguished} in the sense of \cite[Section 12]{ACMVdecomposition} (see \cite[Lemma 12.3]{ACMVdecomposition}). 

(4) The assertion can also be reduced to the complex case, where the statement follows from the construction of the Theta divisor on intermediate Jacobians (see \cite[Section 3]{Clemens-Griffiths}).
\end{proof}

\section{Prime Fano threefolds of genus 7 and curves of genus 7 without $g_4^1$}
\label{section:fanovscurve}

\subsection{Basic properties of prime Fano threefolds of genus $7$}

First, we introduce basic properties of prime Fano threefolds of genus $7$ over $\C$.

\begin{prop}
\label{prop:basic}
Let $X$ be a prime Fano threefolds of genus $7$ over $\C$.
Then the following hold:
\begin{enumerate}
\item 
The Hodge diamond of $X$ is given by
\begin{center}
\begin{tabular}{ccccccc}
& & & 1 \\
& & 0 & & 0 \\
& 0 & & 1 & & 0 \\
0 & & 7 & & 7 & & 0 \\
& 0 & & 1 & & 0 \\
& & 0 & & 0 \\
& & & 1
\end{tabular}.
\end{center}
\item 
The dimension of $H^{0} (T_{X/\C})$ is $0$, i.e., there is no infinitesimal automorphism.
Since automorphisms preserve the ample line bundle $-K_{X}$ automatically, the automorphism group $\Aut(X)$ is finite.
\item 
The dimension of $H^1 (T_{X/\C})$ is $18$, i.e., the number of moduli of prime Fano threefolds of genus $7$ is 18.
\item The variety $X$ is rational.
\end{enumerate}
\end{prop}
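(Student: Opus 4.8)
The plan is to establish (1)--(3) by cohomological computations resting on two numerical inputs, and to deduce (4) from birational geometry. Throughout I realise $X$ as a smooth transverse linear section $X = \Sigma \cap \mathbb{P}^8 \subset \mathbb{P}^{15}$ of the $10$-dimensional spinor variety $\Sigma = \OGr(5,10)$ in its spinor embedding (as in the definition of Mukai varieties of genus $7$). Since $\Sigma$ has index $8$, adjunction over the seven hyperplane sections gives $-K_X = \mathcal{O}_X(1)$, so $X$ has index $1$ and $(-K_X)^3 = \deg\Sigma = 12 = 2g-2$, confirming $g=7$. Alternatively, every numerical assertion below is tabulated in \cite{Iskovskikh-Prokhorov}.

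For (1), ampleness of $-K_X$ and Kodaira vanishing give $H^i(X,\mathcal{O}_X)=0$ for $i>0$, hence $h^{p,0}=h^{0,p}=0$ for $p=1,2,3$; in particular $h^{3,0}=0$. As $H^2(X,\mathcal{O}_X)=0$, the exponential sequence identifies $h^{1,1}$ with the Picard rank, which is $1$ by primeness. The remaining Betti numbers $b_0,b_1,b_2$ (and, by Poincar\'e duality, $b_4,b_5,b_6$) follow from the weak Lefschetz theorem applied to a smooth flag of the seven hyperplane sections cutting $X$ out of $\Sigma$: since $\Sigma$ is a rational homogeneous space its odd cohomology vanishes and $H^2(\Sigma,\mathbb{Z})=\mathbb{Z}$, so $b_1=0$ and $b_0=b_2=1$. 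The only number left is $b_3=2h^{2,1}$ (using $h^{3,0}=0$), which I would obtain from the topological Euler characteristic $c_3(T_X)=\chi_{\mathrm{top}}(X)=4-b_3$; computing $c_3$ from $c(T_X)=c(T_\Sigma|_X)\,(1+H)^{-7}$, with $H$ the hyperplane class, gives $c_3=-10$, whence $b_3=14$ and $h^{2,1}=7$.

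For (2) and (3) I would use $T_X\cong\Omega^2_X\otimes\mathcal{O}_X(-K_X)$ and Serre duality. With the ample bundle $L=\mathcal{O}_X(-K_X)$, the Akizuki--Nakano vanishing theorem gives $H^q(X,\Omega^1_X\otimes L^{-1})=0$ for $q<2$, that is $H^3(X,T_X)=H^2(X,T_X)=0$. Granting $h^0(X,T_X)=0$, Hirzebruch--Riemann--Roch,
\[
\chi(X,T_X)=\tfrac{1}{2}(-K_X)^3-\tfrac{19}{24}(-K_X)\!\cdot\! c_2+\tfrac{1}{2}c_3,
\]
combined with $(-K_X)^3=12$, the universal relation $(-K_X)\!\cdot\! c_2=24$ (equivalent to $\chi(\mathcal{O}_X)=1$), and $c_3=-10$ from (1), yields $\chi(X,T_X)=-18$, hence $h^1(X,T_X)=18$. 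It remains to see $h^0(X,T_X)=0$: the cleanest route is that a nonzero vector field would force $\Aut(X)$ to be positive-dimensional, contradicting finiteness of $\Aut(X)$, which follows from the injection $\Aut(X)\hookrightarrow\Aut(C)$ into the automorphisms of the associated genus-$7$ curve $C$ (finite as $g(C)\geq 2$); one may instead cite \cite{Iskovskikh-Prokhorov}. As a cross-check, $h^1(X,T_X)=18$ also equals $\dim\Gr(9,16)-\dim\Spin(10)=63-45$, the dimension of the family of linear sections of $\Sigma$ modulo its automorphisms.

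For (4), rationality is Mukai's theorem \cite{MukaiBrill}: the double projection from a line on $X$, realised by a two-ray game (the very construction studied in Section \ref{section:goodredcriteria}), produces a birational map to a rational variety. I expect the main obstacle to be the numerical input $b_3=14$ in (1): the Chern-class computation of $c_3$ requires Schubert calculus on $\OGr(5,10)$, and the conceptually cleaner alternative---reading $h^{2,1}=\dim J(X)$ off the isomorphism $J(X)\cong\Jac(C)$ with $g(C)=7$---is exactly the Mukai correspondence developed later in the paper. Thus either a genuine computation on the spinor tenfold or a citation to \cite{Iskovskikh-Prokhorov} seems unavoidable at this stage, with the rest of the proposition then following formally as above.
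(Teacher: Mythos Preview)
Your proposal is correct, and for (3) it is essentially identical to the paper's argument (Akizuki--Nakano vanishing plus Hirzebruch--Riemann--Roch plus (2) gives $h^1(T_X)=-\chi(T_X)=18$). The two places where you diverge are (1) and (2). For (1), the paper simply cites \cite{Iskovskikh-Prokhorov} for the Hodge diamond and then \emph{reverses} your logical flow: it reads off $c_3=\sum(-1)^{i+j}h^{i,j}=-10$ from the known Hodge numbers and feeds that into Hirzebruch--Riemann--Roch, whereas you propose to compute $c_3$ via $c(T_X)=c(T_\Sigma|_X)(1+H)^{-7}$ on the spinor tenfold and deduce $b_3$ from it. Your route is more self-contained in principle but, as you acknowledge, requires Schubert calculus on $\OGr(5,10)$; the paper's route trades that for a citation. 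For (2), the paper cites \cite{Prokhorov} and \cite{Kuznetsov-Prokhorov-Shramov} for $h^0(T_X)=0$; your argument via $\Aut(X)\hookrightarrow\Aut(C)$ is not circular (Theorem~\ref{thm:moduliarithmetictorelli} does not invoke Proposition~\ref{prop:basic}) but is a forward reference, and your fallback to a citation is exactly what the paper does. For (4) the paper cites \cite[Theorem~4.5.10]{Iskovskikh-Prokhorov}, noting a gap in the existence of smooth conics there that is repaired in \cite{Prokhorovconicnote}.
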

\begin{proof}
These are well-known results.
(1) and (4) follow from \cite[Corollary 4.4.12 and Theorem 4.5.10]{Iskovskikh-Prokhorov} and the Lefschetz theorem (cf.\ \cite[Remark 3.9]{Kuznetsovspinor}). 
(2) follows from \cite{Prokhorov} or \cite[Corollary 4.3.5]{Kuznetsov-Prokhorov-Shramov}.
Note that, as in \cite[Remark 9.30]{Tanaka2}, the proof of the existence of smooth conics in \cite[Theorem 4.5.10]{Iskovskikh-Prokhorov} contains a gap. Instead, see \cite[Corollary 10.6]{Prokhorovconicnote}.

Let $c_i$ be the $i$-th Chern class of $T_X$.
By the Hirzebruch--Riemann--Roch theorem, we have 
\[
\frac{c_1c_2}{12} = \chi(\cO_X) =1
\]
and
\[
c_3 = \sum (-1)^{i+j} h^{i,j}=-10.
\]
Then, again by the Hirzebruch--Riemann--Roch theorem, we have
\[
\chi(T_X) = -18.
\]
Now (3) follows from (2) and the Kodaira--Akizuki--Nakano vanishing theorem.
\end{proof}

Next, we recall the notion of $g_n^r$ on a smooth proper curve.
\begin{defn}
Let $C$ be a smooth proper curve over an algebraically closed field $k$.
We say $C$ \emph{has $g_n^r$} if there exists a divisor $D$ on $C$ of degree $n$ and a vector subspace $V \subset H^0(C,D)$ of dimension $r+1 \geq 1$.
\end{defn}

\begin{defn}
\label{defn:family}
Let $B$ be a locally Noetherian scheme, and $f \colon X \rightarrow B$ be a smooth projective morphism.
\begin{enumerate}
\item 
We say $f$ is a \emph{(relative) prime Fano threefold of genus 7} when any geometric fiber of $f$ is a prime Fano threefold of genus $7$.
\item 
We say $f$ is a \emph{(relative) curve of genus 7 without $g_4^1$} when any geometric fiber of $f$ is a smooth proper curve of genus $7$ without $g_4^1$.
\end{enumerate}
\end{defn}

\subsection{Spinor tenfolds}
\label{subsection:spin10}

First, we recall the definition of quadratic form over general base schemes.

\begin{defn}
{\cred Let $B$ be a locally Noetherian scheme, 
$V$ a locally free sheaf of rank $10$ on $B$.
A \emph{quadratic form} on $V^{\vee}$ (over $B$) is a locally direct summand morphism
\[
\varphi \colon L \hookrightarrow S^2 V
\]
from a line bundle $L$ on $B$.
We note that $\varphi$ corresponds to the morphism 
\[
q_{\varphi} \colon S_{2} V := (S^2 V)^{\vee} \rightarrow L^{\vee}
\]
which defines a quadratic form in the usual sense on a fiber over each point $b \in B$.
We say $\varphi$ is \emph{non-degenerate} when $q_{\varphi}$ is non-degenerate on a fiber over each point $b \in B$.
}
\end{defn}

{\cred In the following, we recall the definition of spinor tenfolds}.
Let $B$ be a locally Noetherian scheme, 
$V$ a locally free sheaf of rank $10$ on $B$, and $L$ a line bundle on $B$.
{\cred Let $\varphi \colon L \hookrightarrow S^2 V$ be a non-degenerate quadratic form on $V^{\vee}$.}
Let $\OGr_B (V, \varphi)$ be the isotropic Grassmannian, which is a closed subscheme of the 
{\cred $\Gr_{B} (V,5)$}
whose $T$-valued points {\cred for any $B$-scheme $T$} are given by the set of isomorphism classes of surjections $t$ from $V_{T}$ to some locally free sheaf $E$ of rank $5$ on $T$ such that $S^2 t$ kills $\varphi_{T} (L_{T})$.
Then the Stein factorization of the structure morphism $p \colon \OGr_B(V,\varphi) \rightarrow B$ is 
\[
\OGr_B (V, \varphi) \rightarrow B' \rightarrow B,
\]
where $B' \rightarrow B$ is a finite \'{e}tale covering of degree 2.
Indeed, as in \cite{Mukaicurve}, when $B$ is an algebraically closed field, this holds true.
Therefore, even in the general case, every geometric fiber of a finite morphism $B' \rightarrow B$ is just two points, so $B' \rightarrow B$ is finite \'{e}tale of degree $2$.
In the following, we suppose that the following assumption is satisfied.

\begin{quote}
\textbf{Assumption (a):} \ 
{\cred $\pi \colon B' \rightarrow B$ splits, i.e., the map $\cO_B \to \pi_* \cO_{B'}$ splits.}
\end{quote}

{\cred In the following, suppose that $B$ is connected,} we denote connected components of $\OGr_B (V,\varphi)$ by $\Sigma_{+} (V,\varphi)$ and $\Sigma_{-} (V, \varphi)$.
Let $M$ be the relative Pl\"{u}cker class of $\OGr_B (V, \varphi),$ which is an element of $\Pic_{\OGr_{B} (V, \varphi)/B} (B)$.
Moreover, we suppose that the following assumption is satisfied.

\begin{quote}
\textbf{Assumption (b):} \  
There exists an ample line bundle $H_{+}$ on $\Sigma_{+} (V, \varphi)$ such that
$M|_{\Sigma_{+} (V, \varphi)} \simeq H_{+}^{\otimes 2}$. 
\end{quote}

\begin{lem}
\label{lem:pmindep}
Let $B$, $V$, $L$, and $\varphi$ be as above.
In particular, we suppose that $B$ is connected and the Assumptions (a) and (b) are satisfied. 
Then $M|_{\Sigma_{-} (V, \varphi)}$ can be also divided by $2$ in $\Sigma_{-} (V, \varphi)$.
\end{lem}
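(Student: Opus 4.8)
The plan is to reinterpret divisibility by $2$ as the existence of a genuine ample generator of the relative Picard group, and then to exploit the duality pattern of the two half-spin representations of $\Spin_{10}$. First I would record the fiberwise picture: every geometric fiber of $\Sigma_{\pm}(V,\varphi) \to B$ is the spinor tenfold, a Fano variety with $\Pic \cong \Z$ generated by an ample class $h_{\pm}$ with $M|_{\Sigma_{\pm}} = 2 h_{\pm}$ on fibers, and with $H^1(\cO) = H^2(\cO) = 0$. Hence the relative Picard scheme $\Pic_{\Sigma_{\pm}/B}$ is a constant group scheme $\underline{\Z}$, the positive generator being pinned down by relative ampleness; and since $M$ is represented by the genuine line bundle $\det E$ of the tautological rank-$5$ quotient $V \twoheadrightarrow E$ on $\OGr_B(V,\varphi)$, the class $M|_{\Sigma_{\pm}}$ lies in the image of $\Pic(\Sigma_{\pm}) \to \Pic_{\Sigma_{\pm}/B}(B) = \Z$ and equals $2$. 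Divisibility of $M|_{\Sigma_{\pm}}$ by $2$ is then equivalent to the fiberwise generator $1 \in \Z$ lifting to an actual line bundle; the obstruction to this is a class $\beta_{\pm} \in \Br(B)$ from the Leray sequence for $\G_m$ along $\Sigma_{\pm} \to B$ (it is $2$-torsion, since $M|_{\Sigma_{\pm}} = 2h_{\pm}$ already lifts). By Assumption (b), $\beta_{+} = 0$, and the goal becomes $\beta_{-} = 0$.

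Second, I would identify these obstructions with the Tits algebras of $\Spin_{10}$. Assumption (a) forces the discriminant double cover to split, so the even Clifford algebra decomposes as $C_{0}(\varphi) \cong A_{+} \times A_{-}$ into two Azumaya algebras over $B$, and $A_{\pm}$ is exactly the algebra whose splitting is equivalent to the existence over $B$ of the half-spin bundle inducing the spinor embedding of $\Sigma_{\pm}$; thus $\beta_{\pm} = [A_{\pm}]$ in $\Br(B)$. Concretely, when $\beta_{+} = 0$ one recovers the half-spin bundle as $\mathcal{S}_{+} := f_{+\ast} H_{+}$, a rank $16$ vector bundle on $B$ realizing $\Sigma_{+} \hookrightarrow \P_B(\mathcal{S}_{+})$ with $\cO(1)|_{\Sigma_{+}} = H_{+}$.

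Third comes the decisive input, special to rank $10$: the two half-spin representations of $\Spin_{10}$ are dual to one another (for $D_{n}$ with $n$ odd the longest Weyl element acts as $-\sigma$, with $\sigma$ the diagram automorphism exchanging the two spinor nodes). Equivalently, the canonical reversal involution of $C(\varphi)$ sends the central volume element $z$ to $(-1)^{5} z = -z$, hence swaps the two central idempotents and restricts to an anti-isomorphism $A_{+} \cong A_{-}^{\mathrm{op}}$. Either formulation gives $[A_{+}] = -[A_{-}]$ in $\Br(B)$, so $A_{+}$ is split if and only if $A_{-}$ is. Therefore $\beta_{+} = 0$ implies $\beta_{-} = 0$; concretely, $\mathcal{S}_{-} := \mathcal{S}_{+}^{\vee}$ realizes the dual half-spin bundle, embeds $\Sigma_{-} \hookrightarrow \P_B(\mathcal{S}_{-})$, and its hyperplane class is an ample line bundle $H_{-}$ on $\Sigma_{-}$ with $H_{-}^{\otimes 2} \cong M|_{\Sigma_{-}}$, which is the assertion.

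The step I expect to be the main obstacle is the second one: globalizing the half-spin picture over the (possibly singular, non-reduced) base $B$ — that is, making precise that the square root $H_{+}$ is tantamount to the splitting of the Azumaya algebra $A_{+}$, and that the representation-theoretic duality of the half-spin modules descends to the relation $A_{+} \cong A_{-}^{\mathrm{op}}$ over $B$ rather than merely fiberwise. Once this identification of the obstruction with a Clifford/Tits Brauer class is in place, the rank-$10$ duality makes the two components symmetric and the conclusion is immediate. A more elementary variant avoids $\Br(B)$ entirely: build $\mathcal{S}_{+} = f_{+\ast}H_{+}$ directly, invoke the $\Spin_{10}$-equivariant perfect pairing between the two half-spin representations to set $\mathcal{S}_{-} = \mathcal{S}_{+}^{\vee}$, and check by flat base change that $\cO(1)$ on $\P_B(\mathcal{S}_{-})$ restricts to the desired square root $H_{-}$ on $\Sigma_{-}$.
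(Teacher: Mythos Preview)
Your approach is correct in substance but takes a genuinely different route from the paper. You recast the problem as the vanishing of a Brauer obstruction $\beta_{\pm}\in H^2_{\et}(B,\G_m)$, identify $\beta_{\pm}$ with the class of the Azumaya factor $A_{\pm}$ of the even Clifford algebra $C_0(\varphi)\cong A_+\times A_-$, and then invoke the feature of $D_5$ that the two half-spin representations are dual (equivalently, the canonical reversal anti-involution of $C(\varphi)$ swaps the central idempotents), giving $[A_+]=-[A_-]$ and hence $\beta_+=0\Leftrightarrow\beta_-=0$. This is a clean conceptual explanation and pinpoints exactly why rank $10$ (more generally $\equiv 2\pmod 4$) is special. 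Your honest caveat about globalizing the Clifford picture over an arbitrary locally Noetherian base with an $L$-valued form is well placed; it can be done, but it is not free.

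The paper proceeds instead by a direct geometric argument with no Clifford algebras or Brauer groups. It introduces the orthogonal flag variety $\OFl_B((V,\varphi),5,4)$, observes that it is simultaneously $\P_{\Sigma_+}(\mathcal{U}_+^{\vee})$ and $\P_{\Sigma_-}(\mathcal{U}_-^{\vee})$ (each projection being an isomorphism onto $\OGr_B((V,\varphi),4)$), and thus obtains two fibrations $q_{\pm}\colon \OGr_B((V,\varphi),4)\to\Sigma_{\pm}$. Setting $H_-:=\tfrac{1}{2}M\in\Pic_{\Sigma_-/B}(B)$ (so $2H_-\in\Pic(\Sigma_-)$ automatically), one computes the relative anticanonical bundle
\[
\cO\bigl(-K_{\OGr_B((V,\varphi),4)/B}\bigr)\;=\;5\,q_+^*H_+\;+\;5\,q_-^*H_-.
\]
Since the left side and $q_+^*H_+$ are genuine line bundles, so is $5\,q_-^*H_-$; together with $2H_-\in\Pic(\Sigma_-)$ this forces $H_-\in\Pic(\Sigma_-)$.

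The trade-off: your argument makes the representation-theoretic reason transparent (duality of half-spins for $D_n$, $n$ odd) but requires setting up twisted Clifford algebras over a general base; the paper's flag-variety argument is self-contained over any locally Noetherian $B$ and avoids $\Br(B)$ entirely, with the price that the mechanism is hidden in the numerology that the coefficient $5$ is coprime to $2$.
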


\begin{proof}
Let $\OFl_{B}((V,\varphi),5,4)$ be the orthogonal flag variety parametrizing orthogonal flags $V \twoheadrightarrow L \twoheadrightarrow L'$ with $\rank L =5$ and $\rank L'=4$.
Let $\OGr_{B} ((V,\varphi), 4)$ be the orthogonal Grassmannian variety parametrizing rank 4 isotropic quotients.
The natural morphism 
\[
p\colon \OFl_{B}((V,\varphi),5,4) \rightarrow \OGr_{B}(V,\varphi)
\]
is equal to the projective bundle
\[
p \colon
\P_{\OGr_{B}(V,\varphi)} (\mathcal{U}^{\vee})
\rightarrow
\OGr_{B}(V,\varphi),
\]
where $\mathcal{U}$ is rank 5 the universal quotient bundle.
By Assumption (a), $p$ decomposes into 
\[
p_{\pm} \colon
\P_{\Sigma_{\pm}(V,\varphi)} (\mathcal{U}_{\pm}^{\vee}) \rightarrow \Sigma_{\pm}(V,\varphi),
\]
where $\mathcal{U}_{\pm}$ is the restriction $\mathcal{U}|_{\Sigma_{\pm}(V,\varphi)}$.
Then the natural morphism
\[
\P_{\Sigma_{\pm}(V,\varphi)} (\mathcal{U}_{\pm}^{\vee})
\hookrightarrow 
\OFl_{B}((V,\varphi),5,4) \rightarrow \OGr_{B}((V,\varphi),4)
\]
is an isomorphism.
We put
\[
H_{-} := \frac{1}{2} M  \in \Pic_{\Sigma_{-}(V,\varphi)/B} (B).
\]
Here, the inclusion follows from the proof of \cite[Corollary 2.3]{Kuznetsovfamily}. Note that, the Kodaira vanishing for geometric fibers of $\Sigma_{\pm}$ holds by \cite[Proposition 2 and Theorem 2]{Mehta-Ramanathan} (cf.\ Lemma \ref{lem:mukaincohomology}).
Then by construction, we have $2H_{-} \in \Pic (\Sigma_{-} (V,\varphi))$.
Note that, by the above argument, we have two distinct fibrations
\[
q_{\pm} \colon
\OGr_{B} (V, \varphi) \rightarrow \Sigma_{\pm} (V, \varphi)
\]
induced by $p_{\pm}$.
Since $\OGr((V,\varphi),4)$ (resp.\ $\Sigma_{\pm}(V,\varphi)$) has a relative Picard rank 2 (resp.\ $1$), by restricting on a fiber of $q_{\pm}$ and using an adjunction, we can show that
\[
\cO_{\OGr((V,\varphi),4)} (-K_{\OGr((V,\varphi),4)/B} )= 5 q_{+}^{\ast}(H_{+}) + 5 q_{-}^{\ast} (H_{-}).
\]
Combining with $H_+ \in \Pic (\Sigma_{+}(V,\varphi))$ and $2 H_- \in \Pic(\Sigma_{-}(V,\varphi))$, we have $H_{-} \in \Pic (\Sigma_{-})$, and it finishes the proof.
\end{proof}

We denote the half of $M|_{\Sigma_{-}(V,\varphi)}$ given by Lemma \ref{lem:pmindep} by $H_{-}$.
{\cred By \cite[Section 1]{Mukaicurve}, $H_{+}$ and $H_{-}$ are very ample on each geometric fiber.
Since $H^1 (H_{+}) = H^1 (H_{-})=0$ by the Kodaira vanishing theorem (\cite[Proposition 2 and Theorem 2]{Mehta-Ramanathan}) 
(cf.\ Lemma \ref{lem:mukaincohomology}),
$S_{\pm} := p_{\ast} H_{\pm}$ is a locally free sheaf by the cohomology and base change theorem, and
they define embeddings
\[
\Sigma_{\pm} (V, \varphi) \hookrightarrow \P_{B} ( S_{\pm} ).
\]
}
{\cred In this setting, the projective dual $\Sigma_{\pm} (V, \varphi)^{\vee}$ of $\Sigma_{\pm} (V, \varphi) \hookrightarrow \P_{B} ( S_{\pm} )$ is defined as the image of the composition 
\[
\P_{\Sigma_{\pm} (V, \varphi)} (N_{\Sigma_{\pm} (V, \varphi)/\P_{B}(S_{\pm})}) \hookrightarrow
\P_{\P_{B}(S_{\pm})} (T_{\P_{B}(S_{\pm})/B}) \rightarrow \P_{B} (S_{\pm}^{\vee}),
\]
where the first map is induced by the normal bundle exact sequence, and the second map is induced by the Euler sequence.
By the above argument using the cohomology and the base change theorem, for any geometric point $b$ in $B$, the base change $\Sigma_{\pm} (V, \varphi)^{\vee} \times_{B} b$ is isomorphic to the projective dual of 
$\Sigma_{\pm} (V|_{b}, \varphi|_{b}) \hookrightarrow \P_{b} ( S_{\pm} |_{b})$.
}

\begin{prop}[cf.\ {\cite[Proposition 2.7]{Mukaicurve}}]
\label{prop:spinorduality}
Let $B$, $V$, $L$, and $\varphi$ be as above. In particular, we suppose that $B$ is connected and the Assumptions (a) and (b) are satisfied. 
Then the projective dual $\Sigma_{+}^{\vee}$ of $\Sigma_{+} \hookrightarrow \P_{B} (S_{+})$ is isomorphic to 
$\Sigma_{-} \hookrightarrow \P_{B} (S_{-})$.
\end{prop}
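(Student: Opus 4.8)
The statement is the relative version of Mukai's projective self-duality of the spinor tenfold, and the fiberwise assertion is exactly \cite[Proposition 2.7]{Mukaicurve}. Moreover, by the base-change compatibility recorded just before the proposition, for every geometric point $b \in B$ the fiber $(\Sigma_{+}^{\vee})_{b}$ is the projective dual of $\Sigma_{+}(V|_{b},\varphi|_{b}) \hookrightarrow \P_{b}(S_{+}|_{b})$, hence is isomorphic to $\Sigma_{-}(V|_{b},\varphi|_{b}) \hookrightarrow \P_{b}(S_{-}|_{b})$. Thus the entire content is to upgrade these fiberwise isomorphisms to a single isomorphism over $B$ that is compatible with the two projective embeddings. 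The plan is to (i) identify the ambient projective bundles $\P_{B}(S_{+}^{\vee})$ and $\P_{B}(S_{-})$ canonically over $B$, and (ii) show that under this identification the closed subscheme $\Sigma_{+}^{\vee}$ coincides with $\Sigma_{-}$.

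For (i): since $V$ has even rank $10$, the relevant orthogonal group is of type $D_{5}$ with $n=5$ odd, and the two half-spin representations are \emph{dual} to one another rather than self-dual. Globally I expect this to be realized as a perfect $\cO_{B}$-bilinear pairing $S_{+} \otimes_{\cO_{B}} S_{-} \to N$ valued in a line bundle $N$ on $B$ (built from $L$, the universal quotient bundles $\mathcal{U}_{\pm}$, and the spinor/Clifford structure of $\OGr_{B}(V,\varphi)$), yielding a canonical isomorphism $S_{-} \simeq S_{+}^{\vee} \otimes N$ and hence $\P_{B}(S_{-}) \simeq \P_{B}(S_{+}^{\vee})$ over $B$. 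I would construct this pairing directly from the Clifford algebra $C(q_{\varphi})$ acting on the spinor bundles, so that it automatically commutes with base change; the fiberwise non-degeneracy then follows from the classical statement. An alternative, more geometric route is to build the map from the incidence correspondence: a point of $\Sigma_{-}$ is an isotropic $5$-space of the opposite family, which determines a hyperplane tangent to $\Sigma_{+}$ along a linear subspace, and this assignment is exactly the defining map of the projective dual, producing a $B$-morphism $\Sigma_{-} \to \Sigma_{+}^{\vee}$ directly.

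For (ii): after fixing the identification $\P_{B}(S_{-}) \simeq \P_{B}(S_{+}^{\vee})$ and writing $P$ for this common bundle, both $\Sigma_{-}$ and $\Sigma_{+}^{\vee}$ are closed subschemes of $P$ over $B$. The scheme $\Sigma_{-} \to B$ is smooth projective, hence flat. For $\Sigma_{+}^{\vee}$ I would argue flatness over $B$ from its construction as the image of the proper projectivized normal bundle $\P_{\Sigma_{+}}(N_{\Sigma_{+}/\P_{B}(S_{+})})$ together with the constancy of the fiberwise Hilbert polynomial (all fibers being isomorphic to the spinor tenfold in its spinor embedding). Granting flatness of both, the fiberwise equality $(\Sigma_{+}^{\vee})_{b} = (\Sigma_{-})_{b}$ inside $P_{b}$ for all $b$, which we have from Mukai and base change, forces $\Sigma_{+}^{\vee} = \Sigma_{-}$: the ideal of $\Sigma_{-}$ in $\cO_{\Sigma_{+}^{\vee}}$ restricts to zero on every fiber, so by flatness of $\Sigma_{+}^{\vee}$ and Nakayama it vanishes. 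Equivalently, the $B$-morphism $\Sigma_{-} \to \Sigma_{+}^{\vee}$ constructed in (i) is a fiberwise isomorphism of flat, finitely presented $B$-schemes, and hence an isomorphism.

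I expect the main obstacle to be step (i): pinning down the canonical duality $S_{-} \simeq S_{+}^{\vee} \otimes N$ over an arbitrary (possibly non-reduced) base, with the correct line bundle $N$ and with genuine compatibility with base change, rather than merely fiberwise. The flatness of the dual variety $\Sigma_{+}^{\vee}$ over a non-reduced base is the secondary technical point; the underlying projective geometry of the fibers is supplied entirely by \cite[Proposition 2.7]{Mukaicurve}.
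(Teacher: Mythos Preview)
Your primary approach (Clifford pairing plus fiberwise comparison) differs from the paper's, which instead executes your ``alternative, more geometric route'' via the incidence correspondence---but does so globally over $B$ rather than fiberwise. The key identification is $\mathcal{U}_{\pm}^{\vee} \simeq N_{\Sigma_{\pm}/\P_{B}(S_{\pm})}(-2H_{\pm})$ (from \cite[Corollary 2.4]{Mukaicurve}), so the conormal variety $\P_{\Sigma_{+}}(N_{\Sigma_{+}/\P_{B}(S_{+})})$ is canonically $\P_{\Sigma_{+}}(\mathcal{U}_{+}^{\vee}) \simeq \OGr_{B}((V,\varphi),4) \simeq \P_{\Sigma_{-}}(\mathcal{U}_{-}^{\vee})$, the isomorphisms having been set up in the preceding lemma. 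Since this orthogonal flag variety has relative Picard rank $2$ over $B$ with the two projections $q_{\pm}$ to $\Sigma_{\pm}$, the map from the conormal variety to $\P_{B}(S_{+}^{\vee})$ defining the projective dual is forced to be the second contraction $q_{-}$; its image is therefore $\Sigma_{-} \hookrightarrow \P_{B}(S_{-})$, and the identification $\P_{B}(S_{+}^{\vee}) \simeq \P_{B}(S_{-})$ falls out of the construction. This dissolves both obstacles you flagged: no Clifford-algebra pairing over $B$ is built, and flatness of $\Sigma_{+}^{\vee}$ is not argued separately but is a consequence of the identification with $\Sigma_{-}$.

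On your main line, step (ii) has a genuine gap. The scheme-theoretic image of a proper morphism need not commute with base change, so you cannot simply read off flatness of $\Sigma_{+}^{\vee}$ from constancy of fiberwise Hilbert polynomials: that criterion requires either a reduced Noetherian base or a family already known to sit flatly in the ambient bundle. The proposition allows $B$ to be an arbitrary connected locally Noetherian scheme, in particular non-reduced, so your argument does not close there. The paper's route sidesteps this by identifying the \emph{map} from the conormal variety, not merely its image.
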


\begin{proof}
We use the notation in Lemma \ref{lem:pmindep}.
We have the diagram
\[
\begin{tikzcd}
&   \P_{\Sigma_{+}(V, \varphi)}(\mathcal{U}_{+}^{\vee}) \ar[d] \ar[r, "\sim"] & \OGr_{B} ((V,\varphi),4) \ar[r, "\sim"] & \P_{\Sigma_{-}(V, \varphi)}(\mathcal{U}_{-}^{\vee}) \ar[d] & \\
\P_{B} (S_{+}) & \Sigma_{+} (V, \varphi)\ar[l,hook'] && \Sigma_{-} (V,\varphi) \ar[r,hook] & \P_{B}(S_{-}) 
\end{tikzcd}
\]
Since $\mathcal{U}_{\pm}^{\vee}$ is isomorphic to 
the twisted normal bundle $N_{\Sigma_{\pm}(V,\varphi)/\P_{B}(S_{\pm})} (-2H_{\pm})$ by \cite[Corollary 2.4]{Mukaicurve} (see also the proof of Lemma \ref{lem:mukaincohomology} (9)) and the relative Picard rank of $\OGr_{B}((V,\varphi),4)$ is $2$, the proposition follows from the above diagram.
\end{proof}

\begin{prop}
\label{prop:assumptionab}
Let $V$ be a $10$-dimensional vector space over a field $k$ of characteristic different from $2$. 
Let $q \colon S_{2} V^{\vee} \rightarrow k$.
be a {\cred non-degenerate} quadratic form on $V^{\vee}$.
We denote the corresponding morphism $k \hookrightarrow S^2 V$ by $q_\varphi$.
\begin{enumerate}
\item 
Assumption (a) for $(V, \varphi_{q})$ is satisfied if and only if $\disc q $ is trivial.
\item
Suppose that Assumption (a) for $(V, \varphi_{q})$ is satisfied.
{\cora Moreover, suppose that $k$ is of characteristic $0$.}
Then Assumption (b) for $(V, \varphi_{q})$ is satisfied if and only if $C_0 (q) \simeq M_{16} (k) \times M_{16} (k).$
\end{enumerate}
\end{prop}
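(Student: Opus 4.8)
The plan is to identify the \'etale double cover $\pi\colon B'\to \Spec k$ with the discriminant cover of $q$. Over $\overline{k}$ the two points of $B'$ index the two families of maximal isotropic subspaces, i.e.\ the two connected components $\Sigma_{\pm}$ of $\OGr_{\overline{k}}(V,\varphi)$, and these families are interchanged precisely by the improper isometries of $q_{\overline{k}}$. Hence the $\Gal(\overline{k}/k)$-action on the two geometric points of $B'$ factors through the determinant character $\mathrm{O}(q)\to\{\pm 1\}$, so the class of $B'$ in $H^1(k,\Z/2)\cong k^{\times}/k^{\times 2}$ is the discriminant. Equivalently — and this is the computation I would actually record — $B'$ is isomorphic to $\Spec Z$, where $Z$ is the center of the even Clifford algebra $C_0(q)$; for a nondegenerate form of even rank $n=10$ one has $Z\cong k[t]/(t^2-d)$ with $d$ a representative of $(-1)^{n(n-1)/2}\det q=\disc q$ (the paper's normalization of $\disc$ is chosen so that this is clean: for $n=10$, $(-1)^{n(n-1)/2}=-1$ and the center is $k[\sqrt{-\det q}]=k[\sqrt{\disc q}]$). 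Thus Assumption (a), which asks that $\pi$ split, holds iff $Z\cong k\times k$, iff $\disc q$ is a square, as claimed.

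\textbf{Part (2), setup.} Now $\disc q$ is trivial, so the center of $C_0(q)$ is $k\times k$ and $C_0(q)\cong A_{+}\times A_{-}$ with $A_{\pm}$ central simple $k$-algebras; comparing dimensions ($\dim_k C_0(q)=2^{9}$) shows each $A_{\pm}$ has degree $16$, and since $A_{\pm}\otimes_k\overline{k}\cong M_{16}(\overline{k})$ we have $C_0(q)\cong M_{16}(k)\times M_{16}(k)$ if and only if the Brauer classes $[A_{+}],[A_{-}]$ both vanish. The plan is therefore to reinterpret Assumption (b) as the vanishing of these classes. Over $\overline{k}$ we have $\Pic(\Sigma_{\pm,\overline{k}})=\Z\,H_{\pm}$ with $M|_{\Sigma_{\pm}}=2H_{\pm}$, and the Galois action is trivial since $H_{\pm}$ is the ample generator. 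The low-degree terms of the Hochschild--Serre spectral sequence for $\mathbb{G}_m$ (using Hilbert 90 and $H^0(\Sigma_{\pm,\overline{k}},\mathbb{G}_m)=\overline{k}^{\times}$) yield an exact sequence $0\to \Pic\Sigma_{\pm}\to \Z\,H_{\pm}\xrightarrow{\delta_{\pm}}\Br k$, and by construction Assumption (b) says exactly that $H_{+}$ lifts to $\Pic\Sigma_{+}$, i.e.\ $\delta_{+}(H_{+})=0$. The heart of the argument is the identification $\delta_{\pm}(H_{\pm})=[A_{\pm}]$: the generator $H_{\pm}$ corresponds to the minuscule fundamental weight attached to a spin node of the $D_5$ diagram that defines the relevant parabolic, and by Tits' theory of the algebras associated to a semisimple group — concretely, by realizing the ambient space of the minimal homogeneous embedding of $\Sigma_{\pm}$ as the Severi--Brauer variety $\mathrm{SB}(A_{\pm})$ of the corresponding half-spinor factor — the obstruction to descending that generator is precisely $[A_{\pm}]$. (Consistently, $\delta_{\pm}(2H_{\pm})=\delta_{\pm}(M|_{\Sigma_{\pm}})=0$ because $M$ is defined over $k$, so $[A_{\pm}]$ is automatically $2$-torsion.)

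\textbf{Conclusion and main obstacle.} With the identification in hand, Assumption (b) $\iff \delta_{+}(H_{+})=0 \iff [A_{+}]=0 \iff A_{+}\cong M_{16}(k)$. Lemma~\ref{lem:pmindep} shows that divisibility of $M|_{\Sigma_{+}}$ by $2$ over $k$ forces the same for $M|_{\Sigma_{-}}$, i.e.\ $H_{+}\in\Pic\Sigma_{+}\Rightarrow H_{-}\in\Pic\Sigma_{-}$; running the same argument with the roles of $+$ and $-$ exchanged gives the converse, so $[A_{+}]=0\iff[A_{-}]=0$. Combining, Assumption (b) is equivalent to $[A_{+}]=[A_{-}]=0$, i.e.\ to $C_0(q)\cong M_{16}(k)\times M_{16}(k)$, as desired. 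The step I expect to be genuinely substantial is the identification $\delta_{\pm}(H_{\pm})=[A_{\pm}]$ of the Picard obstruction with the half-spinor factor of the even Clifford algebra; everything else is formal descent together with the cited Lemma. The characteristic-$0$ hypothesis enters to guarantee the clean form of the Picard descent sequence (vanishing of $H^1(\Sigma_{\pm,\overline{k}},\mathcal{O})$ and of the higher geometric Picard groups) and the standard structure theory of $C_0(q)$ and its half-spinor modules.
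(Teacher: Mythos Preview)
Your argument is correct but follows a different route from the paper's. For (1) the paper simply cites a reference (Wang) for the identification of the double cover with the discriminant cover, which is what you spell out via the center of $C_0(q)$; this is fine. For (2) the approaches diverge. You package the problem as a Picard-descent obstruction: $\delta_+(H_+)\in\Br k$ is identified with the Tits algebra $[A_+]$ attached to the relevant spin node of $D_5$, and then Lemma~\ref{lem:pmindep} gives the symmetry $[A_+]=0\iff[A_-]=0$. The paper instead argues each implication by hand. For the forward direction it uses that $\Spin(V,q)$ is simply connected in characteristic $0$, so the projective action on $S_+=H^0(H_+)$ lifts to a linear representation; differentiating gives a map $\mathrm{Cl}_2(q)\to\End(S_+)$, and since $k\oplus\mathrm{Cl}_2(q)$ generates $C_0(q)$, the half-spinor module is defined over $k$. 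For the backward direction it builds the embedding $\Sigma_+\hookrightarrow\P(S_+)$ explicitly from pure spinors and Galois descent. Your approach is cleaner and more conceptual, and in fact the Tits-algebra identification works over any field of characteristic $\neq 2$, so your argument does not genuinely need the characteristic-$0$ hypothesis (your stated justification for it --- vanishing of $H^1(\mathcal{O})$ for the descent sequence --- is not the real issue; the Hochschild--Serre sequence only needs $H^0(\Sigma_{+,\overline{k}},\mathbb{G}_m)=\overline{k}^\times$). The paper's approach, by contrast, is more elementary in that it avoids invoking the theory of Tits algebras, at the cost of the representation-lifting step that forces characteristic $0$.
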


\begin{proof}
(1) The assertion follows from \cite[p.366, footnote]{Wang}.

(2) First, suppose that Assumption (b) is satisfied.
{\cora Then the action of $\Spin (V, q) (k)$ on $\Sigma_{+}$ induces a projective representation of 
$\Spin (V,q)$ on $S_{+} = H^0 (H_{+})$. 
Since $\Spin (V,q)$ is a semi-simple simply connected algebraic group in characteristic $0$, a $\G_{m}$-central extension of $\Spin (V,q)$ is split.
Therefore, we can take a lift $\rho \colon \Spin (V,q) \rightarrow \GL (S_{+})$.
Moreover, the base change $\rho_{\overline{k}}$ is the half-spinor representation of $\Spin (V_{\overline{k}}, q_{\overline{k}})$ (cf. \cite[Section 1]{Mukaicurve}).
By taking a derivation, we have a morphism of Lie algebras
\[
d \rho \colon \mathrm{Cl}_{2} (q) \rightarrow \End (S_{+}).
\]
Note that $\rho_{\overline{k}}$ is induced by the half-spinor Clifford representation $\rho' \colon C_{0}(q_{\overline{k}}) \rightarrow \End (S_{+, \overline{k}})$ and the base change $d\rho_{\overline{k}}$ equals to $\rho'|_{\mathrm{Cl}_{2} (q_{\overline{k}})}$.
Therefore, since $\rho'$ is a $\overline{k}$-algebra homomorphism and $k \oplus \mathrm{Cl}_{2}(q_{\overline{k}})$ generates $C_{0} (q)$ as a $k$-algebra, $\rho'$ is also defined over $k$.
}
In particular, by \cite[Chapter V, Theorem 2.5]{Lam}, we have $C_{0} (q) \simeq M_{16} (k) \times M_{16} (k).$
Next, suppose that we have $C_{0} (q) \simeq M_{16} (k) \times M_{16} (k).$
In this case, by \cite[Chapter V, Theorem 2.5]{Lam}, we have
$C(q) \simeq M_{32} (k) $.
Therefore, there exists a representation of $C(q)$ on a $32$-dimensional vector space $S$ whose restriction to $C_{0} (q)$ decomposes into $S_{+}^{\vee} \bigoplus S_{-}^{\vee}$, where $S_{+}$ and $S_{-}$ are $16$-dimensional vector spaces over $k$.
For any $v \in V_{\overline{k}}$, let $\varphi_{v} \in \End (S_{\overline{k}})$ be a corresponding endomorphism.
For any Lagrangian subspace $U \subset V_{{\cred \overline{k}}}$, 
let $s_{U}$ be a pure spinor corresponding to $U$, i.e., $s_{U}\in S_{+ \overline{k}} \cup S_{- \overline{k}}$ such that $\varphi_{u} (s_{U}) =0$ for any $u \in U.$
Then the map
\[
\Sigma_{+}(V,\varphi_{q})_{\overline{k}} \rightarrow \P (S_{+ \overline{k}})\simeq \P^{15}_{\overline{k}}
\]
defined by
\[
U \mapsto [s_{U}]
\]
is an embedding (cf.\ \cite[Section 1]{Mukaicurve}).
By the Galois descent, this map descends to an embedding
\[
\Sigma_{+} (V,\varphi_q) \hookrightarrow \P (S_{+}).
\]
The hyperplane section of $\P (S_{+})$ defines an ample class $H_{+}$, as desired.
\end{proof}

\subsection{Dual construction as a family}

Recently, Kuznetsov \cite{Kuznetsovfamily} studied relative prime Fano threefolds of genus $7$ over $\Q$-schemes.
In this section, we confirm that such a family corresponds to a relative curve of genus $7$ without $g_4^1$ (although it might be well-known to experts).

We state the curve analogue of Kuznetsov's result in the following.

\begin{prop}
\label{prop:curvesigmafamily}
Let $B$ be a {\cred connected locally Noetherian} {\cora $\Q$-}scheme, and $f \colon C \rightarrow B$ a curve of genus 7 without $g_4^1.$
Let
\[
\iota \colon
C \hookrightarrow \P_{B}(f_{\ast} \cO_{C} (K_{C/B}))
\]
be the canonical embedding over $B$, {\cred $\mathcal{I}_{C}$ the ideal sheaf of $C$ in $\P_{B}(f_{\ast} \cO_{C} (K_{C/B}))$,} and $p$ the projection from $\P_{B}(f_{\ast} \cO_{C} (K_{C/B}))$ to $B$.
\begin{enumerate}
\item
The coherent sheaf $W_C :=p_{\ast} (\mathcal{I}_{C} (2))$ is a locally free sheaf of rank 10 on $B$, and the kernel $F_C$ of a natural map
\[
S^2 W_C \rightarrow p_{\ast} (\mathcal{I}_{C}^{2} (4)) 
\]
{\cred defines a non-degenerate quadratic form $\varphi_{C} \colon F_{C} \hookrightarrow S^2 W_{C}$ on $W_{C}^{\vee}$.}
\item
We put 
\[
E_C:= (N_{C/\P_{B}(f_{\ast}\cO_{C} (K_{C/B}))}^{\ast} \otimes \cO_{C} (2 K_{C/B})),
\]
which is a locally free sheaf on $C$ of rank $5$.
Then the morphism $f^{\ast} W_C \rightarrow E_C$ is a surjection and
defines a morphism 
\[
\iota_{C} \colon C \rightarrow \Gr_B (W_C,5) 
\]
which is an embedding factoring through $\OGr_B (W_C, \varphi_{C})$.
\item
The orthogonal Grassmannian $\OGr_B (W_C, \varphi_{C})$ satisfies the Assumptions (a) and (b) in Subsection \ref{subsection:spin10}. In particular, $\OGr_B (W_{C}, \varphi_{C})$ decomposes into two connected components $\Sigma_{C,+}$ and $\Sigma_{C,-}$, and we have locally free sheaves $S_{C,+}$ and $S_{C,-}$ on $B$ with an embedding
\[
\Sigma_{C, \pm} \hookrightarrow \P_{B}(S_{C,\pm}).
\]
Moreover, $\iota_{C}$ factors through one connected component $\Sigma_{C,+}$, and there exists a locally free quotient $S_{C,+} \twoheadrightarrow A_{C}$ of rank $7$ such that 
\[
C = \Sigma_{C,+} \cap \P_{B} (A_{C}) \subset \P_{B} (S_{C,+}).
\]
\item 
Let $f' \colon C' \rightarrow B$ be a curve of genus $7$ without $g_4^1$, and $g \colon C \rightarrow C'$ be an isomorphism over $B$.
Then there exists an isomorphism $\widetilde{g} \colon \P_{B} (S_{C, +}) \simeq \P_{B} (S_{C',+})$ such that $\widetilde{g}$ restricts to $\P_{B}(A_{C}) \simeq \P_{B}(A_{C'})$ and induces isomorphisms $\Sigma_{C,+} \simeq \Sigma_{C',+}$ and $g \colon C \simeq C'$.
\end{enumerate}
\end{prop}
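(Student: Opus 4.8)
The plan is to prove everything by reduction to the geometric fibers, where the assertions are Mukai's classical description of canonical genus-$7$ curves without $g_4^1$ (\cite{Mukaicurve}), and then to globalize the sheaf-theoretic statements by cohomology and base change, following the strategy Kuznetsov used in the Fano threefold case (\cite{Kuznetsovfamily}). Over an algebraically closed field of characteristic $0$, a canonical curve $C\subset \P^6$ of genus $7$ without $g_4^1$ is projectively normal with ideal generated by quadrics (Petri), so that $W=H^0(\P^6,\mathcal{I}_C(2))$ has dimension $\binom{8}{2}-(3g-3)=10$; this space carries a canonical non-degenerate quadratic form (the net-of-quadrics form, i.e.\ the unique quadratic relation in $S^2W$), and $C$ is recovered as a transverse linear section $\Sigma_+\cap \P^6$ of the spinor tenfold of $(W,q)$, where $\P^6=\P(A)$ with $A$ the canonical space $H^0(C,K_C)$. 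The characteristic-$0$ hypothesis is used both for this projective normality and for the Clifford-algebra criterion of Proposition \ref{prop:assumptionab}.

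For (1) and (2), first I would show $W_C=p_*(\mathcal{I}_C(2))$ is locally free of rank $10$ and compatible with base change, by checking that $h^0(\mathcal{I}_{C_b}(2))=10$ with vanishing higher direct images fiberwise (a consequence of projective normality of the fibers) and invoking cohomology and base change. The same device applied to the multiplication map $S^2W_C\to p_*(\mathcal{I}_C^2(4))$ shows its image and kernel are locally free and commute with base change; since the kernel has fiberwise rank $1$, $F_C\hookrightarrow S^2W_C$ is a line subbundle, i.e.\ a locally direct summand, and it is fiberwise non-degenerate by Mukai, yielding the non-degenerate $\varphi_C$ of the definition in Subsection \ref{subsection:spin10}. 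For (2), the conormal exact sequence over $C$ realizes $E_C$ as a rank-$5$ quotient of $f^*W_C$; the universal property of $\Gr_B(W_C,5)$ then defines $\iota_C$, and the vanishing $S^2(f^*W_C\to E_C)(\varphi_C(F_C))=0$ (the net-of-quadrics relation restricts to zero on $C$) shows $\iota_C$ factors through $\OGr_B(W_C,\varphi_C)$; that it is a closed immersion is checked on fibers (Mukai) and propagates by properness and Nakayama.

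For (3), Assumption (a) follows from the existence of $\iota_C$: composing with the Stein factorization gives a $B$-morphism $C\to B'$, and since $B$ is connected and $f$ has geometrically connected fibers, its image is a section of the finite \'etale double cover $B'\to B$, which therefore splits; this produces the components $\Sigma_{C,\pm}$. The delicate point is Assumption (b): I would produce the relative half-spinor bundle and the square root $H_+$ of the Pl\"ucker class $M|_{\Sigma_{C,+}}$ from the even Clifford algebra $C_0(\varphi_C)$, which is an Azumaya $\cO_B$-algebra whose center is split by Assumption (a); fiberwise its two factors are matrix algebras by Proposition \ref{prop:assumptionab}(2), and I would descend this to a global splitting $C_0(\varphi_C)\simeq \End(S_{C,+}^{\vee})\times \End(S_{C,-}^{\vee})$ to obtain $S_{C,\pm}$, the embeddings $\Sigma_{C,\pm}\hookrightarrow\P_B(S_{C,\pm})$, and $M|_{\Sigma_{C,+}}\simeq H_+^{\otimes2}$ (with $H_-$ then handled by Lemma \ref{lem:pmindep}). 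Once these are in place, $\iota_C$ factors through $\Sigma_{C,+}$, the linear span of its image identifies the rank-$7$ quotient $A_C$ with the canonical pushforward $f_*\cO_C(K_{C/B})$, and the scheme-theoretic equality $C=\Sigma_{C,+}\cap\P_B(A_C)$ is checked fiberwise (transversality of Mukai's linear section) and holds in the family because both sides are $B$-flat with the same Hilbert polynomial.

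Finally, (4) is formal: an isomorphism $g\colon C\simeq C'$ over $B$ carries $K_{C/B}$ to $g^*K_{C'/B}$, hence induces $\P_B(f_*\cO_C(K_{C/B}))\simeq\P_B(f'_*\cO_{C'}(K_{C'/B}))$ sending $\mathcal{I}_C$ to $\mathcal{I}_{C'}$; since every subsequent object ($W_C$, $\varphi_C$, $\OGr$, $S_{C,\pm}$, $A_C$) was constructed canonically from this data, $g$ induces the asserted compatible isomorphisms. I expect the main obstacle to be the global verification of Assumption (b)---that is, descending the fiberwise half-spinor representation of Proposition \ref{prop:assumptionab}(2) to a genuine rank-$16$ bundle over $B$ (the vanishing of the relevant Brauer/relative-Picard obstruction), since Brauer classes are not detected fiberwise; the secondary technical point is upgrading the fiberwise linear-section description to the scheme-theoretic equality $C=\Sigma_{C,+}\cap\P_B(A_C)$ over all of $B$.
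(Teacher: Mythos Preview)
Your treatment of (1), (2), (4) and of Assumption~(a) in (3) matches the paper's proof essentially step for step (fiberwise vanishing plus cohomology and base change; Mukai's results for the fiberwise geometry; functoriality of the whole construction in $g$). The one substantive divergence is in Assumption~(b), and here your proposal has a genuine gap.

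You propose to obtain $S_{C,\pm}$ by globally splitting the even Clifford algebra $C_0(\varphi_C)$ as $\End(S_{C,+}^\vee)\times\End(S_{C,-}^\vee)$, and you correctly flag that the obstruction is a Brauer class on $B$ not detected fiberwise. But you give no mechanism to kill it: the Clifford route at best tells you the obstruction has order dividing $16$, and combining with index~$8$ of the fibers ($\beta^8=1$) still leaves the possibility $\beta\neq 1$. The paper's argument is different and is precisely the missing idea. One works directly with the relative half-Pl\"ucker class $H_C\in\Pic_{\Sigma_{C,+}/B}(B)$ and its Brauer obstruction $\beta\in H^2_{\et}(B,\G_m)$. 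The index bound gives $\beta^8=1$ via \cite[Corollary~2.17]{Kuznetsovfamily}. The new input is the $\beta$-twisted sheaf $p^+_*(\mathcal{I}^+\otimes H_C)$, where $\mathcal{I}^+$ is the ideal of $C$ in $\Sigma_{C,+}$: one checks $H^1$ of this twist vanishes on each fiber (this is Lemma~\ref{lem:mukaincohomology}), so it is a $\beta$-twisted locally free sheaf of rank~$9$, forcing $\beta^9=1$. Since $\gcd(8,9)=1$, one concludes $\beta=1$, and $H_C$ is an honest line bundle. In other words, the presence of the curve $C$ inside $\Sigma_{C,+}$ is used not only to split the double cover (Assumption~(a)) but also to produce an auxiliary twisted bundle of rank coprime to the index, which is what kills the Brauer class. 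Your Clifford-algebra approach does not see this rank-$9$ object and therefore cannot conclude.

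A smaller point: once $H_C$ is a line bundle, the paper defines $A_C$ directly as $\coker\bigl(p^+_*(\mathcal{I}^+\otimes H_C)\to p^+_*H_C\bigr)$, so the equality $C=\Sigma_{C,+}\cap\P_B(A_C)$ holds by construction rather than by a separate fiberwise-plus-flatness check.
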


\begin{proof}
(1) {\cred Note that the sheaf $\mathcal{I}_{C} (2)$ is flat over $B$ since so are $C$ and $\P_{B} (f_* \cO_{C}(K_{C/B}))$.
We have 
$H^1 (I_{C_{k(b)}}(2)) =0$
for any point $b \in B$ by Noether's theorem, where $I_{C_{k(b)}}$ is the ideal sheaf of the closed embedding $\iota \times_{B} \Spec k(b)$.
Therefore, the coherent sheaf $W$ is a locally free sheaf of rank 10 by the cohomology and base change theorem and Noether's theorem (see \cite[(3.1)]{Mukaicurve}).}
{\cora Next, we shall show that $F_{C}$ is a locally free sheaf of rank 1. We can show that the map $S^2(W_{C}\otimes_{\cO_B} k(b)) \rightarrow H^0(I_{C_{k(b)}}^2(4))$
is surjective with $1$-dimensional kernel, and $H^1 (I_{C_{k(b)}}^2(4)) =0$ by 
\cite[Main theorem, Section 4, and Corollary 5.3]{Mukaicurve} and \cite[Corollary 4.4]{Kuznetsovspinor}.
Note that since the conormal bundle $\mathcal{I}_{C}/\mathcal{I}_{C}^2$ is locally free on $C$, $\mathcal{I}_{C}^2$ is flat over $B$.
Therefore, by the cohomology and base change theorem, $p_*(\mathcal{I}^2_{C}(4))$ is locally free.
Since $S^2 W_{C} \rightarrow p_*(\mathcal{I}^2_{C} (4))$ is surjective, 
$F_{C}$ is torsion-free and locally generated by one section, which means that $F_{C}$ is a rank $1$ locally direct summand of $S^2 W_{C}$.
Moreover, $F_{C}$ defines non-degenerate quadratic form $\varphi_{C}$ since so it is on any fiber (\cite[Theorem 4.2]{Mukaicurve}).
}

(2) The morphism $p^{\ast} W_{C} \rightarrow E_{C}$ is surjective by \cite[Section 3]{Mukaicurve}.
Moreover, $\iota_{C}$ is an embedding by \cite[Proposition 3.3]{Mukaicurve}, and $\iota_{C}$ factors through $\OGr_B (W_{C}, \varphi_{C})$ by the definition of $\varphi_{C}$ (cf.\ \cite[Proof of Theorem 0.4]{Mukaicurve}).

(3) We denote the structure morphism of $\OGr_B (W_{C}, \varphi_{C})$ by $p_{\OGr}$.
Since $C \hookrightarrow \OGr_B (W_{C}, \varphi_{C})$ induces the morphism
\[
p_{\OGr \ast} \cO_{\OGr_B (W_{C}, \varphi_{C})} \rightarrow f_{\ast} \cO_{C} = \cO_{B}
\]
which is a section of $\cO_{B} \rightarrow p_{\OGr \ast} \cO_{\OGr_B (W, \varphi_{C})}$, Assumption (a) is satisfied. 
We denote the component containing $C$ by $\Sigma_{C,+}$.
Let $H_{C} \in \Pic_{\Sigma_{C,+}/B} (B)$ be the relative fundamental class (cf.\ \cite[Corollary 2.6]{Kuznetsovfamily}).
Let $\beta$ be the image of $H$ via the natural map 
\[
\Pic_{\Sigma_{C, +}/B} (B) \rightarrow H^2_{\et} (B, \G_{m}).
\]
Since any geometric fiber of $\Sigma_{C,+}/B$ is a Fano variety of Picard rank 1 and index $8$ {\cred by Proposition \ref{prop:MukaivssplitMukai} (see also \cite[Proposition 2.1]{Mukaicurve})},
we have
$\beta^8 =1$
by \cite[Corollary 2.17]{Kuznetsovfamily}.
We denote the structure morphism of $\Sigma_{C,+}$ by $p^{+}$, and the ideal sheaf of $C$ in $\Sigma_{C,+}$ by $\mathcal{I}^{+}$.
{\cred We consider
\[
p^+_* (\mathcal{I}^{+} \otimes H_{C})
\]
as a $\beta$-twisted sheaf (cf.\ \cite[Subsection 2.2]{Kuznetsovfamily}).
Since $H^1(\mathcal{I}^{+} \otimes H_{C}|_{\Sigma_{C,+, k(b)}})=0$ for any point $b\in B$ by Lemma \ref{lem:mukaincohomology}, this sheaf is a $\beta$-twisted locally free sheaf.
Moreover, this sheaf is of rank $9$ (cf.\ \cite[Proposition 2.2]{Mukaicurve}), and we have $\beta^{9} =1$.
Therefore, we have $\beta=1$ and $H_{C}$ comes from an actual line bundle, i.e., Assumption (b) is satisfied.
The sheaf
\[
S_{C,+} := p_{\ast}^+ H_{C}
\]
is a locally free sheaf by the Kodaira vanishing (\cite[Proposition 2 and Theorem 2]{Mehta-Ramanathan}) and the cohomology and base change theorem.
Moreover, this sheaf is of rank 16 (see \cite{Mukaicurve}, see also Lemma \ref{lem:mukaincohomology}).
We put
\[
A_{C} := \coker ( p_{\ast}^+ (\mathcal{I}^{+} \otimes H_{C}) \rightarrow  p_{\ast}^+ H_{C})
\]
which is a locally free sheaf of rank $7$.
Then we have the desired equality by definition.}

(4) The morphism $g$ induces a natural isomorphism
\[
f'_{\ast} \cO_{C'} ( K_{C'/B} )\simeq f'_{\ast} g_{\ast} g^{\ast} \cO_{C'}( K_{C'/B}) \simeq f_{\ast} \cO_{C}( K_{C/B}),
\]
so we have an isomorphism
\[
\P_{B} (f_{\ast} \cO_{C}( K_{C/B})) \simeq \P_{B} (f'_{\ast} \cO_{C'}( K_{C'/B})).
\]
By this isomorphism, we have isomorphisms $W_C \simeq W_{C'}$ and $F_C \simeq F_{C'}$ compatible with $\varphi_{C}$ and $\varphi_{C'}$.
In particular, we obtain an isomorphism \[ \Gr_B (W_{C}, 5) \simeq \Gr_B (W_{C'}, 5)
\]
compatible with $\iota_{C}$ and $g$.
This isomorphism clearly restricts to $g_{+} \colon \Sigma_{C,+} \simeq \Sigma_{C',+}$ and $g_+$ induces an isomorphism
\[
\P_{B} (p_{C, +,\ast} H_{C}) \simeq \P_{B}( p_{C', +, \ast} H_{C'}),
\]
since $g_{+}^{\ast} (H_{C'}) \otimes H_{C}^{-1}$ is relatively trivial over $B$. 
This isomorphism is no other than the desired isomorphism $\widetilde{g}$.
\end{proof}

{\cora
\begin{rem}
\label{rem:basedvr}
In the above proposition, we assume that the base scheme $B$ is a $\Q$-scheme since we use the surjectivity of $S^2(W_{C}\otimes_{\cO_B} k(b)) \rightarrow H^0(I_{C_{k(b)}}^2(4))$ and the vanishing of $H^1 (I_{C_{k(b)}}^2(4))$. We do not know whether the same hold true also in positive characteristic.

However, if $B$ is a regular connected scheme of dimension less than 2, (which is not necessarily $\Q$-scheme,) then all the statements in the above proposition hold.
Only the part showing that $F_{C}$ is a locally direct summand of $S^2 W_C$ is non-trivial. To show this, we may assume that $B$ is a spectrum of a discrete valuation ring.
The kernel of $S^2(W_{C}\otimes_{\cO_B} k(b)) \rightarrow H^0(I_{C_{k(b)}}^2(4))$ is 1-dimensional as in the above proof.
Moreover, the sheaf $p_* \mathcal{I}^2_{C}(4)$ is torsion-free since it is a subsheaf of $p_* \mathcal{I}^2_{C}(m)$ with sufficiently large $m$, which is a locally free sheaf by the cohomology and base change theorem.
Thus the image of $S^2 W_{C} \rightarrow p_*(\mathcal{I}^2_{C} (4))$ is torsion-free and therefore locally free.
Moreover, $F_{C} \subset S^2 W_{C}$ is also torsion free and hence $F_C$ is a rank $1$ locally direct summand of $S^2 W_{C}$.
\end{rem}
}

\begin{prop}
\label{prop:Xsigmafamily}
Let $B$ be a {\cred connected locally Noetherian} $\Q$-scheme, and $f\colon X \rightarrow B$ a prime Fano threefold of genus $7$.
Let 
\[
\iota \colon
X \hookrightarrow \P_{B}( f_{\ast} \cO_{X} (- K_{X/B}))
\]
be the anti-canonical embedding over $B$ (which exists by cohomology and base change theorem and Definition-Proposition \ref{defn-prop:Xtocurveflop}.(1)),
{\cred $\mathcal{I}_{X}$ an ideal sheaf of $X$ in $\P_{B}(f_{\ast} \cO_{X} (-K_{X/B}))$,}
and $p$ a projection from
$\P_{B}(f_{\ast} \cO_{X}(-K_{X/B}))$ to $B$.
\begin{enumerate}
\item
The coherent sheaf $W_{X}:= p_{\ast} (\mathcal{I}_{X} (2))$ is a locally free sheaf of rank 10 on $B$,
and the kernel $F_{X}$ of a natural map
\[
S^{2} W_{X} \rightarrow p_{\ast} (\mathcal{I}_{X}^{2} (4))
\]
{\cred defines a non-degenerate quadratic form $\varphi_{X} \colon F_{X} \hookrightarrow S^2 W_{X}$ on $W_{X}^{\vee}$.}
\item 
We put 
\[
E_{X} := N_{X/\P_{B}(f_{\ast}\cO_{X}(-K_{X/B}))}^{\ast} \otimes \cO_{X}(-2 K_{X/B}),
\]
which is a locally free sheaf of rank $5$.
The morphism $f^{\ast} W_{X} \rightarrow E_{X}$ defines a morphism 
\[
\iota_{X} \colon X \rightarrow \Gr_B (W_{X}, 5) 
\]
which is an embedding factoring through $\OGr_B(W_{X}, \varphi_{X})$.
\item
The similar statements as in Proposition \ref{prop:curvesigmafamily} (3) and (4) holds true.
\end{enumerate}
\end{prop}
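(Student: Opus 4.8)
The plan is to transport the proof of Proposition \ref{prop:curvesigmafamily} almost verbatim, replacing each curve-specific fiberwise input by its counterpart for a prime Fano threefold of genus $7$. For a point $b \in B$, write $X_b$ for the anticanonically embedded fiber in $\P^8$ (note $h^0(-K_{X_b}) = 9$). The fiberwise facts I would invoke are that $X_b$ is a transversal linear section of the spinor tenfold $\Sigma_{10} \subset \P^{15}$, that it is scheme-theoretically cut out by quadrics with $h^0(I_{X_b}(2)) = 10$ and $H^1(I_{X_b}(2)) = 0$, that the multiplication $S^2 H^0(I_{X_b}(2)) \to H^0(I_{X_b}^2(4))$ is surjective with one-dimensional kernel, that $H^1(I_{X_b}^2(4)) = 0$, and that this kernel defines a non-degenerate quadratic form. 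These are the threefold analogues, due to Mukai \cite{Mukaicurve} and Kuznetsov \cite{Kuznetsovspinor}, of the curve facts used in the proof of Proposition \ref{prop:curvesigmafamily}; as in Remark \ref{rem:basedvr}, it is these characteristic-$0$ statements that force $B$ to be a $\Q$-scheme.

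For (1), flatness of $X$ and of $\P_B(f_*\cO_X(-K_{X/B}))$ over $B$ gives flatness of $\mathcal{I}_X(2)$, and the fiberwise vanishing $H^1(I_{X_b}(2)) = 0$ together with the constant $h^0 = 10$ yields, by cohomology and base change, that $W_X$ is locally free of rank $10$. For $F_X$ I would run the torsion-freeness argument from the second half of the proof of Proposition \ref{prop:curvesigmafamily}: the conormal sheaf of $X$ is locally free, so $\mathcal{I}_X^2$ is flat and $p_*(\mathcal{I}_X^2(4))$ is locally free; fiberwise surjectivity with one-dimensional kernel then forces $F_X$ to be a rank-$1$ locally direct summand of $S^2 W_X$, and fiberwise non-degeneracy makes $\varphi_X$ a non-degenerate quadratic form on $W_X^\vee$. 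For (2), the codimension of $X_b$ in $\P^8$ is $5$, so $E_X$ is locally free of rank $5$; the surjection $f^* W_X \twoheadrightarrow E_X$, the embedding $\iota_X \colon X \to \Gr_B(W_X,5)$, and its factoring through $\OGr_B(W_X,\varphi_X)$ all follow from the construction of $\varphi_X$ exactly as in \cite[Section 3 and Proposition 3.3]{Mukaicurve}.

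For (3), Assumption (a) holds because $X \hookrightarrow \OGr_B(W_X,\varphi_X)$ produces a section of $\cO_B \to p_{\OGr *}\cO_{\OGr_B(W_X,\varphi_X)}$; call the component containing $X$ by $\Sigma_{X,+}$. The class $\beta \in H^2_{\et}(B,\G_m)$ of the relative fundamental divisor satisfies $\beta^8 = 1$, since each geometric fiber of $\Sigma_{X,+}$ is Fano of Picard rank $1$ and index $8$ (Proposition \ref{prop:MukaivssplitMukai}, \cite[Corollary 2.17]{Kuznetsovfamily}). The one numerical change from the curve case is that $X = \Sigma_{X,+} \cap \P_B(A_X)$ is now a codimension-$7$ linear section, so $A_X$ has rank $9$ and the $\beta$-twisted sheaf $p^+_*(\mathcal{I}^+ \otimes H_X)$ has rank $16 - 9 = 7$, giving $\beta^7 = 1$; since $\gcd(7,8) = 1$ this forces $\beta = 1$, so Assumption (b) holds and $H_X$ descends to a genuine line bundle. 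Then $S_{X,+} = p^+_* H_X$ is locally free of rank $16$ by Kodaira vanishing (\cite[Proposition 2 and Theorem 2]{Mehta-Ramanathan}, Lemma \ref{lem:mukaincohomology}) and cohomology and base change, $A_X = \coker(p^+_*(\mathcal{I}^+ \otimes H_X) \to p^+_* H_X)$ is locally free of rank $9$, and $X = \Sigma_{X,+} \cap \P_B(A_X)$ holds by construction; the analogue of Proposition \ref{prop:curvesigmafamily} (4) follows from the identical functoriality argument.

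The hard part is not the sheaf-theoretic bookkeeping, which is identical to the curve case, but the verification of the fiberwise cohomological inputs for prime Fano threefolds of genus $7$: that the net of quadrics through $X_b$ is exactly $10$-dimensional, that $H^1(I_{X_b}^2(4)) = 0$, and that the multiplication map has one-dimensional kernel defining a non-degenerate form. These are exactly the facts established by Mukai and Kuznetsov in setting up the spinor-tenfold description of genus-$7$ Fano threefolds, and, as in the curve case, they are available only in characteristic $0$.
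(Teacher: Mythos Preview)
Your proposal is correct and follows essentially the same approach as the paper, which gives only a one-line proof deferring to the curve case and to \cite[Proposition 5.11 and Remark 5.12]{Kuznetsovfamily}. You have in fact spelled out the adaptation in more detail than the paper does, including the correct numerical swap in part (3): for the threefold the linear section has codimension $7$, so $A_X$ has rank $9$ and the $\beta$-twisted sheaf $p^+_*(\mathcal{I}^+\otimes H_X)$ has rank $7$, yielding $\beta^7=\beta^8=1$ and hence $\beta=1$.
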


\begin{proof}
It is similar to the proof of Proposition \ref{prop:curvesigmafamily}.
See \cite[Proposition 5.11 and Remark 5.12]{Kuznetsovfamily}.
Since we should reduce it to the result over {\cred algebraically closed field in characteristic $0$} (cf.\ \cite[Section 6]{MukaiSugakuShintenkaiEnglish}), we need to work over $\Q$-schemes (see Proposition \ref{prop:Mukainfoldsigmafamily} for certain generalization).
\end{proof}

\begin{defn-prop}
\label{defn-prop:dualvariety}
\begin{enumerate}
\item 
Let $B$ be a {\cred connected locally Noetherian} $\Q$-scheme, and $f\colon C \rightarrow B$ a curve of genus 7 without $g_4^1$.
We take an embedding 
\[
C = \Sigma_{C,+} \cap \P_{B} (A_{C}) \hookrightarrow \Sigma_{C, +} \hookrightarrow \P_{B} (S_{C, +})
\]
given in Proposition \ref{prop:curvesigmafamily}.
Let $\Sigma_{C, +}^{\vee} \hookrightarrow \P_{B} (S_{C,+}^{\vee})$ be the projective dual of 
$
\Sigma_{C,+}\hookrightarrow \P_{B} (S_{C,+}).
$
Then 
\[
\beta(C) := \Sigma_{C,+}^{\vee} \cap \P_{B} (A_{C}^{\perp}) \subset \P_{B} (S_{C,+}^{\vee})
\]
is a prime Fano threefold of genus 7.
{\cred Even when the base scheme $B$ is \emph{non-connected} locally Noetherian $\Q$-scheme, we can define $\beta (C)$ by discussing each connected component separately.
We call $\beta(C)$ the \emph{dual} of $C$.
}
\item
Let $B$ be a {\cred connected locally Noetherian} $\Q$-scheme, and $f\colon X \rightarrow B$ a prime Fano threefold of genus 7.
We take an embedding 
\[
X = \Sigma_{X, +} \cap \P_{B} (A_{X})
\hookrightarrow \Sigma_{X, +} \hookrightarrow \P_{B} (S_{X, +})
\]
given in Proposition \ref{prop:Xsigmafamily}.
Then
\[
\Gamma(X) := \Sigma_{X,+}^{\vee} \cap \P_{B} (A_{X}^{\perp}) \subset \P_{B} (S_{X,+}^{\vee})
\]
is a curve of genus 7 without $g_4^1$.
{\cred Even when the base scheme $B$ is \emph{non-connected} locally Noetherian $\Q$-scheme, we can define $\Gamma (X)$ by discussing each connected component separately.
We call $\Gamma(X)$ the \emph{dual} of $X$.}
\end{enumerate}
\end{defn-prop}

\begin{proof}
(1) Since the morphism 
\[
\P_{\Sigma_{C,+}} (N_{\Sigma_{C,+}/\P_{B}(S_{C,+})})
\rightarrow \P_{B} (S_{C,+}^{\vee})
\]
is a projective bundle onto the image, the projective dual $\Sigma_{C,+}^{\vee}$ is flat over $B$, so $\Sigma_{C,+}^{\vee}$ is smooth over $B$ by Proposition \ref{prop:spinorduality}.
Then $\beta(C)$ is smooth over $B$ by \cite[Theorem 2.5.8]{Fu} and \cite[Lemma 3.3]{Kuznetsovspinor}.
Note that, for any geometric point $b$ of $B$, $\cO(-K_{\Sigma_{C,+,b}^{\vee}/ \kappa (b)}) = \cO(8)$, where $\cO(1)$ is a hyperplane section of $\P_{\kappa(b)} (S_{C,+ (\kappa(b))}^{\vee})$.
Indeed, this follows from Proposition \ref{prop:spinorduality} and \cite[(3.7)]{Kuznetsovfamily} in characteristic $0$, and can be verified by taking a lift even in positive characteristic.
Then the computation of index and degree follows from the adjunction.

(2) The assertion follows from the same argument as above and \cite[Proposition 2.2]{Mukaicurve}.
\end{proof}

\begin{prop}
\label{prop:dualcompatiblejacobian}
Let $k$ be a field of characteristic $0$.
Let $f \colon X \rightarrow \Spec k$ be a prime Fano threefold of genus $7$.
Let \[
 C = \Gamma (X) \rightarrow \Spec k
\]
be as in Proposition \ref{defn-prop:dualvariety}.
Then we have an isomorphism
\[
(\Ab_{X/k}^2, \Theta_{X}) \simeq (J_{C/k},\Theta_{C}).
\]
In particular, we have an isomorphism of $\Gal(\overline{k}/k)$-representations
\[
H^3_{\et} (X_{\overline{k}}, \Q_{\ell} (2)) \simeq H^1_{\et} (C_{\overline{k}}, \Q_{\ell} (1)).
\]
\end{prop}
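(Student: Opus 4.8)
The plan is to produce the isomorphism by a single $k$-rational construction and then pin down that it is the correct one by base change to $\C$, where Mukai's global Torelli theorem \cite{MukaiBrill} is available. The point to keep in mind throughout is that an isomorphism of (polarized) abelian varieties over $\overline{k}$ does \emph{not} descend to $k$ unless it is Galois-equivariant, so the entire difficulty is to realize Mukai's complex isomorphism by data defined over $k$. The algebraic representative $\Ab^2_{X/k}$ is tailor-made for this, since by Definition-Theorem \ref{defn-thm:representative} it is characterized by a universal property through a $\Gal(\overline{k}/k)$-equivariant regular homomorphism out of $\A^2(X_{\overline{k}})$.

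First I would extract from the duality construction of Definition-Proposition \ref{defn-prop:dualvariety} (equivalently, from the relative spinor embeddings of Propositions \ref{prop:curvesigmafamily} and \ref{prop:Xsigmafamily} applied over $B=\Spec k$) an algebraic correspondence $Z \in \CH^2(C \times_k X)$ defined over $k$ --- concretely the incidence cycle attached to the pair $C = \Sigma_{X,+}^{\vee}\cap \P(A_X^{\perp})$ and $X = \Sigma_{X,+}\cap \P(A_X)$ inside the spinor tenfold and its projective dual, or, what amounts to the same Hodge-theoretic data, the Fourier--Mukai kernel of Kuznetsov's non-commutative Brill--Noether equivalence. Such a $Z$ induces a map $\A^1(C_{\overline{k}}) = \Pic^0(C_{\overline{k}}) \to \A^2(X_{\overline{k}})$ on algebraically trivial cycles which is $\Gal(\overline{k}/k)$-equivariant precisely because $Z$ is defined over $k$. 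By the universal property of the algebraic representative (Definition-Theorem \ref{defn-thm:representative}), together with the fact that $J_{C/k}$ is the algebraic representative of $\A^1(C_{\overline{k}})=\Pic^0(C_{\overline{k}})$, this equivariant map is induced by a unique homomorphism of abelian varieties
\[
\psi \colon J_{C/k} \longrightarrow \Ab^2_{X/k}
\]
defined over $k$, and the formation of $\psi$ commutes with the base change of Definition-Theorem \ref{defn-thm:representative}(1).

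To see that $\psi$ is an isomorphism of \emph{polarized} abelian varieties I would base change to $\C$. Since being an isomorphism is insensitive to a faithfully flat base change, I may descend all the data to a finitely generated subfield $k_0 \subseteq k$ over which $X$ (hence $C=\Gamma(X)$, by functoriality of the construction) is already defined, and fix an embedding $k_0 \hookrightarrow \C$; then $(\Ab^2_{X/k})_{\C}\simeq J(X_{\C})$ and $(J_{C/k})_{\C}\simeq J(C_{\C})$ by Definition-Theorem \ref{defn-thm:representative}(2), and under these identifications $\psi_{\C}$ is exactly the isomorphism furnished by Mukai's Torelli theorem \cite{MukaiBrill}. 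As $\psi_{\C}$ is then an isomorphism, so is $\psi$ over $k$. For the polarizations, both $\Theta_X$ (Definition-Theorem \ref{defn-thm:representativepolarization}) and $\Theta_C$ are defined over $k$, and the class $\psi^{*}\Theta_X-\Theta_C$ in $\mathrm{NS}(J_{C/k})$ maps to $0$ in $\mathrm{NS}((J_{C/k})_{\C})$, because Mukai's isomorphism respects the theta polarizations and $(\Theta_X)_{\C}$ is the theta polarization by Definition-Theorem \ref{defn-thm:representativepolarization}(1); since $\mathrm{NS}(J_{C/k})\hookrightarrow \mathrm{NS}((J_{C/k})_{\C})$ is injective in characteristic $0$, I conclude $\psi^{*}\Theta_X=\Theta_C$ over $k$.

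Finally, the cohomological statement follows formally: since $\psi$ is a $k$-isomorphism it induces a $\Gal(\overline{k}/k)$-equivariant isomorphism on the first $\ell$-adic cohomology (equivalently, Tate modules) of the two abelian varieties, which I combine with the Galois-equivariant identifications $T_{\ell}\Ab^2_{X/k}\simeq H^3_{\et}(X_{\overline{k}},\Z_{\ell}(2))$ and $T_{\ell}J_{C/k}\simeq H^1_{\et}(C_{\overline{k}},\Z_{\ell}(1))$ of Proposition \ref{prop:representativeetale} (using Definition-Theorem \ref{defn-thm:representativepolarization}(4) for the compatibility of the Weil pairing with the cup product) to obtain $H^3_{\et}(X_{\overline{k}},\Q_{\ell}(2))\simeq H^1_{\et}(C_{\overline{k}},\Q_{\ell}(1))$. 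I expect the genuine obstacle to be the very first step: writing down the correspondence $Z$ over $k$ and verifying that its complex realization is precisely Mukai's map. Everything afterwards --- the isomorphy of $\psi$, the descent of the polarization, and the $\ell$-adic corollary --- is formal given Definition-Theorems \ref{defn-thm:representative} and \ref{defn-thm:representativepolarization}; the real content lies in certifying that the complex-analytic Torelli isomorphism is of algebraic, $k$-rational origin.
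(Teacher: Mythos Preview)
Your approach is essentially the one the paper takes: produce a $k$-rational correspondence on $X\times_k C$, use Galois-equivariance to get a $k$-morphism between the algebraic representatives, and then check over an algebraically closed field that it is an isomorphism of polarized abelian varieties. The cohomological consequence is deduced exactly as you say, via Proposition~\ref{prop:representativeetale} and Remark~\ref{rem:representative}.

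The one place the paper is sharper is precisely the step you flag as the obstacle. Rather than using a loosely specified incidence cycle and then matching its complex realization with Mukai's Torelli map, the paper writes down the Fourier--Mukai kernel explicitly: with $\mathcal{U}_{\pm}$ the universal rank~$5$ quotient bundles on $\Sigma_{\pm}$, there is a natural map $e\colon p_+^*\mathcal{U}_-^{\vee}\to p_+^*\mathcal{U}_+$ on $\Sigma_+\times_k\Sigma_-$ (Kuznetsov), and one sets $\mathcal{E}:=\coker e|_{X\times C}$, which is visibly defined over $k$. The correspondence is then $c_2(\mathcal{E})\in\CH^2(X\times_k C)$. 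For the verification over $\overline{k}$ the paper does \emph{not} reduce to $\C$ and invoke \cite{MukaiBrill}; instead it cites \cite[Proposition~8.4]{KuznetsovProkhorov}, which shows directly (via the semi-orthogonal decomposition $D^{\mathrm{b}}(X_{\overline{k}})=\langle D^{\mathrm{b}}(C_{\overline{k}}),\cO,\mathcal{U}_+\rangle$ of \cite{Kuznetsovhyperplane}) that the maps $\A^2(X_{\overline{k}})\rightleftarrows\A^1(C_{\overline{k}})$ induced by $c_2(\mathcal{E})$ give an isomorphism $(\Ab^2_{X/\overline{k}},\Theta_{X,\overline{k}})\simeq(J_{C/\overline{k}},\Theta_{C,\overline{k}})$. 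This dissolves your ``genuine obstacle'': there is no need to identify anything with Mukai's transcendental map, because the needed isomorphism (including compatibility with the polarizations) is already available algebraically. Your descent-to-$k_0\hookrightarrow\C$ and N\'eron--Severi injectivity arguments are then unnecessary.
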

\begin{proof}
This follows from the proof of \cite[Proposition 8.4]{KuznetsovProkhorov} and \cite[Subsection 6.2]{Kuznetsovhyperplane}.
We include the proof for the sake of completeness.
Let $X \hookrightarrow \Sigma_{+} \subset\P(S_{+})$ and $C \hookrightarrow \Sigma_{-} \subset \P(S_{-})$ be as in Definition-Proposition \ref{defn-prop:dualvariety}, i.e., we have $S_{-} = S_{+}^{\vee}$ and $\Sigma_{-} = \Sigma_{+}^{\vee}$.
Let $\mathcal{U}_{+}$ (resp.\ $\mathcal{U}_-$) be the universal rank 5 quotient bundle on $\Sigma_{+}$ (resp.\ $\Sigma_{-}$).
Let $p_{+}$ (resp.\ $p_-$) be the projection to $\Sigma_+$ (resp.\ $\Sigma_-$) from $\Sigma_{+} \times_{k} \Sigma_{-}$. 
Then there exists a natural morphism 
\[
e \colon p_+^* \cO_{\Sigma_+} \otimes p_-^* \mathcal{U}_-^{\vee} 
\rightarrow 
p_+^* \mathcal{U}_+ 
\otimes p_-^* \cO_{\Sigma_{-}}
\]
(see \cite[\S1 and Subsection 6.2]{Kuznetsovhyperplane} for the precise definition).
We put
\[
\mathcal{E} := \coker e |_{X \times C}.
\]
By \cite[Theorem 2.46 and Theorem 4.19]{Kuznetsovhyperplane}, the Fourier-Mukai functor with the Fourier-Mukai kernel $\mathcal{E}_{\overline{k}}$ gives a fully faithful functor 
\[
\Phi_{\mathcal{E}_{\overline{k}}} \colon D^{\mathrm{b}} (C_{\overline{k}}) \rightarrow D^{\mathrm{b}} (X_{\overline{k}})
\]
which induces a semi-orthogonal decomposition \footnote{We can also show that $D^{\mathrm{b}} (X) = \langle D^{\mathrm{b}} (C), \cO_{\Sigma+}, \mathcal{U}_+ \rangle$. Indeed, the fully faithfulness of $\Phi_{\mathcal{E}}$ can be reduced to that of $\Phi_{\mathcal{E}_{\overline{k}}}$ by using \cite[Proposition 1.49 and Proposition 3.17]{Huybrechts}, and we have the one inclusion $\supset$. The other inclusion can be reduced to the case where $k=\overline{k}$ by \cite[Lemma 1.61]{Huybrechts}.}
\[
D^{\mathrm{b}} (X_{\overline{k}}) =
\langle
D^{\mathrm{b}} (C_{\overline{k}}), \cO_{\Sigma_{+ \overline{k}}}, \mathcal{U}_{+, \overline{k}} 
\rangle.
\]

Then by \cite[Proposition 8.4]{KuznetsovProkhorov}, the second Chern class $c_2 (\mathcal{E})$ gives maps
\[
\begin{tikzcd}
\A^2(X_{\overline{k}}) \arrow[r,yshift=0.6ex, "c_2(\mathcal{E})"] & \A^1(C_{\overline{k}}) \arrow[l,yshift=-0.5ex,"c_2(\mathcal{E})"]\\
\end{tikzcd}
\]
inducing an isomorphism 
\[
\varphi \colon (\Ab^2_{X/k, \overline{k}}, \Theta_{X, \overline{k}}) \simeq (J_{C/k, \overline{k}}, \Theta_{C, \overline{k}})
\]
(see also Definition-Theorem \ref{defn-thm:representative}).
Recall that $\A^2 (X_{\overline{k}}) \rightarrow \Ab^{2}_{X/k} (\overline{k})$ is a $\Gal(\overline{k}/k)$-equivariant isomorphism by Proposition \ref{prop:representativeetale} 
and Remark \ref{rem:representative}, 
and it is well-known that the same holds for $\A^1(C_{\overline{k}})$ and $J_{C/k}$.
Since $c_{2} (\mathcal{E})$ is $\Gal(\overline{k}/k)$-invariant, $\varphi$ descends to a desired isomorphism
\[
(\Ab_{X/k}^2, \Theta_{X}) \simeq (J_{C/k},\Theta_{C}).
\]

The later statement follows from Proposition \ref{prop:representativeetale} and Remark \ref{rem:representative}.
\end{proof}

\begin{defn}
\begin{enumerate}
\item 
We define the moduli stack of smooth curves $\mathcal{M}_{\mathrm{sm}}$ over $\Z$ as the fibered category with objects relative smooth projective curves $f \colon X \rightarrow B$ and morphisms defined by Cartesian diagrams.
We define $\mathcal{M} \subset \mathcal{M}_{\mathrm{sm}}$ over $\Z$ as the substack consisting of objects $X \rightarrow B$ that are relative curves of genus 7 without $g_4^1$. 
\item 
We define the moduli stack of Fano schemes $\mathcal{F}_{\mathrm{Fano}}$ over $\Z$ as the fibered category with objects Fano schemes $f \colon X \rightarrow B$ and morphisms defined by Cartesian diagrams.
We define $\mathcal{F} \subset \mathcal{F}_{\mathrm{Fano}}$ as the substack consisting of objects $X \rightarrow B$ that are prime Fano threefolds of genus $7$.
\item 
We define the moduli stack of principally polarized abelian schemes $\mathcal{A}$ over $\Z$ as the fibered category with objects principally polarized abelian schemes $(f \colon X \rightarrow B, \Lambda)$.
\end{enumerate}
\end{defn}

\begin{thm}
\label{thm:moduliarithmetictorelli}
\begin{enumerate}
\item
{\cora
The construction $\beta$ gives a morphism of stacks
\[
\beta \colon \mathcal{M}_{\Q} \rightarrow \mathcal{F}_{\Q}
\]
over $\Q$.
}
\item 
The construction $\Gamma$ gives a morphism of stacks
\[
\Gamma \colon \mathcal{F}_{\Q} \rightarrow \mathcal{M}_{\Q}
\]
over $\Q$.
\item 
Morphisms $\beta$ and $\Gamma$ are mutually inverse isomorphisms.
In particular, we have an isomorphism of stacks
\[
\mathcal{M}_{\Q} \simeq \mathcal{F}_{\Q}.
\]
\item
Let $j \colon \mathcal{M}_{\Q} \rightarrow \mathcal{A}_{\Q}$ be a period map defined by Jacobian of curves, which is an immersion.
Let $k$ be a field of characteristic $0$.
Let $f \colon X \rightarrow \Spec k$ be an object of $\mathcal{F}_{\Q}$ over $\Spec k$.
Then $j \circ \Gamma (f)$ is isomorphic to $(\Ab^{2}_{X/k}, \Theta_{X})$.
\end{enumerate}
\end{thm}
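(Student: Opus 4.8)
The plan is to establish the four parts of Theorem~\ref{thm:moduliarithmetictorelli} by promoting the fiberwise duality constructions $\beta$ and $\Gamma$ of Definition-Proposition~\ref{defn-prop:dualvariety} to morphisms of stacks, proving they are mutually inverse, and then identifying the composite with the period map via Proposition~\ref{prop:dualcompatiblejacobian}. For parts (1) and (2), I would first check functoriality: given an object $C \to B$ of $\mathcal{M}_{\Q}$ (a relative curve of genus $7$ without $g_4^1$), Proposition~\ref{prop:curvesigmafamily} produces the spinor tenfold data $\Sigma_{C,+} \hookrightarrow \P_B(S_{C,+})$ together with the rank-$7$ quotient $A_C$, and Definition-Proposition~\ref{defn-prop:dualvariety}(1) produces $\beta(C) \to B$ as a relative prime Fano threefold of genus $7$. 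To see that $\beta$ is a morphism of stacks, I would verify compatibility with base change along any morphism $B' \to B$ in $\mathcal{M}_{\Q}$: since $W_C$, $F_C$, $\varphi_C$, $S_{C,+}$, $A_C$ are all formed via pushforwards along projective bundles whose formation commutes with base change by the cohomology-and-base-change theorem (the relevant $H^1$-vanishings being exactly those invoked in the proof of Proposition~\ref{prop:curvesigmafamily}), the projective dual $\Sigma_{C,+}^\vee$ and the intersection $\beta(C) = \Sigma_{C,+}^\vee \cap \P_B(A_C^\perp)$ also commute with base change. The morphism structure on $\beta$, i.e.\ its action on Cartesian diagrams, then follows from part (4) of Proposition~\ref{prop:curvesigmafamily}, which provides the required isomorphisms $\widetilde{g}$ upgrading an isomorphism $g \colon C \simeq C'$ over $B$. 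The argument for $\Gamma$ is entirely symmetric, using Proposition~\ref{prop:Xsigmafamily} and Definition-Proposition~\ref{defn-prop:dualvariety}(2).

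For part (3), the key is biduality: I would show $\Gamma \circ \beta \simeq \mathrm{id}_{\mathcal{M}_\Q}$ and $\beta \circ \Gamma \simeq \mathrm{id}_{\mathcal{F}_\Q}$. The essential input is Proposition~\ref{prop:spinorduality}, which gives $(\Sigma_{+})^{\vee\vee} \simeq \Sigma_+$ as embedded spinor tenfolds, i.e.\ the projective dual is an involution at the level of the spinor varieties, together with the reflexivity identity $(A^\perp)^\perp = A$ for the linear sections. Concretely, starting from $C \to B$, the construction $\beta$ produces $X = \beta(C)$ sitting in $\Sigma_{C,+}^\vee \cap \P_B(A_C^\perp)$; applying $\Gamma$ to $X$ recovers the spinor datum $\Sigma_{X,+} = \Sigma_{C,+}^\vee$ with quotient $A_X = A_C^\perp$, and then $\Gamma(X) = (\Sigma_{C,+}^\vee)^\vee \cap \P_B((A_C^\perp)^\perp) \simeq \Sigma_{C,+} \cap \P_B(A_C) = C$. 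One must check that the spinor data attached to $X$ by the intrinsic construction of Proposition~\ref{prop:Xsigmafamily} agree canonically with the data $(\Sigma_{C,+}^\vee, A_C^\perp)$ read off from the construction of $\beta$; this is where I expect the main obstacle to lie, since it requires matching two a priori different recipes for extracting $(W, \varphi, S_\pm, A)$ from the same variety, and verifying the identifications are natural (functorial in $B$) rather than merely fiberwise. I would reduce this to the fiberwise statement over an algebraically closed field of characteristic $0$, where it is Mukai's theory, and then invoke the base-change compatibilities established in part (1) together with the rigidity coming from the finiteness of automorphisms (Proposition~\ref{prop:basic}(2)) to conclude that the fiberwise isomorphisms glue to an isomorphism of functors.

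For part (4), I would combine part (3) with Proposition~\ref{prop:dualcompatiblejacobian}. Fix a field $k$ of characteristic $0$ and an object $f \colon X \to \Spec k$ of $\mathcal{F}_\Q$; then $\Gamma(X)$ is the curve $C$ of genus $7$ without $g_4^1$ associated to $X$, and $j \circ \Gamma(X) = (J_{C/k}, \Theta_C)$ by definition of the period map. Proposition~\ref{prop:dualcompatiblejacobian} gives precisely the isomorphism of principally polarized abelian varieties $(\Ab^2_{X/k}, \Theta_X) \simeq (J_{C/k}, \Theta_C)$, so $j \circ \Gamma(f) \simeq (\Ab^2_{X/k}, \Theta_X)$ as required. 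The remaining assertion in the statement, that $j$ is an immersion, I would handle separately: injectivity on points follows from the global Torelli theorem for curves of genus $7$ (Torelli), since the image of $\Gamma$ consists of curves without $g_4^1$ and the Torelli map is a locally closed immersion on the locus of curves with finite automorphisms; the infinitesimal injectivity of the period map reduces via part (3) and the identification of tangent spaces to the non-degeneracy of the differential of the period map, which over $\C$ is classical and descends to $\Q$ by the base-change results of Definition-Theorem~\ref{defn-thm:representative}.
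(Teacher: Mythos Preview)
Your proposal follows the same overall approach as the paper: parts (1), (2), and (4) are handled exactly as the paper does, by assembling Propositions~\ref{prop:curvesigmafamily}, \ref{prop:Xsigmafamily}, \ref{prop:spinorduality}, Definition-Proposition~\ref{defn-prop:dualvariety}, and Proposition~\ref{prop:dualcompatiblejacobian}. Two remarks, however.

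For part (3), the paper does not argue via fiberwise reduction plus rigidity; it simply asserts that $\Gamma\circ\beta \simeq \mathrm{id}$ and $\beta\circ\Gamma \simeq \mathrm{id}$ hold ``by the construction''. The point you flag as the main obstacle---that the intrinsic data $(W_X,\varphi_X,\Sigma_{X,+},A_X)$ computed from $X=\beta(C)$ via Proposition~\ref{prop:Xsigmafamily} must coincide canonically with $(W_C,\varphi_C,\Sigma_{C,-},A_C^{\perp})$---is in fact dissolved by the way the propositions are set up: the restriction $H^0(I_{\Sigma_{\pm}}(2)) \to H^0(I_X(2))$ is an isomorphism (this is the content behind Lemma~\ref{lem:mukaincohomology}(4)--(5) and \cite[Corollary~2.5]{Mukaicurve}), so the intrinsic quadratic space of a smooth linear section is \emph{canonically} that of the ambient spinor tenfold. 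Once this is observed, biduality is literally $(\Sigma_-)^\vee = \Sigma_+$ and $(A^\perp)^\perp = A$, and no appeal to finiteness of automorphisms is needed. Your proposed route via fiberwise isomorphism plus rigidity is not wrong in spirit, but the gluing step (``fiberwise isomorphisms glue to an isomorphism of functors'') would itself require producing a canonical section of the $\mathrm{Isom}$-scheme, which amounts to the same canonical identification you were trying to avoid.

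For part (4), your final paragraph about proving that $j$ is an immersion is unnecessary: in the statement this is asserted as a known property of the Torelli map for curves, not something to be established here. The paper's proof of (4) is one line invoking Proposition~\ref{prop:dualcompatiblejacobian}.
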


\begin{proof}
(1) and (2) follows from Propositions \ref{prop:spinorduality}, \ref{prop:curvesigmafamily}, \ref{prop:Xsigmafamily}, and \ref{defn-prop:dualvariety}.
{\cora Note that, for any isomorphism $X \simeq Y$ in $\mathcal{M}_{\Q} (B)$, we can associate $\beta (X) \simeq \beta (Y)$ in $\mathcal{A}_{\Q} (B)$ by Proposition \ref{prop:curvesigmafamily} (4), and the same holds for $\Gamma$ by Proposition \ref{prop:Xsigmafamily} (3).
}
{\cora Then, by the construction, we have}
$
\Gamma \circ \beta \simeq \mathrm{id}
$
and 
$
\beta \circ \Gamma \simeq \mathrm{id},
$
hence we have (3).
(4) follows from Proposition \ref{prop:dualcompatiblejacobian}. 
\end{proof}

\begin{rem}
\label{rem:separatedness}
\begin{enumerate}
\item
$\mathcal{M}$ and $\mathcal{A}$ are smooth Deligne-Mumford stacks that are separated and of finite type over $\Z$.
By Theorem \ref{thm:moduliarithmetictorelli}, $\mathcal{F}_{\Q}$ is also a smooth Deligne-Mumford stack which is separated and of finite type.
The reason \cite{Javanpeykar-Loughran:GoodReductionFano}'s proof of the (ordinary) Shafarevich conjecture did not go through for prime Fano threefolds of genus 7 is that their method could not establish this separatedness. 
On the other hand, this separatedness follows also from recent advances in $K$-stability (\cite[Theorem C]{Abban-Zhuang} and \cite[Theorem 1.1 (3)]{Blum-Xu}).
However, for the cohomological Shafarevich conjecture, this separatedness alone does not suffice.

\item 
{\cora As in Remark \ref{rem:basedvr}, we can define $\beta$ on $\mathcal{M} (B)$ for any regular (possibly mixed characteristic) scheme $B$ with $\dim B \leq 1$.
In that sense, we expect that $\beta$ extends to $\mathcal{M}$.
On the other hand, to extend $\Gamma$ to a mixed characteristic base scheme, we need the classification of prime Fano threefolds of genus $7$ in positive characteristic, which is not known yet.
However, in Proposition \ref{prop:mixedtwo-ray}, we construct $C \in \mathcal{M}_{\mathrm{sm}} (k)$, which is the candidate of $\Gamma (X)$ on a $k$-valued point $X$, when $X$ admit (possibly ramified) characteristic 0 lift.
Even though we do not know whether $C \in \mathcal{M} (k)$ in general,
if $X = \beta (C')$ for $C' \in \mathcal{M} (k)$ in the above sense, we have $C \simeq C'$ by the separatedness of $\mathcal{M}_{\mathrm{sm}}$.
}
\end{enumerate}
\end{rem}

To conclude this subsection, we state two direct corollaries of Theorem \ref{thm:moduliarithmetictorelli}.
The first one is a generalization of \cite[Paragraph after Theorem 8.6]{MukaiBrill} over an arbitrary field of characteristic $0$.

\begin{cor}
\label{cor:usualarithmetictorelli}
Let $k$ be a field of characteristic $0$.
Let $X$ and $Y$ be prime Fano threefolds of genus 7 over $k$.
Let $(\Ab^2_{X/k}, \Theta_{X})$ and $(\Ab^2_{Y/k}, \Theta_{Y})$ be their algebraic representatives and polarizations given in Theorem \ref{defn-thm:representative} and Theorem \ref{defn-thm:representativepolarization}.
Suppose that there exists an isomorphism $\phi \colon (\Ab^2_{X/k}, \Theta_{X}) \simeq  (\Ab^2_{Y/k}, \Theta_{Y})$ as principally polarized abelian varieties.
Then $X$ and $Y$ are isomorphic over $k$.
\end{cor}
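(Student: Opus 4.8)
The plan is to transport the hypothesis across the duality of Theorem~\ref{thm:moduliarithmetictorelli}, turning the assertion about Fano threefolds into the classical Torelli problem for the dual curves, and then to invoke the Torelli theorem for curves over the (possibly non-closed) field $k$. The essential input is that the polarized algebraic representative of a prime Fano threefold of genus $7$ is, already over $k$ itself and not merely over $\overline{k}$, the polarized Jacobian of its dual curve (Proposition~\ref{prop:dualcompatiblejacobian}); this is what lets me replace the rather rigid Torelli statement for threefolds by the well-understood rigidity of curves.

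Concretely, first I would set $C := \Gamma(X)$ and $D := \Gamma(Y)$, which by Definition-Proposition~\ref{defn-prop:dualvariety} are curves of genus $7$ without $g_4^1$ over $k$. By Theorem~\ref{thm:moduliarithmetictorelli}(4), equivalently Proposition~\ref{prop:dualcompatiblejacobian}, there are isomorphisms of principally polarized abelian varieties over $k$
\[
(\Ab^2_{X/k}, \Theta_X) \simeq (J_{C/k}, \Theta_C), \qquad (\Ab^2_{Y/k}, \Theta_Y) \simeq (J_{D/k}, \Theta_D).
\]
Composing these with the given $\phi$ yields an isomorphism $(J_{C/k}, \Theta_C) \simeq (J_{D/k}, \Theta_D)$ of principally polarized abelian varieties over $k$. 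I would also record that $C$ and $D$ are non-hyperelliptic: a hyperelliptic curve of genus $7$ carries a $g_2^1$, and twice that pencil produces a $g_4^2 \supset g_4^1$, which is excluded by hypothesis.

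Next I would apply the Torelli theorem over the field $k$ to the non-hyperelliptic curves $C$ and $D$, concluding $C \simeq D$ over $k$. Over $\overline{k}$ the polarized Jacobian isomorphism is induced by an isomorphism of curves, and the non-hyperelliptic hypothesis is precisely what rigidifies the situation: there is a \emph{unique} pair $(f,e)$ with $f\colon C_{\overline{k}}\xrightarrow{\sim} D_{\overline{k}}$ and sign $e\in\{\pm 1\}$ such that the Jacobian isomorphism equals $e\cdot J(f)$. Since the isomorphism of Jacobians is defined over $k$ and the sign $e$ lies in $\{\pm 1\}\subset k$, it is Galois-stable; uniqueness then forces ${}^{\sigma}\!f = f$ for all $\sigma\in\Gal(\overline{k}/k)$, so $f$ descends to $k$. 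This Galois descent of the curve isomorphism — reconciling the harmless $-1$ ambiguity of the Jacobian with the rigidity of non-hyperelliptic curves, and hence making crucial use of the condition ``without $g_4^1$'' — is the step I expect to be the main (and essentially the only) obstacle.

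Finally, applying the construction $\beta$ and using that $\beta$ and $\Gamma$ are mutually inverse isomorphisms of stacks (Theorem~\ref{thm:moduliarithmetictorelli}(3)), I would conclude
\[
X \simeq \beta(\Gamma(X)) = \beta(C) \simeq \beta(D) = \beta(\Gamma(Y)) \simeq Y
\]
over $k$, as desired. Granting that the period map $j$ is a monomorphism on isomorphism classes, one could instead deduce $\Gamma(X)\simeq\Gamma(Y)$ immediately from $j(\Gamma(X))\simeq j(\Gamma(Y))$; but since that injectivity on isomorphism classes is exactly the content of the curve Torelli theorem above, it seems cleanest to extract it directly rather than appeal to the stacky immersion statement.
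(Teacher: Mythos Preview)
Your proof is correct and is precisely the unfolding of the paper's one-line argument (``This is a direct corollary of Theorem~\ref{thm:moduliarithmetictorelli}''): the paper packages the curve Torelli step into the assertion that $j$ is an immersion of stacks, whereas you spell out the Galois descent explicitly, but the content is the same.
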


\begin{proof}
This is a direct corollary of Theorem \ref{thm:moduliarithmetictorelli}.
\end{proof}

\begin{cor}
\label{cor:autfaithfulaction}
Let $k$ be an algebraically closed field of characteristic $0$, and $X$ a prime Fano threefold of genus $7$ over $k$.
For any prime number $\ell$, the natural action of $\Aut (X)$ on $H^3_{\et} (X, \Q_{\ell})$ is faithful.
\end{cor}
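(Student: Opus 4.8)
The plan is to reduce the faithfulness of the $\Aut(X)$-action on $H^3_{\et}(X,\Q_\ell)$ to the corresponding statement for the dual curve $C = \Gamma(X)$, using the equivalence of categories established in Theorem \ref{thm:moduliarithmetictorelli} together with the cohomological compatibility from Proposition \ref{prop:dualcompatiblejacobian}. First I would observe that since $k$ is algebraically closed of characteristic $0$, the isomorphism $\mathcal{M}_{\Q} \simeq \mathcal{F}_{\Q}$ of stacks identifies the automorphism groups $\Aut(X) \simeq \Aut(C)$ functorially (an isomorphism $g \colon X \simeq X$ in $\mathcal{F}_{\Q}(k)$ corresponds under $\Gamma$ to an isomorphism $C \simeq C$). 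Moreover, by Proposition \ref{prop:dualcompatiblejacobian} we have a $\Gal(\overline{k}/k)$-equivariant isomorphism $H^3_{\et}(X_{\overline{k}},\Q_\ell(2)) \simeq H^1_{\et}(C_{\overline{k}},\Q_\ell(1))$, and this identification is compatible with the automorphism actions on both sides (the isomorphism arises from the Fourier--Mukai kernel $\mathcal{E}$, which is functorial in the pair $(X,C)$).

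Next I would invoke the classical fact that the action of $\Aut(C)$ on $H^1_{\et}(C_{\overline{k}},\Q_\ell)$ is faithful for a smooth proper curve $C$ of genus $g \geq 2$: any nontrivial automorphism of such a curve acts nontrivially on cohomology. Since our $C$ has genus $7$, this applies. Concretely, an automorphism acting trivially on $H^1$ acts trivially on the Jacobian $J_C$ and hence on the polarization; by the Torelli theorem (and the fact that $C$ is not hyperelliptic — indeed $C$ carries no $g_4^1$, so certainly no $g_2^1$) the automorphism is trivial on $C$ itself. Thus the faithfulness on the curve side is standard.

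Combining these, if $\sigma \in \Aut(X)$ acts trivially on $H^3_{\et}(X,\Q_\ell)$, then under the identifications above the corresponding automorphism $\tau \in \Aut(C)$ acts trivially on $H^1_{\et}(C,\Q_\ell)$, forcing $\tau = \mathrm{id}_C$, and hence $\sigma = \mathrm{id}_X$ by the equivalence $\mathcal{F}_{\Q} \simeq \mathcal{M}_{\Q}$.

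The main obstacle I anticipate is verifying precisely that the cohomological isomorphism of Proposition \ref{prop:dualcompatiblejacobian} intertwines the two automorphism actions, rather than merely being an abstract isomorphism of Galois representations. This requires checking that the construction $\Gamma$ (and the associated Fourier--Mukai kernel, or equivalently the induced map on $\A^2$) is equivariant for automorphisms — essentially that the isomorphism $(\Ab^2_{X/k},\Theta_X) \simeq (J_{C/k},\Theta_C)$ is natural in $X$. This should follow from the functoriality built into the stack-theoretic formulation of Theorem \ref{thm:moduliarithmetictorelli}, since an automorphism of $X$ is a morphism in $\mathcal{F}_{\Q}(k)$ and is carried by the equivalence to a morphism in $\mathcal{M}_{\Q}(k)$ compatible with the period map $j$; one then traces through that the induced map on $\A^2(X_{\overline{k}}) \simeq \A^1(C_{\overline{k}})$ matches the geometric automorphism action, using again that $\A^2(X_{\overline{k}}) \simeq \Ab^2_{X/k}(\overline{k})$ is a $\Gal(\overline{k}/k)$-equivariant isomorphism by Proposition \ref{prop:representativeetale} and Remark \ref{rem:representative}.
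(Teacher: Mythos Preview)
Your proposal is correct and follows essentially the same route as the paper: reduce to faithfulness of $\Aut(C)$ on $H^1_{\et}(C,\Q_\ell)$ via the equivalence $\mathcal{F}_{\Q}\simeq\mathcal{M}_{\Q}$, and verify that the cohomological isomorphism of Proposition~\ref{prop:dualcompatiblejacobian} intertwines the automorphism actions. The paper makes the compatibility step explicit by extending $g\in\Aut(X)$ to an automorphism $\widetilde{g}$ of $\Sigma_{+}\subset\P(S_{+})$, dualizing to $\widetilde{g}^{\vee}$ on $\Sigma_{-}$ and restricting to $g^{\vee}\in\Aut(C)$, then checking $(g\times g^{\vee})^{\ast}\mathcal{E}\simeq\mathcal{E}$ so that $c_{2}(\mathcal{E})\circ g=g^{\vee}\circ c_{2}(\mathcal{E})$; this is exactly the verification you anticipated as the main obstacle.
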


\begin{proof}
Let $C$ be the dual of $X$ as in Definition-Proposition \ref{defn-prop:dualvariety}.
We shall show that the isomorphism
$(\Ab^2_{X/k} \Theta_{X}) \simeq (J_{C/k}, \Theta_{C})$  
in Proposition \ref{prop:dualcompatiblejacobian} is compatible with an isomorphism $\Aut (X) \simeq \Aut (C)$  in Theorem \ref{thm:moduliarithmetictorelli} via the natural action of $\Aut (X)$ (resp.\ $\Aut (C)$) on $\Ab^2_{X/k}$ (resp.\ $J_{C/k}$).
For any $g \in \Aut (X)$, as in the proof of Proposition \ref{prop:curvesigmafamily}, $g$ extends to the automorphism $\widetilde{g}$ on $\Sigma_{+} \subset \P (S_{+})$.
Then the dual automorphism $\widetilde{g}^{\vee}$ on $\Sigma_{-} \subset \P (S_{-})$ restricts to the automorphism $g^{\vee}$ on $C$, which is the image of $g$ via the isomorphism $\Aut (X) \simeq \Aut (C)$.
Then we can show that $(g \times g^{\vee})^{\ast} \mathcal{E} \simeq \mathcal{E}$, where $\mathcal{E}$ is as in Proposition \ref{prop:dualcompatiblejacobian}.
Therefore, the morphism
\[
c_{2} (\mathcal{E}) \colon \A^2 (X) \rightarrow \A^1 (C)
\]
satisfies $c_{2} (\mathcal{E}) \circ g = g^{\vee} \circ c_{2} (\mathcal{E})$.
By Proposition \ref{prop:representativeetale} and Remark \ref{rem:representative}, the same holds for the isomorphism $\Ab^2_{X/k} \simeq J_{C/k}$.

Now the result follows from \cite[Theorem 1.13]{Deligne-Mumford} and \cite[Section 19, Theorem 3]{Mumford}, Proposition \ref{prop:representativeetale} and Remark \ref{rem:representative}.
\end{proof}

\subsection{Cohomological Shafarevich conjecture}

As an application of Theorem \ref{thm:moduliarithmetictorelli},
we prove the cohomological Shafarevich conjecture for
prime Fano threefold of genus 7 over
a finitely generated field of characteristic $0$.

\begin{thm}
\label{thm:cohomshaf}
Let $K$ be a finitely generated field of characteristic $0$,
and $R$ be a finitely generated normal $\Z$-algebra
with $K = \Frac R$. We fix a prime number $\ell$ and suppose that $\ell \in R^{\times}$.
Consider the set $\Shaf(K,R)$
as in Theorem \ref{introMainTheoremshaf}, i.e., we set
\[
\Shaf(K,R) :=
\left\{X   \left| 
\begin {array}{l}
X \colon \textup{prime Fano threefold of genus 7 over $K$}, \\
H^3_{\et}(X_{\overline{K}}, \Q_{\ell}) \colon \textup{unramified}\\
\textup{ at any height $1$ prime } \mathfrak{p} \in \Spec R
\end{array}
\right.
\right\}/K\textup{-isom}.
\]
Then the set $\Shaf (K,R)$ is finite.
\end{thm}

\begin{proof}
By Theorem \ref{thm:moduliarithmetictorelli}, the problem is reduced to the finiteness of the following:
\[
\Shaf_{\mathrm{ab}}^{\mathrm{pol}}(K,R)
 :=
\left\{(A,\lambda)   \left| 
\begin {array}{l}
A \colon \textup{$7$-dimensional abelian variety over $K$}, \\
\lambda \colon \textup{principal polarization on $A$},\\
H^1_{\et}(A_{\overline{K}}, \Q_{\ell}) \colon \textup{unramified}\\
\textup{ at any height $1$ prime } \mathfrak{p} \in \Spec R
\end{array}
\right.
\right\}/K\textup{-isom}.
\]
Since 
\[
\Shaf_{\mathrm{ab}}(K,R)
:= \left\{A   \left| 
\begin {array}{l}
A \colon \textup{$7$-dimensional abelian variety over $K$}, \\
H^1_{\et}(A_{\overline{K}}, \Q_{\ell}) \colon \textup{unramified}\\
\textup{ at any height $1$ prime } \mathfrak{p} \in \Spec R
\end{array}
\right.
\right\}/K\textup{-isom},
\]
is a finite set by \cite[Chapter VI, Theorem 2]{Faltings-Wuestholz} and \cite{Martin-Deschamps}, and the natural morphism
\[
\Shaf_{\mathrm{ab}}(K,R) \rightarrow \Shaf_{\mathrm{ab}}^{\mathrm{pol}}(K,R)\]
is finite-to-one by \cite[Theorem 18.1]{Milne}, it finishes the proof.
\end{proof}

\section{Good reduction and cohomology}
\label{section:goodredcriteria}
In this section, we give the counter-example to
the N\'eron--Ogg--Shafarevich criterion for prime Fano threefolds of genus $7$.
The equal-characteristic $0$ case is easily deduced from Theorem \ref{thm:moduliarithmetictorelli}, but the mixed characteristic case is more subtle.
To this end, first, we recall another construction of $C$ from $X$, the so-called two-ray game construction.

\subsection{Two-ray game construction}
In this subsection, we recall the construction from $X$ to $C$ given by two-ray games.

\begin{defn-prop}
\label{defn-prop:Xtocurveflop}
Let $k$ be an algebraically closed field of characteristic $0$, and $X$ a prime Fano threefold of genus $7$ over $k$.
\begin{enumerate}
\item 
The anti-canonical divisor $-K_{X}$ is very ample and it induces an embedding $X \hookrightarrow \P^{8}$.
\item
Let $q \subset X$ be a sufficiently general conic in the sense of \cite[Theorem 6.5]{Iliev-Markushevich}.
Let $\widetilde{X}' \rightarrow X$ 
be the blow-up at the conic $q$.
Then there exists the flop $\widetilde{X}' \dashrightarrow \widetilde{Q}$ 
and a map
$\widetilde{Q} \rightarrow Q$ which is a blow-up at a smooth curve $\Gamma_{\mathrm{f}} \subset Q \subset \P^4$ of genus $7$ and degree $10$.
Here, $Q$ is a smooth quadric threefold in $\P^4$.
We denote the curve $\Gamma_{\mathrm{f}}$ by $\Gamma_{\mathrm{f}}(X)$.
\item 
Assume that $k = \C$.
In the case of (2) (resp.\ (3)), we have an isomorphism 
\[(J(X), \Theta(X)) \simeq (J(\Gamma_{\mathrm{f}}(X)), \Theta (\Gamma_{\mathrm{f}}(X)))
\]
as principally polarized abelian varieties. Here, $\Theta(\Gamma_{\mathrm{f}}(X))$ is the theta polarization on the Jacobian.
\item 
The isomorphism class of a curve $\Gamma_{\mathrm{f}} (X)$ does not depend on the choice of $q \subset X$.
\end{enumerate}
\end{defn-prop}

\begin{proof}
(1) The assertion follows from \cite[Corollary 4.1.13]{Iskovskikh-Prokhorov}.

(2) See, e.g.,  \cite[Theorem 6.3 and Theorem 6.5]{Iliev-Markushevich}.

(3) See the proof of \cite[Lemma 6.8]{Iliev-Markushevich}
(see also \cite[Section 8]{MukaiBrill}).

(4) The assertion is essentially known, but we provide a proof for the reader's convenience.
We take two sufficiently general conics $q_1, q_2 \subset X$. 
We take a finitely generated subfield $k'$ of $k$ so that $q_1, q_2 \subset X$ and the construction in (2) are defined. 
Let $\Gamma_{\mathrm{f}, 1} (X)$ and $\Gamma_{\mathrm{f}, 2} (X)$ be obtained curves over $k'$.
By taking an embedding $k' \hookrightarrow \C$ and applying (4) and the Torelli theorem for curves, we have an isomorphism $\Gamma_{\mathrm{f},1} (X)_{\C} \simeq \Gamma_{\mathrm{f},2} (X)_{\C}$.
Therefore, there exists an isomorphism $\Gamma_{\mathrm{f},1} (X)_{k} \simeq \Gamma_{\mathrm{f},2} (X)_{k}$ and it completes the proof.
\end{proof}

\begin{rem}
\label{rem:suffgeneral}
In Definition-Proposition \ref{defn-prop:Xtocurveflop} (2), the condition ``sufficiently general", which means satisfying $(\ast)$ and $(\ast\ast)$ in \cite[Subsection 4.1]{Iskovskikh-Prokhorov}, is actually automatic for smooth conics $q \subset X$.
Indeed, the condition $(\ast)$ is automatic by \cite[Corollary 4.4.3]{Iskovskikh-Prokhorov}.
Moreover, the condition $(\ast \ast)$ is also automatic by the same argument as in \cite[Proposition 4.2, and Proposition 7.3]{Tanaka2} (or see \cite{Jahnke-Peternell-Radloff}).
\end{rem}

\begin{prop}
\label{prop:relationGamma}
Let $k, X$ be as in Definition-Proposition \ref{defn-prop:Xtocurveflop}.
Then we have 
\[
\Gamma (X) \simeq \Gamma_{\mathrm{f}}(X).
\]
\end{prop}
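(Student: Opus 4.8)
The plan is to compare the two curves through their principally polarized Jacobians and then invoke the Torelli theorem. Both $\Gamma(X)$ and $\Gamma_{\mathrm{f}}(X)$ are smooth projective curves of genus $7$ over $k$ (the latter by Definition-Proposition \ref{defn-prop:Xtocurveflop}), and a curve of genus $7$ without $g_4^1$ is in particular non-hyperelliptic (a $g_2^1$ would produce a $g_4^1$ by doubling), so the classical Torelli theorem applies: it suffices to produce an isomorphism of principally polarized abelian varieties $(J_{\Gamma(X)}, \Theta_{\Gamma(X)}) \simeq (J_{\Gamma_{\mathrm{f}}(X)}, \Theta_{\Gamma_{\mathrm{f}}(X)})$.

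First I would record the two Jacobian identifications separately. By Proposition \ref{prop:dualcompatiblejacobian}, the spinor-dual curve satisfies $(J_{\Gamma(X)/k}, \Theta_{\Gamma(X)}) \simeq (\Ab^2_{X/k}, \Theta_X)$ as principally polarized abelian varieties over $k$. On the other side, Definition-Proposition \ref{defn-prop:Xtocurveflop}(3) identifies the classical intermediate Jacobian of $X$ with the polarized Jacobian of the two-ray-game curve, but only over $\C$.

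To bridge these over our (possibly non-$\C$) field, I would use the Lefschetz principle together with the base-change compatibility of the algebraic representative. Choose a finitely generated subfield $k_0 \subset k$ over which $X$, a sufficiently general conic $q \subset X$, and the entire blow-up/flop/blow-down of Definition-Proposition \ref{defn-prop:Xtocurveflop} are defined, so that both $\Gamma(X)$ and $\Gamma_{\mathrm{f}}(X)$ descend to $k_0$; by Definition-Theorem \ref{defn-thm:representative}(1) their polarized Jacobians and $\Ab^2_{X/k_0}$ base change correctly. Fixing an embedding $k_0 \hookrightarrow \C$ and combining Definition-Theorem \ref{defn-thm:representative}(2) and Definition-Theorem \ref{defn-thm:representativepolarization}(1) (which identify $(\Ab^2_{X/k})_{\C}$ with the analytic intermediate Jacobian $J(X_{\C})$ compatibly with the theta polarization) with Proposition \ref{prop:dualcompatiblejacobian} and Definition-Proposition \ref{defn-prop:Xtocurveflop}(3), I obtain over $\C$ the chain
\[
(J_{\Gamma(X)_{\C}}, \Theta) \simeq (\Ab^2_{X/k})_{\C} \simeq J(X_{\C}) \simeq (J_{\Gamma_{\mathrm{f}}(X)_{\C}}, \Theta),
\]
all as principally polarized abelian varieties. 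By Torelli over $\C$ this yields $\Gamma(X)_{\C} \simeq \Gamma_{\mathrm{f}}(X)_{\C}$.

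Finally I would descend this isomorphism back to $k$: the scheme $\Isom_{k_0}(\Gamma(X), \Gamma_{\mathrm{f}}(X))$ is of finite type over $k_0$ and has a $\C$-point, hence is nonempty, hence has a closed point whose residue field is algebraic over $k_0$; since $k$ is algebraically closed and contains $k_0$, this produces a $k$-point, i.e.\ an isomorphism $\Gamma(X) \simeq \Gamma_{\mathrm{f}}(X)$ over $k$. The main obstacle is not any single estimate but the bookkeeping across fields: transferring the purely analytic identification of Definition-Proposition \ref{defn-prop:Xtocurveflop}(3) to the algebraic representative $\Ab^2_{X/k}$ over an arbitrary algebraically closed field of characteristic $0$, and checking that every isomorphism in the chain respects the theta polarizations so that Torelli is genuinely applicable.
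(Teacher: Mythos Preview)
Your argument is correct, but the paper does not actually prove this proposition: it simply cites \cite[Corollary 6.12]{Iliev-Markushevich}, where the identification is established directly via the geometry of the Fano surface of conics on $X$. So you are supplying a genuinely different proof.

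Your route---compare the two curves through their principally polarized Jacobians, pass to $\C$ via a finitely generated subfield and an embedding, and invoke Torelli---is the same trick the paper already uses in the proof of Definition-Proposition \ref{defn-prop:Xtocurveflop}(4) to show that $\Gamma_{\mathrm{f}}(X)$ is independent of the chosen conic. What you gain is self-containment: once Proposition \ref{prop:dualcompatiblejacobian} and Definition-Proposition \ref{defn-prop:Xtocurveflop}(3) are in hand, no further external input from \cite{Iliev-Markushevich} is required beyond the classical Torelli theorem. What the paper's citation buys is a direct geometric link between the two constructions (the spinor-dual curve and the two-ray-game curve) that does not route through the intermediate Jacobian at all. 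One minor remark: your aside about non-hyperellipticity is unnecessary, since over an algebraically closed field the Torelli theorem gives $C \simeq C'$ from $(J_C,\Theta_C)\simeq(J_{C'},\Theta_{C'})$ regardless; but it does no harm.
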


\begin{proof}
This is \cite[Corollary 6.12]{Iliev-Markushevich}.
\end{proof}

\begin{rem}
There exists another construction from $C$ to $X$, so-called non-commutative Brill--Noether theory (\cite{MukaiBrill}).
Since we do not need this construction, we omit it.
\end{rem}

\subsection{Good reduction}
In this subsection, we define good reduction of Fano varieties.

\begin{defn}
\label{defn:goodreduction}
Let $R$ be a discrete valuation ring, $K$ a fraction field of $R$, and $k$ a residue field of $R$.  
Let $X$ be a Fano variety over $K$.
We say \emph{$X$ has good reduction at $R$} if and only if there exists a smooth projective scheme $\mathcal{X}$ over $R$ such that $\mathcal{X}_{K} \simeq X$ holds and $-K_{\mathcal{X}/R}$ is ample.
\end{defn}

\begin{rem}
\label{rem:reductionFano}
We use the same notation as in Definition \ref{defn:goodreduction}.
Assume that a prime Fano threefold $X$ of genus $7$ over $K$ has good reduction (as Fano varieties) at $R$, and let $\pi \colon \mathcal{X} \rightarrow R$ be a smooth projective model as in Definition \ref{defn:goodreduction}. In this case, the following hold. 
\begin{enumerate}
\item
The special fiber $\mathcal{X}_{k}$ is also a prime Fano threefold of genus $7$ over $k$. 
Indeed, the Picard rank of $\mathcal{X}_{\overline{k}}$ is $1$ by \cite[Theorem 1.1]{Gounelas-Javanpeykar}.
Moreover, since the intersection number $(-K_{\mathcal{X}_{k}})^3$ is 14, $\mathcal{X}_{k}$ has the index 1 and the genus $7$.
\item 
The anti-canonical bundle $-K_{\mathcal{X}/R}$ is very ample.
Indeed, $-K_{X/K}$ is very ample with $h^0 (-K_{X/K}) =9$ by Definition-Proposition \ref{defn-prop:Xtocurveflop} and \cite[Theorem 1.1 and Theorem 1.2]{Tanaka1}, and so is $-K_{\mathcal{X}_{k}/k}$ by the same reason.
Therefore, $\pi_{*} (\cO(-K_{\mathcal{X}/R}))$ is free module of rank $9$ over $R$ whose basis induces an embedding $\mathcal{X} \hookrightarrow \P^8_{R}$. 
\end{enumerate}
\end{rem}

In general, good reduction as Fano varieties (the above definition) and good reduction as smooth varieties are different concepts.
For instance, degree 4 del Pezzo surfaces in \cite[Remark 4.6]{Scholl} have good reduction as smooth varieties, but do not have good reduction as Fano varieties at some finite places.
The following proposition ensures that they are equivalent if the second Betti number of the generic fiber is equal to $1$: 

\begin{prop}
Let $R, K, k$ be as in Definition \ref{defn:goodreduction}.
Let $X$ be a Fano variety of genus $7$ over $K$ such that $b_{2} (X_{\overline{K}}) =1$. 
If there exists a smooth projective scheme $\mathcal{X}$ over $R$ such that $\mathcal{X}_{K} \simeq X$, then $\mathcal{X}$ satisfies the condition of Definition \ref{defn:goodreduction}.
\end{prop}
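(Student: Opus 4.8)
The plan is to reduce the ampleness of $-K_{\mathcal{X}/R}$ to the ampleness of $-K_{\mathcal{X}_k}$ on the special fiber, and then to exploit the Picard rank $1$ hypothesis to pin down the sign of the anticanonical class. First I would recall that, since $\mathcal{X} \to \Spec R$ is proper and $\Spec R$ is local with closed point corresponding to $k$, the ample locus of a line bundle on $\mathcal{X}$ is open on the base (EGA IV, 9.6.4). Hence $-K_{\mathcal{X}/R}$ is relatively ample over $R$ as soon as its restriction $-K_{\mathcal{X}_k} = (-K_{\mathcal{X}/R})|_{\mathcal{X}_k}$ is ample; note that the converse direction (generic ampleness $\Rightarrow$ special ampleness) is \emph{not} formal, which is exactly why the hypothesis $b_2(X_{\overline K}) = 1$ is needed. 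Since ampleness may be checked after extending the base field, it suffices to prove that $-K_{\mathcal{X}_{\overline k}}$ is ample.

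Next I would control the Néron--Severi group of the special fiber. By smooth proper base change, for $\ell$ invertible in $k$ we have $H^2_{\et}(\mathcal{X}_{\overline k}, \Q_\ell) \cong H^2_{\et}(\mathcal{X}_{\overline K}, \Q_\ell)$, so $b_2(\mathcal{X}_{\overline k}) = b_2(\mathcal{X}_{\overline K}) = 1$. Combined with the injectivity of the $\ell$-adic cycle class map $\mathrm{NS}(\mathcal{X}_{\overline k}) \otimes \Q_\ell \hookrightarrow H^2_{\et}(\mathcal{X}_{\overline k}, \Q_\ell(1))$ and the existence of an ample class, this forces $\mathrm{NS}(\mathcal{X}_{\overline k})$ to have rank exactly $1$; let $H$ be an ample generator of $\mathrm{NS}(\mathcal{X}_{\overline k}) \otimes \Q$. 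Thus $-K_{\mathcal{X}_{\overline k}} \equiv m H$ for some $m \in \Q$, and it remains only to show $m > 0$.

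The sign of $m$ I would extract from a single intersection number, transported from the generic fiber where ampleness is known. Choose a relatively ample line bundle $\mathcal{A}$ on the projective $R$-scheme $\mathcal{X}$; then $\mathcal{A}_K$ is ample on $X$ and $\mathcal{A}_{\overline k} \equiv a H$ with $a > 0$. Writing $d = \dim X$, the intersection number $(-K_{\mathcal{X}/R}) \cdot \mathcal{A}^{d-1}$ computed fibrewise is locally constant over the connected base $\Spec R$, because fibrewise Euler characteristics are locally constant in a flat proper family and intersection numbers are leading coefficients of the associated Hilbert polynomials. On the generic fiber this number is $(-K_X) \cdot (\mathcal{A}_K)^{d-1} > 0$, since both $-K_X$ and $\mathcal{A}_K$ are ample. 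Hence on the special fiber $(-K_{\mathcal{X}_{\overline k}}) \cdot (\mathcal{A}_{\overline k})^{d-1} = m\, a^{d-1} (H^d) > 0$, and since $a > 0$ and $H^d > 0$ we conclude $m > 0$. Therefore $-K_{\mathcal{X}_{\overline k}}$ is a positive rational multiple of the ample class $H$, hence ample (ampleness depends only on the numerical class, by the Nakai--Moishezon/Kleiman criterion), which completes the argument.

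I expect the only genuinely delicate point to be the direction of the implication: ampleness of $-K$ on the generic fiber does not by itself descend to the special fiber, so the whole argument hinges on using $b_2(X_{\overline K}) = 1$ to reduce to the one-dimensional Néron--Severi group, where positivity of a single intersection number already suffices. Everything else is bookkeeping of standard facts (openness of the ample locus, smooth proper base change, and constancy of intersection numbers in flat families), and the ``genus $7$'' hypothesis plays no role here beyond fixing the class of varieties.
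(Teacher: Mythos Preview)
Your proof is correct and shares the same skeleton as the paper's: both reduce to showing $-K_{\mathcal{X}_{\overline{k}}}$ is ample via openness of the ample locus, and both use smooth proper base change to force $\rho(\mathcal{X}_{\overline{k}})=1$.

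The divergence is in the final step of extracting the sign of $m$ in $-K_{\mathcal{X}_{\overline{k}}}\equiv mH$. The paper does this by semicontinuity of cohomology: since $h^0(-K_{\mathcal{X}_k}) \geq h^0(-K_X) = 9 > 1$, the anticanonical class on the special fiber is effective and nontrivial, hence (Picard rank one) ample. You instead transport an intersection number $(-K)\cdot \mathcal{A}^{d-1}$ from the generic fiber using deformation invariance in flat proper families, and read off positivity on the special fiber. Your route is marginally longer but, as you correctly observe, makes no use of the genus~$7$ hypothesis; the paper's argument is shorter but leans on the specific value $h^0(-K_X)=9$ to get a nonzero section.
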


\begin{proof}
By the smooth proper base change theorem and Proposition \ref{prop:basic}, $H^2 (\mathcal{X}_{k}, \Q_{\ell}) $ is 1-dimensional.
In particular, $\mathcal{X}_{k}$ has a Picard rank $1$.
Since $h^0(-K_{\mathcal{X}_{k}}) \geq h^0(-K_{X}) = 9$, $-K_{\mathcal{X}_{k}}$ is ample.
It implies that $-K_{\mathcal{X}/R}$ is ample, and it finishes the proof. 
\end{proof}

\subsection{Counter-example to the N\'eron--Ogg--Shafarevich criterion}
In this subsection, we show that an analogue of Neron--Ogg--Shafarevich criterion does not hold for prime Fano threefolds of genus 7.

\begin{lem}
\label{lem:reductioncurveschar0}
Let $k$ be an algebraically closed field of characteristic $0$.
Then there exists a Dedekind domain $\cO_{K}$ of finite type over $k$ with fraction field $K$, a closed point $\p \in \Spec \cO_{K}$ 
and a smooth proper curve $\mathcal{C}$ over $\cO_{K,\p}$ of genus $7$ such that 
\begin{itemize}
\item 
The geometric generic fiber $\mathcal{C}_{\overline{K}}$ does not have $g_4^1$.
\item 
The geometric special fiber $\mathcal{C}_{k} := \mathcal{C} \otimes_{\cO_{K,\p}} \kappa(\p)$ has $g_4^1$, does not have $g_3^1$ or $g_6^2$, and is not hyperelliptic (We call such a curve a \emph{generic curve with $g_4^1$}).
\end{itemize}
\end{lem}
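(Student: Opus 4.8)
The plan is to produce the family as a general one-parameter arc in the moduli space $\mathcal{M}_7$ of smooth projective curves of genus $7$ over $k$, chosen so that it crosses the tetragonal locus transversally at a general point; every condition in the statement will then be read off from Brill--Noether dimension counts. Recall the Brill--Noether number $\rho(g,r,d)=g-(r+1)(g-d+r)$, and write $\mathcal{M}^r_{g,d}\subset\mathcal{M}_g$ for the locus of curves admitting a $g^r_d$. For $g=7$ one computes $\rho(7,1,4)=-1$ for a $g^1_4$, $\rho(7,1,3)=-3$ for a $g^1_3$, $\rho(7,1,2)=-5$ for a $g^1_2$, and $\rho(7,2,6)=-2$ for a $g^2_6$.

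First I would invoke the $\rho=-1$ theorem of Eisenbud--Harris to conclude that the tetragonal locus $\mathcal{T}:=\mathcal{M}^1_{7,4}$ is a nonempty irreducible divisor, so $\dim\mathcal{T}=17$ inside $\dim\mathcal{M}_7=18$; by construction a geometric point lies off $\mathcal{T}$ exactly when the corresponding curve has no $g^1_4$. Next I would locate the general point of $\mathcal{T}$. The trigonal locus $\mathcal{M}^1_{7,3}$ and the hyperelliptic locus $\mathcal{M}^1_{7,2}$ are both contained in $\mathcal{T}$ (a $g^1_3$ or a $g^1_2$ produces a $g^1_4$ by adding base points) and have the classical dimensions $2g+1=15$ and $2g-1=13$; the $g^2_6$ locus $\mathcal{M}^2_{7,6}$, whose members are normalizations of three-nodal plane sextics, has dimension $16$. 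As each of these is strictly smaller than $\dim\mathcal{T}=17$ and $\mathcal{T}$ is irreducible, $\mathcal{T}$ is contained in none of them, so there is a dense open $U\subset\mathcal{T}$ parametrizing curves that carry a $g^1_4$ but are neither trigonal, nor hyperelliptic, nor carry a $g^2_6$; after shrinking $U$ I may also assume every such curve is automorphism-free.

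Then I would assemble the trait. Working over the locus of automorphism-free curves, where the moduli stack is an algebraic space $M^{\circ}$ equipped with a universal curve, I would choose a smooth irreducible curve $B\hookrightarrow M^{\circ}$ through a closed point $[C_0]\in U$ that is transverse to $\mathcal{T}$ at $[C_0]$ and not contained in $\mathcal{T}$ (possible since $\mathcal{T}$ is a divisor and $M^{\circ}$ is smooth at $[C_0]$); such a $B$ meets $\mathcal{T}$ in only finitely many points. Set $K=k(B)$, let $\mathcal{O}_K$ be the coordinate ring of a smooth affine model of $B$, and let $\mathfrak{p}$ be the point over $[C_0]$, so that $\mathcal{O}_{K,\mathfrak{p}}$ is a discrete valuation ring of finite type over $k$. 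Pulling back the universal curve along $\Spec\mathcal{O}_{K,\mathfrak{p}}\to M^{\circ}$ yields a smooth proper curve $\mathcal{C}\to\Spec\mathcal{O}_{K,\mathfrak{p}}$ of genus $7$ whose closed fiber is $C_0\in U$ (note $\kappa(\mathfrak{p})=k$) and whose geometric generic fiber corresponds to the generic point of $B$, hence lies off $\mathcal{T}$ and has no $g^1_4$, as required.

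The main obstacle is the Brill--Noether input: one needs both the nonemptiness and the codimension-one irreducibility of $\mathcal{T}$ (the Eisenbud--Harris $\rho=-1$ theorem) and the upper bound showing the $g^2_6$ locus has dimension $\le 16$, so that $\mathcal{T}$ is genuinely not contained in it; the three-nodal-plane-sextic count makes this bound explicit, while the trigonal and hyperelliptic dimensions are classical. Once these dimension statements are in hand, the existence of the transversal arc and the passage from the moduli stack to an honest smooth family over $\mathcal{O}_{K,\mathfrak{p}}$ are routine.
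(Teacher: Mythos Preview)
Your argument is correct and follows essentially the same route as the paper: locate a point of the ``generic with $g^1_4$'' locus inside $\mathcal{M}_7$, run a one-parameter arc through it that is not contained in the tetragonal locus, and pull back a universal family. The only differences are in bookkeeping: the paper simply cites Mukai (\cite{Mukaicurve3}) for the existence of a generic tetragonal curve of genus $7$, whereas you supply this via Brill--Noether dimension counts (Eisenbud--Harris for the irreducible divisor $\mathcal{T}$, plus the classical dimensions of the trigonal, hyperelliptic and $g^2_6$ loci); and the paper obtains the universal family by passing to a level-structure moduli space $\mathcal{M}_{7,L}$, whereas you restrict to the automorphism-free locus. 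Your approach is more self-contained but invokes heavier Brill--Noether input; the paper's is shorter but relies on an external reference.
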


\begin{proof}
This follows from the standard argument on the moduli space of curves.
First, note that there exists at least one generic curve with $g_4^1$ of genus $7$ over $k$ (see \cite{Mukaicurve3} for example).
Therefore, on the coarse moduli space $M_{7}$ of smooth curves of genus 7,
the locus $V$ 
corresponding to generic curves with $g_4^1$ of genus $7$ is locally closed and non-empty.
We take closed points $P \in V$ and $Q \in M_{7} \setminus V$.
Then we can take an irreducible curve $X \subset M_{7}$ passing $P$ and $Q$.
By replacing $X$ with a finite cover and taking normalization, we have a smooth curve $Y$ over $k$ with a morphism $\phi\colon Y \rightarrow M_{7}$ whose image contains $P$ and $Q$ such that $\phi$ factors through the fine moduli space $\mathcal{M}_{7,L}$ with a suitable level structure $L$.
By pulling back the universal curve over $\mathcal{M}_{7,L}$, we have a relative curve of genus $7$ over $Y$. 
By taking localization of $Y$ at $\p$, we have a desired family.
\end{proof}

\begin{lem}
\label{lem:reductioncurves}
Let $p$ be a prime number.
There exists a number field $K$, a finite place $\mathfrak{p}$ of $K$ with $\mathfrak{p} | p$, and a smooth proper curve $\mathcal{C}$ of genus $7$ over $\cO_{K,\p}$ such that the following hold.
\begin{itemize}
\item 
The curve $\mathcal{C}_{\overline{K}}$ does not have $g_4^1$.

\item 
The geometric special fiber $\mathcal{C}_{\overline{k}}$ is a generic curve with $g_4^1$ in the sense of Lemma \ref{lem:reductioncurveschar0}.
\end{itemize}
Here, $\cO_{K,\mathfrak{p}}$ is the localization of the integer ring of $K$ at $\mathfrak{p}$ and $k$ is its residue field.
\end{lem}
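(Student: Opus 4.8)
The idea is to carry out the moduli-theoretic argument of Lemma \ref{lem:reductioncurveschar0} inside the moduli \emph{scheme} of curves over $\Z$, producing a horizontal arithmetic trait that is transverse to the $g_4^1$-locus and to the special fibre. Fix an integer $N \geq 3$ with $p \nmid N$, and let $\mathcal{M}_{7,N}$ be the fine moduli scheme of smooth proper genus-$7$ curves with full level-$N$ structure, which is smooth of relative dimension $18$ over $\Z[1/N]$ and carries a universal curve. Inside $\mathcal{M}_{7,N}$ I consider the Brill--Noether locus $\mathcal{D}$ parametrizing curves admitting a $g_4^1$ (for which $\rho(7,1,4) = -1$, so the expected codimension is $1$), together with the loci of curves carrying a $g_3^1$, a $g_6^2$, or a hyperelliptic pencil.

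\textbf{Geometry in characteristic $p$.} First I would establish that $\mathcal{D}$ is a relative effective divisor, flat over $\Z[1/N]$, whose geometric fibres have pure codimension $1$, and that the three exceptional loci have strictly smaller dimension in every characteristic (the trigonal, $g_6^2$, and hyperelliptic strata have dimensions $15$, $16$, and $13$, all below $\dim \mathcal{D} = 17$). Granting this, $\mathcal{D}_{\overline{\F_p}}$ is a nonempty divisor in the $18$-dimensional fibre, and removing the exceptional loci leaves a nonempty dense open locus $V_{\overline{\F_p}}$ of generic curves with $g_4^1$. I then choose a closed point $P$ of $\mathcal{M}_{7,N}$ lying in $V$ and in characteristic $p$, say with residue field a finite field $\F_q$.

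\textbf{Transverse horizontal lift.} The local ring $\cO_{\mathcal{M}_{7,N},P}$ is regular of dimension $19$. Using smoothness of $\mathcal{M}_{7,N}/\Z[1/N]$, pick $18$ elements $g_1,\dots,g_{18} \in \cO_{\mathcal{M}_{7,N},P}$ restricting to a regular system of parameters of the special fibre at $P$; since they then form a regular sequence cutting out $\{P\}$ in $\mathcal{M}_{7,N,\F_q}$, the subscheme $Z := V(g_1,\dots,g_{18})$ is, near $P$, an integral one-dimensional complete intersection, flat and hence dominant over $\Z$, so its generic point lies in characteristic $0$. Because $\mathcal{D}$ is a proper closed subset, a generic such choice arranges $Z \not\subset \mathcal{D}$, i.e.\ $Z$ meets $\mathcal{D}$ only at $P$ locally. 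Normalizing $Z$ and localizing at the point over $P$ yields a discrete valuation ring; as $Z$ is a one-dimensional integral scheme of finite type over $\Z$ with characteristic-$0$ fraction field, this ring is the localization $\cO_{K,\mathfrak{p}}$ of the ring of integers of a number field $K$ at a place $\mathfrak{p} \mid p$. Pulling back the universal curve along $\Spec \cO_{K,\mathfrak{p}} \to Z \hookrightarrow \mathcal{M}_{7,N}$ and forgetting the level structure gives a smooth proper genus-$7$ curve $\mathcal{C}$ over $\cO_{K,\mathfrak{p}}$; its geometric special fibre is the curve at $P$, a generic curve with $g_4^1$ in the sense of Lemma \ref{lem:reductioncurveschar0}, while its geometric generic fibre, corresponding to a point off $\mathcal{D}$, carries no $g_4^1$.

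\textbf{Main obstacle.} I expect the crux to be the first step: controlling the Brill--Noether loci integrally, and in particular guaranteeing that the tetragonal locus is nonempty of the expected codimension $1$ in \emph{every} characteristic (including $p = 2, 3, 5$), while the $g_3^1$, $g_6^2$, and hyperelliptic loci remain strictly smaller. This requires a Brill--Noether existence and dimension statement valid over $\Z$, rather than the purely characteristic-$0$ input used in Lemma \ref{lem:reductioncurveschar0}; one can appeal to the flatness of the relative $g^r_d$-scheme over $\Z[1/N]$ together with the degeneration and limit-linear-series techniques that secure the dimension bounds in positive characteristic. Once a suitable point $P$ is produced, the transversality and the descent to a number field are comparatively routine.
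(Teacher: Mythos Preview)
Your approach is correct in outline but differs from the paper's. The paper argues more indirectly: it fixes an auxiliary prime $q\neq p$, picks a generic tetragonal curve over $\overline{\F_p}$ and a curve \emph{without} $g_4^1$ over $\overline{\F_q}$, and then invokes Moret-Bailly's Skolem-type result \cite[Example~0.9]{MBSkolem} to obtain a single curve over a number field whose reductions at places above $p$ and $q$ are the two given curves; the reduction at $q$ together with closedness of the $g_4^1$-locus then forces the generic fibre off $\mathcal D$. Your argument instead works entirely locally at the single point $P$ and produces the horizontal trait by an explicit slicing in the moduli scheme. The paper's route hides the transversality step inside the black box \cite{MBSkolem}; yours is more self-contained but makes that step visible.

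Two comments on the execution. First, your ``main obstacle'' is over-stated: you do not need $\mathcal D$ to be a flat relative divisor, nor do you need fibrewise codimension counts in every characteristic. All that is required is (i) the existence of one closed point $P$ in characteristic $p$ corresponding to a generic tetragonal curve---precisely the input the paper cites from \cite{Mukaicurve}---and (ii) that $\mathcal D$ is a proper closed subset of $\mathcal M_{7,N}$, which follows from closedness plus the classical characteristic-$0$ fact that a general genus-$7$ curve has no $g_4^1$. Second, the phrase ``a generic such choice arranges $Z\not\subset\mathcal D$'' deserves one more sentence, since $P\in\mathcal D$ and you are forcing $Z$ through $P$: in the completion $\widehat{\cO}_{\mathcal M_{7,N},P}\simeq W[[t_1,\dots,t_{18}]]$ a section through $P$ is $t_i\mapsto a_i\in pW$, and a nonzero $f$ in the ideal of $\mathcal D$ cannot vanish at all such tuples (substitute $t_i=px_i$ and compare coefficients), so some section has generic point outside $\mathcal D$. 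Finally, note that your $Z=V(g_1,\dots,g_{18})$ need not be integral in general; take an irreducible component through $P$ before normalizing.
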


\begin{proof}
Let $q$ be a prime number other than $p$.
First, we can take a smooth proper curve $C_{1}$ (resp.\ $C_2$) of genus 7 over $\overline{\F}_{p}$ (resp.\  $\overline{\F}_{q}$) with $g_4^1$ which is a generic curve with $g_4^1$  (resp.\ without $g_4^1$).
We remark that there exists at least one such curve (see \cite[Table 1]{Mukaicurve}. Note that, generic with $g_4^1$ is equivalent to tetragonal without $g_6^2$).

By the argument in \cite[Example 0.9]{MBSkolem}, we have a number field $K$ and finite places $\mathfrak{p}$ and $\mathfrak{q}$ such that the following hold.
\begin{itemize}
\item 
We have $\mathfrak{p} | p$ and $\mathfrak{q} | q$.
\item 
There exists a smooth projective scheme $\mathcal{C}_{1}$ over $\cO_{K, \p}$ and $\mathcal{C}_{2}$ over $\cO_{K,\mathfrak{q}}$ such that $\mathcal{C}_{1,K} \simeq \mathcal{C}_{2,K}$, $\mathcal{C}_{1, \overline{\F}_{p}} \simeq C_{1}$, and $\mathcal{C}_{2, \overline{\F}_{q}} \simeq C_{2}$ hold.
\end{itemize}
Since the locus parametrizing smooth curves with $g_4^1$ is Zariski closed in the moduli space of smooth curves of genus $7$, $\mathcal{C}_{1,\overline{K}} \simeq \mathcal{C}_{2,\overline{K}}$ is a curve without $g_4^1$.
Therefore, the curve $\mathcal{C}_{1}$ satisfies the desired condition.
\end{proof}

To discuss the two-ray game construction in mixed characteristic, first we give a definition of a flop.

\begin{defn}
\label{defn:flop}
Let $R$ be an excellent discrete valuation ring.
Let $\mathcal{X}$ and $\mathcal{Y}$ be normal Noetherian excellent integral schemes over $R$ of finite dimension with dualizing complexes.
We assume that $\mathcal{X}$ is terminal and $\Q$-factorial.
Let $f \colon \mathcal{X} \rightarrow \mathcal{Y}$ be a projective birational morphism.
\begin{enumerate}
\item 
We say $f$ is a \emph{flopping contraction} if $f$ is an isomorphism in codimension $1$, the relative Picard rank $\rho(\mathcal{X}/Y) =1$, and
$K_{\mathcal{X}/R}$ is numerically $f$-trivial.
\item 
Let $D$ be a $\Q$-Cartier effective $\Q$-divisor on $\mathcal{X}$ such that $-D$ is $f$-ample.
Let $f^{+} \colon \mathcal{X}^{+} \rightarrow \mathcal{Y}$ be a projective birational morphism between normal integral schemes.
We say $f^{+}$ is a \emph{$D$-flop} if $\mathcal{X}^{+}$ is $\Q$-factorial, 
$f^{+}$ is an isomorphism in codimension $1$, $D^{+}$ is $f^{+}$-ample.
Here, $D^{+}$ is a strict transform of $D$ in $\mathcal{X}^{+}$.
Then a $D$-flop is isomorphic to
\[
\Proj_{\mathcal{Y}}\bigoplus_{d\geq 0} \cO_{\mathcal{Y}} (d f_{\ast} (D)) \rightarrow \mathcal{Y},
\]
so a $D$-flop is unique.
\end{enumerate}
\end{defn}

\begin{prop}
Let $f$, $\mathcal{X}$, $\mathcal{Y}$ be as in Definition \ref{defn:flop}.
Assume that $f$ is a flopping contraction, $D$ a $\Q$-Cartier effective $\Q$-divisor on $\mathcal{X}$ which is $f$-anti-ample, and $f^{+}\colon \mathcal{X}^{+} \rightarrow \mathcal{Y}$ a $D$-flop.
Then the following holds.
\begin{enumerate}
\item
We have $\rho(\mathcal{X}^{+}/\mathcal{Y}) =1$.
\item 
For any $\Q$-Cartier effective $f$-anti-ample divisor $D'$ on $\mathcal{X}$, $f^{+}$ is $D'$-flop.
\end{enumerate}
By (2), the notion of $D$-flop is independent of the choice of $D$, and we call $f^{+}$ simply a \emph{flop} of $f$.
\end{prop}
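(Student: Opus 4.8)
The plan is to handle both parts through the relative Néron--Severi space $N^1(-/\mathcal{Y})_{\Q}$, exploiting that $f$ and $f^{+}$ are both \emph{small} birational morphisms (isomorphisms in codimension $1$) emanating from $\Q$-factorial schemes, so that strict transform identifies $\Q$-Weil divisor classes on $\mathcal{X}$ and $\mathcal{X}^{+}$. Throughout I would rely on the relative minimal model program for excellent schemes with dualizing complexes (the framework of Tanaka's results used elsewhere in the paper) to justify the standard positivity bookkeeping in the mixed-characteristic setting.

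For (1), I would first show $f^{+}$ is not an isomorphism. If it were, then $\mathcal{X}^{+}\simeq\mathcal{Y}$ would be $\Q$-factorial; but a small projective birational morphism onto a $\Q$-factorial target is forced to be an isomorphism (taking an $f$-ample $\Q$-Cartier divisor $A$, the difference $A-f^{\ast}f_{\ast}A$ is $f$-exceptional and $\Q$-Cartier, hence supported in codimension $\geq 2$, hence zero, so $A$ is $f$-numerically trivial, contradicting $f$-ampleness unless $f$ is finite). This would make $f$ itself an isomorphism, contradicting that $f$ is a flopping contraction. Thus $f^{+}$ contracts a curve and $\rho(\mathcal{X}^{+}/\mathcal{Y})\geq 1$. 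For the reverse inequality I would use that strict transform induces an isomorphism $\Pic(\mathcal{X})_{\Q}\simeq\Pic(\mathcal{X}^{+})_{\Q}$ (prime divisors correspond since the two schemes agree outside codimension $2$, and $\Q$-factoriality makes every $\Q$-Weil divisor $\Q$-Cartier), carrying classes pulled back from $\mathcal{Y}$ to classes pulled back from $\mathcal{Y}$; this is precisely the statement that a flop preserves the relative Picard number, yielding $\rho(\mathcal{X}^{+}/\mathcal{Y})=\rho(\mathcal{X}/\mathcal{Y})=1$.

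For (2), I observe that, by the definition of a $D'$-flop, the only condition left to verify is that the strict transform $D'^{+}$ of $D'$ is $f^{+}$-ample, since $\Q$-factoriality of $\mathcal{X}^{+}$ and the fact that $f^{+}$ is an isomorphism in codimension $1$ are already known; the uniqueness clause of Definition \ref{defn:flop} then identifies $f^{+}$ as the $D'$-flop. Since $\rho(\mathcal{X}/\mathcal{Y})=1$, the space $N^1(\mathcal{X}/\mathcal{Y})_{\R}$ is one-dimensional and the $f$-anti-ample classes form a single open ray; as $D$ and $D'$ are both $f$-anti-ample, there is $\lambda>0$ with $D'\equiv_{f}\lambda D$ relatively numerically. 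The strict-transform isomorphism $N^1(\mathcal{X}/\mathcal{Y})_{\R}\simeq N^1(\mathcal{X}^{+}/\mathcal{Y})_{\R}$ from part (1) sends $[D]\mapsto[D^{+}]$ and $[D']\mapsto[D'^{+}]$, giving $D'^{+}\equiv_{f^{+}}\lambda D^{+}$. As $D^{+}$ is $f^{+}$-ample by hypothesis and $\lambda>0$, the class $D'^{+}$ is $f^{+}$-ample, which finishes the verification.

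The hard part will be making rigorous, in the excellent/mixed-characteristic setting, that strict transform furnishes a well-defined linear isomorphism of relative Néron--Severi spaces respecting numerical equivalence and exchanging the $f$-anti-ample ray with the $f^{+}$-ample ray: the classical proofs (e.g.\ Koll\'ar--Mori) are over a field, so the argument that $N^1$ is one-dimensional on both sides and that the two relative ample cones meet along the flopping wall must be transported via the relative MMP. Everything else reduces to the one-dimensionality of $N^1(-/\mathcal{Y})_{\R}$, which makes the sign comparison immediate once that identification is in place.
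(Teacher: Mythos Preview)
Your plan follows the paper's, but the step you flag as ``the hard part''---that strict transform descends to an isomorphism $N^1(\mathcal{X}/\mathcal{Y})_{\Q}\simeq N^1(\mathcal{X}^{+}/\mathcal{Y})_{\Q}$---is not a technicality to be transported from elsewhere; it is the entire content of (1), and your argument for (2) also rests on it. Saying it ``must be transported via the relative MMP'' leaves the proposition unproved: the bijection on $\Q$-Weil divisors induced by agreement outside codimension~$2$ is clear, but curves contracted by $f$ and by $f^{+}$ are different, so preservation of \emph{numerical} equivalence over $\mathcal{Y}$ needs an argument.

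The paper settles this directly using a common roof and the negativity lemma, which holds for excellent schemes (e.g.\ \cite[Lemma~2.14]{BMPSTWW}). Take $\mathcal{X}\xleftarrow{g}\mathcal{W}\xrightarrow{h}\mathcal{X}^{+}$ with $\mathcal{W}$ normal integral and $g,h$ projective birational. If $E^{+}$ is $f^{+}$-numerically trivial, then any curve contracted by $g$ is contracted by $f^{+}\circ h=f\circ g$, so $h^{\ast}E^{+}$ is $g$-numerically trivial; hence $h^{\ast}E^{+}-g^{\ast}(g_{\ast}h^{\ast}E^{+})$ is $g$-exceptional and $g$-numerically trivial, so it vanishes by the negativity lemma. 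Since $h^{\ast}E^{+}=g^{\ast}E$ (with $E=g_{\ast}h^{\ast}E^{+}$ the strict transform) is $(f\circ g)$-trivial, $E$ is $f$-trivial. The converse is symmetric, giving (1). For (2) the paper takes a slightly different tack from your proportionality argument: it observes that $D'^{+}$ cannot be $f^{+}$-anti-ample (else the Proj description in Definition~\ref{defn:flop} forces $f=f^{+}$) nor $f^{+}$-trivial (else $D'$ would be $f$-trivial by the argument just given), so by $\rho(\mathcal{X}^{+}/\mathcal{Y})=1$ it must be $f^{+}$-ample. Your route via $D'^{+}\equiv_{f^{+}}\lambda D^{+}$ also works once the $N^1$ isomorphism is established.
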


\begin{proof}
(1) We note that $\mathcal{X}^{+}$ and $\mathcal{X}$ are $\Q$-factorial and isomorphic in codimension 1 over $\mathcal{Y}$.
It suffices to show that for any numerically $f^{+}$-trivial $\Q$-Cartier divisor $E^{+}$ on $\mathcal{X}^{+}$, its strict transform $E$ on $\mathcal{X}$ is numerically $f$-trivial
(converse direction can be proved similarly).
Take a normal integral scheme $\mathcal{W}$ with projective birational morphisms $\mathcal{X} \xleftarrow{g} \mathcal{W} \xrightarrow{h} \mathcal{X}^{+}$, we have $h^{\ast} E^{+} = g^{\ast} (g_{\ast} (h^{\ast} E^{+}))$  by the negativity lemma (cf.\ \cite[Lemma 2.14]{BMPSTWW}).
Since $h^{\ast} E^{+}$ is $f^{+}\circ h = f \circ g$-numerically trivial, $g_{\ast} (h^{\ast} E^{+}) = E$ is $f$-numerically trivial, and it finishes the proof.

(2) It suffices to show that the strict transform $D'^{+}$ of $D'$ on $\mathcal{X}^{+}$ is $f^{+}$-ample.
Since $f \neq f^{+}$, $D'^{+}$ is not $f^{+}$-anti-ample.
Moreover. if $D'^{+}$ is $f^{+}$-numerically trivial, $D$ is also $f$-numerically trivial by the argument in (1) and it gives a contradiction.
Combining with (1), $D'^{+}$ is $f^{+}$-ample and it finishes the proof.
\end{proof}

Now we can prove the following mixed characteristic analogue of two-ray games for conics.
Note that, the proof of the existence of flop is similar to \cite[Proposition 10.7]{Tanaka2}. However, since he considers a $W(k)$-lift of purely positive characteristic flop, the situation is a bit different.
Therefore, we include its complete proof.

\begin{prop}
\label{prop:mixedtwo-ray}
Let $R$ be a Henselian excellent discrete valuation ring, $K$ a fraction field of $R$, $k$ a residue field of $R$, and $\varpi$ a uniformizer of $R$. Assume that $K$ is of characteristic $0$.
Let $X$ be a prime Fano threefold of genus $7$ over $K$.
Assume that $X$ has good reduction at $R$, and take a smooth projective scheme $\mathcal{X}$ over $R$ with $\mathcal{X}_{K} \simeq X$.
Then by extending $K$ if necessary, we have the following.
\begin{enumerate}
\item
There exists a smooth relative conic $q \subset \mathcal{X}$ with respect to the anti-canonical embedding $\mathcal{X} \subset \P^8_{R}$.
\item
We denote the blow-up of $\mathcal{X}$ at $q$ by $b \colon \widetilde{\mathcal{X}} \rightarrow \mathcal{X}$.
Then the anti-canonical divisor $-K_{\widetilde{\mathcal{X}}/R}$ is semi-ample, and the corresponding fiber space morphism $f \colon \widetilde{\mathcal{X}} \rightarrow \mathcal{Z}$ is a flopping contraction.
Moreover, the base change $f_{k}$ (resp.\ $f_{K}$) of $f$ is also a flopping contraction.
\item
The flop $f^+ \colon \widetilde{\mathcal{Q}} \rightarrow \mathcal{Z}$ of $f$ exists and $\widetilde{\mathcal{Q}}$ is smooth over $R$.
Moreover, the base change $(f^+)_{k}$ (resp.\ $(f^+)_{K}$) of $f^{+}$ is a flop of $f_{k}$ (resp.\ $f_{K}$). 
\item
There exists a morphism $g \colon \widetilde{\mathcal{Q}} \rightarrow \mathcal{Q}$, where $\mathcal{Q}$ is a relative smooth quadric threefold in $\P^{4}_{R}$.
Moreover, $g$ is a blow-up of $\mathcal{Q}$ at $\mathcal{C} \subset \mathcal{Q}$.
Here, $\mathcal{C} \subset \mathcal{Q} \subset \P^{4}_{R}$ is a smooth projective relative curve over $R$ whose genus is $7$ and degree is $10$.
Moreover,
the geometric generic fiber $\mathcal{C}_{\overline{K}}$ satisfies 
$
\mathcal{C}_{\overline{K}} \simeq \Gamma_{\mathrm{f}}(X_{\overline{K}})
$
\item 
The generic fiber $\mathcal{C}_{\overline{K}}$ is a smooth proper curve of genus $7$ without $g_4^1$.
Moreover, if $k$ is of characteristic 0, so is the special fiber $\mathcal{C}_{\overline{k}}$.
\item
Even if $k$ is of positive characteristic,
the special fiber $\mathcal{C}_{\overline{k}}$ is a smooth proper curve of genus 7 which is not a generic curve with $g_4^1$ in the sense of Lemma \ref{lem:reductioncurveschar0}.
\end{enumerate}
\end{prop}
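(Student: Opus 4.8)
The plan is to run the two-ray game of Definition-Proposition \ref{defn-prop:Xtocurveflop} in a relative setting over $R$, checking at each step that the construction commutes with restriction to both the generic and the special fibers, and then to read off the linear series of $\mathcal{C}_{\overline{k}}$ from the geometry of the characteristic-$p$ fiber. First, for (1) I would produce the relative conic by lifting from the special fiber. By Remark \ref{rem:reductionFano} the special fiber $\mathcal{X}_{\overline{k}}$ is again a smooth prime Fano threefold of genus $7$, and it carries smooth conics (over $\C$ this is classical; in characteristic $p$ it is Tanaka's result, see Remark \ref{rem:suffgeneral}). The relative Hilbert scheme of conics $\mathcal{H} \to \Spec R$ is smooth at points parametrizing smooth conics, since such a conic $q \cong \P^{1}$ has $H^{1}(N_{q/\mathcal{X}_{\overline{k}}}) = 0$. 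As $R$ is Henselian and $\mathcal{H}_{\overline{k}}$ is non-empty, after enlarging $K$ (and replacing $R$ by its integral closure, which stays Henselian and excellent) Hensel's lemma yields an $R$-point of $\mathcal{H}$, i.e.\ a relative conic $q \subset \mathcal{X}$; flatness over the DVR together with smoothness of the special fiber forces $q$ to be smooth over $R$, and Remark \ref{rem:suffgeneral} guarantees it is ``sufficiently general'' in both fibers.

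For (2)--(4) I would perform the blow-up, the flop, and the second contraction relatively, controlling each fiberwise. After blowing up $q$ the divisor $-K_{\widetilde{\mathcal{X}}/R}$ is nef on each fiber by the first step of the two-ray game; relative semi-ampleness then follows from the base-point-free theorem for threefolds over an excellent DVR (\cite{BMPSTWW}), and the resulting contraction $f \colon \widetilde{\mathcal{X}} \to \mathcal{Z}$ is a flopping contraction because its three defining conditions (isomorphism in codimension $1$, $\rho(\widetilde{\mathcal{X}}/\mathcal{Z}) = 1$, and numerical $f$-triviality of $K_{\widetilde{\mathcal{X}}/R}$) can each be checked on the fibers, where they hold by the classical two-ray game in characteristic $0$ and by Tanaka's analysis (\cite{Tanaka2}) in characteristic $p$. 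The flop $f^{+} \colon \widetilde{\mathcal{Q}} \to \mathcal{Z}$ then exists as the relative $D$-flop $\Proj_{\mathcal{Z}} \bigoplus_{d \geq 0} \cO_{\mathcal{Z}}(d f_{\ast} D)$ of Definition \ref{defn:flop}, its finite generation being supplied by the relative MMP (\cite{BMPSTWW}) and the existence of flops in characteristic $p$ (\cite{Tanakaflop}). The key technical glue is the fibral criterion for smoothness: since both $\widetilde{\mathcal{Q}}_{K}$ (classical two-ray game) and $\widetilde{\mathcal{Q}}_{\overline{k}}$ (Tanaka) are smooth and $\widetilde{\mathcal{Q}}$ is flat over $R$, the total space $\widetilde{\mathcal{Q}}$ is smooth over $R$; the same cohomology-and-base-change input shows that forming the $\Proj$ commutes with restriction to the fibers, so $(f^{+})_{k}$ and $(f^{+})_{K}$ are the flops of $f_{k}$ and $f_{K}$. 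The second extremal contraction $g \colon \widetilde{\mathcal{Q}} \to \mathcal{Q}$ is again obtained from a relative base-point-free theorem and is fiberwise the blow-down of the quadric threefold along the genus-$7$ degree-$10$ curve; flatness plus fiberwise smoothness gives that $\mathcal{Q} \subset \P^{4}_{R}$ is a relative smooth quadric threefold and $\mathcal{C} \subset \mathcal{Q}$ a smooth relative curve of genus $7$ and degree $10$, with $\mathcal{C}_{\overline{K}} \simeq \Gamma_{\mathrm{f}}(X_{\overline{K}})$ on the generic fiber.

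Finally, for (5) and (6) I would read off the linear series on the fibers. Statement (5) is immediate from the characteristic-$0$ dictionary: $\mathcal{C}_{\overline{K}} \simeq \Gamma_{\mathrm{f}}(X_{\overline{K}}) \simeq \Gamma(X_{\overline{K}})$ by Proposition \ref{prop:relationGamma}, and the dual of a prime Fano threefold of genus $7$ has no $g_4^1$ by Definition-Proposition \ref{defn-prop:dualvariety}; when $\chara k = 0$ the identical argument applies to $\mathcal{X}_{\overline{k}}$. The heart of the statement is (6), which I would prove by contradiction, assuming $\mathcal{C}_{\overline{k}}$ is a generic curve with $g_4^1$. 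The point is that $\mathcal{C}_{\overline{k}}$ sits on the smooth quadric threefold $\mathcal{Q}_{\overline{k}}$ as the smooth center of the blow-down $g_{\overline{k}}$, exactly the realization produced by the characteristic-$p$ two-ray game from the smooth Fano $\mathcal{X}_{\overline{k}}$. By Mukai's degeneration analysis (\cite[Section 8]{MukaiBrill}), transported to characteristic $p$ via Tanaka's results, a \emph{generic} curve with $g_4^1$ is precisely the one whose spinor/Brill--Noether partner is a Gorenstein, singular Fano threefold; equivalently, the presence of a generic $g_4^1$ would force the quadric produced by the two-ray game to be singular, contradicting the smoothness of $\mathcal{Q}_{\overline{k}}$ established in (4). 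Since $\mathcal{X}_{\overline{k}}$ is smooth, $\mathcal{C}_{\overline{k}}$ cannot be of this type.

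The hard part will be (3) together with the base-change compatibility feeding into (6): flops and the minimal model program in mixed characteristic are delicate, and what is really needed is not merely existence of the flop but that it has smooth total space and that its formation commutes with passage to the characteristic-$p$ fiber, so that the geometry of $\mathcal{C}_{\overline{k}}$---and in particular its failure to be a generic $g_4^1$ curve---can be controlled. This is exactly where the relative-MMP machinery of \cite{BMPSTWW} and Tanaka's existence of flops and classification of Fano threefolds in characteristic $p$ (\cite{Tanaka1}, \cite{Tanaka2}, \cite{Tanakaflop}) are indispensable.
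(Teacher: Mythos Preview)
Your overall strategy matches the paper's, but there are two genuine gaps.

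For (3), your argument for smoothness of $\widetilde{\mathcal{Q}}$ and for base-change compatibility of the flop is circular. You say $\widetilde{\mathcal{Q}}_{\overline{k}}$ is smooth because it ``is'' Tanaka's flop, and then conclude smoothness of $\widetilde{\mathcal{Q}}$ by the fibral criterion; but to identify $(f^{+})_{k}$ with the flop of $f_{k}$ you need precisely the base-change compatibility you are trying to establish, and formation of $\Proj_{\mathcal{Z}}\bigoplus_{d}\cO_{\mathcal{Z}}(df_{\ast}D)$ does not commute with restriction to the special fiber in general. The paper resolves this by an explicit local construction: after completing at a singular point $P\in\mathcal{Z}_{k}$, Tanaka's structure result exhibits $\widehat{\cO_{\mathcal{Z}_{k},P}}$ as free of rank $2$ over $k[[x,y,z]]$, and this lifts to make $\widehat{\cO_{\mathcal{Z},P}}$ free of rank $2$ over $R[[x,y,z]]$. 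The Galois involution of this degree-$2$ cover exchanges the two small resolutions, so the local flop $\widetilde{\mathcal{Q}}'$ is \emph{isomorphic} to $\widetilde{\mathcal{X}}'$ (hence regular), and the same diagram reduced mod $\varpi$ shows that $(f^{+})_{k}$ is small, hence is the flop of $f_{k}$. A black-box appeal to \cite{BMPSTWW} supplies neither smoothness nor compatibility with base change; this is the technical heart of the proposition. (There is a smaller gap in (2) as well: you need $\mathcal{Z}_{k}$ to be normal for $f_{k}$ to be a flopping contraction, and the paper proves $R^{1}f_{\ast}\cO_{\widetilde{\mathcal{X}}}=0$ to obtain this.)

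For (6), your argument is not the paper's and is unjustified as stated. You assert that a generic $g_{4}^{1}$ on $\mathcal{C}_{\overline{k}}$ would force $\mathcal{Q}_{\overline{k}}$ to be singular, attributing this to Mukai's degeneration picture ``transported to characteristic $p$''; but no such implication is proved in the references, and the Brill--Noether direction goes from curves to Fanos, not to the quadric appearing in the two-ray game. The paper's argument is a concrete cone computation: if $C=\mathcal{C}_{\overline{k}}$ were a generic curve with $g_{4}^{1}$, then by \cite[Proposition 6.3]{Mukaicurve} the $4$-secant planes of the canonical model $C\subset\P^{6}$ sweep out a quartic threefold $V$; projecting $V$ from the line through $P_{1},P_{2}$ yields a second quadric $Q'\neq Q$ through $C\subset\P^{4}$, so $|2H-C|\neq\emptyset$ on $Q$, i.e.\ $2g_{\overline{k}}^{\ast}H-E_{C}$ is effective on $\widetilde{Q}$. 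But the pseudo-effective cone of $\widetilde{Q}$ is spanned by the two exceptional divisors $E_{C}$ and $E^{+}$, and the Takeuchi relations (specialized from the generic fiber) give $E^{+}\sim 5g_{\overline{k}}^{\ast}H-2E_{C}$, whence $2g_{\overline{k}}^{\ast}H-E_{C}\equiv\tfrac{2}{5}E^{+}-\tfrac{1}{5}E_{C}$ is not pseudo-effective, a contradiction.

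As a minor remark, your approach to (1) via the relative Hilbert scheme of conics differs from the paper's: the paper instead lifts a closed \emph{point} $x_{k}$ of the special fiber, chosen off both the locus swept by lines and the closure of the locus carrying only singular conics, takes a smooth conic through $x_{K}$, and closes up; the choice of $x_{k}$ then forces the special fiber of the closure to be irreducible. Your approach should also work in view of Remark \ref{rem:suffgeneral}.
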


\begin{proof}
We only consider the case where $K$ is mixed characteristic $(0,p)$ since the equal-characteristic $0$ case is easier.

(1) Let $F(X)$ (resp.\ $S(X)$) be the Hilbert scheme of conics (resp.\ lines) on $X$. 
Let $\widetilde{F}(X)$ (resp.\ $\widetilde{S}(X)$) be the universal conic (resp.\ universal line) over $X$. We shall use the same notation for the model $\mathcal{X}$ and the special fiber $\mathcal{X}_{k}$.
The variety $F(X)$ is a smooth projective variety of dimension $2$ over $K$ (
\cite[Theorem 5.3]{Kuznetsovderived}, see also the proof of Proposition \ref{prop:basic} (4)).
Moreover, by \cite[Proposition 5.4]{Tanaka2}, we have $\dim S(\mathcal{X}_{k}) =1$.
We define the proper closed subscheme $Z_{k} \subset \mathcal{X}_{k}$ as the image of 
$\widetilde{S}(\mathcal{X}_{k})$ via the projection $\widetilde{S}(\mathcal{X}_{k}) \rightarrow \mathcal{X}_{k}$.
On the other hand, let $F(X)' \subset F(X)$ be the non-smooth locus of the projection $\widetilde{F}(X) \rightarrow F(X)$,
and $\widetilde{F}(X)' \subset \widetilde{F}(X)$ the preimage of $F(X)'$, which is a closed subscheme of dimension less than $3$.
Let $W \subset X$ be the image of $\widetilde{F}(X)'$ via the projection $\widetilde{F} (X) \rightarrow X$.
Let $W_{k} \subset \mathcal{X}_{k}$ be the special fiber of the Zariski closure of $W$ in $\mathcal{X}$, which is a closed subscheme of dimension less than 3.
Now by extending $K$ if necessarily, we  can choose a point $x_{k} \in (\mathcal{X}_{k} \setminus (Z_{k} \cup W_{k}))(k)$.
Since $\mathcal{X}$ is smooth and $R$ is Henselian, we can choose a lift $x \in \mathcal{X} (R)$ of $X$.
Then by the choice of $x$, $x_{K} \in \mathcal{X} (K) = X(K)$ lies on a smooth conic $q_{K} \subset X$.
We define $q \subset \mathcal{X}$ as the Zariski closure of $q_{K}$ in $\mathcal{X}$. 
Then the special fiber $q_{k} \subset \mathcal{X}_{k}$ is a (possibly singular) conic passing through $x_{k}$.
However, by the choice of $x_{k}$ again, there is no line passing through $x_{k}$, so $q_{k}$ is also a smooth conic.
Now we have a desired smooth relative conic $q \subset \mathcal{X}$.

(2) We denote the blow up of $\mathcal{X}$ at $q$ by $\widetilde{\mathcal{X}}$.
The line bundle $\cO_{\widetilde{\mathcal{X}}_{k}} (- K_{\widetilde{\mathcal{X}}_{k}})$ is base point free and big by \cite[Proposition 7.2]{Tanaka2}, and the same holds true for $\cO_{\widetilde{\mathcal{X}}_{K}} (- K_{\widetilde{\mathcal{X}}_{K}})$ (see \cite[Proposition 4.4.1]{Iskovskikh-Prokhorov}).
Therefore, by \cite[Theorem 1.2]{Witaszek},
the line bundle $\cO_{\widetilde{\mathcal{X}}} (- K_{\widetilde{\mathcal{X}}/R})$ is semi-ample, and we obtain the fiber space $f \colon \widetilde{\mathcal{X}} \rightarrow \mathcal{Z}$ associated with the line bundle.
Then the special fiber $f_{k}$ only contracts a curve \cite[Proposition 7.3 and Proposition 7.8]{Tanaka2},  
and by \cite[Proposition 4.4.1]{Iskovskikh-Prokhorov}, the generic fiber $f_{K}$ is also a flopping contraction.
Therefore, $f$ is also a flopping contraction. 
To prove $f_{k}$ is a flopping contraction, it suffices to show that $\mathcal{Z}_{k}$ is normal, which is equivalent to $f_{k \ast} \cO_{\widetilde{\mathcal{X}}_{k}} = \cO_{\mathcal{Z}_{k}}$.
Let $\widetilde{\mathcal{X}}_{k} \xrightarrow{h_{1}} V \xrightarrow{h_{2}} \mathcal{Z}_{k}$ be the Stein factorization of $f_{k}$, where $h_1$ is a flopping contraction given in \cite[Proposition 7.3 and Proposition 7.8]{Tanaka2}, and $h_{2}$ is a normalization. 
Since $\mathcal{Z}_{k}$ is reduced, and we have an injection
\[
\cO_{\mathcal{Z}_{k}} \hookrightarrow f_{k \ast} (\cO_{\widetilde{\mathcal{X}}_{k}}) = h_{2,\ast} (\cO_{V}). 
\]
Since we have a commutative diagram
\[
 \xymatrix{
\cO_{\mathcal{Z}} \ar[r]^{\simeq} \ar[d]
& f_{\ast} \cO_{\widetilde{\mathcal{X}}}  \ar[d]\\
\cO_{\mathcal{Z}_{k}}
 \ar@{^{(}->}[r] &
f_{k \ast} \cO_{\widetilde{\mathcal{X}}_{k}},
}
\]
it suffices to show that the right vertical arrow is surjective.
The multiplication by a uniformizer $\varpi \in R$ induces an exact sequence
\begin{equation}
\label{eqn:exactspecial}
0 \rightarrow \cO_{\widetilde{\mathcal{X}}} \rightarrow \cO_{\widetilde{\mathcal{X}}} \rightarrow \cO_{\widetilde{\mathcal{X}_{k}}} \rightarrow 0.
\end{equation}
By taking $R^i f_{\ast}$ of (\ref{eqn:exactspecial}), it suffices to show 
\[
R^1 f_{\ast} \cO_{\widetilde{\mathcal{X}}} =0.
\]
Since $\mathcal{Z}_{K}$ has terminal singularity and of characteristic 0, it has rational singularity and we have 
\begin{equation}
\label{eqn:genericrational}
R^i f_{K \ast} \cO_{\widetilde{\mathcal{X}}_{K}} = 0
\end{equation}
for $i>0$.
On the other hand, since $h_{1}$ is a flopping contraction, we have $R^i h_{1 \ast} \cO_{\widetilde{\mathcal{X}}_k} = 0$ by \cite[Proposition 6.10]{Tanakaflop}.
Since $h_2$ is finite, we have $R^i f_{k \ast} \cO_{\widetilde{\mathcal{X}}_k} = 0$ for $i>0$.
By taking $Rf_{\ast}$ of (\ref{eqn:exactspecial}) again, the multiplication by $\varpi$ on  $R^i f_\ast (\cO_{\widetilde{\mathcal{X}}})$ is surjective.
Combining with (\ref{eqn:genericrational}), we have \[
R^i f_{\ast} \cO_{\widetilde{\mathcal{X}}} = 0
\]
for $i>0$, and it completes the proof.

(3) We prove the assertion (3), namely, the existence of a flop.
Note that, for a faithfully flat extension of Henselian discrete valuation rings $R \subset S$, the base change $f_{S}$ is also flopping contraction since the assumptions are preserved under base change and the relative conic $q_{R}$ is still smooth.
Since the existence of a flop is equivalent to the finite generation of some relative log canonical algebra, we may assume that $k$ is algebraically closed.
Let $P \in \mathcal{Z}_{k} \subset \mathcal{Z}$ be a closed point. We assume that $\mathcal{Z}$ is singular at $P$. In particular, $P \in \mathcal{Z}_{k}$ is also a singular point. 
Let $\mathcal{Z}':= \widehat{\cO_{\mathcal{Z},P}}$ be a completion of $\cO_{\mathcal{Z}}$ at $P$, and we put $\widetilde{\mathcal{X}}' := \widetilde{\mathcal{X}} \times_{\mathcal{Z}} \mathcal{Z}'$.
Note that we have
\[
(\widehat{\cO_{\mathcal{Z},P}})_{k} := \widehat{\cO_{\mathcal{Z},P}} \otimes_{R} k = \widehat{\cO_{\mathcal{Z}_{k},P}},
\]
where $\widehat{\cO_{\mathcal{Z}_{k},P}}$ is a completion of $\cO_{\mathcal{Z}_{k}}$ at $P$.
By \cite[Lemma 6.12]{Tanakaflop},
there exist an injective $k$-algebra homomorphism
\[
 h \colon
 k[[x,y,z]] \hookrightarrow \widehat{\cO_{\mathcal{Z}_{k},P}},
\]
and $w \in \widehat{\cO_{\mathcal{Z}_{k},P}}$ such that we have $\widehat{\cO_{\mathcal{Z}_{k},P}} = k[[x,y,z]] \oplus k[[x,y,z]]w$ via $h$.
We take a lift $\widetilde{w} \subset \widehat{\cO_{Z,P}}$.
By taking lifts of $h(x), h(y), h(z) \in (\widehat{\cO_{\mathcal{Z},P}})_k$, we have an $R$-algebra homomorphism
\[
\widetilde{h} \colon R[[x,y,z]] \rightarrow  \widehat{\cO_{Z,P}}.
\]
Take a lift $\widetilde{w} \in \widehat{\cO_{Z,P}}$ of $w$, and consider the $R[[x,y,z]]$-module map
\begin{align*}
&\Phi\colon R[[x,y,z]]^{\oplus 2} \rightarrow \widehat{\cO_{Z,P}} \\
& (a,b) \mapsto \widetilde{h} (a) + \widetilde{h}(b)\widetilde{w}.
\end{align*}
Since $R[[x,y,z]]$ is $\varpi$-complete and $\widehat{\cO_{Z,P}}$ is $\varpi$-separated, this map is surjective.
Moreover, since $\widehat{\cO_{Z,P}}$ is $\varpi$-torsion free, we have $\ker \Phi \otimes_{R[[x,y,z]]} k[[x,y,z]] =0$, it implies $\ker \Phi \subset \bigcap \varpi^n R[[x,y,z]] =0$.
Therefore, $\Phi$ is an isomorphism.
We put 
\[
W := \Spec R[[x,y,z]].
\]
We have a map of normal $R$-schemes
\[
\alpha \colon
\widetilde{\mathcal{X}}' \xrightarrow{f'}
\mathcal{Z}' 
\xrightarrow{\beta}
W,
\]
where $\beta$ is $\widetilde{h}^{\ast}$.

Let $E$ be a prime divisor on $\widetilde{\mathcal{X}}'$ such that $-E$ is $f'$-ample (note that $\widetilde{\mathcal{X}}'$ is regular).
We put $E_{\mathcal{Z}'} := f'_{\ast} E$ and $E_{W} := \beta_{\ast} E_{\mathcal{Z}'}$.
Since $E_{\mathcal{Z}'}$ is non-$\Q$-Cartier and $E_{W}$ is Cartier, we have $E_{\mathcal{Z}'} \neq a \beta^{*} E_{W}$ for any $a\in \Q$, and we have
\[
\beta^{\ast} E_{W} = E_{\mathcal{Z}'} + E'_{\mathcal{Z}'},
\]
where $E'_{\mathcal{Z}}$ is the Galois conjugate (with respect to $\beta$) of $E_{\mathcal{Z}'}$.
Let $E'$ be the strict transform of $E'_{\mathcal{Z}'}$ on $\widetilde{\mathcal{X}}'.$
Then we have
\[
\alpha^{*} E_{W} = E+ E'.
\]
Since $\alpha^{\ast} E_{W}$ is $f'$-trivial, $E'$ is $f'$-ample.
We note that the Galois conjugation induces
\[
H^0(\mathcal{Z}', \cO (d E_{\mathcal{Z}'}')) \simeq  H^0(\mathcal{Z}', \cO(d E_{\mathcal{Z}'}))
\]
for any $d \in \Z_{\geq 0}$.
Now we have
\begin{equation}
\label{eqn:Galoisconj}
\widetilde{\mathcal{X}}' \simeq  \Proj \bigoplus_{d \geq 0} H^{0}(\mathcal{Z}', \cO(dE'_{\mathcal{Z}'})) \simeq \Proj \bigoplus_{d \geq 0} H^{0}(\mathcal{Z}', \cO(dE_{\mathcal{Z}'})) =: \widetilde{\mathcal{Q}}'.
\end{equation}
Therefore, by the argument in \cite[Proposition 6.6]{Kollar-Mori}, the flop $f^{+}$ of $f$ exists over an open neighborhood of a singular point $P \in \mathcal{Z}$, and its pull-back via $\mathcal{Z}' \rightarrow \mathcal{Z}$ equals 
$f'^{+} \colon \widetilde{\mathcal{Q}}' \rightarrow \mathcal{Z}'$.
Since $f$ is isomorphic over the smooth locus of $\mathcal{Z}$, a flop exists around smooth points.
Moreover, on the generic fiber, the existence of a flop is well-known (see \cite[Theorem 6.14]{Kollar-Mori} for example. Note that in our setting, the Picard number is preserved under field extension, and we can easily reduce the problem to the case of $\C$) and the flop $\widetilde{\mathcal{Q}}_{K}$ of $\widetilde{\mathcal{X}}_{K}$ is smooth (see \cite[Theorem 2.4]{Kollarflop}).
Therefore, the flop $f^{+} \colon \widetilde{\mathcal{Q}} \rightarrow \mathcal{Z}$
of $f$ exists.
Note that, the Galois action $\sigma$ induces a birational map $\mathcal{Z}' \dashrightarrow \mathcal{Z}'$ over $W$, which is isomorphic in codimension $1$ since $\mathcal{Z}' \rightarrow W$ is finite.
By the argument in the proof of \cite[Theorem 6.13]{Tanakaflop}, the base change $\beta_{k}$ of $\beta$ is also finite Galois of degree 2.
Therefore, $\sigma$ induces the associated Galois action $\sigma_{k} \colon \mathcal{Z}'_{k} \dashrightarrow \mathcal{Z}'_{k}$ which is isomorphic in codimension 1.
Since we have a commutative diagram
\[
 \xymatrix{
\widetilde{\mathcal{X}}' \ar[r]^{\simeq} \ar[d]_{f'}
& \widetilde{\mathcal{Q}}'  \ar[d]^{f'^{+}}\\
\mathcal{Z}' \ar@{..>}[r]^{\sigma} 
&\mathcal{Z}',
}
\]
by (\ref{eqn:Galoisconj}),
restricting to the special fiber, we have a commutative diagram
\[
 \xymatrix{
\widetilde{\mathcal{X}}'_{k} \ar[r]^{\simeq} \ar[d]_{f'_k}
& \widetilde{\mathcal{Q}}'_{k}  \ar[d]^{f'^{+}_{k}}\\
\mathcal{Z}'_{k} \ar@{..>}[r]^{\sigma_{k}} 
&\mathcal{Z}'_{k}.
}
\]
Since $f'_{k}$ and $\sigma_k$ are isomorphic in codimension 1,
$f'^{+}_{k}$ is also isomorphic in codimension 1. 
Therefore, the special fiber of a flop $f_{k}$ is also isomorphic in codimension 1.
Let $D$ be a prime divisor on $\mathcal{X}$ such that $-D$ is $f$-ample. Note that $-D|_{\mathcal{X}_{k}}$ is $f_{k}$-ample.
Then the strict transform $D^{+}$ of $D$ on $\widetilde{\mathcal{Q}}$ is $f^{+}$-ample, and so $D^{+}|_{\widetilde{\mathcal{Q}}_{k}}$ is $f^{+}_{k}$-ample.
Moreover, $\widetilde{\mathcal{Q}}_{k}$ is regular since its completion at closed points are regular.
Therefore, $f^{+}_{k}$ is a flop of $f_{k}$.

(4) By \cite[Theorem 1.1]{Kollarsmooth} (see also \cite[Theorem 7.4]{Tanaka2}), 
there exists a 
$K_{\widetilde{\mathcal{Q}}_{k}}$-negative extremal ray contraction $g_{k}\colon \widetilde{\mathcal{Q}}_{k} \rightarrow Q$, 
and $Q$ is smooth.
We fix a very ample line bundle $H$ on $Q$, and we
put $M := g_{k}^{\ast} (H)$.
Since $H^2(\widetilde{\mathcal{Q}}_{k}, \cO_{\widetilde{\mathcal{Q}}_{k}}) =0$, we can take a lift $L$ of line bundle $M$ on $\widetilde{\mathcal{Q}}$.
Then $L_{k}:=L|_{\widetilde{\mathcal{Q}}_{k}} = M$ is semi-ample and $L_{k}\otimes \cO_{\widetilde{\mathcal{Q}}_{k}} (-K_{\widetilde{\mathcal{Q}}_{k}})$ is ample.
Therefore, $L_{K}$ is nef and $L_{K} \otimes \cO_{\widetilde{\mathcal{Q}}_{K}} (-K_{\widetilde{\mathcal{Q}}_{K}})$ is ample, so $L_{K}$ is base point free by the base point free theorem.
By \cite[Theorem 1.2]{Witaszek}, $L$ is semi-ample, and we have the associated fiber space morphism
\[
g \colon \widetilde{\mathcal{Q}} \rightarrow \mathcal{Q}.
\]
Since $L_{K} \otimes \cO_{\widetilde{\mathcal{Q}}_{K}} (-K_{\widetilde{\mathcal{Q}}_{K}})$ is ample and $L_{K}$ is semi-ample, $g_{K}$ is $K_{\widetilde{\mathcal{Q}}_{K}}$-negative extremal contraction.
Since the Stein factorization of the base change $g_k$ is given by $\widetilde{\mathcal{Q}}_{k} \rightarrow Q \rightarrow \mathcal{Q}$ and $Q$ is smooth, we can prove $g_{k \ast} \cO_{\widetilde{\mathcal{Q}}_{k}} = \cO_{\mathcal{Q}_{k}}$
in a similar way to (2) (we can use \cite{Chatzistamatiou-Rulling} since $Q$ is smooth).
By Takeuchi's result (\cite{Takeuchi} ,see \cite[Theorem 4.4.11]{Iskovskikh-Prokhorov}, the base change $g_{\overline{K}}$ is given by the blow-up of a quadratic threefold $\mathcal{Q}_{\overline{K}} \subset \P_{\overline{K}}$ at a genus 7 degree 10 curve $C_{\overline{K}}$.
In particular, we have $(-K_{\mathcal{Q}_{K}})^3 = (-K_{\mathcal{Q}_{k}})^3 =54$.
and by \cite[Theorem 2.16]{Tanaka1} ,\cite[Theorem 7.4]{Tanaka2} and \cite[Theorem 1.1]{Kollarsmooth}, $g_{\overline{k}}$ is also a blow-up of a quadratic threefold $ \mathcal{Q}_{\overline{k}} \subset \P_{\overline{k}}^4$ at a genus $7$-curve $C_{\overline{k}}$.
In particular, $\mathcal{Q}$ is smooth over $R$.
By extending $K$ if necessary, we may assume that $\mathcal{Q}_{k}$ is a quadric threefold in $\P^4_{k}$.
Since $H^2(\cO_{\mathcal{Q}_k}) = 0$, we can take a lift $L \in \Pic (\mathcal{Q})$ of a very ample line bundle on $\mathcal{Q}_{k}$ corresponding to a hyperplane in $\P^4_{k}$, and by the cohomology and base change theorem, $L$ defines an embedding $\mathcal{Q} \hookrightarrow \P^4_{R}$, which embeds $\mathcal{Q}$ as a relative smooth quadric threefold.

Let $\mathcal{C} \subset \mathcal{Q}$ be a non-isomorphic locus of $g$.
Then by the above consideration, $\mathcal{C}$ is a relative smooth proper curve of genus 7.
The isomorphism $\mathcal{C}_{\overline{K}} \simeq \Gamma_{\mathrm{f}}(X_{\overline{K}})$ clearly follows from the construction and Definition-Proposition \ref{defn-prop:Xtocurveflop} (5).

(5) The assertion follows from Proposition \ref{prop:relationGamma}.

(6) We may assume that $k=\overline{k}$, and we put $C := \mathcal{C}_{\overline{k}}$ and $Q:=\mathcal{Q}_{\overline{k}}$.
Note that, the embedding $C \subset Q \subset \P_{\overline{k}}^{4}
$
is given by a linear system $\cO(K_{C}-P_{1}-P_{2})$ for closed points $P_{1}, P_{2} \in C$.
In other words, This embedding is given by the projection of the canonically embedded curve $C \subset \P^{6}_{\overline{k}}$ from the line $\ell$ passing through $P_{1}$ and $P_{2}$.

Assume that $C$ is a generic curve with $g_4^1$.
In this case, by \cite[Proposition 6.3]{Mukaicurve}, the union of $4$-secant planes of $C$ with respect to the canonical embedding $C \hookrightarrow \mathbb{P}^{6}_{\overline{k}}$ is a quartic threefold $V$ in $\mathbb{P}^6$ (in \cite[Section 6]{Mukaicurve}, $V$ is written as $W$).
Let $Q' \subset \mathbb{P}^4_{\overline{k}}$ be a projection of $V \subset \mathbb{P}^6_{\overline{k}}$ from the line $\ell$.
Since $V$ is a quartic covered by $4$-secant planes, $Q'$ is a quadric containing $\P^{2}_{\overline{k}}$, so $Q'$ is different from $Q$.
Therefore, $Q'$ defines a member of the linear system $|2H - C|$ on $Q$. Here, $|2H - C|$ is a linear system consisting of a member of $|2H|$ containing $C$, and $H$ is a hyperplane section of $Q \subset \P^4_{\overline{k}}$.
In particular, we have
$H^0 (\widetilde{Q}, \cO_{\widetilde{Q}} (2 g_{\overline{k}}^{\ast}(H)- E_{C})) \neq 0$.
Here, we put $\widetilde{Q} := \widetilde{\mathcal{Q}}_{\overline{k}}$, and $E_{C}$ is the $g_{\overline{k}}$-exceptional divisor.
On the other hand, the pseudo-effective cone of $\widetilde{Q}$ is generated by two rays, the $g_{\overline{k}}$-exceptional divisor $E_{C}$ and the 
$b_{\overline{k}} \circ f_{\overline{k}}^{-1} \circ f^{+}_{\overline{k}}$-exceptional divisor $E^{+}$.
By applying \cite[Theorem 4.4.11]{Iskovskikh-Prokhorov} to the geometric generic fiber and taking specialization, we have
\[
E_{C} \sim 5(-K_{\widetilde{Q}})-3 E^{+} = -5 (-g_{\overline{k}}^{\ast} (3H) + E_{C}) - 3 E^{+}.
\]
Therefore, we have $E^{+} \sim 5 g_{\overline{k}}^{\ast} (H) -2 E_{C}$.
Then the numerical class $2g_{\overline{k}}^{\ast}(H)-E_{C}$ is equal to $\frac{2}{5} E^{+} - \frac{1}{5} E_{C}$, which contradicts the description of the pseudo-effective cone. This shows that $C$ is not a general curve with $g_4^1$ and it finishes the proof.
\end{proof}

\begin{thm}
\label{thm:grcounterexample}
Let $p$ be a prime number (resp.\ $p:=0$).
There exists a number field (resp.\ a function field of a curve over an algebraically closed field) $K$, finite place $\p$ of $K$ with residue characteristic $p$, and a prime Fano threefold $X$ of genus $7$ over $K$ such that the following hold.
\begin{enumerate}
\item 
The Galois representation $H^3_{\et} (X_{\overline{K}}, \Q_{\ell})$ is unramified at $\p$ for any prime number $\ell \notin \p$.
\item 
The Fano variety $X$ does not admit potentially good reduction at $\cO_{K, \p}$.
More precisely, for any finite extension $L$ of $\Frac (\widehat{\cO_{K, \p}})$, the Fano variety $X_{L}$ does not admit good reduction at its valuation ring $R$.
\end{enumerate}
\end{thm}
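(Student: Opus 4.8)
The plan is to construct $X$ as the dual $\beta(\mathcal{C}_K)$ of the generic fiber of a cleverly chosen smooth model of a genus $7$ curve, and then to obstruct potential good reduction of $X$ by running a hypothetical smooth model of $X$ back through the two-ray game of Proposition \ref{prop:mixedtwo-ray}. First I would invoke Lemma \ref{lem:reductioncurves} in the mixed characteristic case (resp.\ Lemma \ref{lem:reductioncurveschar0} when $p=0$) to produce a number field (resp.\ function field) $K$, a place $\p$ of residue characteristic $p$, and a smooth proper relative curve $\mathcal{C}$ of genus $7$ over $\cO_{K,\p}$ whose geometric generic fiber has no $g_4^1$ but whose geometric special fiber $\mathcal{C}_{\overline{k}}$ is a generic curve with $g_4^1$. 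Since ``without $g_4^1$'' is a geometric condition, $\mathcal{C}_K$ is a curve of genus $7$ without $g_4^1$ over $K$, so $X := \beta(\mathcal{C}_K)$ is a prime Fano threefold of genus $7$ over $K$ by Theorem \ref{thm:moduliarithmetictorelli}, and $\Gamma(X) \simeq \mathcal{C}_K$ because $\Gamma \circ \beta \simeq \mathrm{id}$.

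For condition (1), I would observe that $\mathcal{C}$ is smooth and proper over $\cO_{K,\p}$, so $\mathcal{C}_K$ has good reduction at $\p$ and hence $H^1_{\et}(\mathcal{C}_{K,\overline{K}}, \Q_{\ell})$ is unramified at $\p$ for every $\ell \neq p$ by smooth proper base change (equivalently by N\'eron--Ogg--Shafarevich for the Jacobian). Proposition \ref{prop:dualcompatiblejacobian} then supplies a $\Gal(\overline{K}/K)$-equivariant isomorphism $H^3_{\et}(X_{\overline{K}}, \Q_{\ell}(2)) \simeq H^1_{\et}(\mathcal{C}_{K,\overline{K}}, \Q_{\ell}(1))$, so $H^3_{\et}(X_{\overline{K}}, \Q_{\ell})$ is unramified at $\p$ for all $\ell \notin \p$, as required.

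For condition (2), suppose toward a contradiction that $X$ has potential good reduction, so after a finite extension $L$ of $\Frac(\widehat{\cO_{K,\p}})$, with valuation ring $R$, there is a smooth model $\mathcal{X}/R$ of $X_L$. After possibly enlarging $L$, Proposition \ref{prop:mixedtwo-ray} yields a smooth proper relative curve $\mathcal{C}'$ of genus $7$ over $R$ with $\mathcal{C}'_{\overline{L}} \simeq \Gamma_{\mathrm{f}}(X_{\overline{L}})$ whose geometric special fiber $\mathcal{C}'_{\overline{k}}$ is not a generic curve with $g_4^1$ (and in the equal characteristic $0$ case has no $g_4^1$ at all). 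Chaining $\Gamma_{\mathrm{f}} = \Gamma$ (Proposition \ref{prop:relationGamma}), the base-change compatibility of the stack morphism $\Gamma$, and $\Gamma(X) \simeq \mathcal{C}_K$ gives $\mathcal{C}'_{\overline{L}} \simeq \Gamma(X)_{\overline{L}} \simeq \mathcal{C}_{\overline{L}}$; this isomorphism is defined over a finite extension $L''/L$, with associated valuation ring $R''$ a discrete valuation ring dominating $\cO_{K,\p}$. Over $R''$ the two smooth models $\mathcal{C}'_{R''}$ and $\mathcal{C}_{R''}$ then have isomorphic generic fibers, so by separatedness of the moduli stack $\mathcal{M}_{\mathrm{sm}}$ (valuative criterion) they are isomorphic as $R''$-curves; in particular their geometric special fibers agree. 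But $\mathcal{C}_{\overline{k}}$ is a generic curve with $g_4^1$ whereas $\mathcal{C}'_{\overline{k}}$ is not, a contradiction. Hence $X$ admits no potential good reduction at $\p$.

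The serious geometric input is entirely packaged in Proposition \ref{prop:mixedtwo-ray} (the two-ray game run over a mixed characteristic base, together with the fact that its output special fiber fails to be a generic curve with $g_4^1$), which I am free to assume. Within the present argument, the main obstacle is the final descent step: one must convert the isomorphism of \emph{geometric} generic fibers, obtained through the identifications $\Gamma_{\mathrm{f}} = \Gamma = \Gamma\beta = \mathrm{id}$, into a comparison of models over an honest discrete valuation ring to which the valuative criterion of separatedness applies, and one must check that the ``generic curve with $g_4^1$'' property is stable under the intervening residue-field extensions so that the resulting contradiction on special fibers is genuine.
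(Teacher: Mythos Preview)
Your argument is correct and the overall architecture matches the paper's: build $X$ from a carefully chosen family of genus $7$ curves degenerating to a generic $g_4^1$ curve, and obstruct potential good reduction by running Proposition~\ref{prop:mixedtwo-ray} on a hypothetical smooth model and comparing special fibers via uniqueness of smooth curve models.

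The differences are cosmetic but worth noting. For part~(1) the paper re-runs the two-ray game over $K$ (after extending $K$ so a smooth conic, the flop, and the identification $C \simeq \mathcal{C}_K$ are all rational) and then reads off $H^3_{\et}(X_{\overline K},\Q_\ell) \simeq H^1_{\et}(C_{\overline K},\Q_\ell)(-1)$ from the blow-up formula and invariance of cohomology under flop. You instead invoke Proposition~\ref{prop:dualcompatiblejacobian} directly, which is shorter and avoids the extra field extensions. Similarly, the paper produces $X$ by first taking $\beta(\mathcal{C}_{\overline K})$ and then descending to a finite extension of $K$, whereas you set $X=\beta(\mathcal{C}_K)$ over $K$ from the start; since $\beta$ is a morphism of $\Q$-stacks this is legitimate and again a mild streamlining. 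For part~(2) the two arguments are identical in substance; the paper cites \cite[Lemma~1.12]{Deligne-Mumford} where you cite separatedness of $\mathcal{M}_{\mathrm{sm}}$.
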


\begin{proof}
Let $K$, $\p$, $\mathcal{C}$ be as in Lemma \ref{lem:reductioncurves} (resp.\ Lemma \ref{lem:reductioncurveschar0}).
By extending $K$ and taking a finite place above $\p$ if necessary, we may take a prime Fano threefold $X$ of genus 7 over $K$ such that $X_{\overline{K}} \simeq \beta (\mathcal{C}_{\overline{K}})$.

(1) By extending $K$ and take a finite place above $\p$ if necessarily, 
we have the following:
There exists $\widetilde{X}' \rightarrow X$ which is a blow-up at conic, $\widetilde{Q} \rightarrow Q$ which is a blow-up at a smooth curve $C$ of genus $7$ on $Q$,
and a flop $\widetilde{X}' \dashrightarrow \widetilde{Q}$.
This is a $K$-rational version of construction in Definition-Proposition \ref{defn-prop:Xtocurveflop}.
By extending $K$ if necessarily again, we may assume that $\mathcal{C}_{K} \simeq C$ by Proposition  \ref{prop:relationGamma}.
Then we have an isomorphism as $\Gal(\overline{K}/K)$-modules
\[
H^3_{\et}(X_{\overline{K}}, \Q_{\ell}) \simeq H^3_{\et} (\widetilde{X}'_{\overline{K}}, \Q_{\ell}) \simeq H^3_{\et} (\widetilde{Q}_{\overline{K}}, \Q_{\ell}) \simeq H^1_{\et}(C_{\overline{K}}, \Q_{\ell})(-1)
\]
by the argument as in \cite[Corollary 4.12]{Kollarflop} and the blow-up formula.
Since $C \simeq \mathcal{C}_{K}$ has good reduction at $\cO_{K,\p}$, $H^1_{\et} (C_{\overline{K}},\Q_{\ell})$ is unramified at $\p$, and so is $H^3_{\et} (X_{\overline{K}}, \Q_{\ell})$.

(2) Suppose by contradiction that $X_{L}$ admits a good reduction at $R$, i.e., there exists a smooth projective scheme $\mathcal{X}$ over $R$ such that $\mathcal{X}_{L} \simeq X_{L}$.
We denote the residue field of $R$ by $l$, which is a finite extension of $k$.
By Proposition \ref{prop:mixedtwo-ray},
if we extend $L$, we have a smooth proper curve $\mathcal{C}'$ of genus $7$ over $R$ satisfying the following:
\begin{itemize}
\item 
The geometric special fiber $\mathcal{C}'_{\overline{l}}$ is not a generic curve with $g_4^1$ in the sense of Lemma \ref{lem:reductioncurveschar0}. Moreover, if $l$ is of characteristic $0$, $\mathcal{C}'_{\overline{l}}$ has no $g_4^1$.
\item 
We have
$\mathcal{C}'_{\overline{L}} \simeq \Gamma_{\mathrm{f}}(X_{\overline{L}})$.
\end{itemize}
Since we also have $X_{\overline{L}} = \beta (\mathcal{C}_{\overline{L}})$ by Proposition \ref{prop:relationGamma}, we have an isomorphism $\mathcal{C}'_{L} \simeq \mathcal{C}_{L}$ by extending $L$ if necessary.
Moreover, $\mathcal{C}_{R} := \mathcal{C} \otimes_{\cO_{K,\p}} R$ is a smooth proper curve of genus 7 over $R$ whose geometric special fiber $\mathcal{C}_{\overline{l}}$ is a generic curve with $g_4^1$ by the choice of $\mathcal{C}$.
By the uniqueness of a smooth projective model of $\mathcal{C}_{L}$ (\cite[Lemma 1.12]{Deligne-Mumford}), we have $\mathcal{C}_{R} \simeq \mathcal{C}'$ and it gives a contradiction since $\mathcal{C}_{\overline{l}}$ (resp.\ $\mathcal{C'}_{\overline{l}}$) is a generic curve with $g_4^1$ (resp.\ not a generic curve with $g_4^1$).
\end{proof}

\section{On Mukai $n$-folds of genus 7}
\label{Section:Mukain}

In this section, we shall study the arithmetic properties of Mukai $n$-folds that are linear sections of spinor tenfolds. A standard reference of such varieties is \cite{Kuznetsovspinor}. 
In this section, we freely use the notations in
Section \ref{subsection:spin10}.

Throughout this section, $n$ is an integer with $3 \leq n \leq 10$, unless otherwise stated. 

\subsection{Mukai $n$-folds of genus 7}

\begin{defn-prop}
Let $B$ be a {\cred connected} scheme, $V^{(B)}$ a free sheaf of rank 10 on $B$, $L^{(B)}$ a free module of rank $1$ on $B$, and $\varphi \colon L^{(B)} \hookrightarrow S^2 V^{(B)}$ be a split injection.
Suppose that we can choose basis $e_{1}, \ldots, e_{10}$ of $V^{(B)}$ such that 
the quadratic form $q_{\varphi}$ associated with $\varphi$ is given by 
\[
q_{\varphi} (\sum x_i e_i) = x_1 x_2 + x_3 x_4 + \cdots + x_9 x_{10}.
\]
Here, we identify $V^{(B)}$ with its dual by $e_{1}, \ldots, e_{10}$.
In this case, Assumptions (a) and (b) in Section \ref{subsection:spin10} are satisfied. 
Indeed, we have
\[
S_{+}^{(B)} := (\bigwedge_{\mathrm{even}} U_{\infty}^{(B)})^{\vee}, S_{-}^{(B)} := (\bigwedge_{\mathrm{odd}} U_{\infty}^{(B)})^{\vee},
\]
where 
\[
U_{\infty}^{(B)} := \cO_{B} e_{2} \oplus \cO_{B} e_{4} \oplus \cdots \oplus \cO_{B} e_{10}.
\]
See \cite{Mukaicurve} for details.
We denote the components of orthogonal Grassmannian in this case by $\Sigma_{+}^{(B)}$ and $\Sigma_{-}^{(B)}$.
\end{defn-prop}

\begin{defn}
\label{defn:Mukaivariety}
Let $B$ be a locally Noetherian scheme.
Let $n$ be an integer with $1 \leq n \leq 10$.
\begin{enumerate}
\item 
Let $S_{+}^{(B)} \twoheadrightarrow L$ be a locally free quotient over $B$ of rank $6+n$.
{\cred We denote the decomposition of $B$ into connected components by $\sqcup_{i \in I} B_{i}.$
If for any $i$, the intersection
\[
X(L)_{i} := \P_{B_{i}}(L|_{B_{i}}) \cap \Sigma_{+}^{(B_{i})}
\]
is smooth and dimensionally transverse over $B_{i}$, i.e., the dimension of any geometric fiber of $X(L)_{i}$ over $B_{i}$ is $n$, we denote the disjoint union $\sqcup_{i} X(L)_{i}$ over $B$ by $X(L)$, and 
we call it a \emph{split Mukai $n$-folds of genus $7$ over $B$}.}

\item 
A smooth projective scheme $X$ over $B$ is called a \emph{Mukai $n$-fold of genus $7$ over $B$} when any fiber over a geometric point $k$ on $S$ is a split Mukai $n$-fold of genus $7$ over $k$.

\item 
Suppose that $3 \leq n \leq 10$.
A smooth projective scheme $X$ over $B$ is called a \emph{Fano $n$-fold of coindex 3 and genus 7 over $B$}, when any geometric fiber is a (smooth) Fano variety of Picard rank $1$, index $n-2$, and $(\frac{-K_{X}}{n-2})^n =12$.

\end{enumerate}
\end{defn}

\begin{rem}
\label{rem:mukaindual}
Suppose that there exists a locally free quotient $S_{-}^{(B)} \twoheadrightarrow L'$ over $B$ of rank $10 - n$
in the setting of Definition \ref{defn:Mukaivariety}.
Then $X(L'^{\perp})$ is a split Mukai $n$-fold of genus $7$ over $B$ if and only if the intersection
$ \P(L') \cap \Sigma_{-} (V_{B}, \varphi)$
is smooth and dimensionally transverse by \cite[Lemma 3.3]{Kuznetsovspinor} (see the proof of Definition-Proposition \ref{defn-prop:dualvariety}).
\end{rem}

\begin{prop}
\label{prop:MukaivssplitMukai}
Let $B$ be a locally Noetherian scheme. Let $n$ be an integer with $3 \leq n \leq 10$.
Consider the following conditions:
\begin{enumerate}
\item 
$X$ is a split Mukai $n$-fold of genus 7 over $B$.
\item 
$X$ is a Mukai $n$-fold of genus $7$ over $B$.
\item 
$X$ is a Fano $n$-fold of coindex 3 and genus $7$ over $B$.
\end{enumerate}
Then we have $(1) \Rightarrow (2) \Rightarrow (3)$.
When $B$ is a spectrum of an algebraically closed field of characteristic $0$, we have $(1) \Leftrightarrow (2) \Leftrightarrow (3)$. Furthermore, when $B$ is a scheme over $\Q$, (2) and (3) are equivalent.
\end{prop}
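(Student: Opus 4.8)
The plan is to obtain the forward implications $(1)\Rightarrow(2)\Rightarrow(3)$ by reduction to geometric fibres together with the known geometry of the spinor tenfold, and the reverse implications in characteristic $0$ by invoking Mukai's classification theorem. For $(1)\Rightarrow(2)$ I would argue formally. If $X=X(L)$ is a split Mukai $n$-fold with $S_+^{(B)}\twoheadrightarrow L$ of rank $6+n$, then $X\to B$ is smooth projective of relative dimension $n$: smoothness and relative dimension $n$ are exactly the dimensional transversality hypothesis, and projectivity holds because $X(L)_i\subset\P_{B_i}(L|_{B_i})$ is closed. Since the formation of $S_+^{(B)}$ and $\Sigma_+^{(B)}$ commutes with base change, for every geometric point $\overline{k}\to B$ the fibre $X_{\overline{k}}=\P_{\overline{k}}(L_{\overline{k}})\cap\Sigma_+^{(\overline{k})}$ is again a smooth, dimensionally transverse linear section with $S_+^{(\overline{k})}\twoheadrightarrow L_{\overline{k}}$ of rank $6+n$, hence a split Mukai $n$-fold over $\overline{k}$. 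This is exactly condition (2).

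For $(2)\Rightarrow(3)$ it suffices, by the definition of a Mukai $n$-fold, to show that a split Mukai $n$-fold $Y$ over an algebraically closed field $k$ of \emph{arbitrary} characteristic is a smooth Fano variety of Picard rank $1$ and index $n-2$ with $(-K_Y/(n-2))^n=12$. Write $Y=\P_k(L_k)\cap\Sigma_+^{(k)}$, a smooth dimensionally transverse linear section of codimension $10-n$ of the spinor tenfold $\Sigma_+\subset\P(S_+)=\P^{15}$. I would use that $\Sigma_+$, being a homogeneous space for $\Spin(10)$ defined already over $\Z$, is in every characteristic a smooth Fano tenfold of Picard rank $1$ with ample generator $H_+$ (the square root $M=H_+^{\otimes 2}$ of the Plücker class, as in Lemma \ref{lem:pmindep}), of index $8$ so that $-K_{\Sigma_+}=8H_+$, and of degree $\deg_{H_+}\Sigma_+=12$. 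Because $n\geq 3$, the Grothendieck--Lefschetz theorem for Picard groups together with the weak Lefschetz theorem for $\ell$-adic cohomology---both valid in all characteristics, compare \cite[Remark 3.9]{Kuznetsovspinor}---show that restriction induces an isomorphism $\Pic\Sigma_+\xrightarrow{\sim}\Pic Y$; thus $\Pic Y=\Z\cdot H$ with $H:=H_+|_Y$ ample. Adjunction for the linear section, whose normal bundle is $\mathcal{O}_Y(1)^{\oplus(10-n)}$, gives $-K_Y=(8-(10-n))H=(n-2)H$, so $Y$ is Fano of index exactly $n-2$; and since a dimensionally transverse linear section has the same degree as $\Sigma_+$, one gets $(-K_Y/(n-2))^n=H^n=12$.

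For the reverse equivalences over an algebraically closed field $k$ of characteristic $0$, note that $(1)\Leftrightarrow(2)$ is automatic, since over $\Spec k$ the only geometric fibre of $X$ is $X$ itself, so the two notions coincide. The remaining implication $(3)\Rightarrow(1)$---that a smooth Fano $n$-fold over $k=\overline{k}$ of Picard rank $1$, index $n-2$ and genus $7$ is a linear section of $\Sigma_+$---is exactly Mukai's biregular classification of Fano manifolds of coindex $3$ and genus $7$, valid for $3\leq n\leq 10$, and I would cite it \cite{Mukaicurve,MukaiSugakuShintenkaiEnglish}. Finally, over a $\Q$-scheme $B$ the implication $(2)\Rightarrow(3)$ is the case already proved, while for $(3)\Rightarrow(2)$ every geometric point of $B$ has characteristic $0$ residue field, so applying the characteristic-$0$ equivalence $(3)\Rightarrow(1)$ to each geometric fibre shows every fibre is a split Mukai $n$-fold, which is the defining condition (2).

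The genuinely hard input is Mukai's classification theorem used for the reverse implications; everything else is formal or rests on characteristic-free Lefschetz theorems. The one point requiring care---rather than depth---is $(2)\Rightarrow(3)$ in positive characteristic, where Kodaira vanishing and Hodge theory are unavailable, so I would phrase the Picard-rank, index and degree computations purely in terms of Grothendieck--Lefschetz, weak Lefschetz, adjunction, and the fact that $\Sigma_+$ is homogeneous over $\Z$.
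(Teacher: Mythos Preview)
Your overall strategy matches the paper's: forward implications by fibrewise geometry of the spinor tenfold plus Lefschetz/adjunction, and the reverse implication in characteristic $0$ by Mukai's classification. Two points need attention.

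For $(2)\Rightarrow(3)$ in arbitrary characteristic, you invoke Grothendieck--Lefschetz and weak Lefschetz directly from $\Sigma_+$ (dimension $10$) to $Y$ (dimension $n$). As usually stated, these theorems concern a single smooth ample divisor; to pass through codimension $10-n$ one must iterate, which requires intermediate smooth linear sections. Over an infinite field this is a Bertini argument, but you do not say so, and the most natural Bertini statement is on the \emph{dual} side. The paper makes this explicit: given $X=\P(L)\cap\Sigma_+$ with $n\le 9$, one chooses a hyperplane $L'\subset L^{\perp}$ such that $\P(L')\cap\Sigma_+^{\vee}$ is smooth and dimensionally transverse, and then $Y=\P(L'^{\perp})\cap\Sigma_+$ is a split Mukai $(n+1)$-fold containing $X$ as a hyperplane section; one then reduces inductively to $n=10$ (where the paper lifts to characteristic $0$). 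You should either supply this intermediate-section construction or cite a Lefschetz theorem that applies directly to a smooth zero locus of a section of $\cO(1)^{\oplus(10-n)}$ in positive characteristic.

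For $(3)\Rightarrow(1)$ in characteristic $0$, the paper cites \cite{Mukaimukain-fold}, \cite{Amb99}, and \cite{Mella}. Mukai's original classification of coindex-$3$ Fano $n$-folds assumed that the fundamental linear system $|H|$ contains a smooth member; this hypothesis was removed only later by Ambro and Mella. Your citation \cite{Mukaicurve} is not the right source (that paper concerns the curve side), and without the supplementary references the implication is not fully justified for all $3\le n\le 10$.
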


\begin{proof}
First, we show $(1) \Rightarrow (2) \Rightarrow (3)$. Since $(1) \Rightarrow (2)$ is trivial, we show $(2) \Rightarrow (3)$.
We may assume that $B = \Spec k$ for an algebraically closed field $k$.
{\cred Let $X \subset \P(S_{+}^{(k)})$ be the embedding in the definition of split Mukai $n$-folds.
It suffices to show the following: 
\begin{enumerate}
\item[(a)]
$b_{1} (X) =0$ and $b_{2} (X)$=1. 
\item[(b)]
The hyperplane section $\cO_{X}(1)$ is an ample generator of $\Pic (X)$.
\item[(c)]
$\cO_{X}(-K_{X}) \simeq \cO_{X}(n-2)$, and $(\cO_{X}(1))^n =12$.
\end{enumerate}
}
{\cred
We suppose that $n \leq 9$.
Note that, by the construction, there exists a split Mukai $(n+1)$-fold $Y$ over $k$ containing $X$ as a hyperplane section.
Indeed, if we put $X = \P(L) \cap \Sigma_{+}$, then we can choose a codimension 1 subspace 
$L' \subset L^{\perp} \subset \P^{15 \vee}$ such that 
$L' \cap \Sigma_{+}^{\vee} \subset \P^{15 \vee}$ is smooth and dimensionally transverse. Then 
$Y= \P(L'^{\perp}) \cap \Sigma_{+}$ satisfies the desired condition.
Therefore, by the adjunction and weak Lefschetz, it suffices to show the above statements for $n=10.$
In this case, (a) is well-known in characteristic 0 (see \cite[Corollary 3.8]{Kuznetsovspinor} for example), and the general case can be shown by taking a lift of the split quadratic form.
Moreover, (c) follows from \cite[Proposition 2.1]{Mukaicurve}, 
and (b) follows from (c).
}

The second statement follows from \cite{Mukaimukain-fold}, \cite{Amb99} and \cite{Mella}.
The third statement clearly follows from the second statement.
\end{proof}

\begin{lem}
\label{lem:mukaincohomology}
Let $k$ be a field, $n$ an integer with {\cred $1 \leq n \leq 10$}, and 
{\cora 
\[
X \subset \Sigma_{+} := \Sigma_{+}^{(\Spec k)} \subset \P^{15} := \P (S_{+}^{(\Spec k)})
\]
be a split Mukai $n$-fold over $k$.
Let $\cO_{X}(1)$ be the hyperplane line bundle on $X \subset \P^{15}$.
}
Then we have the following.
\begin{enumerate}
\item
$h^i(\cO_{X} (j)) =0$ for $i>0$ and $j\geq 3-n$. 
\item 
$h^0(\cO_{X}(1)) =6+n$.
{\cora Moreover, $\cO_{X} (1)$ is an ample generator of $\Pic (X)$ if $3 \leq n \leq 10$.}
\end{enumerate}
Let $I_{X}$ be an ideal sheaf of $X$ with respect to the embedding $X \hookrightarrow \P^{5+n}$ defined by $\cO_{X}(1)$.
\begin{enumerate}
\setcounter{enumi}{2}
\item 
{\cred $H^0 (\cO_{\P^{5+n}}(i)) \rightarrow H^0 (\cO_{X}(i))$ is surjective for any $i \geq 0$.}
\item
$h^0 (I_{X} (2)) =10$ {\cred and $h^i (I_{X} (j)) =0$ for $i>0$ and $j\geq 3-n$}.
\item 
The kernel of the natural morphism
$S^{2} H^0 (I_{X} (2)) \rightarrow H^{0} (I^{2}_{X}(4))$
is 1-dimensional.
{\cora Moreover, the kernel of the natural morphism 
$S^{2} H^0 (I_{X} (2)) \rightarrow H^{0} (S^2 N_{X/\P^{5+n}}^{*}(4))$ 
is also $1$-dimensional.
}
\item 
The natural map
$ H^{0}(I_{X} (2)) \otimes_{k} \cO_{X} \rightarrow N_{X/\P^{5+n}}^{\ast} (2)$
is surjective.
{\cred
\item 
Let $X\subset \Sigma_{+} := \Sigma_{+}^{(\Spec k)}$ be the embedding in Definition \ref{defn:Mukaivariety}, and $I_{X}^{+}$ be the ideal sheaf of $X$ on $\Sigma_{+}$.
Then we have $H^1(I_{X}^{+}(1)) =0$.
}
\cred{\item 
Suppose that $k$ is of characteristic $0$.
Then the natural morphism $S^2 H^0 (I_{X} (2)) \rightarrow H^0(I_{X}^2 (4))$ is surjective.
Moreover, we have $H^i (I_{X}^2(4)) =0$ for $i>0$.
}
\cora{\item 
Suppose that $4 \leq n \leq 10$.
Then we have 
\begin{enumerate}
\item 
$H^i (S^2 N^{\ast}_{X/\P^{5+n}} (4)) =0$ for $i>0$.
\item
The natural morphism $S^2 H^0 (I_{X} (2)) \rightarrow H^0(S^2 N^{\ast}_{X/\P^{5+n}} (4))$ is surjective.
\end{enumerate}
}
\end{enumerate}
\end{lem}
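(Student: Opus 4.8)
The plan is to remove the twist by identifying $N^{\ast}_{X/\P^{5+n}}(2)$ with a tautological bundle on the spinor tenfold, and then to reduce both assertions to a finite list of vanishing statements on $\Sigma_{+}$ via a Koszul resolution. Set $E:=N^{\ast}_{X/\P^{5+n}}(2)$, so that $S^2 N^{\ast}_{X/\P^{5+n}}(4)=S^2E$. Since $X=\P(L)\cap\Sigma_{+}$ is a dimensionally transverse smooth linear section of codimension $10-n$ in $\Sigma_{+}\subset\P^{15}$, transversality gives $N^{\ast}_{X/\P^{5+n}}\simeq N^{\ast}_{\Sigma_{+}/\P^{15}}|_{X}$, and the isomorphism $\mathcal{U}_{+}\simeq N^{\ast}_{\Sigma_{+}/\P^{15}}(2)$ recorded in the proof of Proposition \ref{prop:spinorduality} (via \cite[Corollary 2.4]{Mukaicurve}) yields $E\simeq\mathcal{U}_{+}|_{X}$. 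The universal isotropic quotient on $\Sigma_{+}$ sits in $0\to\mathcal{U}_{+}^{\vee}\to V\otimes\cO_{\Sigma_{+}}\to\mathcal{U}_{+}\to0$, where $V$ is the $10$-dimensional space of spinor quadrics and the self-duality of the maximal isotropic quotient identifies the kernel with $\mathcal{U}_{+}^{\vee}$. Restricting to $X$ and using part (6) together with the identification $H^0(I_X(2))\simeq V$ from parts (3),(4) (cf.\ \cite{Mukaicurve}), this becomes the conormal presentation
\[
0\to E^{\vee}\to H^0(I_X(2))\otimes\cO_X\to E\to0,
\]
in which the right-hand map is precisely the one appearing in part (6).

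For (a) I would resolve $\cO_X$ on $\Sigma_{+}$ by the Koszul complex with terms $\bigwedge^{p}\bigl(\cO_{\Sigma_{+}}(-1)^{\oplus(10-n)}\bigr)$, $0\le p\le 10-n$, tensor it with $S^2\mathcal{U}_{+}$, and run the hypercohomology spectral sequence; then $H^{i}(X,S^2E)=H^{i}(X,S^2\mathcal{U}_{+}|_X)=0$ for all $i>0$ follows once
\[
H^{q}\bigl(\Sigma_{+},S^2\mathcal{U}_{+}(-p)\bigr)=0\qquad(q>p,\ 0\le p\le 10-n).
\]
For (b) I would use the characteristic-free filtration of the symmetric square of the conormal presentation: $S^2\bigl(H^0(I_X(2))\otimes\cO_X\bigr)=S^2H^0(I_X(2))\otimes\cO_X$ carries a two-step filtration with successive quotients $S^2E^{\vee}$, $E^{\vee}\otimes E$ and $S^2E$, and the map $S^2H^0(I_X(2))\to H^0(S^2E)$ in question is exactly $H^0$ of the surjection onto the top quotient $S^2E$. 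Since $h^0(\cO_X)=1$ and $h^{>0}(\cO_X)=0$ by part (1), surjectivity follows from the vanishing of $H^1$ of the kernel of this surjection, hence from $H^1(X,S^2E^{\vee})=0$ and $H^1(X,E^{\vee}\otimes E)=0$. Feeding these into the same Koszul resolution reduces them to
\[
H^{p+1}\bigl(\Sigma_{+},S^2\mathcal{U}_{+}^{\vee}(-p)\bigr)=H^{p+1}\bigl(\Sigma_{+},\mathcal{U}_{+}\otimes\mathcal{U}_{+}^{\vee}(-p)\bigr)=0\qquad(0\le p\le 10-n).
\]
The hypothesis $4\le n\le10$ enters precisely here, forcing $10-n\le 6$ so that only the mild twists $\cO_{\Sigma_{+}}(-p)$ with $0\le p\le6$ occur.

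It remains to prove these vanishings on the $10$-dimensional homogeneous space $\Sigma_{+}=\Spin_{10}/P_{5}$ for the bundles $S^2\mathcal{U}_{+}$, $S^2\mathcal{U}_{+}^{\vee}$ and $\mathcal{U}_{+}\otimes\mathcal{U}_{+}^{\vee}$ twisted by $\cO_{\Sigma_{+}}(-p)$, $0\le p\le6$. In characteristic $0$ this is a bounded Borel--Weil--Bott computation: decompose each bundle into irreducible homogeneous bundles, apply the $\rho$-shift, and check singularity of the resulting weights in the stated cohomological degrees; Serre duality with $K_{\Sigma_{+}}=\cO_{\Sigma_{+}}(-8)$ turns the high-degree checks into low-degree vanishing for mildly negative twists, and these are exactly the computations underlying the rectangular Lefschetz collection of \cite{Kuznetsovspinor}.

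The main obstacle is that part (9) is asserted in \emph{every} characteristic, while Borel--Weil--Bott may fail in characteristic $p$ and plain semicontinuity runs the wrong way (special fibres can only gain cohomology). To obtain the vanishing uniformly I would pull the bundles back along the flag fibration $\Spin_{10}/B\to\Sigma_{+}$, which preserves cohomology because $H^{>0}(\cdot,\cO)=0$ on its fibres in all characteristics, filter the pulled-back bundles by line bundles, and apply Kempf's vanishing theorem (valid in all characteristics) on $\Spin_{10}/B$ together with its Serre dual. The delicate point, and the genuine heart of the argument, is to check that within the Koszul range $0\le p\le6$ every weight occurring after the $\rho$-shift lands on the dominant or the anti-dominant side, so that no characteristic-dependent cohomology survives; intermediate weights would spoil the argument, and ruling them out is exactly where the restriction $n\ge4$ is indispensable.
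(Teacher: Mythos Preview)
Your reduction is sound: identifying $E=N^{\ast}_{X/\P^{5+n}}(2)$ with $\mathcal{U}_{+}|_{X}$, resolving $\cO_{X}$ by the Koszul complex on $\Sigma_{+}$, and (for (b)) filtering the symmetric square of the conormal presentation are all correct moves, and in characteristic~$0$ Borel--Weil--Bott would indeed dispose of the resulting list of vanishings on $\Sigma_{+}$. The paper, however, takes a quite different route that avoids any weight computation. It observes that the projective bundle $\mathcal{D}:=\P_{\Sigma_{+}}(\mathcal{U}_{+})\to\Sigma_{+}$ is a connected component of the orthogonal flag variety $\OFl(10;5,1)$ and therefore carries a \emph{second} projection $p\colon\mathcal{D}\to\mathbb{Q}^{8}=\OGr(10,1)$, with tautological class $H_{Q}=p^{\ast}H_{\mathbb{Q}^{8}}$. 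Since $R\pi_{\ast}\cO_{\mathcal{D}}(2H_{Q})=S^{2}\mathcal{U}_{+}$, part~(a) for $n=10$ becomes a single Kodaira-type vanishing on the homogeneous space $\mathcal{D}$, which holds in \emph{all} characteristics by Mehta--Ramanathan (Frobenius splitting). One then descends to $4\le n\le 9$ by restricting $\mathcal{D}$ over $X\subset\Sigma_{+}$ and running an inductive hyperplane-section argument with the exact sequences $0\to\cO_{\mathcal{D}_{Y}}(2H_{Q}+(j-1)H_{S})\to\cO_{\mathcal{D}_{Y}}(2H_{Q}+jH_{S})\to\cO_{\mathcal{D}_{X}}(2H_{Q}+jH_{S})\to 0$; the case $n=4$ requires one extra Serre-dual step on $\mathbb{Q}^{8}$. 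For (b) the paper does not analyse the kernel at all: it computes $h^{0}(S^{2}\mathcal{U}_{+})=h^{0}(\cO_{\mathcal{D}}(2H_{Q}))=h^{0}(\cO_{\mathbb{Q}^{8}}(2))=54$ directly via the second projection, pushes this down the same induction, and combines $h^{0}=54$ with the $1$-dimensional kernel of part~(5) to get surjectivity.

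The genuine gap in your proposal is the positive-characteristic step. You correctly diagnose that Borel--Weil--Bott fails in characteristic~$p$ and propose to filter on $\Spin_{10}/B$ and invoke Kempf plus Serre duality, but you do not carry out the weight check and explicitly flag it as ``the delicate point, and the genuine heart of the argument.'' This is not a formality: for the bundles $S^{2}\mathcal{U}_{+}(-p)$, $S^{2}\mathcal{U}_{+}^{\vee}(-p)$, $\mathcal{U}_{+}\otimes\mathcal{U}_{+}^{\vee}(-p)$ with $0\le p\le 6$, the weights appearing in the $B$-filtration are spread over a range, and there is no a~priori reason they all lie in the dominant or Serre-dual chambers; for weights in intermediate Weyl chambers, line-bundle cohomology on $G/B$ in characteristic~$p$ is genuinely uncontrolled, and your filtration argument would break. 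The paper's flag-variety trick is precisely designed to circumvent this: it replaces the representation-theoretic input by a single Kodaira vanishing on a Frobenius-split variety, which is characteristic-free. If you want to salvage your approach uniformly, you would either have to carry out the full weight analysis and show no bad chambers occur (a substantial and unwritten computation), or find an alternative such as the paper's geometric realisation.
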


\begin{proof}
{\cora  We may assume that $k$ is an algebraically closed field.}

First, we prove (1) and (2).
{\cora 
The latter statement in (2) follows from the argument in Proposition \ref{prop:MukaivssplitMukai}.
We shall show the remaining statements by descending induction on $n$.}
We consider the case where $n=10$. 
By \cite[Proposition 2 and Theorem 2]{Mehta-Ramanathan}, the Kodaira type vanishing theorem holds true for $X$.
Moreover, $X$ admits a lift to $W(k)$ as a Mukai $10$-fold, and $h^0(\cO(1)) =16$ holds true in the generic fiber (\cite[Subsection 2.2]{Kuznetsovspinor} for example).
By the invariance of $\chi(\cO (1))$, we have $h^0(\cO_{X}(1)) =16$.

Next, we show the general case.
{\cred As in the proof of Proposition \ref{prop:MukaivssplitMukai}, there exists a Mukai $(n+1)$-fold $Y$ over $k$ containing $X$ as a hyperplane section.}
We consider the exact sequence
\[
0 \rightarrow  \cO_{Y} (j-1) \rightarrow \cO_{Y}(j)  \rightarrow \cO_{X} (j) \rightarrow 0
\]
for $j \geq 3-n$.
Since $j-1 \geq 3-(n+1)$, by the induction hypothesis and the above exact sequence, we have $h^i (\cO_{X} (j)) =0$ for $i>0.$
Moreover, if $n \geq 2$, we have $h^0 (\cO_{X} (1)) =6+n$ by the induction hypothesis and the exact sequence for $j=1$ since we have $H^1 (\cO_{Y}) =0$.
Even when $n=1$, the same strategy works since in this case $Y$ has $b_{1} =0$ and a trivial canonical sheaf (see the proof of Proposition \ref{prop:MukaivssplitMukai}), i.e., $Y$ is a K3 surface.

Next, we show (3).
{\cred
This is trivial for $i=0,1$.
We shall show this by induction on $i$. We fix $i\geq 2$.
Note that the $n=1$ case follows from Noether's theorem. 
Assume that $n\geq 2$, and we assume by double induction that
the results for Mukai $(n-1)$-folds hold.
We can take a Mukai $(n-1)$-fold $Z$ over $k$ contained in $X$ as a hyperplane section.
We have a commutative diagram
\[
\xymatrix{
0 \ar[r] & \cO_{\P^{5+n}} (i-1) \ar[r] \ar[d] & \cO_{\P^{5+n}} (i) \ar[r] \ar[d] & \cO_{\P^{4+n}} (i) \ar[d] \ar[r] & 0 \\
0 \ar[r] & \cO_{X}(i-1) \ar[r] & \cO_{X}(i) \ar[r] & \cO_{Z} (i) \ar[r] &0
}
\]
where horizontal sequences are exact.
By (1), $H^0$ of horizontal sequences are also exact.
The induction hypothesis on $i$ and $n$ ensures that $H^0$ of left and right vertical arrows are surjective.
Therefore, $H^0$ of the middle vertical arrow is surjective as desired.

Next, we shall show (4).
Since we have an exact sequence
\begin{equation}
\label{eqn:ideal}
0 \rightarrow I_{X} (j) \rightarrow \cO_{\P^{5+n}}(j) \rightarrow \cO_{X} (j) \rightarrow 0,
\tag{$*$}
\end{equation}
the second statement follows from (1) and (3).
We shall show that $h^0(I_{X}(2))=10$.
This is well-known in characteristic $0$ (see \cite[Corollary 4.3]{Kuznetsovspinor} for example).
First, note that we can take a flat $W(k)$-lifting $\mathcal{X}$ of $X$.
Indeed, if we have $X = X(L) = \P_{\Spec k} (L) \cap \Sigma_{+}^{(\Spec k)}$ for quotient vector space $q \colon S_{+}^{(\Spec k)} \twoheadrightarrow L$,
we can take the free quotient $S_{+}^{(\Spec W (k))} \twoheadrightarrow \mathcal{L}$ which lifts $q$.
Then 
\[
\mathcal{X} := \P_{\Spec W(k)}(\mathcal{L}) \cap \Sigma_{+}^{(\Spec W(k))}
\]
is a desired lift.
Then by the invariance of $\chi (I(2))$ and the second statement, the equality $h^0 (I_{X} (2)) =10$ can be reduced to the case of characteristic $0$.
}

Next, we show (5).
{\cred We can take a Mukai $10$-fold $Y=\Sigma_{+}$ of genus $7$ containing $X$ and a Mukai $1$-fold $Z$ of genus $7$ contained in $X$.
Therefore, by \cite[Corollary 2.5]{Mukaicurve}, the restriction $H^0(I_{Y} (2)) \rightarrow H^0 (I_{Z} (2))$ is an isomorphism.
By (4), the restriction induces 
\[
H^0(I_{Y} (2)) \simeq H^0 (I_{X} (2)) \simeq H^0 (I_{Z} (2)).
\]
Therefore, the dimensions of the kernels in the statement are at least 1 by \cite[Corollary 1.11]{Mukaicurve} and at most 1 by \cite[Corollary 5.3]{Mukaicurve}.
}

Next, we show (6).
As in the proof of (5), this can be reduced to the case where $n=10$.
In this case, the result follows from \cite[Proposition 2.4]{Mukaicurve}.

{\cred
Next, we show (7). By the proof of (1) and (2), the restriction morphism $H^0 (\cO_{\Sigma_{+}} (1)) \rightarrow H^0 (\cO_{X} (1))$ is surjective. 
Then (7) follows from the exact sequence
\[
0 \rightarrow I^{+}_{X}(1) \rightarrow \cO_{\Sigma_{+}}(1) \rightarrow \cO_{X}(1) \rightarrow 0
\]
and (1).
}

{\cred
(8) follows from \cite[Corollary 4.4]{Kuznetsovspinor}.
}

{\cora
Next, we show (9), note that, by (4) and (5), (b) is equivalent to $h^0 (S^2 N^{\ast}_{X/\P^{5+n}} (4)) =54$.

First, we show (a) and (b) for the case where $n=10$, i.e., $X = \Sigma_{+}$.
By \cite[Corollary 2.4]{Mukaicurve},
The surjection $H^0(I_{\Sigma_{+}}(2)) \otimes_{k} \cO_{\Sigma_{+}} \rightarrow N^{\ast}_{\Sigma_{+}/\P^{15}} (2)$ in (6) induces the embedding $\Sigma_{+} \hookrightarrow \Gr (10,5)$.
In particular, the universal rank 5 quotient sheaf on $\Sigma_+$ is isomorphic to $N^{\ast}_{\Sigma_{+}/\P^{15}} (2)$.
Therefore, the projective bundle 
\[
\pi \colon \mathcal{D} := \P_{\Sigma_{+}} (N^{\ast}_{\Sigma_+/\P^{15}}(2)) \rightarrow \Sigma_+
\]
is isomorphic to the natural projection from a connected component of an orthogonal flag variety $\OFl_{+} (10,5,1) \rightarrow \Sigma_+$.
In particular, the Kodaira vanishing theorem holds for $\mathcal{D}$ by \cite[Proposition 2 and Theorem 2]{Mehta-Ramanathan}.
Let $\cO_{\mathcal{D}}(1)$ be the the tautological quotient line bundle on $\mathcal{D}$.
Then $\cO_{\mathcal{D}}(1)$ is isomorphic to $\cO_{\mathcal{D}} (H_{Q})$, where $H_Q$ is the pull-back of a hyperplane section of a hyperquadric 8-fold $\OGr(10,1) = \mathbb{Q}^8 \subset \P^{9}$ by the natural projection
\[
p \colon \mathcal{D} \rightarrow \mathbb{Q}^{8} \subset \P^{9}.
\]
Moreover, we have
\[
R\pi_* (\cO_{\mathcal{D}}(2 H_Q)) = S^2 N_{\Sigma_+/\P^{15}}^{*}(4).
\]
Therefore, it suffices to show that 
\[
H^i (\cO_{\mathcal{D}}(2H_Q)) =0 \textup{ and } h^0(\cO_{\mathcal{D}}(2H_Q)) =54
\] 
for $i>0.$
Note that we have
\[
\cO_{\mathcal{D}} (K_{\mathcal{D}}) \simeq  \cO_{\mathcal{D}}(-5) \otimes \pi^* ((\cO_{\Sigma_+} (K_{\Sigma_+})) \otimes_{k} \det (N_{\Sigma_+/\P^{15}}^{*}(2))) =
\cO_{\mathcal{D}} (-5 H_{Q} - 6 H_{S}),
\]
where $H_{S}$ is the pull-back of a hyperplane section of $\Sigma_{+} \subset \P^{15}$ by $\pi$.
Therefore, the first vanishing follows from the Kodaira vanishing theorem.
On the other hand, we have
\[
H^0(\cO_{\mathcal{D}} (2H_{Q})) =H^0(\cO_{\mathbb{Q}^8} (2)) =54.
\]
Note that the map $p$ is a flat fiber space morphism with $R^j p_* \cO_{\mathcal{D}} =0$ for any $j>0$ since for any closed point $v \in \mathcal{D}$, the fiber $p^{-1}(v) \subset \OFl_{+}(10,5,1)=\mathcal{D}$ is isomorphic to a component of the orthogonal Grassmannian $\OGr_{+}(8,4)$.

Next, we show (a) when $5 \leq n \leq 9$.
Let $X \subset \Sigma_{+}$ be a Mukai $n$-fold.
We put $\mathcal{D}_{X} := \pi^{-1} (X)$.
We shall show that 
\begin{equation}
H^i (\cO_{\mathcal{D}_{X}} (2H_{Q} + j H_{S}) ) =0 
\tag{i}
\end{equation}
for any $j\geq 5-n$ and $i>0$ by descending induction on $n$.
Note that, this vanishing holds when $n=10$ by the above argument.
As in the proof of (1) and (2), there exists a Mukai $(n+1)$-fold $Y \subset \Sigma_{+}$ over $k$ containing $X$.
Then the desired vanishing follows from the exact sequence
\begin{equation}
0 \rightarrow \cO_{\mathcal{D}_Y} (2H_{Q} + (j-1) H_{S})
\rightarrow \cO_{\mathcal{D}_Y} (2H_{Q} + j H_{S})
\rightarrow \cO_{\mathcal{D}_X} (2H_{Q} + j H_{S}) \rightarrow 0.
\tag{$**$}
\end{equation}
Therefore, we have 
\[
H^i (\cO_{\mathcal{D}_{X}} (2H_{Q})) = H^i (S^2 N^{\ast}_{X/\P^{5+n}} (4)) =0
\]
for $i>0$ by (i) as desired.

Next, we show (a) when $n=4$.
To this end, we shall show that 
\begin{equation}
H^i (\cO_{\mathcal{D}_{X}} (2H_{Q} + (4-n) H_{S}) ) =0 
\tag{ii}
\end{equation}
hold for any Mukai $n$-fold $X$ with $4\leq n \leq 10$ and $i>0$.
By the exact sequence ($**$) for $j= 4-n$ and (i), we can reduce it to the case where $n=10$.
By the Serre duality, it suffices to show that
$H^{14-i} (-7H_{Q}) =0$.
Then the desired vanishing follows from the Kodaira vanishing on $\mathbb{Q}^8$, which follows from \cite[Proposition 2 and Theorem 2]{Mehta-Ramanathan}.
Now we put $n:=4$.
Then we have
\[
H^i (\cO_{\mathcal{D}_{X}} (2H_{Q})) = H^i (S^2 N^{\ast}_{X/\P^{5+n}} (4)) =0
\]
by (ii) as desired.

Finally, we show (b) when $4 \leq n \leq 9$.
To this end, we show
\[
H^0 (\cO_{\mathcal{D}_{X}} (2H_{Q}+ jH_{S})) =0
\]
for $4-n \leq j \leq -1$ and $5 \leq n \leq 9$.
Note that, we have
\[
H^0 (\cO_{\mathcal{D}} (2 H_{Q} + j H_{S})) =0
\]
for $j\leq -1$ since $H_{Q}$ and $H_{S}$ span the pseudo-effective cone of $\mathcal{D}$.
Then the desired vanishing follows from the descending induction and the exact sequence ($**$), (i), and (ii).
Then the equality $h^0 ( \cO_{\mathcal{D}_{X}} (2 H_{Q})) =54$ can be reduced to the case $n=10$ by the exact sequence ($**$) again.
Therefore, we have
$h^0 (S^2 N^{\ast}_{X/\P^{5+n}} (4)) =54$
for any $ 4 \leq n \leq 10$.
}
\end{proof}

\begin{prop}
\label{prop:Mukainfoldsigmafamily}
{\cora
Let $1 \leq n \leq 9$ be an odd integer.
Let $B$ be a locally Noetherian connected scheme, and $f \colon X \rightarrow B$ a Mukai $n$-fold of genus $7$ over $B$.
Suppose one of the following.
\begin{enumerate}
\item[(a)]
$B$ is a $\Q$-scheme.
\item[(b)]
$B$ is a regular scheme of dimension $1$, or a spectrum of a field.
\item[(c)]
$n \geq 4$.
\end{enumerate}
Then the following hold.
}
\begin{enumerate}
\item 
The relatively ample generator $H \in \Pic_{X/B} (B)$ comes from an actual line bundle which is relatively very ample.
\item
Let 
\[
\iota \colon X \hookrightarrow \P_{B} (f_{\ast} H)
\]
be the embedding by $H$,
{\cred $\mathcal{I}_{X}$ an ideal sheaf of $X$ in $\P_{B}(f_{\ast} H)$,}
and $p$ a projection from $\P_{B} (f_{\ast} H)$ to $B$.
Then the coherent sheaf $W_{X}:= p_{\ast} (\mathcal{I}_{X} (2))$ is a locally free sheaf of rank 10 on $B$.
{\cora Moreover, in the cases (a) and (b) (resp,\ the case (c)), the kernel $F_{X}$ of a natural map
\[
S^{2} W_{X} \rightarrow p_{\ast} (\mathcal{I}_{X}^2 (4))
\]
\[
\textup{(resp. } 
S^{2} W_{X} \rightarrow p_{\ast} (S^2 N^*_{X/\P_{B}(f_{*}H)}(4))
\textup{)}
\]
defines a non-degenerate quadratic form $\varphi_{X} \colon F_{X} \hookrightarrow S^2 W_{X}$ on $W_{X}^{\vee}$.
}
\item 
We put $E_{X} := N_{X/\P_{B}(f_{\ast}H)}^{\ast} \otimes \cO_{X}(2)$, which is a locally free sheaf of rank $5$.
Then the morphism $p^{\ast} W_{X} \rightarrow E_{X}$ defines a morphism 
\[
\iota \colon X \rightarrow \Gr_B (W_{X}, 5) 
\]
which is an embedding factoring through $\OGr_B(W_{X}, \varphi_{X})$.
\item
The statements similar to Proposition \ref{prop:curvesigmafamily} (3) and (4) hold true.
\end{enumerate}
\end{prop}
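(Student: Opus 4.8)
The plan is to imitate the proofs of Proposition~\ref{prop:curvesigmafamily} and Proposition~\ref{prop:Xsigmafamily} verbatim wherever possible, substituting the cohomological inputs collected in Lemma~\ref{lem:mukaincohomology} for the curve/threefold-specific vanishing used there. The one genuinely new point, which does not arise for curves or for prime Fano threefolds, is that the polarization $H$ is the ample \emph{generator} rather than a (anti)canonical class, so I must first produce $H$ as an honest relatively very ample line bundle before any of the Kuznetsov-style construction makes sense.

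First I would establish (1). Working with the relative Picard functor, let $\beta \in H^2_{\et}(B,\G_m)$ be the obstruction to lifting $H \in \Pic_{X/B}(B)$ to an actual line bundle, written additively. Since the relative canonical class $K_{X/B}$ is represented by an honest line bundle and equals $-(n-2)H$ in $\Pic_{X/B}(B)$ (fibrewise this is Proposition~\ref{prop:MukaivssplitMukai} for $3\le n\le 10$, and the canonical embedding for $n=1$), we get $(n-2)\beta=0$. On the other hand $f_{\ast}H$ is a priori only a $\beta$-twisted sheaf; by Lemma~\ref{lem:mukaincohomology}(2) and the cohomology and base change theorem it is $\beta$-twisted locally free of rank $n+6$, so passing to its determinant forces $(n+6)\beta=0$. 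Because $n$ is odd, $\gcd(n-2,n+6)=\gcd(n-2,8)=1$, whence $\beta=0$ and $H$ descends to an actual line bundle. Relative very ampleness then follows from fibrewise very ampleness together with the surjectivity in Lemma~\ref{lem:mukaincohomology}(3) and cohomology and base change. This Brauer-vanishing step, and the essential use of the oddness of $n$, is the first place where the argument genuinely differs from the earlier propositions, and is exactly why the oddness hypothesis appears.

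Next I would prove (2) and (3) along the lines of Proposition~\ref{prop:curvesigmafamily}. That $W_X=p_{\ast}(\mathcal{I}_X(2))$ is locally free of rank $10$ is immediate from $h^0(I_X(2))=10$ and $h^1(I_X(2))=0$ in Lemma~\ref{lem:mukaincohomology}(4) and the cohomology and base change theorem. For $F_X$ I would split into cases. In cases (a) and (b) I use the map $S^2W_X \to p_{\ast}(\mathcal{I}_X^2(4))$: in case (a), Lemma~\ref{lem:mukaincohomology}(8) gives fibrewise surjectivity and $H^1(I_X^2(4))=0$, so $p_{\ast}(\mathcal{I}_X^2(4))$ is locally free and, by the one-dimensionality of the kernel in Lemma~\ref{lem:mukaincohomology}(5), $F_X$ is a rank-$1$ locally direct summand; in case (b) I instead argue as in Remark~\ref{rem:basedvr}, reducing to a discrete valuation ring and using torsion-freeness of the image to conclude that $F_X$ is again a locally direct summand of rank $1$. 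In case (c) the characteristic-$0$ tool is unavailable, so I replace $\mathcal{I}_X^2(4)$ by $S^2 N^{\ast}_{X/\P_{B}(f_{\ast}H)}(4)$ and invoke Lemma~\ref{lem:mukaincohomology}(9), whose vanishing and surjectivity statements hold for $4\le n\le 10$ in all characteristics; combined with the second one-dimensionality assertion of Lemma~\ref{lem:mukaincohomology}(5), this again yields a rank-$1$ $F_X$. In every case non-degeneracy of $\varphi_X$ is checked on geometric fibres, where it recovers the spinor quadratic form. For (3), $E_X$ has rank $5$ and is locally free because $X \hookrightarrow \P_{B}(f_{\ast}H)$ is a regular immersion of codimension $5$; surjectivity of $p^{\ast}W_X \to E_X$ holds fibrewise by Lemma~\ref{lem:mukaincohomology}(6), hence holds by Nakayama, so it defines $\iota\colon X \to \Gr_B(W_X,5)$, which factors through $\OGr_B(W_X,\varphi_X)$ by construction and is an embedding by the same fibrewise check as in Proposition~\ref{prop:curvesigmafamily}(2).

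Finally, (4) is proved by repeating the arguments for parts (3) and (4) of Proposition~\ref{prop:curvesigmafamily}. Assumption (a) holds because the section $X \hookrightarrow \OGr_B(W_X,\varphi_X)$ splits $\cO_B \to p_{\OGr \ast}\cO_{\OGr}$, singling out the component $\Sigma_{X,+}$ containing $X$; Assumption (b) is obtained by the same twisted-sheaf computation as in Proposition~\ref{prop:curvesigmafamily}(3), where $\Sigma_{+}$ is a Mukai tenfold of index $8$, so its relative fundamental class has obstruction $\beta'$ with $(\beta')^8=0$, while a rank-$9$ twisted sheaf forces $(\beta')^9=0$, whence $\beta'=0$; the rank-$(n+6)$ quotient $A_X$ with $X=\Sigma_{X,+}\cap \P_{B}(A_X)$ and the functoriality statement are then formal. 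The main obstacle I anticipate is therefore concentrated in two places: producing the polarization as an honest line bundle in (1), which is precisely where the oddness of $n$ is used, and, in positive characteristic, routing the construction of $F_X$ through $S^2 N^{\ast}_{X/\P_{B}(f_{\ast}H)}(4)$ rather than $\mathcal{I}_X^2(4)$ so as to stay within the vanishing range supplied by Lemma~\ref{lem:mukaincohomology}(9).
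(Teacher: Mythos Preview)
Your proposal is correct and follows essentially the same approach as the paper, which likewise reduces (1) to the Brauer obstruction vanishing via $\gcd(n-2,n+6)=\gcd(n-2,8)=1$ and handles the three cases in (2) exactly as you describe using Lemma~\ref{lem:mukaincohomology}(5),(8),(9) and Remark~\ref{rem:basedvr}. One minor slip in your treatment of (4): the twisted sheaf $p^{+}_{\ast}(\mathcal{I}^{+}\otimes H_{X})$ has rank $16-(n+6)=10-n$, not $9$, but your argument still goes through since $10-n$ is odd and hence coprime to $8$.
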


\begin{proof}
We prove (1).
Let $\beta$ be the image of $H$ via the natural map
\[
\Pic_{X/B} (B) \rightarrow H^2_{\et}(B,\G_{m}).
\]
Since $X$ is of index $n-2$, we have $\beta^{n-2} =1$.
Moreover, since $f_{\ast} H$ is a $\beta$-twisted vector bundle of rank $6+n$ by Lemma \ref{lem:mukaincohomology}.(2), we have $\beta^{6+n}=1$. 
Therefore, the order of $\beta$ divides $n-2$ and $8$, and we get $\beta =1.$
{\cred This shows that $H$ comes from a line bundle on $X$.}
Moreover, $H$ is relatively very ample by Lemma \ref{lem:mukaincohomology}.(1)-(2) and the cohomology and base change theorem.

Then (2)-(4) basically follows from the same argument as in Proposition \ref{prop:curvesigmafamily}.
{\cora The only non-trivial point is where we prove that $F_{X}$ is a locally free direct summand of $S^2 W_X$.
In the case (a), this follows from the same argument as in Proposition \ref{prop:curvesigmafamily}.(1) by using Lemma \ref{lem:mukaincohomology}.(5), (8).
The case (b) follows from the same argument as in Remark \ref{rem:basedvr}.
The case (c) follows from Lemma \ref{lem:mukaincohomology}.(5), (9) by replacing $p_{\ast} (\mathcal{I}_{X}^{2} (4))$ with $p_{\ast} (S^2 N^*_{X/\P_{B}(f_*H)}(4))$ in the proof of Proposition \ref{prop:curvesigmafamily}.(1).
}
\end{proof}

\begin{prop}
\label{prop:splitsigmaisom}
Let $B$ be a {\cred connected locally Noetherian} scheme. 
Let $f \colon X \rightarrow B$ be a split Mukai $n$-fold of genus $7$ over $B$.
{\cora Suppose that one of the conditions (a)-(c) in Proposition \ref{prop:Mukainfoldsigmafamily} holds.
Then the statements (1)-(4) in Proposition \ref{prop:Mukainfoldsigmafamily} hold true for $X$.}
\end{prop}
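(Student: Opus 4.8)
The plan is to reduce everything to Proposition \ref{prop:Mukainfoldsigmafamily}, whose proof applies almost verbatim; the only genuinely new point is that for a \emph{split} Mukai $n$-fold the conclusion (1) is automatic, and this was the sole place where the parity of $n$ entered the earlier argument, so no parity hypothesis is needed here. First I would recall that by Definition \ref{defn:Mukaivariety}.(1) a split Mukai $n$-fold comes equipped with a given embedding
\[
X = \P_{B}(L) \cap \Sigma_{+}^{(B)} \hookrightarrow \Sigma_{+}^{(B)} \hookrightarrow \P_{B}(S_{+}^{(B)})
\]
attached to a locally free quotient $S_{+}^{(B)} \twoheadrightarrow L$ of rank $6+n$. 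In particular $X$ is a Mukai $n$-fold by Proposition \ref{prop:MukaivssplitMukai}, so all the fiberwise vanishing and dimension statements of Lemma \ref{lem:mukaincohomology} are at our disposal, and the three hypotheses (a)--(c) of Proposition \ref{prop:Mukainfoldsigmafamily} carry the same meaning.

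For statement (1) I would set $H := \cO_{\P_{B}(S_{+}^{(B)})}(1)|_{X}$, the restriction of the tautological quotient line bundle. This is an honest line bundle on $X$, and by Lemma \ref{lem:mukaincohomology}.(2) its restriction to every geometric fiber is the ample generator of the Picard group; hence $H$ represents the relatively ample generator of $\Pic_{X/B}(B)$, and it is relatively very ample by Lemma \ref{lem:mukaincohomology}.(1)--(2) together with the cohomology and base change theorem. This is precisely the step that diverges from Proposition \ref{prop:Mukainfoldsigmafamily}: there the existence of an actual line bundle was extracted from the vanishing of a Brauer class $\beta$, which forced the order of $\beta$ to divide both $n-2$ and $8$ and therefore required $n$ odd; here the split structure supplies the line bundle outright, so the parity of $n$ plays no role.

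With (1) in hand, statements (2)--(4) follow by repeating the argument of Proposition \ref{prop:Mukainfoldsigmafamily} word for word. Local freeness of $W_{X} = p_{\ast}(\mathcal{I}_{X}(2))$ of rank $10$ comes from Lemma \ref{lem:mukaincohomology}.(4) and cohomology and base change, while the factorization $\iota\colon X \hookrightarrow \OGr_{B}(W_{X}, \varphi_{X}) \subset \Gr_{B}(W_{X},5)$ and the analogues of Proposition \ref{prop:curvesigmafamily}.(3)--(4) are then formal. I expect the one delicate point to be showing that $F_{X}$ is a \emph{rank-one locally direct summand} of $S^{2}W_{X}$ defining a non-degenerate $\varphi_{X}$: this must be checked in each of the cases (a)--(c) by combining the one-dimensionality of the fiberwise kernel with flatness of the relevant direct image, using Lemma \ref{lem:mukaincohomology}.(5) and (8) via $p_{\ast}(\mathcal{I}_{X}^{2}(4))$ in case (a), the torsion-freeness argument of Remark \ref{rem:basedvr} in case (b), and Lemma \ref{lem:mukaincohomology}.(5) and (9) via $p_{\ast}(S^{2}N^{\ast}_{X/\P_{B}(f_{\ast}H)}(4))$ in case (c). Since none of these cohomological inputs depends on the parity of $n$, the whole argument goes through for arbitrary $n$, which is the real content of this proposition beyond Proposition \ref{prop:Mukainfoldsigmafamily}.
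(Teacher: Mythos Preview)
Your proposal is correct and follows essentially the same approach as the paper's proof, which simply says ``the same argument as in Proposition \ref{prop:Mukainfoldsigmafamily} by using Lemma \ref{lem:mukaincohomology}'' and notes that ``the splitness ensures that several Brauer classes are automatically trivial.'' One small remark: the paper's phrase ``several Brauer classes'' reminds us that there is a second obstruction beyond the one you treat in (1), namely the class controlling Assumption (b) for $\Sigma_{X,+}$ in the analogue of Proposition \ref{prop:curvesigmafamily}.(3); in the split case this vanishes because the restriction of quadrics identifies $(W_X,\varphi_X)$ with the given split $(V^{(B)},\varphi)$, so $\Sigma_{X,+}$ already carries $H_+$ --- this is presumably what you mean by ``formal,'' but it is worth saying explicitly.
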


\begin{proof}
This follows from the same argument as in Proposition \ref{prop:Mukainfoldsigmafamily} by using Lemma \ref{lem:mukaincohomology}.
Note that, since $f \colon X \rightarrow B$ is split,
several Brauer classes are automatically trivial.
\end{proof}

\begin{defn}
Let $R$ be a discrete valuation ring, and $K$ the fraction field of $R$.
\begin{enumerate}

\item 
Let $X$ be a Mukai $n$-fold of genus $7$ over $K$.
We say \emph{$X$ has good reduction as Mukai $n$-folds of genus $7$ at $R$} when there exists a Mukai $n$-fold $\mathcal{X}$ of genus $7$ over $\Spec R$ such that $\mathcal{X}_{K} \simeq X$.

\item
Let $X$ be a split Mukai $n$-fold of genus $7$ over $K$.
We say \emph{$X$ has good reduction as split Mukai $n$-folds of genus $7$ at $R$} when
there exists a split Mukai $n$-fold $\mathcal{X}$ of genus $7$ over $\Spec R$ such that $\mathcal{X}_{K} \simeq X$.

\end{enumerate}
\end{defn}

\subsection{Mukai tenfolds of genus 7}
In this subsection, we show that the Shafarevich conjecture for Mukai tenfolds of genus $7$ holds true.
Note that, since Mukai tenfolds of genus $7$ are flag schemes in the sense of \cite[Definition 3.1]{Javanpeykar-Loughran:flag},
this result is just a direct application of Javanpeykar--Loughran's results in \cite{Javanpeykar-Loughran:flag}.
We also show another finiteness result for Mukai tenfolds with $\cO(1)$.

\begin{thm}
\label{thm:Mukai10shaf}
Let $K$ be a number field, $S$ a finite set of finite places of $S$.
Then the set of $\cO_{K,S}$-isomorphism classes of Mukai tenfolds of genus $7$ over $\cO_{K,S}$ is a finite set.
In particular, the set of $K$-isomorphism classes of Mukai tenfolds of genus $7$ over $K$ with good reduction as Mukai tenfolds at the localization at any finite place $v \notin S$ is a finite set.
\end{thm}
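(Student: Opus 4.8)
The plan is to identify a Mukai tenfold of genus $7$ with a form of a single projective homogeneous variety, and then to invoke the Shafarevich-type finiteness theorem for flag varieties proved in \cite{Javanpeykar-Loughran:flag}. The starting observation is that when $n = 10$ the only locally free quotient of $S_{+}^{(B)}$ of rank $6 + n = 16$ appearing in Definition \ref{defn:Mukaivariety} is $S_{+}^{(B)}$ itself; hence a split Mukai tenfold over an algebraically closed field $k$ is exactly the spinor tenfold $\Sigma_{+}^{(\Spec k)} \subset \P^{15}$. This variety is the projective homogeneous space $\Spin_{10}/P$ attached to one of the two spinor nodes of the Dynkin diagram of type $D_5$ (equivalently, a connected component of the orthogonal Grassmannian $\OGr(V,5)$ of maximal isotropic subspaces of a split $10$-dimensional quadratic space). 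Consequently every geometric fibre of a Mukai tenfold is isomorphic to this fixed flag variety $\mathbb{S}$, so a Mukai tenfold of genus $7$ over $\cO_{K,S}$ is precisely a smooth projective $\cO_{K,S}$-scheme all of whose geometric fibres are isomorphic to $\mathbb{S}$, and its generic fibre is a $K$-form of $\mathbb{S}$ with good reduction outside $S$.

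Given this dictionary the first assertion is immediate: the finiteness theorem of \cite{Javanpeykar-Loughran:flag} bounds the number of isomorphism classes of smooth projective families over $\cO_{K,S}$ whose geometric fibres are the fixed homogeneous space $\mathbb{S}$, which is exactly the set of $\cO_{K,S}$-isomorphism classes of Mukai tenfolds of genus $7$. Here one only needs to observe that the two notions of good reduction coincide, which is clear, because a Mukai-tenfold model over a local ring $\cO_{K,v}$ is by definition a smooth projective model whose geometric fibres are spinor tenfolds, hence flag varieties of type $\mathbb{S}$.

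To obtain the final statement I would run the standard spreading-out and gluing argument. Let $X$ be a Mukai tenfold of genus $7$ over $K$ with good reduction as a Mukai tenfold at every finite place $v \notin S$. Spreading out yields a Mukai-tenfold model of $X$ over $\cO_{K,S'}$ for some finite $S' \supseteq S$, leaving only the finitely many places $v \in S' \setminus S$ to fill in; at each such $v$ the hypothesis supplies a Mukai-tenfold model over $\cO_{K,v}$ with generic fibre $X$. Because a Mukai tenfold is a Fano variety of Picard rank $1$, its ample generator $\cO(1)$ gives a canonical embedding, so a smooth projective model over a discrete valuation ring is unique up to unique isomorphism; equivalently, the moduli stack of Mukai tenfolds is separated, exactly as for $\mathcal{M}_{\mathrm{sm}}$. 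This separatedness allows me to glue the local models over the $\cO_{K,v}$ for $v \in S' \setminus S$ to the spread-out model over $\cO_{K,S'}$ along their common generic fibre $X$, producing a Mukai-tenfold model of $X$ over all of $\cO_{K,S}$. Thus the generic-fibre map is a surjection from the set of $\cO_{K,S}$-isomorphism classes, finite by the first assertion, onto the set of $K$-isomorphism classes with good reduction outside $S$, and the latter set is therefore finite.

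The identification of Mukai tenfolds with forms of a flag variety and the citation of \cite{Javanpeykar-Loughran:flag} do essentially all the work, so I expect the only non-formal point to be the last deduction, and within it the separatedness input required for the gluing: one must know that a Mukai tenfold with prescribed generic fibre admits at most one smooth projective integral model over a discrete valuation ring. I would secure this from the rigidity forced by Picard rank $1$ together with the ampleness of the anticanonical class, which pins down both the model and its polarization, mirroring the way separatedness of $\mathcal{M}_{\mathrm{sm}}$ is used elsewhere in the paper.
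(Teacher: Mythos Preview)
Your proposal is correct and takes essentially the same approach as the paper: both identify Mukai tenfolds of genus $7$ with forms of the spinor flag variety $\Spin_{10}/P$ and then invoke \cite[Theorem~3.6]{Javanpeykar-Loughran:flag}. The paper's proof is in fact a single sentence citing that theorem; you have supplied the surrounding details---in particular the spreading-out and gluing argument, together with the separatedness input (uniqueness of smooth Fano models over a DVR), needed to pass from the $\cO_{K,S}$-statement to the ``in particular'' clause---which the paper leaves implicit.
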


\begin{proof}
This is a corollary of \cite[Theorem 3.6]{Javanpeykar-Loughran:flag}.
\end{proof}

\begin{rem}
In Theorem \ref{thm:Mukai10shaf}, we cannot treat the general good reduction since we do not have an analogue of Proposition \ref{prop:MukaivssplitMukai} $(2) \Leftrightarrow (3)$ in positive characteristic.
\end{rem}

\begin{rem}
\label{rem:mukai10}
We shall show that there exist infinitely many isomorphism classes of Mukai tenfolds over $\Q$. 
Let $K$ be a field, and
fix a vector space $V$ of rank 10 over $K$.

We take $K=\Q$ in the following.
Let $S$ be a finite set of prime numbers 
such that $\# S$ is even.
For places $v$ on $\Q$, we set 
\[
\epsilon_{v} :=
\left\{
\begin{array}{ll}
-1, & v \in S \textup{; and}  \\
1,& \textup{otherwise.}
\end{array}
\right.
\]
Then there exists a quadratic form $q_{S}$ on $V^{\vee}$ with  $\disc q \in \Q^{\times 2}$, signature (10,0), and Hasse invariants $(\epsilon_{v})_v$ (see \cite[Chapter IV, Proposition 7]{SerreArithmetic}).
The simple computations on Hilbert symbols show that, for any $a \in \Q^{\times}$, Hasse invariants of the quadratic form $a q_{S}$ at prime numbers are the same as those in $q_S$ (cf. \cite[Chapter III, Theorem 1]{SerreArithmetic}).
For any $S$, 
we can take a Mukai tenfold $\Sigma_{+} (V, \varphi_{q_{S}})$ over $\Q$ by Proposition \ref{prop:assumptionab}.
Moreover, by the argument as in Proposition \ref{prop:Mukainfoldsigmafamily}, we can recover the $8$-dimensional hyperquadric defined by $q_{S}=0$ from $\Sigma_{+} (V, \varphi_{q_{S}})$.
Therefore, this construction gives infinitely many Mukai tenfolds over $\Q.$
\end{rem}

\begin{defn}
Let $X$ be a Mukai tenfold $X$ of genus $7$ over a field $k$.
We say $X$ is \emph{with $\cO (1)$} when there exists an ample line bundle $H$ on $X$ which is a generator of $\Pic (X_{\overline{k}})$.
\end{defn}

\begin{prop}
\label{prop:mukaitenO1finite}
Let $K$ be a number field.
Then the number of $K$-isomorphism classes of Mukai tenfolds of genus 7 over $K$ with $\cO (1)$ is exactly $2^r$, in particular, it is finite.
Here, $r$ is the number of real places of $F$.
\end{prop}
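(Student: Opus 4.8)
The plan is to set up a bijection between $K$-isomorphism classes of Mukai tenfolds of genus $7$ with $\cO(1)$ and similarity classes of rank $10$ quadratic forms $q$ over $K$ whose discriminant $\disc q$ is trivial and whose even Clifford algebra $C_0(q)$ is split, and then to count the latter place by place. For the correspondence, note first that a Mukai tenfold with $\cO(1)$ embeds via $\cO(1)$ into $\P^{15}_K$ as a spinor tenfold $\Sigma_+(V,\varphi_q)$: Assumption (a), needed for $\Sigma_\pm$ to be defined over $K$, amounts to $\disc q$ being trivial, and the existence over $K$ of the ample generator, i.e.\ Assumption (b), amounts to $C_0(q)\simeq M_{16}(K)\times M_{16}(K)$ by Proposition \ref{prop:assumptionab}. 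Scaling $q$ by $c\in K^\times$ leaves the isotropic Grassmannian, hence $\Sigma_+$, unchanged, so $\Sigma_+$ depends only on the similarity class of $q$; conversely, by Remark \ref{rem:mukai10} the quadric $\{q=0\}$ is recovered from $\Sigma_+$, so non-similar forms give non-isomorphic varieties. Finally $\Sigma_+(V,\varphi_q)\simeq \Sigma_-(V,\varphi_q)$ over $K$, since a reflection in $\mathrm{O}(q)\smallsetminus \SO(q)$, which is defined over $K$, swaps the two rulings; together with Proposition \ref{prop:spinorduality} this ensures each similarity class contributes exactly one isomorphism class, making the correspondence a bijection.

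It then remains to count similarity classes of rank $10$ forms with $\disc q$ trivial and $C_0(q)$ split. I would analyze the local conditions. Over $K_v$ for $v$ finite (or complex), a rank $10$ form with trivial discriminant and trivial Clifford invariant, which is the split-$C_0$ condition, is determined by its Hasse invariant and is therefore the hyperbolic form $H^{\oplus 5}$; thus the local similarity class is forced and unique. Over a real place the computation $\disc q=(-1)^{\nu+1}$, where $(p,\nu)$ is the signature, forces $\nu$ odd, and a direct computation of the real even Clifford algebra $C_0\simeq C\ell_{p,\nu-1}$ shows that $C_0(q_v)$ is split exactly when $p-\nu\equiv 0 \pmod 8$, i.e.\ for signatures $(9,1)$, $(5,5)$, $(1,9)$. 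Up to similarity, which may swap $(p,\nu)\leftrightarrow(\nu,p)$, these collapse to two local classes, $\{(9,1),(1,9)\}$ and $\{(5,5)\}$; so each real place carries exactly two admissible local similarity classes and each non-real place exactly one.

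To pass from local to global I would invoke Hasse--Minkowski theory. The assignment sending a form to its class, $\{(9,1),(1,9)\}$ versus $\{(5,5)\}$, at each real place is invariant under global scaling, since both real classes are stable under flipping the signature; hence it is a similarity invariant, and distinct patterns $T\subseteq\{\text{real places}\}$ give non-similar forms. Conversely, two admissible forms with the same pattern $T$ become isometric after scaling by a suitable $c\in K^\times$ whose signs at the real places are prescribed by weak approximation: such scaling preserves the hyperbolic local forms at finite places and the trivial discriminant, and matches all real signatures, so Hasse--Minkowski gives isometry. Finally, every pattern $T$ is realized: the prescribed local data has trivial discriminant and trivial Clifford invariant at every place, so the Clifford, equivalently Hasse, reciprocity $\prod_v s_v=1$ holds automatically, and the realization theorem for quadratic forms over number fields produces a global form inducing it. Therefore the similarity classes are in bijection with the $2^r$ subsets $T$, and the count is exactly $2^r$, where $r$ is the number of real places of $K$.

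The main obstacle will be the third step, specifically verifying that there is no parity obstruction cutting $2^r$ down to $2^{r-1}$: one must check that requiring $C_0(q_v)$ to be split at every place pins down the Clifford invariant, and hence, via Hilbert reciprocity for the correction terms that depend only on the fixed rank and the trivial discriminant, the Hasse--Witt invariant, so that reciprocity holds for every choice of real pattern $T$. A secondary delicate point is the clean bijection in the first step, in particular the $K$-isomorphism $\Sigma_+\simeq\Sigma_-$, which is what prevents an erroneous extra factor of two.
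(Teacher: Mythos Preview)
Your proposal is correct and follows essentially the same route as the paper: reduce to counting similarity classes of rank $10$ forms $q$ with trivial $\disc q$ and split $C_0(q)$, identify the admissible real signatures as $(9,1)$, $(5,5)$, $(1,9)$ (collapsing to two local similarity classes per real place), and use approximation to show that every pattern in $\{\pm\}^r$ is both realized and distinguishes similarity classes. The paper phrases the classification step by citing Lam (split $C_0 \Leftrightarrow$ trivial Hasse invariant at every place) and O'Meara's existence theorem directly, whereas you unfold the same content via Bott periodicity at the real places and an explicit Hasse--Minkowski argument; your extra care about the reciprocity constraint (the potential ``$2^{r-1}$'' obstruction) is handled implicitly in the paper by the fact that all local Hasse invariants are already forced to be trivial.
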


\begin{proof}
By the proof of Proposition \ref{prop:Mukainfoldsigmafamily}, Mukai tenfolds of genus $7$ over $K$ with $\cO(1)$ is given by $\Sigma_{\pm} (V, \varphi_{q})$ for some quadratic form $q$ on a vector space $V$ of rank $10$ satisfying Assumptions (a) and (b) in subsection \ref{subsection:spin10} (see the notation in Proposition \ref{prop:assumptionab}).
Note that $\Sigma_{+} (V, \varphi_{q})$ and $\Sigma_{-} (V,\varphi_{q})$ are isomorphic to each other by the action of $\mathrm{O} (V,q) (K)$. 
Moreover, as in the argument in Remark \ref{rem:mukai10}, $\Sigma_{+} (V, \varphi_{q_{1}})$ and $\Sigma_{+} (V, \varphi_{q_{2}})$ are isomorphic if and only if $q_{1}$ and $q_{2}$ are $K$-similar.
Combining with Proposition \ref{prop:assumptionab},
the set of $K$-isomorphism classes of Mukai tenfolds over $K$ with $\cO(1)$ is isomorphic to the following set.
\[
A:=
\left\{
q 
\left| 
\begin{array}{l}
q \colon 
\textup{quadratic form over $K$}, \\
q \textup{ is of rank $10$},\\
\disc q  = 1,\\
C_{0}(q) \simeq M_{16} (K) \times M_{16} (K)
\end{array}
\right\}\right.
/ K\textup{-similar}.
\]
By \cite[Chapter V, Proposition 3.20]{Lam} (cf.\ \cite[Chapter V, Theorem 2.5 and (3.12)]{Lam}), for any quadratic form $K$ of rank 10 with trivial determinant, $C_{0}(q) \simeq M_{16} (K) \times M_{16} (K)$ holds if and only if the Hasse invariant of $q$ on any place $v$ on $K$ is trivial.
By \cite[Theorem 72:1]{OMeara} (cf.\ \cite[Remark 66:5]{OMeara}), quadratic forms of rank $10$ over $K$ with trivial discriminant and Hasse invariants correspond to integers $(s_{v})_{v}$ with $0 \leq s_{v} \leq 10$, $(-1)^{s_{v}} =1$, and $(-1)^{\frac{s_{v}(s_{v}-1)}{2}} =1$, where $v$ varies in all real places of $K$.
Here, the correspondence is given by taking negative indices of quadratic forms.
These conditions are equivalent to $s_{v} \in \{1,5,9 \}$.
For tuples $(s_{v})_{v}$ and $(t_{v})_{v}$,
let $q_{(s_{v})_{v}}$ and $q_{(t_{v})_{v}}$ be corresponding quadratic forms over $k$.
Note that $q_{(s_{v})_{v}}$ and $q_{(t_{v})_{v}}$ are $K$-similar if and only if $s_{v} \in \{t_{v}, 10-t_{v}\}$ for any $v$.
Indeed, since $c q_{(t_{v})_{v}}$ for $c \in K^{\times}$ corresponds to 
$(5- \sgn (v(c))(5- t_{v}))_{v}$, the ``only if" part is clear.
Here, 
\[
\sgn \colon \R^{\times} \rightarrow \{\pm 1\}
\]
is the signature function.
To show the ``if" part, it suffices to show that the map
\[
K^{\times} \rightarrow \{\pm 1 \}^{r} ; c \mapsto (\sgn (v(c)))_{v}
\]
is surjective. This follows from the approximation theorem.
Therefore, the cardinality of $A$ is exactly $2^r$.
\end{proof}

\subsection{Mukai ninefolds of genus 7}
In this section, we prove the stronger finiteness result for Mukai ninefolds of genus $7$.
The fundamental result is the rigidity of Mukai ninefolds.

\begin{prop}
Let $k$ be an algebraically closed field of characteristic $0$.
\begin{enumerate}
\item 
The number of $k$-isomorphism classes of Mukai ninefolds is just $1$.
\item 
Let $X$ be a Mukai ninefold over $k$. Then we have $H^1 (T_{X}) =0$.
\end{enumerate}
\end{prop}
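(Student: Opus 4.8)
The plan is to prove the uniqueness (1) first and then derive the rigidity (2) from it, so that only one honest cohomology computation on the spinor tenfold remains.

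For (1), I would present a Mukai ninefold over $k$ as a smooth, dimensionally transverse hyperplane section $X=\P(L)\cap\Sigma_{+}$ of the spinor tenfold $\Sigma_{+}\subset\P(S_{+})$, where $S_{+}\twoheadrightarrow L$ has rank $15$. Passing to the perp, such an $X$ is the datum of a point $[L^{\perp}]\in\P(S_{+}^{\vee})=\P(S_{-})$, and by Proposition \ref{prop:spinorduality} smoothness and transversality of $X$ are equivalent to $[L^{\perp}]\notin\Sigma_{+}^{\vee}=\Sigma_{-}$. Thus Mukai ninefolds correspond bijectively to points of $\P(S_{-})\setminus\Sigma_{-}$. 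The crucial input is Igusa's classification of spinors: over an algebraically closed field of characteristic $0$, the group $\Spin(V,q)=\Spin_{10}$ acting on $\P(S_{-})\cong\P^{15}$ through the half-spin representation has exactly two orbits, the closed orbit $\Sigma_{-}$ and its open complement. Two Mukai ninefolds therefore correspond to two points of this single open orbit, so some $g\in\Spin_{10}$ carries one to the other; as $g$ preserves $\Sigma_{+}$ and sends one hyperplane to the other, it induces an isomorphism of the two sections. A general member of $|\cO_{\Sigma_{+}}(1)|$ is smooth of dimension $9$ by Bertini, so at least one Mukai ninefold exists, and (1) follows.

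For (2), write $\Sigma:=\Sigma_{+}$ and use the two standard sequences of $X\subset\Sigma\subset\P(S_{+})$. Since $X$ is a hyperplane section, $N_{X/\Sigma}=\cO_{X}(1)$, and from
\[
0\to T_{X}\to T_{\Sigma}|_{X}\to \cO_{X}(1)\to 0
\]
together with $H^{1}(\cO_{X}(1))=0$ (Lemma \ref{lem:mukaincohomology}(1)) one gets the exact sequence
\[
H^{0}(T_{\Sigma}|_{X})\xrightarrow{\ \alpha\ } H^{0}(\cO_{X}(1))\to H^{1}(T_{X})\to H^{1}(T_{\Sigma}|_{X})\to 0 .
\]
Here I would argue that $\alpha$ is surjective using (1): the group $\Aut^{0}(\Sigma)$ acts on the $15$-dimensional linear system $|\cO_{\Sigma}(1)|$ with the locus of smooth members as an open orbit, so the orbit of $[X]$ is open and the differential of the orbit map, namely the composite $H^{0}(T_{\Sigma})\to H^{0}(T_{\Sigma}|_{X})\xrightarrow{\alpha}T_{[X]}\,|\cO_{\Sigma}(1)|=H^{0}(\cO_{X}(1))$, is surjective; hence so is $\alpha$. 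Consequently $H^{1}(T_{X})\cong H^{1}(T_{\Sigma}|_{X})$. Plugging this into the restriction sequence
\[
0\to T_{\Sigma}(-1)\to T_{\Sigma}\to T_{\Sigma}|_{X}\to 0
\]
and invoking the rigidity $H^{1}(T_{\Sigma})=0$ of the homogeneous spinor tenfold, I obtain $H^{1}(T_{X})\cong H^{1}(T_{\Sigma}|_{X})\hookrightarrow H^{2}(T_{\Sigma}(-1))$. Thus everything reduces to the single vanishing $H^{2}(T_{\Sigma}(-1))=0$.

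This last vanishing is the main obstacle. I would prove it either by a direct Borel–Weil–Bott computation on $\Sigma=\Spin_{10}/P_{5}$, or, staying inside the paper's toolkit, as follows. Twisting the Euler sequence of $\P(S_{+})$ by $\cO(-1)$ and restricting to $\Sigma$, together with the fact that $H^{i}(\cO_{\Sigma}(-1))=0$ for all $i$ (by Kodaira vanishing, since $\cO_{\Sigma}(-1)$ is $K_{\Sigma}$ twisted by the ample $\cO_{\Sigma}(7)$ as $\Sigma$ has index $8$, and $H^{0}=0$ for degree reasons), yields $H^{i}(T_{\P(S_{+})}(-1)|_{\Sigma})=0$ for $i\ge 1$. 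Feeding this into the twisted normal-bundle sequence of $\Sigma\subset\P(S_{+})$ identifies $H^{2}(T_{\Sigma}(-1))$ with $H^{1}(N_{\Sigma/\P(S_{+})}(-1))$. Finally, using the identification $N^{\ast}_{\Sigma/\P(S_{+})}(2)\cong\mathcal{U}_{+}$ of the conormal bundle with the universal rank-$5$ quotient (established in the proof of Lemma \ref{lem:mukaincohomology}(9)), this becomes $H^{1}(\mathcal{U}_{+}^{\vee}(1))=0$, a cohomology computation for a homogeneous bundle on $\Sigma$; I expect this to be the one genuinely computational step. Granting it gives $H^{1}(T_{X})=0$, proving (2).
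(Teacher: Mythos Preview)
Your argument for (1) is correct and matches the paper's: spinor duality (Proposition~\ref{prop:spinorduality}) together with Igusa's transitivity of $\Spin_{10}$ on $\P(S_{-})\setminus\Sigma_{-}$.

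For (2), your route is valid but considerably longer than the paper's, and you leave the last step unproved. The paper simply observes that $H^{2}(T_{X})=0$ by the Akizuki--Nakano vanishing (write $T_{X}\cong\Omega^{8}_{X}\otimes\omega_{X}^{-1}$, so $H^{2}(T_{X})=H^{2}(\Omega^{8}_{X}\otimes\omega_{X}^{-1})$ with $8+2>9$), and then infers $H^{1}(T_{X})=0$ directly from (1): since deformations are unobstructed, the versal deformation of $X$ is smooth of dimension $h^{1}(T_{X})$, and its fibers are Mukai ninefolds, hence all isomorphic to $X$ by (1); over the strictly Henselian base this forces the versal family to be trivial, whence $h^{1}(T_{X})=0$. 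No cohomology chase on $\Sigma$ is needed.

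Your approach does go through if you complete it. The missing vanishing $H^{1}(\mathcal{U}_{+}^{\vee}(1))=0$ can be obtained exactly within the paper's framework: using $\mathcal{D}=\P_{\Sigma_{+}}(\mathcal{U}_{+}^{\vee})\cong\OFl_{+}(10;5,1)$ from the proof of Lemma~\ref{lem:mukaincohomology}(9), one has $H^{i}(\mathcal{U}_{+}^{\vee}(1))\cong H^{i}(\mathcal{D},\cO_{\mathcal{D}}(H_{Q}+H_{S}))$, and since $K_{\mathcal{D}}=\cO_{\mathcal{D}}(-5H_{Q}-6H_{S})$ this is $H^{i}(K_{\mathcal{D}}\otimes\cO_{\mathcal{D}}(6H_{Q}+7H_{S}))$, which vanishes for $i>0$ by Kodaira on the homogeneous variety $\mathcal{D}$. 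So your strategy is sound, but the paper's two-line deformation-theoretic argument is the intended shortcut and avoids the entire reduction to $H^{2}(T_{\Sigma}(-1))$.
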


\begin{proof}
(1) The assertion follows from the self-duality Proposition \ref{prop:spinorduality} and Igusa's results (\cite{Igusa}) on the transitivity of the action of $\Spin_{10}$ on $\P(S_{+}) \setminus \Sigma_{+}$ (cf.\ \cite[Proposition 1.13]{Mukaicurve} or \cite[Section 5]{Kuznetsovspinor}).

(2) The assertion follows from (1) and $H^2 (T_X) =0$, which follows from the Akizuki--Nakano vanishing.
\end{proof}

Here we briefly recall a construction of the Mukai ninefold of genus 7 \cite[\S5.4]{Kuznetsovspinor}.
For later use, we give the construction over an arbitrary field $k$.
Let $(W, q_W \in S^2W)$ be a $7$-dimensional orthogonal space over $k$.
Then we have the orthogonal flag variety $\OFl(W;3,1)$, which parametrizes the flags $W_3 \to W_1$ of $1$-dimensional (resp.\ $3$-dimensional) isotropic quotients $W_1$ (resp.\ $W_3$) of $W$.
This variety admits two projections $p_1 \colon \OFl(W;3,1) \to \OGr(W,1)$ and $p_2 \colon \OFl(W;3,1) \to \OGr(W,3)$.
By the definition, $\OGr(W,1)$ is a hyperquadric $\Q^5$ in $\P(W)$ and $p_2$ is a $\P^2$-bundle given by the projectivization of the universal quotient bundle on $\OGr(W,3)$.

It is well known that, over an algebraically closed field,  $\OGr(W,3)$ is isomorphic to a quadric $\Q^6 \subset \P(V)$ in the projectivized $8$-dimensional spinor representation $V$ associated with $q_W$, and $p_1$ is a $\P^3$-bundle given by the projectivization of the spinor bundle $E$, which is defined as $((p_1)_*(p_2)^* \cO_{\Q^6}(1))$ \cite{Ott88}.
Thus, over a non-closed field $k$, $\OGr(W,3)$ is a $k$-form of a $6$-dimensional hyperquadric $\Q^6$.
In the following, we assume that $\cO_{\Q^6}(1)$ is defined over the base field $k$.
Then, by the construction, $E$ is also defined over $k$.

Let $\widetilde X$ be the projectivization $\P_{\Q^5}(\cO_{\Q^5}(1) \oplus E) \to \Q^5$.
Then the tautological line bundle of this projectivization is generated by global sections and defines a morphism $\varphi \colon \widetilde X \to X \subset \P(W \oplus V)$.
The restriction of $\varphi$ to the subbundle $\P_{\Q^5}(E) \subset \P_{\Q^5}(\cO_{\Q^5}(1) \oplus E)$ is the morphism $\P_{\Q^5}(E) \to \Q^6$, which is identified with $p_2 \colon \OFl(W;3,1) \to \OGr(W,3)$ via the isomorphism $\P_{\Q^5}(E) \simeq \OFl(W;3,1)$.

\begin{prop}
Use the same notations as above.
Then $X$ is a Mukai ninefold of genus $7$, and $\varphi$ is a blow up of $X$ along a $6$-dimensional hyperquadric.

 Conversely, if $X$ is the Mukai ninefold of genus 7 over an algebraically closed field of characteristic $0$,
 then we have:
\begin{enumerate}
 \item $X$ is a two-orbits variety under the action of the automorphism group, 
 \item The blow up $\widetilde X$ of $X$  along the closed orbit, which is $\Q^6$, is isomorphic to the projective bundle
 \[
 \P(\cO_{\Q^5}(1) \oplus E) \to \Q^5.
 \]
\end{enumerate}
 \end{prop}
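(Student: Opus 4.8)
The plan is to establish the forward statement by explicit projective geometry on the $\P^4$-bundle $\widetilde X$, and then to deduce the converse from the uniqueness of the Mukai ninefold established above together with the equivariance of the construction. Throughout write $\pi\colon \widetilde X=\P_{\Q^5}(\cO_{\Q^5}(1)\oplus E)\to\Q^5$, let $\xi=c_1(\cO_{\widetilde X}(1))$ and $H=c_1(\cO_{\Q^5}(1))$, and set $D:=\P_{\Q^5}(E)$. The inclusion $\cO_{\Q^5}(1)\hookrightarrow \cO_{\Q^5}(1)\oplus E$ exhibits $D$ as the zero locus of a section of $\cO_{\widetilde X}(1)\otimes\pi^*\cO_{\Q^5}(-1)$, so $\xi=D+\pi^*H$. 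Since $\pi_*\cO_{\widetilde X}(1)=\cO_{\Q^5}(1)\oplus E$ and $H^0(\Q^5,E)=H^0(\Q^6,\cO_{\Q^6}(1))$ via $p_2$ (which has connected fibres), one reads off $\dim H^0(\widetilde X,\cO(1))=7+8=15$, so $X\subset\P(W\oplus V)=\P^{14}$, exactly as for a linear section of the spinor tenfold in $\P^{15}$.

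First I would pin down the contraction. The class $\xi$ is nef and, since $\dim X=9=\dim\widetilde X$, big; moreover every $\xi$-trivial curve lies in $D$, for if $\xi\cdot C=0$ with $C\not\subset D$ then $D\cdot C\ge 0$ forces $\pi^*H\cdot C=0$, placing $C$ in a fibre of $\pi$ on which $\xi$ is ample, a contradiction. Hence $\varphi$ is birational (generically finite by bigness, with connected fibres), it contracts exactly the $\P^2$-fibres of $\varphi|_D=p_2\colon D=\OFl(W;3,1)\to\OGr(W,3)=\Q^6$, and is an isomorphism off $D$. For a line $\ell$ in a $p_2$-fibre, $p_1=\pi|_D$ maps $\ell$ isomorphically onto a line of $\Q^5$, so $\pi^*H\cdot\ell=1$ and $\xi\cdot\ell=0$ give $\cO_D(D)\cdot\ell=-1$; as the fibre has Picard group $\Z$, $\cO_D(D)$ restricts to $\cO_{\P^2}(-1)$. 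By the classical blow-up criterion (a birational contraction from a smooth variety whose exceptional divisor is a $\P^{r}$-bundle over a smooth base with conormal restriction $\cO(-1)$ on fibres) $X$ is smooth and $\varphi$ is the blow-up of $X$ along the smooth six-dimensional quadric $\Q^6=\OGr(W,3)$. Consequently $\rho(X)=\rho(\widetilde X)-1=1$; combining the projective-bundle formula $-K_{\widetilde X}=5\xi-\pi^*(K_{\Q^5}+c_1(\cO_{\Q^5}(1))+c_1(E))$, the relation $-K_{\widetilde X}=\varphi^*(-K_X)-2D$ for a codimension-$3$ blow-up, $K_{\Q^5}=-5H$, and $c_1(E)=2H$ for the spinor bundle (Ottaviani \cite{Ott88}), gives $\varphi^*(-K_X)=7\xi$, so the index of $X$ is $7$; a Segre-class computation of $\xi^9=\int_{\Q^5}s_5(\cO_{\Q^5}(1)\oplus E)$ yields $(\cO_X(1))^9=12$. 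Thus $X$ is a Fano ninefold of coindex $3$ and genus $7$, hence a Mukai ninefold by Proposition \ref{prop:MukaivssplitMukai}. (Alternatively one identifies $X$ with the smooth hyperplane section $\Sigma_+\cap\P(W\oplus V)$ of the spinor tenfold, using the decomposition of the half-spinor representation under $\Spin_7\subset\Spin_{10}$ together with Proposition \ref{prop:spinorduality}.)

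For the converse, the uniqueness of the Mukai ninefold over an algebraically closed field of characteristic $0$ established above lets me replace $X$ by the model just built, where all claims are visible. The construction is equivariant for $G:=\Spin(W)\cong\Spin_7$, since $\Q^5=\OGr(W,1)$, $\Q^6=\OGr(W,3)$ and $E$ are $G$-homogeneous; hence $\widetilde X$, $D$ and $X$ carry a $G$-action. The closed orbit is the homogeneous space $\Q^6=\OGr(W,3)\subset X$, and its complement $X\setminus\Q^6\cong\widetilde X\setminus D$ is the total space of the homogeneous bundle $\mathcal{H}om(\cO_{\Q^5}(1),E)$ over $\Q^5$; I would show $G$ acts transitively there by a stabilizer dimension count, equivalently by restricting Igusa's transitivity of $\Spin_{10}$ on $\P(S_-)\setminus\Sigma_-$ (\cite{Igusa}) to the stabilizer of a generic vector of $S_-$, which contains $\Spin_7$. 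This gives the two orbits of (1), and statement (2) is precisely the forward direction applied to $X$.

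The main obstacle is the contraction analysis: verifying that $\varphi$ is a genuine blow-up (the $\cO_D(D)=\cO(-1)$ fibre restriction together with the smoothness half of the criterion) and, in the converse, establishing transitivity of $\Spin_7$ on the open orbit. The identity $\xi=D+\pi^*H$ reduces the first to the single intersection number $\pi^*H\cdot\ell=1$, so the $\cO(-1)$ check is clean and the real work is the transitivity statement, best obtained from the prehomogeneous-vector-space structure of $(\Spin_{10},S_+)$ restricted to the generic stabilizer in $S_-$. The Chern- and Segre-class bookkeeping for the index and degree is then routine once the Chern classes of the spinor bundle $E$ are recorded.
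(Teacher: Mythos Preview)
Your forward direction is essentially correct and gives a self-contained argument where the paper simply cites the horospherical classification of Pasquier \cite{Pas09} and \cite{GPPS02}. The identities $\xi=D+\pi^*H$, the canonical-bundle computation, and the blow-up criterion all work as you indicate; the only loose end is the assertion that $\varphi$ has connected fibres, which you should justify (e.g.\ via Stein factorization, checking that the normalization of $X$ already coincides with the blow-down you construct). That is a routine fix.

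The converse, however, contains a genuine error. You claim that $G=\Spin(W)\cong\Spin_7$ acts transitively on $X\setminus\Q^6$, but it does not. Under your own identification $X\setminus\Q^6\simeq\widetilde X\setminus D$ is the total space of a rank-$4$ vector bundle over $\Q^5$, and $\Spin_7$ preserves the zero section $\Q^5\hookrightarrow X$ (equivalently, $\Spin_7$ respects the splitting $W\oplus V$, hence fixes $\P(W)\cap X=\Q^5$). Thus $\Spin_7$ already has at least two orbits on the open part, so your dimension count cannot succeed, and the ``restriction of Igusa's transitivity'' does not yield what you want either: Igusa's statement concerns $\P(S_-)\setminus\Sigma_-$, not $\Sigma_+\cap H$. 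What is actually true is that the stabilizer $\mathrm{Stab}_{\Spin_{10}}([v])$ of the hyperplane, which has dimension $45-15=30$ and equals $\Aut^0(X)$, acts transitively on $X\setminus\Q^6$; the unipotent radical of this group (the $8$-dimensional $V$ in Proposition~\ref{prop:autmukai9}) is precisely what moves points off the $\Q^5$ zero section. So to repair your argument you must either invoke that larger group and prove its transitivity directly (essentially reproving the horospherical two-orbit structure), or cite Pasquier as the paper does. Statement~(1) in the proposition is about $\Aut(X)$, not $\Spin_7$, and that distinction is essential here.
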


\begin{proof}
Note that Mukai ninefold of genus $7$ is the horospherical variety whose associated triple is \cite[Theorem~0.1~(ii)~2]{Pas09} (see, e.g. \cite[Proposition 1.16]{GPPS02}).
Therefore, this follows from \cite[Subsection 1.4, Subsection 1.5, and Subsection 4.3]{GPPS02}.
\end{proof}

The above construction gives the following correspondence.

\begin{prop}
\label{prop:Mukaininequadic}
Let $k$ be a field over $\Q$, and $W$ a $7$-dimensional vector space over $k$.
Then there is a bijection between 
\[
\{\textup{Mukai ninefolds of genus $7$ over }k \}/k\textup{-isom}
\]
and
\[
\left\{
\textup{hyperquadrics } \Q^5 \subset \P (W) \textup{ over $k$} \left|
\begin{array}{l}
\OGr(W,3) \textup{ admits } \cO_{\OGr(W,3)}(1) \\
\textup{which is a generator of } \Pic (\OGr(W,3)_{\overline{k}})
\end{array}
\right\}\right./k\textup{-isom}.
\]
\end{prop}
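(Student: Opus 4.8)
The plan is to realize the asserted bijection by the explicit geometric construction recalled just before the statement (assigning a Mukai ninefold to a quadric) together with an inverse construction that recovers the quadric as the base of a projective bundle, and then to verify that both assignments are defined over $k$, respect $k$-isomorphisms, and are mutually inverse. The geometry over $\overline k$ — the two-orbit structure and the identification of the blow-up of the closed orbit with $\P(\cO_{\Q^5}(1)\oplus E)$ — will be supplied by the preceding proposition; the real content is the descent to the non-closed field $k$.

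For the forward map, given a hyperquadric $\Q^5\subset\P(W)$ satisfying the stated condition, the quadratic form $q_W$ is determined up to scalar, and the hypothesis that $\cO_{\OGr(W,3)}(1)$ is defined over $k$ makes the spinor space $V$ and the spinor bundle $E$ on $\Q^5$ rational over $k$, as recalled in the construction. One then forms $\widetilde X=\P_{\Q^5}(\cO_{\Q^5}(1)\oplus E)$ and the morphism $\varphi$ attached to the tautological quotient line bundle, all over $k$, and sets $X$ to be the image of $\varphi$ in $\P(W\oplus V)$. After base change to $\overline k$ the preceding proposition identifies $X_{\overline k}$ with the Mukai ninefold of genus $7$; since $X$ is smooth and projective over $k$ with this geometric fibre, it is a Mukai ninefold of genus $7$ over $k$. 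Functoriality of the construction in the pair $(\Q^5\subset\P(W))$ shows that the class of $X$ depends only on the $k$-isomorphism class of the quadric.

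For the inverse map, given a Mukai ninefold $X$ over $k$, the automorphism group scheme $\underline{\Aut}(X)$ is defined over $k$, so its unique closed orbit is Galois-stable and hence defined over $k$; by the preceding proposition this orbit becomes $\OGr(W,3)\simeq\Q^6$ over $\overline k$. Blowing up $X$ along it gives $\varphi\colon\widetilde X\to X$ over $k$. By Proposition \ref{prop:Mukainfoldsigmafamily} the ample generator $\cO_X(1)$ is already $k$-rational (here $\gcd(n-2,6+n)=\gcd(7,15)=1$ forces the relevant Brauer class to be trivial), so the blow-up formula gives $\Pic(\widetilde X_k)=\Z\,\varphi^{\ast}\cO_X(1)\oplus\Z[\mathrm{Exc}(\varphi)]$, which is a $\Z$-basis of $\Pic(\widetilde X_{\overline k})$ as well; thus every geometric line bundle on $\widetilde X$ is $k$-rational. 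Over $\overline k$ the proposition yields $\widetilde X_{\overline k}\simeq\P(\cO_{\Q^5}(1)\oplus E)$, so $\widetilde X$ has Picard rank $2$ and a second extremal contraction $\pi\colon\widetilde X\to B$, canonical and hence defined over $k$, realizing $\widetilde X$ as a projective bundle. Since the relative hyperplane class $\xi$ and $\pi^{\ast}\cO_B(1)$ lie in the $k$-rational lattice $\Pic(\widetilde X_k)$, the bundle $\pi$ is a projectivized vector bundle over $k$ and $\cO_B(1)$ is $k$-rational; therefore $|\cO_B(1)|$ embeds $B$ as a hyperquadric $\Q^5\subset\P(W)$ with $W^{\vee}=H^0(B,\cO_B(1))$ of dimension $7$. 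The condition is automatic for this quadric: the exceptional divisor of $\varphi$ maps onto the closed orbit $\OGr(W,3)$, and $\cO_X(1)$ restricts there to the spinor polarization $\cO_{\OGr(W,3)}(1)$, which is thereby defined over $k$.

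Finally, mutual inverseness follows directly from the preceding proposition: starting from $X$ and reconstructing, the blow-up of the closed orbit is exactly $\P(\cO_{\Q^5}(1)\oplus E)$ and $\varphi$ returns $X$ as its image, while starting from $\Q^5\subset\P(W)$ the second contraction of $\P(\cO_{\Q^5}(1)\oplus E)$ is the structural projection back to $\Q^5$ with its original polarization, and the spinor bundle $E$ attached to the recovered quadric agrees with the one used in the construction. The step I expect to be the main obstacle is precisely this descent of the inverse construction — guaranteeing that the second extremal contraction $\pi$, and above all the ample generator $\cO_B(1)$ of the recovered quadric, are rational over $k$ rather than merely over $\overline k$. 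The blow-up formula for $\Pic(\widetilde X)$ is the crucial engine here, since it forces the entire geometric Picard lattice, and hence $\pi^{\ast}\cO_B(1)$, to be $k$-rational, ruling out any Brauer-type obstruction to $B$ being a genuine quadric hypersurface over $k$.
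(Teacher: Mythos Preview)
Your proposal is correct and follows essentially the same approach as the paper: descend the unique $\Q^6\subset X_{\overline k}$ to $k$ by Galois-invariance, observe that the ample generator $\cO_X(1)$ is $k$-rational (via Proposition~\ref{prop:Mukainfoldsigmafamily}) and hence so is its restriction $\cO_{\Q^6}(1)$, and then invoke the geometry over $\overline k$ from the preceding proposition. The paper's own proof is much terser---it records precisely these two descent facts and then writes ``the remaining argument can be easily reduced to the case where $k=\overline k$''---whereas you spell out the recovery of $\Q^5$ explicitly via the blow-up formula for $\Pic(\widetilde X)$ and the second extremal contraction, which the paper leaves implicit.
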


\begin{proof}
Note that, for a Mukai ninefold $X$ of genus 7 over $k$, a quadric 6-fold $\Q^6$ contained in $X_{\overline{k}}$ is unique by \cite[Lemma 5.10]{Kuznetsovspinor}.
Therefore, by Galois descent, we have a unique $k$-form of $\Q^6$ contained in $X$.
Note that the ample generator $H \in \Pic(X)$ is defined over $k$.
Thus the ample generator $\cO_{\Q^6}(1)$ is also defined over $k$. 
The remaining argument can be easily reduced to the case where $k = \overline{k}.$
\end{proof}

\begin{thm}
\label{thm:numberofmukai9}
Let $K$ be a number field.
Then the number of $K$-isomorphism classes of Mukai ninefolds over $K$ is equal to $2^r$ (in particular finitely many),
where $r$ denotes the number of real places of $K$.
\end{thm}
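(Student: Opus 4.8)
The plan is to mirror the proof of Proposition \ref{prop:mukaitenO1finite} for Mukai tenfolds, replacing the rank $10$ (even) Clifford algebra analysis by a rank $7$ (odd) one. First I would invoke Proposition \ref{prop:Mukaininequadic} to identify the set of $K$-isomorphism classes of Mukai ninefolds over $K$ with the set of $K$-isomorphism classes of hyperquadrics $\Q^5 \subset \P(W)$, for $W$ a fixed $7$-dimensional $K$-vector space, subject to the condition that $\cO_{\OGr(W,3)}(1)$ be a generator of $\Pic(\OGr(W,3)_{\overline{K}})$ that is defined over $K$. Since a hyperquadric in $\P(W)$ is the same datum as a rank $7$ quadratic form $q_W$ on $W$ up to similarity, this rephrases the count as one of similarity classes of rank $7$ forms satisfying a descent condition.

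The next step is to translate that descent condition into Clifford-algebraic terms, exactly as in Proposition \ref{prop:assumptionab}. The variety $\OGr(W,3)$ is a $K$-form of the $6$-dimensional quadric $\Q^6$ sitting in the projectivised spinor representation; its geometric Picard group is $\Z$ with trivial Galois action, so the obstruction to descending the ample generator lives in $\Br(K)$ and is precisely the class of the even Clifford algebra $C_{0}(q_W)$, a central simple $K$-algebra of degree $8$. Thus the generator descends if and only if $C_{0}(q_W) \simeq M_{8}(K)$ is split. Because $C_{0}(cq) \simeq C_{0}(q)$ for every $c \in K^{\times}$, this splitting is a similarity invariant, and the problem becomes: count rank $7$ forms $q$ over $K$ with $C_{0}(q)$ split, modulo $K$-similarity.

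I would then carry out the local-global count via the Hasse--Minkowski classification as formulated in \cite{OMeara} (used already in Proposition \ref{prop:mukaitenO1finite}). Splitting of $C_{0}(q)$ is equivalent to the vanishing of its Brauer class at every place, hence to a fixed value of the local Hasse invariant once the discriminant is normalised. Using the residual similarity to set $\disc q$ to a chosen square class, at the non-real places the split condition together with the local data leaves a unique local similarity class, contributing no factor, while at each real place a direct computation with the real even Clifford algebra $C\ell^{0}_{p,q}$ shows that $C_{0}$ is split exactly for the signatures with negative index $q \in \{0,3,4,7\}$, i.e. for $(7,0),(4,3),(3,4),(0,7)$. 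Up to the flip $(p,q)\mapsto(q,p)$ induced by scaling, these fall into the two similarity classes $\{(7,0),(0,7)\}$ and $\{(4,3),(3,4)\}$, giving two admissible local similarity classes per real place.

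Finally, as in Proposition \ref{prop:mukaitenO1finite}, the similarity action of $K^{\times}$ realises every sign pattern in $\{\pm 1\}^{r}$ at the real places by the approximation theorem, so the global similarity classes are counted by the product over the $r$ real places of the two local choices, yielding exactly $2^{r}$. The main obstacle I anticipate is the bookkeeping at the interface of the local invariants: proving rigorously that the splitting of $C_{0}$ pins down the finite- and complex-place data to a single similarity class, and that the global reciprocity (the product formula for Hasse invariants) is automatically consistent for each of the $2^{r}$ real choices, so that no admissible real tuple is excluded and none is overcounted. This is precisely the delicate combinatorial heart played by the $\{1,5,9\}$ analysis in the tenfold case, and the real Clifford-algebra computation identifying the split signatures is the other point requiring care.
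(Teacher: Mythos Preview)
Your proposal is correct and follows essentially the same route as the paper: invoke Proposition~\ref{prop:Mukaininequadic} to reduce to similarity classes of rank~$7$ quadratic forms over $K$ with $C_{0}(q)\simeq M_{8}(K)$, and then count these via the Hasse--Minkowski classification. The only cosmetic difference is in the real-place bookkeeping: the paper first normalises $\det q\in K^{\times 2}$ (possible since $7$ is odd), which forces $s_v$ even and cuts the list of admissible negative indices directly to $s_v\in\{0,4\}$, whereas you list all four split signatures $s_v\in\{0,3,4,7\}$ and then mod out by the similarity flip $s_v\leftrightarrow 7-s_v$; both yield two classes per real place and hence $2^{r}$ globally. (One small inconsistency in your write-up: once you normalise the discriminant to a fixed square class you should already see only two of the four signatures, not all four---your list $\{0,3,4,7\}$ is the \emph{un-normalised} one---but this does not affect the argument.)
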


\begin{proof}
By Proposition \ref{prop:Mukaininequadic},
the set of $K$-isomorphism classes of Mukai ninefolds over $K$ is isomorphic to the following set:
\[
A:=
\left\{
q 
\left| 
\begin{array}{l}
q \colon 
\textup{quadratic form over $K$}, \\
q \textup{ is of rank $7$},\\
C_{0}(q) \simeq M_{8} (K)
\end{array}
\right\}\right.
/ K\textup{-similar}
\]
Indeed, $\OGr(W,3)$ admits a line bundle $\cO_{\OGr(W,3)} (1)$ if and only if $C_{0} (q) \simeq M_{8} (K)$ by an argument similar to the proof of Proposition \ref{prop:assumptionab} {\cred and \cite[Chapter V, Theorem 2.4]{Lam}}.
Any element in $A$ admits a representative $q$ with $\det q \in K^{\times 2}$, so we restrict to such forms.
By \cite[Chapter V, Proposition 3.20]{Lam} (cf.\ \cite[Chapter V, Theorem 2.4 and (3.12)]{Lam}),
for such a $q$, $C_{0} (q) \simeq M_{8} (K)$ holds if and only if
the Hasse invariant of $q$ on any place $v$ of $K$ is trivial.
By \cite[Theorem 72:1]{OMeara} (cf.\ \cite[Remark 66:5]{OMeara}),
quadratic forms of rank 7 over $K$ with trivial determinants and Hasse invariants correspond to integers (negative indices) $(s_{v})_{v}$ with
$0 \leq  s_{v} \leq 7$, $(-1)^{s_v} =1$, and $(-1)^{\frac{s_{v}(s_{v}-1)}{2}} =1$, where $v$ varies in all the real places of $K$.
These conditions are equivalent to $s_{v} \in \{0,4 \}$, so the number of $(s_{v})_{v}$ is exactly $2^r$.
Note that, corresponding quadratic forms have indices $(7-s_{v}, s_{v})_{v}$, so they are not similar to each other.
\end{proof}

\begin{rem}
\label{rem:mukai9}
Let $k$ be a field.
\begin{enumerate}
\item 
Suppose $k$ is of characteristic different from $2$.
Then every split Mukai ninefold is isomorphic to each other by \cite[Proposition 2]{Igusa}, where the transitivity of $\SO (q)$ on $\P^{15} \setminus \Sigma_{+}$ is proved when $q$ is split.

\item
Assume that $k$ is a number field.
By taking a linear hypersurface in $\P^{15}$, we can construct Mukai ninefolds over $k$ from a Mukai tenfold over $k$ with $\cO(1)$.
Moreover,  by the proof of Proposition \ref{prop:Mukainfoldsigmafamily}, 
Mukai tenfold over $k$ can be recovered from Mukai ninefold over $k$.
Therefore, by Proposition \ref{prop:mukaitenO1finite} and Theorem \ref{thm:numberofmukai9}, all possible hyperplanes of a given Mukai tenfold with $\cO(1)$ are isomorphic over $k$ when $k$ is a number field.
In other words, when $k$ is a number field, the transitivity of $\SO (10)$ on $\P^{15} \setminus \Sigma_{+}$ is obtained (without the splitting assumption of $q$ as in \cite{Igusa}). 
\end{enumerate}
\end{rem}

Note that, the finiteness in Theorem \ref{thm:numberofmukai9} can be also proved by the following:

\begin{prop}
\label{prop:autmukai9}
Let $k$ be a field of characteristic $0$, and $X$ a split Mukai ninefold over $k$.
Then we have 
\[
\Aut (X) \simeq ((\Spin(7)\times \GL_1(k))/\{\pm1\}) \ltimes  V.
\]
Here, 
\begin{itemize}
 \item $\{\pm 1\} \subset \Spin(7) $ is the central finite subgroup such that $\Spin(7)/\{\pm 1\} \simeq \SO(7)$.
 \item $\{\pm 1\}$ acts diagonally on $\Spin(7) \times \GL_1(k)$.
 \item $V$ is the $8$-dimensional spin representation of $\Spin(7)$.
\end{itemize}
\end{prop}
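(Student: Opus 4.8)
The plan is to determine the identity component $\Aut^{0}(X)$ from the explicit geometry recorded in the two preceding propositions, and then to verify that $\Aut(X)$ has no further components. Since $X$ is the \emph{split} Mukai ninefold, the orthogonal space $(W,q_{W})$ is split, so $\Q^{5}=\OGr(W,1)$, $\Q^{6}=\OGr(W,3)$, the spinor bundle $E$, and the spinor representation $V$ are all defined over $k$, and the groups in the statement are their split $k$-forms. Hence I may compute the automorphism group scheme over $k$ (the split model is unaffected by passing to $\overline{k}$) and verify statements after base change to $\overline{k}$ where convenient.

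First I would exhibit the three pieces as genuine subgroups acting on $X$. The group $\Spin(7)$ acts through $\Spin(7)\to\SO(W)=\SO(7)$ on everything built from $(W,q_{W})$; on $X$ this factors through $\SO(7)$, but the double cover is needed to make the action on $V$ faithful. The factor $\GL_{1}(k)=\G_{m}$ acts through the grading of the blow-up $\widetilde{X}=\P_{\Q^{5}}(\cO_{\Q^{5}}(1)\oplus E)$, scaling the two summands against each other, i.e.\ as the homotheties of the projective bundle fixing its two distinguished sub-loci. Finally the unipotent radical $V$ acts by the ``shear'' automorphisms of this bundle coming from homomorphisms $E\to\cO_{\Q^{5}}(1)$; using $p_{1}\colon \OFl(W;3,1)\to\Q^{5}$, $p_{2}\colon \OFl(W;3,1)\to\Q^{6}$, the identity $E=(p_{1})_{\ast}p_{2}^{\ast}\cO_{\Q^{6}}(1)$, and the fact that $p_{2}$ is a $\P^{2}$-bundle, one computes
\[
\Hom(E,\cO_{\Q^{5}}(1))\;=\;H^{0}(\Q^{5},E^{\vee}(1))\;\simeq\;H^{0}(\Q^{6},\cO_{\Q^{6}}(1)),
\]
which is the $8$-dimensional spinor representation of $\Spin(7)$. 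The nontrivial central element $-1\in\Spin(7)$ and $-1\in\G_{m}$ act identically (both by $-1$ on $V$, trivially on $\Q^{5}$ and $E$), so their diagonal acts trivially and the reductive part descends to $(\Spin(7)\times\GL_{1}(k))/\{\pm1\}=\mathrm{GSpin}(7)$. This produces a homomorphism $((\Spin(7)\times\GL_{1}(k))/\{\pm1\})\ltimes V\to\Aut(X)$.

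Next I would show this homomorphism is an isomorphism. Injectivity is immediate, since $\Spin(7)$ acts faithfully on $V$ and all the constructed actions on $X$ are visibly faithful. For surjectivity I would use that $X$ is a two-orbit variety: its unique closed orbit is $\Q^{6}$, so every automorphism preserves $\Q^{6}$, lifts to $\widetilde{X}$ preserving the exceptional divisor $\P_{\Q^{5}}(E)$, and hence descends to an automorphism of $\Q^{5}$ respecting the projective-bundle structure $\P_{\Q^{5}}(\cO_{\Q^{5}}(1)\oplus E)$. This reduces $\Aut(X)$ to automorphisms of the pair $(\Q^{5},\cO_{\Q^{5}}(1)\oplus E)$, which I would match with the group above by analyzing $\Aut(\Q^{5})$, the summand decomposition, and the $\Hom$-spaces between the summands; quantitatively this is the equality $\dim\Aut^{0}(X)=h^{0}(X,T_{X})=\dim\mathrm{GSpin}(7)+\dim V=22+8=30$, read off from the bundle presentation together with Bott- and Kodaira-type vanishing for $E$ (cf.\ the use of \cite[Proposition 2 and Theorem 2]{Mehta-Ramanathan} earlier). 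The main obstacle is precisely this surjectivity, i.e.\ excluding unexpected automorphisms: concretely it comes down to the vanishing $H^{0}(\Q^{5},E(-1))=0$ (so that there are no ``reverse'' shears $\cO_{\Q^{5}}(1)\to E$ and the group is a genuine semidirect product) and to showing that the base and bundle-diagonal contribute exactly $\mathrm{GSpin}(7)$.

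Finally I would check $\Aut(X)=\Aut^{0}(X)$. Every automorphism acts on the closed orbit $\Q^{6}=\OGr(W,3)$ and on the fibration $\widetilde{X}\to\Q^{5}$; since $\Q^{5}$ is irreducible and the polarization $\cO_{\Q^{5}}(1)$ together with the spinor bundle $E$ (modulo the $\G_{m}$-scaling already accounted for) are canonically attached to $X$, no component beyond the one constructed survives, and the potential outer contributions on the quadrics are absorbed into the connected group. Thus the homomorphism above is an isomorphism of algebraic groups over $k$, and descent is automatic because every object entering the construction is already defined over the split field $k$.
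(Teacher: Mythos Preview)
Your approach is sound and considerably more explicit than the paper's, which simply cites Pasquier's general structure theorem for automorphism groups of horospherical varieties of Picard rank one (\cite[Proof of Lemma~1.17]{Pas09}). What you are doing is carrying out Pasquier's argument by hand for this particular $X$: automorphisms preserve the closed orbit $\Q^6$, lift to the blow-up $\widetilde{X}=\P_{\Q^5}(\cO(1)\oplus E)$, and are then read off from the projective-bundle structure over $\Q^5$. The identification of the unipotent radical with $\Hom(E,\cO_{\Q^5}(1))\simeq V$ and of the Levi quotient with $(\Spin(7)\times\GL_1)/\{\pm1\}$ is exactly Pasquier's computation specialised to this case; your route has the advantage of being self-contained and of not invoking the general horospherical machinery.

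The one genuinely thin spot is your connectedness step: ``the potential outer contributions on the quadrics are absorbed into the connected group'' is not an argument. A clean way to close it is to note that the induced map $\Aut(X)\to\Aut(\Q^5)=\mathrm{PO}(7)$ lands in a connected group (since $\mathrm{PO}(7)\simeq\SO(7)$ in odd rank), and that its kernel---automorphisms of $\P(\cO(1)\oplus E)$ over $\Q^5$---is, once you have $H^0(\Q^5,E(-1))=0$ and $\End(E)=k$, an extension of $\G_m$ by the vector group $V$ and hence connected. That forces $\Aut(X)=\Aut^0(X)$, after which your dimension count $h^0(T_X)=30$ pins down the identity component as claimed.
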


\begin{proof}
This follows from \cite[Proof of Lemma~1.17]{Pas09}.
(Note that, in \cite[Theorem~1.11 or Lemma~1.17]{Pas09}, the result is stated up to some finite groups;
in our case, the isomorphism is obtained by applying directly the argument given in the proof at \cite[page 974]{Pas09}.)
\end{proof}

\subsection{Mukai sixfold of genus 7 --- Counter-examples to Shafarevich conjecture}

The main theorem of this subsection is the following:

\begin{thm}
\label{thm:counterexampleshaf}
There exists a number field $K$ such that 
\[
\left\{X_{\overline{K}}  \left|
\begin{array}{l}
X \colon \textup{split Mukai sixfold of genus 7 over } K,\\
\textup{there exists a split Mukai sixfold }\mathcal{X} \textup{ over } \cO_{K} \\
\textup{such that } \mathcal{X}_{K} \simeq X
\end{array}
\right.
\right\}/\overline{K}\textup{-isom}
\]
is an infinite set.
In particular, the Shafarevich conjecture  for Mukai sixfolds does not hold (even when we only treat good reduction as split Mukai sixfolds).
\end{thm}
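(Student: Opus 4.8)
The plan is to produce the required sixfolds inside a non-isotrivial family of split Mukai sixfolds defined integrally, and then to use a potential density argument to extract infinitely many members with everywhere good reduction that are pairwise non-isomorphic over $\overline{K}$.

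First I would pass to the integral parameter space. The split spinor tenfold $\Sigma_{+}^{(\Spec \Z)} \subset \P(S_{+}^{(\Spec \Z)}) = \P^{15}_{\Z}$ is defined over $\Z$, and by Definition \ref{defn:Mukaivariety} a split Mukai sixfold over $\cO_{K}$ is a section $X(L) = \P_{\cO_{K}}(L) \cap \Sigma_{+}^{(\cO_{K})}$ attached to a rank-$12$ locally free quotient $S_{+}^{(\cO_{K})} \twoheadrightarrow L$ which is smooth and dimensionally transverse over all of $\Spec \cO_{K}$; equivalently it is cut out by a rank-$4$ direct summand of $S_{+}^{(\cO_{K})\vee}$ whose linear section is smooth at \emph{every} prime of $\cO_{K}$. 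Such a datum is exactly a good-reduction model over $\cO_{K}$ of its generic fibre. Moreover, since $\cO_{X}(1)$ is the ample generator of the geometric Picard group (Lemma \ref{lem:mukaincohomology}), any $\overline{K}$-isomorphism between two such sixfolds is induced by an element of the geometric automorphism group $G := \Aut(\Sigma_{+,\overline{K}})$ of the spinor tenfold, a form of $\mathrm{PSO}_{10}$ acting through the half-spinor representation on $\P^{15}$. Thus the theorem reduces to producing infinitely many rank-$4$ integral subspaces of $S_{+}^{(\cO_{K})\vee}$, lying in pairwise distinct $G(\overline{K})$-orbits, whose sections are smooth over the whole of $\Spec \cO_{K}$.

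Next I would invoke the non-isotrivial family of Mukai sixfolds of \cite{Kuznetsovspinor}. Over an algebraically closed field of characteristic $0$, split Mukai sixfolds correspond to $G$-orbits on the open locus $U \subset \Gr(4, S_{+}^{\vee})$ of subspaces cutting out a smooth dimensionally transverse section, and the cited family shows that the quotient is positive-dimensional, so that the moduli map is non-constant and there are infinitely many geometric isomorphism classes. The orbit-counting mechanism I would use is that a finite union of $G$-orbits has dimension strictly less than $\dim U$ and so lies in a proper Zariski-closed subset; hence any Zariski-dense subset of $U$ must meet infinitely many $G$-orbits, and the associated sixfolds are then pairwise non-isomorphic over $\overline{K}$. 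The heart of the argument is to realise such a Zariski-dense set by \emph{integral} points over a single $\cO_{K}$: here I would apply the potential density results of \cite{Hassett-Tschinkel} to the base of the non-isotrivial family (with its discriminant as boundary divisor) to obtain, after replacing $K$ by a suitable finite extension, a Zariski-dense set of $\cO_{K}$-integral points of $U$, each spreading out to a split Mukai sixfold over $\cO_{K}$. Combined with the orbit-counting and the non-constancy of the moduli map, this forces infinitely many pairwise $\overline{K}$-inequivalent everywhere-good-reduction models, and the failure of the Shafarevich conjecture for Mukai sixfolds follows at once.

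The hard part is the integral point production in the last step, in two respects. First, good reduction is required at \emph{every} prime of $\cO_{K}$, not merely outside a fixed finite set $S$; since the discriminant is an ample divisor on the Picard-rank-one Grassmannian $\Gr(4, S_{+}^{\vee})$, no positive-dimensional projective family can avoid it, so everywhere-smoothness cannot come for free and must be extracted from the specific geometry of the non-isotrivial family and the structure of its discriminant, which is also what pins down the eventual choice of $K$. Second, one must confirm that the dense integral set is not absorbed into finitely many automorphism orbits, which is precisely where the non-isotriviality (positive-dimensional moduli) established above is indispensable. I expect reconciling everywhere-good-reduction with the potential density input to be the principal technical obstacle.
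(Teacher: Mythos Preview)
Your overall architecture matches the paper's proof: parametrise split Mukai sixfolds by points of the Grassmannian of $4$-planes, identify $\overline K$-isomorphism with the action of a $45$-dimensional group on a $48$-dimensional open set $U$, and use potential density of integral points \`a la Hassett--Tschinkel to produce a Zariski-dense set of $\cO_K$-points, which then meets infinitely many orbits. That is exactly how the paper proceeds.

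The gap is in your final paragraph, and it is a misconception rather than a genuine obstacle. You assert that the discriminant is an ample divisor on $\Gr(4,S_+^{\vee})$ and flag the resulting integral-point problem as ``the principal technical obstacle''. In fact the bad locus has \emph{codimension $2$}, and this is the one geometric input that makes the argument go through cleanly. The reason is the self-duality of the spinor tenfold (Proposition~\ref{prop:spinorduality}): the projective dual $\Sigma_+^{\vee}\subset\P(S_+^{\vee})$ is another $10$-dimensional spinor variety $\Sigma_-$, not a hypersurface. A $\P^{11}$-section of $\Sigma_+$ fails to be smooth and dimensionally transverse exactly when the orthogonal $\P^{3}\subset\P(S_+^{\vee})$ meets $\Sigma_-$ (this is Remark~\ref{rem:mukaindual} with $n=6$), and the incidence locus $\{(L,p):p\in\P(L)\cap\Sigma_-\}$ has dimension $10+36=46$ inside the $48$-dimensional Grassmannian. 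So the complement $W$ of $U$ has codimension $2$, your worry about an ample boundary divisor evaporates, and \cite[Proposition~5.1]{Hassett-Tschinkel} applies directly to an open torus $T\subset\Gr$ with the codimension-$2$ closed set $W|_T$.

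Concretely, the paper phrases everything on the dual side from the start: it works in $\Gr(S_-^{(\Spec\Z)},4)$, defines $W$ as the locus where $\P(L)\cap\Sigma_-\neq\emptyset$, computes $\dim W=46$, and then runs exactly the density-plus-orbit-count you described. Once you replace ``discriminant divisor'' by ``codimension-$2$ incidence locus'', your proposal becomes a complete proof.
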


\begin{proof}
Let $\Gr(S_{-}^{(\Spec \Z)}, 4)$ be the Grassmannian over $\Z$ parametrizing rank 4 locally free quotient sheaves of $S_{-}^{(\Spec \Z)}$.
Let $W \subset \Gr (S_{-}^{(\Spec \Z)},4)$ be the closed subscheme consisting of rank 4 locally free quotients $S_{-}^{(\Spec \Z)} \twoheadrightarrow L$
such that 
\[
\emptyset \neq \P_{\Spec \Z}(L) \cap \Sigma_{-}^{(\Spec \Z)}  \subset \P_{\Spec \Z} (S_{-}^{(\Spec \Z)})
\]
Then the relative dimension of $\Gr (S_{-}^{(\Spec \Z)},4)$ (resp.\ $W$) over $\Z$ is 48 (resp.\ 46).
We put $U:= \Gr (S_{-}^{(\Spec \Z)},4) \setminus W$.
Clearly, we have the action of $\Spin(V^{(\Spec \Z)})$ on the open subscheme $U$.

We can show that there exists a number field $K$ such that 
\[
A :=
\left\{ 
u \in U(K) \left| u \textup{ extends to an integral point }\widetilde{u} \in  U( \cO_{K})
\right.
\right\}
\]
is Zariski-dense in $U$. 
Indeed, we can take an open sub-torus $T \subset \Gr (S^{(\Spec \Z)}_{-},4)_{\Q}$, and we can apply Hassett--Tschinkel's results \cite[Proposition 4.1]{Hassett-Tschinkel} for $(T, W|_{T})$ so that there exists a number field $K$ such that
\[
\left\{ 
u \in T(K) \left| u \textup{ extends to an integral point } U( \cO_{K})
\right.
\right\}
\]
is Zariski-dense in $T$, and so $A$ is Zariski-dense in $U$ for such a $K$.

Now we want to show that the set
\[
\{ u_{\overline{K}} \in U_{\overline{K}} (\overline{K})| u_{\overline{K}} \textup{ comes from } u \in A 
\} / \Spin(V)(\overline{K})
\]
is an infinite set.
Suppose for a contradiction that it is a finite set, then the base changes of $A$ can be covered by finitely many $\Spin(V)_{\overline{K}}$-orbits. Since the dimension of $\Spin(V)_{\overline{K}}$ is $45$, which is less than $48$, it contradicts the density of $A$.

For $u \in U(K)$, let $X_{u} := X(L_u^{\perp})$ be the split Mukai sixfold over $K$ given by $S_{-}^{(\Spec K)} \twoheadrightarrow L_u$ corresponding to $u$.
Then for any $u \in A$, $X_{u}$ has good reduction as split Mukai sixfolds of genus 7 at any finite place of $K$.
Indeed, for $S_{-}^{(\Spec \cO_{K})} \twoheadrightarrow L_{\widetilde{u}}$, $X_{\widetilde{u}} := X(L_{\widetilde{u}}^{\perp})$ gives a split Mukai sixfold over $\cO_{K}$ with the generic fiber $X_{u}$ by Remark \ref{rem:mukaindual}.

By Proposition \ref{prop:splitsigmaisom}, for $u, u' \in U(K)$, $X_{u, \overline{K}}$ and $X_{u',\overline{K}}$ are isomorphic if and only if $u_{\overline{K}}$ and $u'_{\overline{K}}$ lie on the same $\Spin (V^{(\Spec \overline{K})})$-orbit.
Since the dimension of $\Spin (V^{(\Spec \overline{K})})$ is 45, which is less than $\dim U =48$, such a number field $K$ satisfies the desired conditions.
\end{proof}

\subsection{Fontaine-type results for Mukai $n$-folds of genus 7}

In this subsection, we discuss the existence/non-existence of Mukai $n$-folds over $\Z$ for $3 \leq n \leq 10$.

Our goal is to prove the following theorem.

\begin{thm}
\label{thm:MukainoverZ}
\begin{enumerate}
\item 
For $n \in \{5,6,7,8,9,10\}$, there is a split Mukai $n$-fold of genus 7 over $\Z$.
\item 
There is no Mukai $3$-fold of genus $7$ over $\Q$ which admits good reduction at $\Z_{p}$ for any prime number $p$. 

\item 
There is no Mukai $4$-fold of genus 7 over $\Q$ which admits good reduction as Mukai $4$-folds of genus $7$ at $\Z_{p}$ for any prime number $p$.
\end{enumerate}
\end{thm}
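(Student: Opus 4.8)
The plan is to dispatch the three parts separately, with the existence in (1) being soft (via duality) while the non-existence statements (2) and (3) require Hodge theory and the geometry of the spinor tenfold in characteristic $2$, respectively. For (1), the split spinor tenfold $\Sigma_+^{(\Spec\Z)}\subset\P(S_+^{(\Spec\Z)})$ is itself a split Mukai tenfold over $\Z$, being a homogeneous space for the split group scheme $\Spin_{10}$ over $\Z$; this settles $n=10$. For $5\le n\le 9$ I would use the projective self-duality $\Sigma_+^{\vee}\simeq\Sigma_-$ of Proposition \ref{prop:spinorduality} together with Remark \ref{rem:mukaindual}: it suffices to produce a rank-$(10-n)$ locally free quotient $S_-^{(\Spec\Z)}\twoheadrightarrow L'$ such that the linear subspace $\P_{\Spec\Z}(L')\subseteq\P(S_-^{(\Spec\Z)})$ is disjoint from $\Sigma_-^{(\Spec\Z)}$ in every fibre, for then $X(L'^{\perp})$ is a split Mukai $n$-fold over $\Z$. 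Indeed for $n\ge 5$ the expected intersection dimension $\dim\P(L')+\dim\Sigma_--15=4-n$ is negative, so dimensional transversality reduces to emptiness; and since any rank-$(10-n)$ quotient $L'_5\twoheadrightarrow L'$ of a fixed rank-$5$ quotient gives $\P(L')\subseteq\P(L'_5)$, it is enough to exhibit one rank-$5$ quotient $L'_5$ (a coordinate subspace adapted to the weight decomposition $S_-=\bigwedge^{\mathrm{odd}}U$) with $\P(L'_5)$ disjoint from $\Sigma_-$ over $\Z$. The only residue characteristic needing attention is $2$, where one checks that the ten spinor quadrics have no common zero on $\P(L'_5)$.

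For (2), a Mukai $3$-fold of genus $7$ over $\Q$ is precisely a prime Fano threefold of genus $7$ by Proposition \ref{prop:MukaivssplitMukai}, whose Hodge diamond (Proposition \ref{prop:basic}) has $h^{2,1}=7\neq 0$. Good reduction as Fano varieties at every $\Z_p$ yields a smooth proper model over $\Z$ (spread $X$ out over $\Z[1/N]$ and fill in the finitely many remaining primes using the given models, the gluing being justified by uniqueness of the smooth Fano model, i.e.\ separatedness of the moduli). The theorem of Fontaine and Abrashkin then forces $h^{i,j}=0$ for all $i\neq j$ with $i+j\le 3$, contradicting $h^{2,1}=7$; hence no such threefold exists.

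For (3) the Hodge diamond is diagonal, so the argument of (2) is unavailable and I would reduce to characteristic $2$. Good reduction as Mukai $4$-folds at every prime gives, after spreading out and gluing, a Mukai $4$-fold $\mathcal X$ over $\Z$; applying the recovery of Proposition \ref{prop:Mukainfoldsigmafamily} in case $n\ge 4$ over the base $\Z$ produces a split rank-$10$ quadratic bundle $(W,q)$ and a rank-$6$ direct summand $A^{\perp}\subseteq S_+^{(\Spec\Z)}$ with $\mathcal X=\Sigma_+\cap\P(A)$. Dually, $Z:=\Sigma_-^{(\Spec\Z)}\cap\P(A^{\perp})$ is finite over $\Z$, and since every geometric fibre of $\mathcal X$ is smooth it is étale of degree $2g-2=12$ over $\Z$. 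By Minkowski's theorem a finite étale $\Z$-scheme is a disjoint union of copies of $\Spec\Z$, so $Z$ is twelve sections: twelve points of $\Sigma_-(\Z)$, pairwise distinct in every fibre, spanning $\P(A^{\perp})=\P^5$ and cutting $\Sigma_-$ transversally. Reducing modulo $2$ yields twelve distinct $\F_2$-points of the split $\Sigma_-^{(\F_2)}$ lying on an $\F_2$-rational $\P^5$ that meets $\Sigma_-$ in exactly this reduced length-$12$ scheme, and the task is to show, from the explicit geometry of $\Sigma_-$ over $\F_2$ (its $\F_2$-points, their tangent spaces, and the linear subspaces it contains), that no such configuration exists.

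The existence in (1) and the Hodge-theoretic input of (2) are essentially formal; the crux is the characteristic-$2$ non-existence at the end of (3). A dimension count of tangent $5$-planes (the tangency locus has codimension $\ge 1$ in the relevant Grassmannian in every characteristic) shows that smooth Mukai $4$-folds still exist over $\overline{\F}_2$, so the obstruction is genuinely arithmetic over the prime field $\F_2$ rather than over its algebraic closure: I must rule out an $\F_2$-rational transverse $\P^5$, equivalently, after the Minkowski reduction, show that twelve everywhere-distinct rational points of $\Sigma_-$ cannot form a transverse linear section over $\F_2$. Proving this from the char-$2$ structure of $\Sigma_{\pm}$ is the delicate, non-formal heart of the argument, exactly parallel to Schr\"oer's reduction of the non-existence of Enriques surfaces over $\Z$ to characteristic $2$.
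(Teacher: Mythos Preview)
Your overall strategy matches the paper's for all three parts, and you have correctly isolated the reduction to $\F_2$ as the heart of (3). Two points deserve comment.

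\textbf{Gluing in (3).} You glue the local models $\mathcal{X}_p$ and the spread-out model over $\Z[1/N]$ into a single Mukai $4$-fold over $\Z$, invoking separatedness of the moduli. But separatedness for Mukai $4$-folds is not established in the paper and is not obvious: these can have positive-dimensional automorphism groups, so neither Matsusaka--Mumford nor the K-stability input used for $n=3$ applies. The paper sidesteps gluing entirely. It works with each $\mathcal{X}_p$ over $\Z_p$ separately: since $\Br(\Z_p)=0$, the ample generator $H_p$ is an honest line bundle, and the spinor-embedding construction (as in the proof of Proposition~\ref{prop:Mukainfoldsigmafamily}; note the proposition itself is stated only for odd $n$, precisely because of the Brauer obstruction you are silently assuming away) runs over each $\Z_p$. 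The generic-fibre data $(W_X,\varphi_X,\Sigma_{X,+},A_X)$ over $\Q$ is then canonical, and injectivity of $\Br(\Q)\to\prod_p\Br(\Q_p)$ shows $H$ is a line bundle over $\Q$ as well. Thus the $\Q$-dual $Y=\Sigma_{X,+}^{\vee}\cap\P(A_X^{\perp})$ is well-defined and admits a finite \'etale $\Z_p$-model $\mathcal{Y}_p$ for every $p$; being unramified everywhere, $Y$ splits into twelve $\Q$-points. No global $\Z$-model of $X$ is ever constructed. (Your gluing in (2) is harmless, since separatedness for prime Fano threefolds of genus $7$ is available.)

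\textbf{The $\F_2$ obstruction.} You identify this as the crux but stop there. The paper's argument (Lemma~\ref{lem:vectorF2}) is short: first, the spinor tenfold over $\F_2$ has a rational point by Esnault's theorem, hence the underlying quadratic form has a maximal isotropic and is split. Now if a transverse $\P^5\subset\P(S_+)$ over $\F_2$ met $\Sigma$ in at least nine rational points, view these as nonzero vectors in $\F_2^{\,6}$; a purely combinatorial fact (\cite{DMMS}) says that in any set of nine vectors in $\F_2^{\,6}$, some four are linearly dependent---the maximal size of a subset in which every four are independent is eight. These four points then lie on a common $\P^2$, producing a $4$-secant plane to $\Sigma_{\overline{\F}_2}$; but \cite[Proposition~1.16]{Mukaicurve} shows the spinor tenfold has no $4$-secant planes. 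Hence $\#Z(\F_2)\le 8<12$. The step you labelled ``delicate'' is thus a one-line coding-theory bound together with a classical geometric property of $\Sigma$.
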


\begin{rem}
\label{rem:Mukai1folds,2-folds}
Note that, Mukai $1$-folds (resp. Mukai $2$-folds) of genus $7$ are
curves of genus $7$ without $g_4^1$ (resp.\ Brill--Noether general K3 surfaces of genus $7$, see \cite[Section 4]{MukaiSugakuShintenkaiEnglish}).
Therefore, there is no family over $\Z$ in these cases by \cite[Theorem 1]{Fontaine} or \cite[\S 7.6 Theorem]{Abrashkin}.
\end{rem}

\medskip
\noindent {\bf Proof of (1)}
Let $\Sigma_{\Z} \subset \P_{\Z} (S)$ be the closed subscheme defined by 10 equations as in \cite[(0.1)]{Mukaicurve}.
Here, $S$ is a free $\Z$-module of rank 16, and we take a basis of $S^{\vee}$
\[
\xi_{\phi}, \xi_{12}, \ldots, \xi_{45}, \xi_{1234}, \ldots, \xi_{2345}.
\]
Then by \cite{Mukaicurve}, $\Sigma$ is a split  Mukai $10$-fold of genus 7 over $\Z$.

We define elements $v_{1}, \ldots, v_{5} \in \P_{\Z}(S)$ as the following:
\begin{eqnarray*}
v_{1} \colon \Z (\xi_{\phi} +\xi_{2345}) \hookrightarrow S^{\vee},\\
v_{2} \colon \Z (\xi_{13} + \xi_{45}) \hookrightarrow S^{\vee},\\
v_{3} \colon \Z (\xi_{14} + \xi_{25}) \hookrightarrow S^{\vee}, \\
v_{4} \colon \Z (\xi_{15} + \xi_{23}) \hookrightarrow S^{\vee}, \\
v_{5} \colon \Z (\xi_{12} + \xi_{1345}) \hookrightarrow S^{\vee}. 
\end{eqnarray*}
For $1 \leq i\leq 5$, we put
\[
V_{i}:= \image v_{1} \oplus \cdots \oplus \image v_{i} \subset S^{\vee}.
\]
Then by direct computation using the equations \cite[(0.1)]{Mukaicurve}, we can show that $\P_{\Z}(V_{i}^{\vee}) \subset \P_{\Z}(S)$  does not intersects with $\Sigma_{\Z} \subset \P_{\Z}(S)$.
Therefore, the dual
\[
X_{i} := \Sigma_{\Z}^{\vee} \cap \P_{\Z}(V_{i}^{\vee \perp}) \subset \P_{\Z}(S^\vee)
\]
is a split Mukai $(10-i)$-fold of genus $7$ over $\Z$ by Remark \ref{rem:mukaindual}. 
It finishes the proof.

\medskip
\noindent {\bf Proof of (2)}
This directly follows from \cite[Theorem 1]{Fontaine} or \cite[\S 7.6 Theorem]{Abrashkin} (see also Proposition \ref{prop:basic}).

\medskip
\noindent {\bf Proof of (3)}
Suppose by contradiction that there exists a Mukai $4$-fold $X$ of genus $7$ over $\Q$ with good reduction as Mukai 4-folds of genus 7 at any completion $\Z_{p}$.
Let $\mathcal{X}_{p}$ be a Mukai $4$-fold over $\Z_{p}$ with $\mathcal{X}_{p,\Q_{p}} \simeq X_{\Q_{p}}.$
Since $\Br (\Z_p)$ is trivial, we have 
\[ 
\Pic_{\mathcal{X}_{p}/\Z_{p}} (\Z_{p}) =
\Pic (\mathcal{X}_{p}).
\]

Therefore, as in the proof of Proposition \ref{prop:Mukainfoldsigmafamily}, the relatively ample generator $H_{p} \in \Pic_{\mathcal{X}_{p}/\Z_p} (\Z_{p})$ is an actual line bundle and relatively very ample.
Since the relatively ample generator $H \in \Pic_{X/\Q} (\Q)$ restricts to the actual line bundle $H_{p}$, by the injectivity of $\Br(\Q) \rightarrow \Pi_{p} \Br (\Q_{p})$, $H$ is also an actual line bundle. 
Moreover, by the same argument as in Proposition \ref{prop:curvesigmafamily}, we have a free $\Z_{p}$-module $W_{\mathcal{X}_{p}}$ with a quadratic form $\varphi_{\mathcal{X}_{p}}$,
the connected component $\Sigma_{\mathcal{X}_{p},+}$ of the corresponding orthogonal Grassmannian $\OGr_{\Z} (W_{\mathcal{X}_{p}}, \varphi_{\mathcal{X}_{p}})$,
a free $\Z_p$-module $S_{\mathcal{X}_{p},+}$ of rank 16, a free quotient $S_{\mathcal{X}_{p}, +} \twoheadrightarrow A_{\mathcal{X}_{p}}$ of rank 10, and an equality as a subscheme
\[
\mathcal{X}_{p} = \Sigma_{\mathcal{X}_{p},+} \cap \P_{\Z_p} (A_{\mathcal{X}_{p}}) \subset \P_{\Z_p} (S_{\mathcal{X}_{p},+}). 
\]
Moreover, the restrictions of $W_{\mathcal{X}_{p}}$, $\varphi_{\mathcal{X}_{p}}$, $\Sigma_{\mathcal{X}_{p}, +}$, and $S_{\mathcal{X}_{p}} \twoheadrightarrow A_{\mathcal{X}_{p}}$ to $\Q_{p}$ have $\Q$-forms $W_{X}, \varphi_{X}, \Sigma_{X,+},$ and $S_{X,+} \twoheadrightarrow A_{X}$ that are independent on $p$.
Let $Y$ be the dual of $X$, i.e.,
\[
Y := \Sigma_{X,+}^{\vee} \cap \P_{\Q} (A_{X}^{\perp}) \subset \P_{\Q} (S_{X,+}^{\vee}).
\]
Then by \cite[Lemma 3.3]{Kuznetsovspinor}, $Y$ is finite \'{e}tale over $\Q$.
Moreover, since $Y_{\Q_{p}}$ admits a finite \'{e}tale scheme model  
\[
\mathcal{Y}_{p} := \Sigma_{\mathcal{X}_{p},+}^{\vee} \cap \P_{\Z_{p}} (A_{\mathcal{X}_p}^{\perp}) \subset \P_{\Z_p} (S_{\mathcal{X}_p,+}^{\vee})
\]
over $\Z_p$ for any $p$, $Y$ is unramified at any $p$.
This implies that $Y$ splits into 12 distinct rational points.
In particular, 
\[
\mathcal{Y}_{\F_{2}} = \Sigma_{\mathcal{X}_2,+ \F_{2}} \cap \P_{\F_{2}} (A_{\mathcal{X}_2, \F_{2}}^{\perp}) \subset \P_{\F_{2}} (S_{\mathcal{X}_2,+,\F_{2}}^{\vee})
\]
consists of 12 distinct $\F_2$-rational points given by dimensionally transverse linear section of Mukai $10$-fold $\Sigma$ with $\cO(1)$ over $\F_{2}$.
Since $\Sigma$ has a rational point by \cite{Esnault}, $\Sigma$ is automatically split.
Therefore, to obtain a contradiction, it suffices to show the following proposition:

\begin{lem}
\label{lem:vectorF2}
Let $\Sigma \subset \P(S_{+})$ be a split Mukai $10$-fold over $\F_{2}$.
For any linear embedding $\P^{5} \hookrightarrow  \P (S_{+})$ such that $\P^5 \cap \Sigma \subset \P(S_{+})$ is a $0$-dimensional scheme $Z$, we have
\[
\# Z (\F_{2}) \leq 8.
\]
\end{lem}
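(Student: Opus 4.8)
The plan is to reinterpret $Z(\F_2)$ purely in terms of pure spinors over $\F_2$ and then convert the hypothesis $\dim Z=0$ into a ``pairwise general position'' condition which, together with the small linear span, forces $\#Z(\F_2)\le 8$. Since $\F_2^{\times}=\{1\}$, a point of $Z(\F_2)$ is just a nonzero vector $u$ of the $6$-dimensional subspace $\Pi\subset S_{+}$ with $\P^{5}=\P(\Pi)$ lying on $\Sigma=\Sigma_{+}$, i.e.\ an $\F_2$-rational pure spinor, equivalently a maximal isotropic $5$-dimensional subspace of the split $10$-dimensional quadratic space $(V,q)$ in the even family. First I would record the crucial consequence of $\dim Z=0$: any two points $p,q\in Z(\F_2)$ span a line of $\P^{5}$, and if that line lay on $\Sigma$ it would be contained in $Z$, contradicting $\dim Z=0$. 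Since the lines on the spinor tenfold join exactly those pairs of pure spinors whose isotropic subspaces meet in codimension $2$ (dimension $3$), every pair of isotropic subspaces arising from $Z(\F_2)$ must meet in dimension \emph{exactly} $1$.

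Next I would pass to Mukai's explicit affine coordinates on $\Sigma_{\Z}$ (the ten equations of \cite[(0.1)]{Mukaicurve}), which specialise to $\F_2$. Using a unipotent element of $\Spin(V,q)(\F_2)$ I may move one chosen rational point of $Z(\F_2)$ to the base point $\xi_{\phi}$, i.e.\ to the standard isotropic subspace, so that the matrix $A_{1}=0$ occurs in the configuration. In the chart $\xi_{\phi}\ne 0$ a pure spinor is the graph of an alternating $5\times 5$ matrix $A$ over $\F_2$, its $\xi_{ij}$-coordinates are the entries $a_{ij}$, and its size-$4$ coordinates $\xi_{ijkl}$ are the sub-Pfaffians $p_{m}(A)$; recall $\rank A$ is even, $\dim(U_{A}\cap U_{B})=5-\rank(A-B)$, and $p(A)=(p_{1}(A),\dots,p_{5}(A))$ spans the radical of $A$, being nonzero precisely when $\rank A=4$. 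Thus the condition above becomes: $A_{i}$ and all pairwise differences $A_{i}-A_{j}$ have rank $4$. Writing each rational point as $(1,A_{i},p(A_{i}))\in\F_2\oplus\Lambda^{2}\F_2^{5}\oplus\F_2^{5}=S_{+}$, the hypothesis $\dim_{\F_2}\langle Z(\F_2)\rangle\le 6$ becomes, after projecting to the first two blocks, the statement that the $A_{i}$ span a linear subspace $L\subset\Lambda^{2}\F_2^{5}$ with $\dim L\le 5$, coupled with the constraint that the quadratic sub-Pfaffian values are almost linearly determined by the $A_{i}$; concretely one obtains $\dim L+\dim K\le 5$, where $K$ is the space of sums $\sum_{i\in S}p(A_{i})$ over even subsets $S$ with $\sum_{i\in S}A_{i}=0$.

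The main obstacle is the resulting combinatorial estimate: to bound the number of alternating matrices lying in a space $L$ of dimension $\le 5$ all of whose pairwise differences have rank $4$, subject to $\dim L+\dim K\le 5$. This is a statement about constant-rank-$4$ configurations of alternating forms on $\F_2^{5}$ together with the quadratic map $p$, and I expect it to be the heart of the proof; the bound $8=2^{3}$ is tight, the extremal case being a suitable $3$-dimensional space on which $p$ restricts to a linear map. I would attack it by classifying the small subspaces of $\Lambda^{2}\F_2^{5}$ that are constant of rank $4$ away from $0$, analysing the radical map $\rho\colon L\setminus 0\to\P^{4}(\F_2)$, $A\mapsto[p(A)]$, and the fibres of $p$, and showing that once $\dim L$ grows the sub-Pfaffian relations either push points off $\Sigma$ or force the span to exceed $6$. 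The remaining bookkeeping is to absorb the boundary points with $\xi_{\phi}=0$, either by choosing the reference isotropic so that a maximal number of points lie in one affine chart, or by a symmetric limiting argument from the chart computation. Assembling these gives $\#Z(\F_2)\le 8$, which, being strictly less than $12$, yields the contradiction required in the proof of Theorem \ref{thm:MukainoverZ}(3).
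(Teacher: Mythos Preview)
Your proposal has a genuine gap: the ``main obstacle'' you identify---bounding the number of alternating $5\times 5$ matrices in a subspace $L$ of dimension $\le 5$ with all pairwise differences of rank $4$, subject to $\dim L+\dim K\le 5$---is never actually carried out. You write that you ``expect it to be the heart of the proof'' and that you ``would attack it by classifying the small subspaces of $\Lambda^{2}\F_2^{5}$ that are constant of rank $4$'', but no such classification or bound is given. The boundary case $\xi_\phi=0$ is likewise only deferred (``either by choosing the reference isotropic \dots\ or by a symmetric limiting argument''). As written, the argument stops precisely where the content is.

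The paper's proof avoids this entirely by replacing your pairwise (line) obstruction with a stronger $4$-tuple (plane) obstruction. The two ingredients are: (i) a coding-theoretic fact---any subset of $\F_2^{\oplus 6}$ in which every $4$ vectors are linearly independent has at most $8$ elements (see \cite[Theorem~2]{DMMS})---so if $\#Z(\F_2)\ge 9$ then four of the points already lie on a common $\P^2\subset\P^5$; and (ii) Mukai's geometric input that $\Sigma\subset\P(S_+)$ has no $4$-secant plane (\cite[Proposition~1.16]{Mukaicurve}), i.e.\ a $\P^2$ meeting $\Sigma$ in a $0$-dimensional scheme meets it in length $\le 3$. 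Together these give the contradiction in two lines. Your framework only exploits the line geometry of $\Sigma$ (pairs of isotropic $5$-spaces meeting in dimension $3$), which is too weak by itself; the decisive step is to use planes, after which the combinatorics over $\F_2$ becomes a known sharp bound rather than an open-ended classification.
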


\begin{proof}

Suppose that there exists a $\P^5 = \P (\F_{2}^{\oplus 6}) \hookrightarrow \P (S_{+})$ such that $\# Z (\F_{2}) \geq 9$.
The key observation is the following:
\begin{itemize}
\item
The maximal possible cardinality of a subset $T \subset \F_{2}^{\oplus 6}$ such that all subsets of $T$ consisting of 4 vectors are linearly independent is $8$.
\end{itemize}
This can be shown by easy computation. See \cite[Theorem 2]{DMMS} for the generalization.
Then $\# Z(\F_{2}) \geq 9$ implies that there exists a 4-points $P_{1},\ldots P_{4} \in Z (\F_{2})$ that span a $1$-dimensional or $2$-dimensional projective space.
In particular, there exists a $\P^2 \subset \P(S_{+})$ whose intersection with $\Sigma$ is 0-dimensional and containing at least 4-rational points.
Since $\Sigma_{\overline{\F}_{2}} \subset \P(S_{+,\overline{\F}_{2}})$ has no $4$-secant plane by \cite[Proposition 1.16]{Mukaicurve}, it gives the contradiction.
\end{proof}

\begin{rem}
{\cred The analogue of Lemma \ref{lem:vectorF2} over $\Q$ does not hold. 
More precisely, by using a computer algebra system, we can show the following:
Let $\Sigma \subset \P^{15}_{\Q}$ be the split Mukai tenfold defined by equations \cite[(0.1)]{Mukaicurve}.
Then there exists a linear subspace $\P^{5}_{\Q} \subset \P^{15}_{\Q}$ such that the intersection $\P^{5}_{\Q} \cap \Sigma$ consists of 12 (smooth) $\Q$-rational points.
Therefore, we genuinely need to work over $\F_{2}$.}
\end{rem}

\section{Remarks}
\label{section:Remarks}

\medskip
\noindent {\bf Possible generalization of the moduli description of Fano threefolds of genus $7$}
As we saw in Section \ref{section:fanovscurve}, prime Fano threefolds of genus 7 correspond to smooth proper curves of genus 7 without $g_4^1$.
According to \cite{Kuznetsov-Prokhorovnodal} and \cite[Section 8]{MukaiBrill}, 1-nodal non-factorial Fano threefolds of genus 7 would ideally correspond to smooth proper curves of genus 7 with $g_4^1$.
Therefore, it is interesting to consider a generalization of Theorem \ref{thm:introarithmeticTorelli} for such varieties.

\medskip
\noindent {\bf Shafarevich conjecture for Mukai $8$-folds of genus $7$}
It is known that
there are exactly two isomorphism classes of Mukai $8$-folds of genus $7$ over $\C$.
One with $H^1(T) =0$ is a general Mukai $8$-folds of genus $7$, and another one is a special Mukai $8$-folds of genus $7$ \cite{Kuznetsovspinor}.
If we restrict to general Mukai $8$-folds of genus $7$ (we only treat good reduction as general Mukai $8$-folds of genus $7$), then the same strategy as in \cite[Proposition 4.10]{Javanpeykar-Loughran:GoodReductionFano} works, and the Shafarevich conjecture holds true.
However, we do not know whether the general case holds or not.

\medskip
\noindent {\bf K-stability and arithmetic finiteness}
It is noteworthy that there seems to be some relation (or coincidence) between the K-stability of a given Fano variety $X$ and the arithmetic finiteness of $X$.

Recall that, for $n=10$ or $9$, the number of isomorphism classes of Mukai $n$-folds of genus $7$ over $\C$ is one.
On the other hand, their arithmetic behavior is widely different from each other;
over a fixed number field $K$, there are infinitely many isomorphism classes of Mukai $10$-folds of genus $7$, but finitely many of $9$-folds (Remark~\ref{rem:mukai10} and Theorem~\ref{thm:numberofmukai9}).
They also differ from each other in view of K-stability;
the Mukai $10$-fold of genus $7$ over $\C$, which is a homogeneous variety of $\SO(10)$, admits K\"ahler-Einstein metrics and hence is K-semistable, but the Mukai $9$-fold of genus $7$ over $\C$, which is a horospherical variety of Picard rank one, is K-unstable \cite{Mat72},\cite[\S 5.4.1]{Del20}.

A similar phenomenon happens also for quintic del Pezzo varieties.
A quintic del Pezzo variety $X$ is, by definition, a Fano $n$-fold $X$ of index $n-1$ and of degree $5$.
By the classification result \cite{Fuj90}, the dimension of a quintic del Pezzo variety $X$ is at most $6$, and each $X$ is a linear section of the $6$-dimensional Grassmann variety $\Gr(5,2) \subset \P^9$.
Moreover the number of isomorphism classes of quintic del Pezzo $n$-folds over $\C$ is one when $3 \leq n \leq 6$.
On the other hand, over a fixed number field $K$, there are infinitely many isomorphism classes for quintic del Pezzo $n$-folds when $n=3$ or $6$, but finitely many for $n$-folds with $n =4$, $5$  \cite{DKM}, \cite[\S 2.2]{DK19}.
A quintic del Pezzo $n$-fold $X$ is K-semistable when $n=3$ or $6$, but K-unstable when $n=4$, $5$ \cite{Fuj17}.

Based on this observation, it seems to be also interesting to study the arithmetic behavior of Mukai $8$-folds of genus $7$.
As we saw before, there are only two isomorphism classes of Mukai $8$-folds of genus $7$ over $\C$.
The general one is $K$-semistable, but the other one is $K$-unstable \cite{Del23}, \cite{Kan22}.
Note that, since the automorphism group of the general one is $\PGL(2) \times G_2$, there are infinitely many isomorphism classes over a fixed number field $K$.
It would be interesting to count the number of isomorphism classes of the special Mukai $8$-folds of genus $7$ over a number field.

\medskip



\noindent {\bf Acknowledgements}
The first author was supported by JSPS KAKENHI Grant Number 21K18577, 23K20204, 23K20786, 24K21512, 25K00905.
The second author was supported by JSPS KAKENHI Grant Number 23K12948.
The third author was supported by JSPS KAKENHI Grant number JP22KJ1780.
The fourth author was partially supported
by JSPS Grant-in-Aid for Scientific Research numbers 20H00114, 21H00973, 21K03246, 23H01073. 
The fourth author thanks Department of Mathematics in Kyoto University and Beijing Institute of Mathematical Sciences and Applications for hospitality. The authors wish to express their gratitude to Yuji Odaka for helpful advice on the moduli of prime Fano threefolds of genus $7$.
The authors are also deeply grateful to Shou Yoshikawa for valuable comments on the minimal-model program.
The authors thank Hiromu Tanaka for the comment on the reference of Proposition \ref{prop:basic}. 
Moreover, the authors thank Naoki Imai, Tatsuro Kawakami, Yuya Matsumoto, and Shigeru Mukai for valuable discussion.
Finally, the authors would like to thank anonymous referees for their valuable suggestions.
\printbibliography
\end{document}